\theoremstyle{plain}
\newtheorem{thm}{Theorem}[section]
\theoremstyle{plain}
\newtheorem{lem}[thm]{Lemma}
\newtheorem{prop}[thm]{Proposition}
\newtheorem{cor}[thm]{Corollary}
\newtheorem{property}[thm]{Property}
\theoremstyle{definition}
\newtheorem{defi}{Definition}[section]
\newtheorem{rem}{Remark}
\newtheorem{assum}[thm]{Assumption}
\newtheorem{main}{Main Result}
\renewcommand{\d}{\/\mathrm{d}\/}
\def\L{T\wedge\rho_N}
\def\Ln{T\wedge\rho^n_N}
\def\Lm{T\wedge\rho_N^{n,m}}
\def\v{\mathbf{v}}
\def\Q{\mathbf{Q}}
\def\\tau{\mathbf{\tau}}
\def \Rn {\rho_N^{n,m}}
\newcommand{\R}{\mathbb{R}}
\renewcommand{\d}{\/\mathrm{d}\/}
\numberwithin{equation}{section} 
\begin{document}

\title[Strong Solutions of Viscoelastic Flows]{Strong Solutions of Stochastic Models for Viscoelastic Flows of Oldroyd Type}

\author[Utpal Manna]{Utpal Manna}

\address{%
School of Mathematics\\
Indian Institute of Science Education and Research (IISER) Thiruvananthapuram\\
Thiruvananthapuram 695016\\
Kerala, INDIA}

\email{manna.utpal@iisertvm.ac.in}

\author[Debopriya Mukherjee]{Debopriya Mukherjee}

\address{%
School of Mathematics\\
Indian Institute of Science Education and Research (IISER) Thiruvananthapuram\\
Thiruvananthapuram 695016\\
Kerala, INDIA}

\email{debopriya13@iisertvm.ac.in}

\begin{abstract}
In this work we study stochastic Oldroyd type models for viscoelastic fluids in $\R^d, d= 2, 3$. We show existence and uniqueness of strong local maximal solutions when the initial data are in $H^s$ for $s>d/2, d= 2, 3$. Probabilistic
estimate of the random time interval for the existence of a local solution is expressed in terms of expected values of the initial data.   
\end{abstract}

\keywords{Oldroyd Fluid, Maximal strong solution, L\'evy noise, Commutator estimates}

\subjclass{60H15; 60H30; 76A05; 76A10; 76D03}

\maketitle
\tableofcontents

\section{Introduction}
Over the past few years, there have been many works devoted to viscoelastic
fluids in dimensions two and three. Most of these works are concerned about local existence of strong solutions, global existence of weak solutions, necessary condition for blow-up (in the spirit of well-known Beale-Kato-Majda criterion \cite{BKM}) and global well-posedness for smooth solutions with small initial data. 

In this work, we focus upon the classical Oldroyd type models for viscoelastic fluids (see, Oldroyd \cite{Old}) in $\R^d, d= 2, 3$
\begin{align}
\frac{\partial \mathbf{v}}{\partial
t}+(\mathbf{v}\cdot\nabla)\v-\nu\Delta \v+\nabla
p &=\mu_1 \nabla \cdot \mathbf{\tau}  \quad \mbox{in} \quad \mathbb{R}^d \times (0,T),    \label{e1}\\
\frac{\partial \mathbf{\mathbf{\tau}}}{\partial
t}+(\v \cdot \nabla)\mathbf{\tau} +a\mathbf{\tau} +\Q(\mathbf{\tau}, \nabla \v)&=\mu_2 \mathcal{D}(\v) \quad \mbox{in} \quad \mathbb{R}^d \times (0,T), \label{e2}\\
\nabla\cdot\v&=0 \quad \mbox{in} \quad \mathbb{R}^d \times (0,T),\label{e3}\\
\v(0, \cdot)=\v_0,\,\,\mathbf{\tau}(0, \cdot)&=\mathbf{\tau}_0 \quad \mbox{in} \quad \mathbb{R}^d .\label{e4}
\end{align}
Here $\v$ is the velocity vector field which is assumed to be divergence free, $\mathbf{\tau}$ is the non-Newtonian part of the stress tensor (i.e., $\mathbf{\tau}(x,t)$ is a $(d,d)$ symmetric matrix), $p$ is the pressure of the fluid, which is a scalar. The parameters $\nu$ (the viscosity of the fluid), $a$ (the reciprocal of the relaxation time), $\mu_1$ and $\mu_2$ (determined by the dynamical viscosity of the fluid, the retardation time and $a$) are assumed to be non-negative. $\mathcal{D}(\v)$ is called the deformation tensor and is the symmetric part of the velocity gradient 
$$\mathcal{D}(\v)=\frac{1}{2}(\nabla \v+ \nabla^t \v).$$
$\Q$ is a quadratic form in $(\mathbf{\tau}, \nabla\v)$. As remarked in Chemin and Masmoudi \cite{CM}, since the equation for the stress tensor should be invariant under coordinate transformation, $\Q$ cannot be most general quadratic form, and for Oldroyd fluids one usually chooses 
$$\Q(\mathbf{\tau},\nabla \v)=\mathbf{\tau} \mathcal{W}(\v)-\mathcal{W}(\v) \mathbf{\tau} -b\left(\mathcal{D}(\v)\mathbf{\tau}+\mathbf{\tau} \mathcal{D}(\v)\right),$$
where $b \in [-1,1]$ is a constant and  $\mathcal{W}(\v)=\frac{1}{2}(\nabla \v- \nabla^t \v)$ is the vorticity tensor, and is the skew-symmetric part of velocity gradient.

There is growing literature devoted to these systems and it is almost impossible to provide a complete review on the topic. We shall restrict ourselves to a few significant works which are relevant to our paper. 
\par
\noindent
It is straightforward to observe that the formal $\mathrm{L}^2$-energy estimate of the above system \eqref{e1}-\eqref{e4} is the following:
\begin{align*}
\dfrac{1}{2}\dfrac{d}{dt} (\mu_2\|\v(t)\|_{\mathrm{L}^2}^2 + \mu_1\|\tau(t)\|_{\mathrm{L}^2}^2)+\nu\mu_2 \|\nabla\v(t)\|_{\mathrm{L}^2}^2+a\mu_1\|\tau(t)\|_{\mathrm{L}^2}^2\leq |b|~\|\nabla \v(t)\|_{\mathrm{L}^{\infty}}\|\tau(t)\|_{\mathrm{L}^2}^2. 
\end{align*}
Since by the Brezis-Wainger type logarithmic Sobolev inequality, $\mathrm{L}^{\infty}$-norm of gradient of velocity field can be bounded by that of vorticity field for the Sobolev exponent strictly bigger than $d/2+1$, the difficulty here arises in getting an $\mathrm{L}^{\infty}$ estimate on the vorticity. Indeed, at first glance it would seem hopeless because the vorticity equation involves a transport term as well as a nonlocal term. However, one needs to perform a losing estimate (see, Chemin-Masmoudi \cite{CM}) for the transport equation satisfied by $\tau$ that allow us to obtain a Beale-Kato-Majda (\cite{BKM}) type sufficient condition of non-breakdown. 

Due to the parabolic-hyperbolic coupling and the special structure of $\Q$, the corresponding stationary problem is also interesting and was studied by Renardy \cite{Re}. The existence and uniqueness of local strong solutions
in $H^m$ ($m$ integer) have been established by Guillop\'e and Saut \cite{GS1}. Further, these solutions are global if the coupling between the two equations is weak as well as the initial data are small  Guillop\'e and Saut \cite{GS2}. Existence of $\mathrm{L}^s-\mathrm{L}^r$ solutions has been treated by Fernandez Cara,
Guill\'en, and Ortega \cite{FGO}. Global existence of weak solutions has been shown in the corotational case (with $b=0$) by Lions and Masmoudi \cite{LM}. Some recent works have been devoted to the proof of global well-posedness in the case of small data (e.g. see Masmoudi et al. \cite{CM, LMZ}, Lin et al. \cite{LLZ}, Lei et al. \cite{Lei1, Lei2}).

Let us mention the connection between the deterministic system under consideration in this work and certain other systems showing a critical coupling. A lot of works have been devoted recently to the study of
two and three dimensional Boussinesq system and magnetohydrodynamics (MHD) system with partial dissipation and Ericksen-Leslie nematic liquid crystal model. 
In particular, a critical coupling for Boussinesq system has been studied in Hmidi et al. \cite{H1}-\cite{H4}, Manna and Panda \cite{MP}, for MHD system in Caflisch et al. \cite{CKS}, for liquid crystal model in   Lin and Liu \cite{LL}, to name a few. The coupling in the Boussinesq system is simpler than the one in MHD system (or in liquid crystal model or viscoelastic fluid of Oldroyd type considered in this work) in the sense that the vorticity
equation is forced by the gradient of the temperature but then the temperature solves an unforced convection-diffusion equation. Consequently, in the case of
critical coupling, if one can find a combination of the vorticity and the temperature
that has better regularity properties, it is rather easy to deduce an estimate on each
individual quantity Elgindi and Rousset \cite{ER}. This is not the case for the MHD system, the liquid crystal model or for the Oldroyd model (even if $\Q=0$) since they are strongly coupled. Moreover, due to the special structure of $\Q$, the Oldroyd model under consideration possesses additional difficulty and it is evident from the lack of $\mathrm{L}^2$-energy estimate without prior assumption on the bound of $\mathrm{L}^{\infty}$-norm of the gradient of velocity field. 

Literatures related to analysis of the above critical coupled systems perturbed by random forcing are very limited and quite recent (e.g. see Yamazaki \cite{Ya} for Boussinesq system with zero dissipation, Manna et al. \cite{MaMo} for non-resistive MHD system, Brze\'{z}niak et al. \cite{BHR} for liquid crystal model). To the best of author's knowledge, there is no literature available on the random perturbation of the general non-linear viscoelastic fluid of Oldroyd type \eqref{e1}-\eqref{e4}. However, in Barbu et al. \cite{Ba} and Razafimandimby \cite{Ra}, existence and asymptotic behaviour of a linear visoelastic fluid equation driven by additive or multiplicative Wiener
stochastic processes are studied. This equation is an integro-differential equation (of Volterra type) consisting of the Navier-Stokes equation and a hereditary (or memory) term as the integral of a linear kernel, but doesn't possess any critical nonlinear coupling as in \eqref{e1}-\eqref{e2}. Therefore, the standard techniques of the stochastic Navier-Stokes equation can be borrowed to establish the well-posedness and regularity of solutions.

In this context, we should make a note that the Oldroyd type viscoelastic fluid considered in this paper is purely a macroscopic model. Usually at the macroscopic level, in the case of
non-Newtonian fluids such as polymeric fluids, such an equation links the stress tensor to the velocity field either through a partial differential equation (as in equations \eqref{e1}-\eqref{e2}) or through an integral relation (e.g. in linear Oldroyd model \cite{Ba}, \cite{Ra}). Recently some works have been devoted for the understanding of fluid behaviour both in the macroscopic and microscopic regimes. In order to build a micro-macro model, one needs to go down to the microscopic scale and make use of kinetic theory to obtain a mathematical model for the evolution of the microstructures of the fluid (e.g. configurations of the polymer
chains in the case of polymeric fluid). In mathematical terms, this micro-macro approach translates into a coupled multiscale system (simplest example of such a model is the dumbbell model) in which the polymers are modelled as dumbbells each of which consists of two beads connected by a spring (see \cite{JLL1} and \cite{Ot} for detailed introduction on the subject). In Jourdain et al. \cite{JLL2}, the authors analyse a stochastic finite extensible nonlinear elastic dumbbell model, and prove a local-in-time existence and uniqueness
result. This work has been further extended in Jourdain et al. \cite{JLLO}, where long-time behavior of the solution in various settings
(shear flow, general bounded domain with homogeneous Dirichlet boundary conditions
on the velocity, general bounded domain with non-homogeneous Dirichlet
boundary conditions on the velocity) have been shown. There is also a recent trend in the community of researchers performing numerical simulations of such complex flows, where stochastic micro-macro models are considered as a numerical tool for simulating the dynamic behavior of polymeric fluids (see, \cite{JLL1}, \cite{Li}).

In the present work, we are interested in the mathematical analysis of a stochastic version of \eqref{e1}-\eqref{e4}. Our work is motivated by the importance of external perturbation on the dynamics of the velocity field for fluids with memory. The viscoelastic property demands that the material must return to its original shape after any deforming external force has been removed (i.e., it will show an elastic response) even though it may take time to do so. Hence the equation modelling stress tensor (i.e. the equation for $\mathbf{\tau}$) is invariant under coordinate transformation. Therefore the question of how to incorporate a suitable perturbation modelling the stress tensor without destroying its invariance property is a delicate one. 

Hence for a full understanding of the effect of
fluctuating forcing field on the behaviour of the viscoelastic fluids, one needs to take into account the dynamics of $\v$ and $\mathbf{\tau}$. To initiate this kind of investigation we propose a mathematical study
of the following system of equations which basically describes an approximation of the system governing the viscoelastic fluids under the influence of fluctuating external forces.
\begin{align}
&d \mathbf{v}(t)+[(\mathbf{v}(t)\cdot\nabla)\v(t)-\nu\Delta \v(t)+\nabla
p]dt =\mu_1 \nabla \cdot \mathbf{\tau}(t)dt+\sigma(t,\v(t))\d W_1(t)+\int_Z G(\v(t-), z)\tilde{N}_{1} (\d t,\d z),     \label{se01}\\
&d \mathbf{\tau}(t)+[(\v(t) \cdot \nabla)\mathbf{\tau} (t) +a\mathbf{\tau}(t) +\Q(\mathbf{\tau}(t), \nabla \v(t))]dt=\mu_2 \mathcal{D}(\v(t))dt+ (h \otimes \mathbf{\tau}(t))\circ d W_2(t) , \label{se02}\\
&\nabla\cdot\v=0 ,\label{se03}\\
&\v(0,\cdot)=\v_0,\,\,\mathbf{\tau}(0,\cdot)=\mathbf{\tau}_0  .\label{se04}
\end{align}
where $\,W_1 $ is a Hilbert space valued Wiener process with nuclear operator $Q_1$, $\circ~ d W_2(t)$ stands for the Stratonovich differential where $W_2$ is a real-valued Wiener processes, $\tilde{N}$ is a compensated Poisson random measure and $h$ is a bounded function. The tensor product $h \otimes \mathbf{\tau}(t)$ denotes usual matrix multiplication. Detailed and precise descriptions of the model are provided in the subsequent Sections.

In this paper, we study the existence and uniqueness of local (maximal) strong solutions to the incompressible, viscoelastic fluids of Oldroyd type \eqref{se01}-\eqref{se04} in both two and three dimensions. The analysis here is significantly different from the classical one for stochastic evolution equations due to the lack of diffusion in the $\tau$ equation \eqref{se02} and  structure of $\Q$. To be a little more precise, one of the key difficulties in proving local existence with diffusion only in the $\v$ equation stems from the nonlinear terms. Since $H^s$ is an algebra for $s>d/2$, so one obtains 
$$\vert \left\langle(\v\cdot\nabla)\mathbf{\tau}, \mathbf\varphi)\right\rangle_{H^s}\vert\leq \|\v\|_{H^s} \|\nabla\mathbf{\tau}\|_{H^s}\|\mathbf{\varphi}\|_{H^s}.$$
Thus we must estimate $\|\nabla\mathbf{\tau}\|_{H^s}$, and if we start with $\mathbf{\tau}_0\in H^s$ we do not have any control over the $H^s$ norm of $\nabla\mathbf{\tau}$ because there is no smoothing for $\mathbf{\tau}$.
Due to the same reasons, the semigroup method to mild solutions may not
work in this case and also the local $m$-accretivity property is not available due to the absence of a diffusive term. To the best of the authors knowledge, this work
appears to be the first systematic treatment for the existence and uniqueness of the local/maximal strong solution of the stochastic viscoelastic fluid of Oldroyd type in its most general form. Global well-posedness for smooth solutions with small initial data of the viscoelastic fluids under the influence of fluctuating external forces will be addressed by the authors in near future.

The organization of the present article is as follows. In the Section 2, we introduce some notation used throughout this paper and certain known but useful results concerning fractional order Sobolev spaces, commutator estimates and stochastic analysis. After stating the hypotheses on the random noise coefficients in Section 3, we provide a full statement of the main result of this work. In the very Subsection, we also briefly outline the strategy of the proof of the main result.  In Section 4, we consider an approximate system and establish an energy estimate in $H^s, s>d/2$. We also provide a probabilistic estimate of the stopping time. Section 5 is devoted in proving the (strong) convergence of the approximate solutions. The existence and uniqueness of local strong solution is provided in Section 6, and that of local maximal solution is proved in Section 7. In the Appendix we prove
several results which are used to establish the strong convergence of the approximate solutions in Section 5. 

\section{Preliminaries}
\subsection{Notations} 
Throughout the paper we use the same notation $H^{s}(\mathbb R^{d})$ for both vector-valued and tensor valued functions. For notational convenience we define
$$H^{s}(\mathbb R^{d}):=H^{s}(\mathbb R^{d};\mathbb R^{d})=(H^{s}(\mathbb R^{d}))^d \quad \mbox{and} \quad H^{s}(\mathbb R^{d}):=H^{s}(\mathbb R^{d};\mathbb R^{d \times d}).$$ 
Likewise, $\mathrm{L}^2(\R^d)$ is used for both vector-valued and tensor valued functions.
\subsection{Fractional Order Sobolev Spaces}
For $s \in \mathbb R,$ let  $J^s$ denote the Bessel potential of order $s$ which is equivalent to the operator $(I - \Delta)^{s/2}$, where $\Delta$ is the Laplace operator, and is defined via the Fourier transform $\mathcal F$ as follows
\[ {\mathcal F} \left[ J^{s} f \right](\xi) = (1 + |\xi|^2)^{s/2} \widehat f(\xi) .\]

\noindent
The inner product on $H^{s}(\mathbb R^{d})$ is given by
\begin{align*}
(f, g)_{H^{s}} = \left( (1 + |\xi|^2)^{s/2} \widehat f(\xi), (1 + |\xi|^2)^{s/2} \widehat g(\xi) \right)_{\mathrm{L}^2} &= ({\mathcal F} \left[ J^s f \right](\xi), {\mathcal F} \left[ J^s g \right](\xi))_{\mathrm{L}^2} \nonumber \\ &= \left( J^s f, J^s g \right)_{\mathrm{L}^2},
\end{align*}
and the norm on ${H}^{s}(\mathbb R^{d})$ is defined by
\begin{align}\label{hs}
\| f\|_{H^{s}} = \left( \int_{\mathbb R^{d}} \left[ (1 + |\xi|^2)^{s/2} |\widehat f(\xi)| \right]^2 \right)^{1/2} &= \left\| (1 + |\xi|^2)^{s/2} \widehat f(\xi)\right\|_{\mathrm{L}^2} = \| J^s f\|_{\mathrm{L}^2},
\end{align}
\begin{rem}\label{prohs}
If $s > d/2$, then each $f \in H^{s}(\mathbb R^{d})$ is bounded and continuous and hence 
\[ \|f\|_{\mathrm{L}^{\infty}(\mathbb R^{d})} \leq C \|f\|_{H^s(\mathbb R^{d})}, \left. \right. for\left. \right. s > d/2.\]
Also, note that $H^s$ is an algebra for $s > d/2$, i.e., if $f, g \in H^{s}(\mathbb R^{d})$, then $fg \in H^{s}(\mathbb R^{d})$, for $s > d/2$. Hence, we have
\[ \| fg\|_{H^s} \leq C \| f\|_{H^s} \| g\|_{H^s}, \left. \right. for\left. \right.s > d/2.\]
\end{rem}

\begin{rem}\label{div}
Fix $s > d/2$ and let $f,g \in H^{s}$ with $\nabla \cdot f=0.$ Then
\[ \| (f \cdot \nabla) g \|_{H^{s-1}} \leq C \|f\|_{H^s} \|g\|_{H^s}.\]
\end{rem}

\begin{proof}
Now $f$ is divergence free, $(f \cdot \nabla) g = \nabla \cdot(f \otimes g)$.
And $H^s$ is an algebra for $s >d/2$,
\[\| (f \cdot \nabla) g \|_{H^{s-1}} = \| \nabla \cdot(f \otimes g)\|_{H^{s-1}} \leq C \| f \otimes g\|_{H^s} \leq C \|f\|_{H^s} \|g\|_{H^s}.\]
\end{proof}

Now we state a more generalised Lemma of  Theorem 2.4.5 of Kesavan \cite{Ks}.
\begin{lem}\label{Si}
$($Sobolev Inequality$)$ For $f \in H^{s}(\mathbb R^{d})$, we have
\[ \|f\|_{L^{q}(\mathbb R^{d})} \leq C_{d, s, q} \|f\|_{H^s(\mathbb R^{d})}\]
provided that q lies in the following range
\begin{enumerate}
\item[(i)]
if $s < d/2$, then $2 \leq q \leq \frac{2d}{d - 2s}$,
\item[(ii)]
if $s = d/2$, then $2 \leq q < \infty$,
\item[(iii)]
if $s > d/2$, then $2 \leq q \leq \infty$.
\end{enumerate}
\end{lem}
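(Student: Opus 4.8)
The plan is to treat the three ranges of $s$ separately, using the Fourier-side description \eqref{hs} of $H^s$ as the principal tool, and then to fill in the full range of admissible $q$ in each case by interpolation between Lebesgue spaces. Two elementary facts will be used repeatedly. First, since $(1+|\xi|^2)^{s/2}\geq 1$ we have $\|f\|_{\mathrm{L}^2}=\|\widehat f\|_{\mathrm{L}^2}\leq \|J^s f\|_{\mathrm{L}^2}=\|f\|_{H^s}$, so $H^s\hookrightarrow \mathrm{L}^2$. Second, $(1+|\xi|^2)^{s'/2}\leq (1+|\xi|^2)^{s/2}$ whenever $s'\leq s$, hence $H^s\hookrightarrow H^{s'}$ for $s'\leq s$. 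We shall also use the interpolation inequality $\|f\|_{\mathrm{L}^q}\leq \|f\|_{\mathrm{L}^2}^{\,2/q}\|f\|_{\mathrm{L}^\infty}^{\,1-2/q}$, valid for $2\leq q\leq\infty$.

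I start with case (iii), $s>d/2$, whose endpoint is $q=\infty$. By Fourier inversion and the triangle inequality $\|f\|_{\mathrm{L}^\infty}\leq C\|\widehat f\|_{\mathrm{L}^1}$, and the Cauchy--Schwarz inequality gives
\[
\|\widehat f\|_{\mathrm{L}^1}=\int_{\R^d}(1+|\xi|^2)^{s/2}|\widehat f(\xi)|\,(1+|\xi|^2)^{-s/2}\,\d\xi\;\leq\; \|J^s f\|_{\mathrm{L}^2}\,\Big\|(1+|\xi|^2)^{-s/2}\Big\|_{\mathrm{L}^2},
\]
and the last factor is finite precisely because $2s>d$ (this is essentially Remark~\ref{prohs}). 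Combining $H^s\hookrightarrow\mathrm{L}^\infty$ with $H^s\hookrightarrow\mathrm{L}^2$ and the interpolation inequality above yields the assertion for every $q\in[2,\infty]$.

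Next, case (i), $s<d/2$, whose endpoint exponent is $q^\ast=\frac{2d}{d-2s}$. Writing $f=J^{-s}(J^s f)=G_s\ast(J^s f)$ with $G_s$ the Bessel kernel of order $s$, and using that $0\leq G_s(x)\leq C|x|^{s-d}$ near the origin while $G_s$ decays exponentially at infinity, one obtains the pointwise bound $|f(x)|\leq C\,(I_s|J^s f|)(x)+C\,(k\ast|J^s f|)(x)$, where $I_s$ is the Riesz potential of order $s$ and $k$ is a bounded kernel with exponential decay at infinity. The Hardy--Littlewood--Sobolev inequality gives $\|I_s g\|_{\mathrm{L}^{q^\ast}}\leq C\|g\|_{\mathrm{L}^2}$ with $1/q^\ast=1/2-s/d$, and Young's inequality controls the lower-order term; hence $\|f\|_{\mathrm{L}^{q^\ast}}\leq C\|J^s f\|_{\mathrm{L}^2}=C\|f\|_{H^s}$. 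Interpolating between $q=2$ and $q=q^\ast$ then covers the range $2\leq q\leq q^\ast$. (For integer $s$ one could instead quote Kesavan~\cite{Ks}, Theorem~2.4.5, and reach non-integer $s$ by interpolating the operators $J^s$; I prefer the Bessel-kernel route because it is uniform in real $s$.)

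Finally, case (ii), $s=d/2$: given a finite $q\geq 2$, choose $s'\in(0,d/2)$ close enough to $d/2$ that $\frac{2d}{d-2s'}\geq q$ --- possible since $\frac{2d}{d-2s'}\to\infty$ as $s'\uparrow d/2$ --- so that $\|f\|_{\mathrm{L}^q}\leq C\|f\|_{H^{s'}}\leq C\|f\|_{H^{d/2}}$ by case (i) together with $H^{d/2}\hookrightarrow H^{s'}$; the value $q=\infty$ is genuinely excluded here, as the standard logarithmic counterexample shows. I expect the single nontrivial ingredient of the whole argument to be the endpoint estimate in case (i) --- the mapping property $I_s\colon\mathrm{L}^2\to\mathrm{L}^{q^\ast}$, equivalently the Hardy--Littlewood--Sobolev inequality, together with the control of the singular part of the Bessel kernel; all remaining steps are soft interpolation and elementary Fourier estimates.
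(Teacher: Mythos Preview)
Your argument is correct and self-contained: the $L^\infty$ endpoint in case~(iii) via Cauchy--Schwarz on the Fourier side, the critical $L^{q^\ast}$ endpoint in case~(i) via the Bessel-kernel representation combined with Hardy--Littlewood--Sobolev, and the reduction of case~(ii) to case~(i) by lowering~$s$ are all standard and valid.

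The paper, however, does not supply a proof of this lemma at all; it simply records the statement as a (fractional-$s$) generalisation of Theorem~2.4.5 in Kesavan~\cite{Ks} and moves on. So there is no competing argument to compare against: you have written out what the paper leaves to the cited reference. Your Bessel-potential route is in fact the natural way to handle non-integer~$s$ uniformly, whereas the Kesavan reference covers integer orders and would require the additional interpolation step you mention parenthetically.
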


\begin{rem}\label{r23}
We deduce the following result using Lemma \ref{Si} and this estimate will be useful in several calculations. In two dimensions, we exploit H\"{o}lder's inequality with exponents 2/$\epsilon$ and 2/(1-$\epsilon$), and Sobolev inequality for 0 $<$ $\epsilon$ $<$ $s-1$ to obtain
\[ \|fg\|_{\mathrm{L}^2} \leq \|f\|_{\mathrm{L}^{2/ {\epsilon}}}\|g\|_{\mathrm{L}^{2/1-{\epsilon}}} \leq C \|f\|_{H^{1-{\epsilon}}} \|g\|_{H^{\epsilon}} \leq C \|f\|_{H^1} \|g\|_{{H}^{s-1}}.\]
In three dimensions, we again exploit H\"{o}lder's inequality with exponents 6 and 3, and Sobolev inequality to obtain
\[ \|fg\|_{\mathrm{L}^2} \leq \|f\|_{\mathrm{L}^6} \|g\|_{\mathrm{L}^3}  \leq C \|f\|_{H^1} \|g\|_{H^{1/2}} \leq C \|f\|_{H^1} \|g\|_{{H}^{s-1}}.\]
Note that, for both the two and three dimensions, we obtain the same bounds.
\end{rem}

\begin{lem}\label{iss}
$($Interpolation in Sobolev spaces$)$. Given $s > 0,$ there exists a constant C depending on s, so that for all $f \in H^{s}(\mathbb R^{d})$ and $0 < s' < s,$
\[\|f\|_{H^{s'}} \leq C \|f\|_{\mathrm{L}^2}^{1-s'/s}\|f\|^{s'/s}_{H^{s}}.\]
\end{lem}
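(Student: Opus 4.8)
The plan is to transfer the inequality to the Fourier side using the characterisation of the $H^{s}$ norm recorded in \eqref{hs}, after which the estimate becomes a one-line application of H\"older's inequality. Set $\theta := s'/s$; since $0 < s' < s$ we have $\theta \in (0,1)$, and the elementary pointwise identity $(1+|\xi|^2)^{s'} = \bigl[(1+|\xi|^2)^{s}\bigr]^{\theta}$ holds for every $\xi \in \mathbb R^d$.

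First I would write, using \eqref{hs},
\[
\|f\|_{H^{s'}}^2 = \int_{\mathbb R^d} (1+|\xi|^2)^{s'} |\widehat f(\xi)|^2 \, d\xi = \int_{\mathbb R^d} \Bigl[(1+|\xi|^2)^{s} |\widehat f(\xi)|^2\Bigr]^{\theta}\, \Bigl[|\widehat f(\xi)|^2\Bigr]^{1-\theta} \, d\xi .
\]
Both factors in the integrand are nonnegative, so H\"older's inequality with the conjugate exponents $1/\theta$ and $1/(1-\theta)$ applies and yields
\[
\|f\|_{H^{s'}}^2 \le \left( \int_{\mathbb R^d} (1+|\xi|^2)^{s} |\widehat f(\xi)|^2 \, d\xi \right)^{\theta} \left( \int_{\mathbb R^d} |\widehat f(\xi)|^2 \, d\xi \right)^{1-\theta} = \|f\|_{H^{s}}^{2\theta}\, \|f\|_{\mathrm{L}^2}^{2(1-\theta)},
\]
where in the last equality I used \eqref{hs} once more together with the Plancherel identity $\|\widehat f\|_{\mathrm{L}^2} = \|f\|_{\mathrm{L}^2}$ (any normalising constant appearing here is harmless and is absorbed into the final $C$). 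Taking square roots and substituting $\theta = s'/s$ gives exactly
\[
\|f\|_{H^{s'}} \le C\,\|f\|_{\mathrm{L}^2}^{1-s'/s}\,\|f\|_{H^{s}}^{s'/s},
\]
with $C=1$ under the convention that makes Plancherel's identity an equality.

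I do not expect any genuine obstacle: the assumption $f \in H^{s}(\mathbb R^d)$ guarantees that the integral $\int (1+|\xi|^2)^{s}|\widehat f(\xi)|^2\,d\xi$ is finite, hence so is $\int |\widehat f(\xi)|^2\,d\xi$, so both factors on the right-hand side are finite and the manipulation is fully justified. The only point requiring attention is the exponent bookkeeping, namely checking that $\theta$ and $1-\theta$ are legitimate H\"older exponents, which holds precisely because $0<s'<s$ forces $\theta\in(0,1)$.
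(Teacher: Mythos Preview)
Your proof is correct and complete; the Fourier-side application of H\"older's inequality with exponents $1/\theta$ and $1/(1-\theta)$ is exactly the standard elementary argument, and it in fact yields the sharp constant $C=1$. The paper does not supply its own proof of this lemma at all---it simply refers the reader to Theorem~9.6 and Remark~9.1 of Lions and Magenes~\cite{LM}---so your self-contained derivation is a genuine addition rather than a paraphrase.
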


\noindent For details see Theorem 9.6, Remark 9.1 of Lions and Magenes \cite{LM}.

\subsection{Fourier Truncation Operator} \label{FTO}
 Let us define the Fourier truncation $\mathcal{J}_{n}$
as follows:
$$\widehat{\mathcal{J}_{n}f}(\xi)=\mathlarger{\mathbf{1}}_{B(0,n)}(\xi)\widehat{f}(\xi),$$ where $B(0,n)$, a  ball of radius
$n$ centered at the origin and $\mathlarger{\mathbf{1}}_{B(0,n)}$ is
the indicator function. We list the following properties of $\mathcal{J}_n$ [see Chemin \cite{C}, Fefferman et.
al.\cite{FMRR}, Manna et al. \cite{MaMo}].

\begin{align}
& 1.\qquad \|\mathcal{J}_{n}f\|_{H^s(\mathbb{R}^d)} 
\leq \|f\|_{H^s(\mathbb{R}^d)}.\label{intro1}\\
& 2. \qquad \|\mathcal{J}_{n}f-f\|_{H^s}\leq
c\left(\frac{1}{n}\right)^{k}\|f\|_{H^{s+k}}.\label{intro2a}\\
& 3. \qquad \|(\mathcal{J}_{n}-\mathcal{J}_{m})f\|_{H^s(\mathbb{R}^d)}
\leq
\max\left\{\left(\frac{1}{n}\right)^{k},\left(\frac{1}{m}\right)^{k}\right\}\|f\|_{H^{s+k}(\mathbb{R}^d)}.\label{sr}
\end{align}

\subsection{Commutator Estimates} \label{comm.est}
Let us now state the celebrated commutator estimate due to Kato and Ponce
\cite{KaPo} (Lemma XI).
\begin{lem} If $s>0$ and
$1<p<\infty$, then
\begin{align}\label{kato}\|J^s(fg)-f(J^sg)\|_{\mathrm{L}^p}\leq C_p\left(\|\nabla f\|_{\mathrm{L}^{\infty}}\|J^{s-1}g\|_{\mathrm{L}^p}+\|J^sf\|_{\mathrm{L}^p}\|g\|_{\mathrm{L}^{\infty}}\right).\end{align}
\end{lem}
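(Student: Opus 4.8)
The plan is to prove \eqref{kato} by Littlewood--Paley localization combined with Bony's paraproduct decomposition. Fix a dyadic partition of unity, let $\Delta_j$ ($j\ge -1$) denote the associated Littlewood--Paley projections, and set $S_j=\sum_{j'<j}\Delta_{j'}$ and $\tilde{\Delta}_j=\Delta_{j-1}+\Delta_j+\Delta_{j+1}$. Writing $[J^s,f]g:=J^s(fg)-f\,J^sg$ and expanding both products by Bony's decomposition, $fg=T_fg+T_gf+R(f,g)$ with $T_fg=\sum_kS_{k-1}f\,\Delta_kg$ and $R(f,g)=\sum_k\Delta_kf\,\tilde{\Delta}_kg$, and similarly $f\,J^sg=T_f(J^sg)+T_{J^sg}f+R(f,J^sg)$, the commutator splits into three blocks:
\[ [J^s,f]g=\underbrace{(J^sT_fg-T_fJ^sg)}_{\text{(I): low}\times\text{high}}\;+\;\underbrace{(J^sT_gf-T_{J^sg}f)}_{\text{(II): high}\times\text{low}}\;+\;\underbrace{(J^sR(f,g)-R(f,J^sg))}_{\text{(III): comparable frequencies}}. \]
Block (I) will produce the term $\|\nabla f\|_{\mathrm{L}^{\infty}}\|J^{s-1}g\|_{\mathrm{L}^p}$ in \eqref{kato}, and blocks (II) and (III) the term $\|J^sf\|_{\mathrm{L}^p}\|g\|_{\mathrm{L}^{\infty}}$.

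\emph{The heart of the proof is block (I).} We have $J^sT_fg-T_fJ^sg=\sum_k\bigl(J^s(S_{k-1}f\,\Delta_kg)-S_{k-1}f\,J^s\Delta_kg\bigr)=\sum_k[J^s,S_{k-1}f]\,\Delta_kg$, and here $S_{k-1}f$ has frequencies $\lesssim 2^{k-2}$ while $\Delta_kg$ has frequencies $\sim 2^k$. Thus, on the Fourier side, the $k$-th term is the integral of the symbol $\langle\zeta\rangle^s-\langle\zeta-\eta\rangle^s$ against $\widehat{S_{k-1}f}(\eta)\,\widehat{\Delta_kg}(\zeta-\eta)$ with $|\eta|\ll|\zeta-\eta|\sim|\zeta|\sim 2^k$. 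Inserting the first-order Taylor remainder $\langle\zeta\rangle^s-\langle\zeta-\eta\rangle^s=\eta\cdot\int_0^1(\nabla\langle\cdot\rangle^s)(\zeta-\eta+t\eta)\,dt$ and using the symbol estimates $|\partial^\alpha\langle\xi\rangle^s|\le C_\alpha\langle\xi\rangle^{s-|\alpha|}$ together with the frequency separation, one realizes $[J^s,S_{k-1}f]\Delta_kg$ as a bilinear Fourier multiplier of Coifman--Meyer type with an integrable convolution kernel (once the scaling is extracted), and obtains the key estimate
\[ \|[J^s,S_{k-1}f]\,\Delta_kg\|_{\mathrm{L}^p}\le C\,2^{(s-1)k}\,\|\nabla S_{k-1}f\|_{\mathrm{L}^{\infty}}\,\|\Delta_kg\|_{\mathrm{L}^p}\le C\,2^{(s-1)k}\,\|\nabla f\|_{\mathrm{L}^{\infty}}\,\|\Delta_kg\|_{\mathrm{L}^p}. \]
The factor $\eta$ from the Taylor remainder is exactly what trades one frequency power $2^k$ on $g$ for a derivative $\nabla$ on $f$. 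Since the outputs are frequency-localized in annuli $\sim 2^k$, summation in $k$ via the Littlewood--Paley square-function characterization of $\|\cdot\|_{\mathrm{L}^p}$ yields
\[ \Bigl\|\sum_k[J^s,S_{k-1}f]\,\Delta_kg\Bigr\|_{\mathrm{L}^p}\le C\,\|\nabla f\|_{\mathrm{L}^{\infty}}\,\Bigl\|\Bigl(\sum_k 2^{2(s-1)k}|\Delta_kg|^2\Bigr)^{1/2}\Bigr\|_{\mathrm{L}^p}\le C\,\|\nabla f\|_{\mathrm{L}^{\infty}}\,\|J^{s-1}g\|_{\mathrm{L}^p}. \]

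Blocks (II) and (III) require no cancellation at leading order. In (II) the high-frequency factor is $f$, so $J^s(S_{k-1}g\,\Delta_kf)$ is localized at frequency $\sim 2^k$ and $\|J^s(S_{k-1}g\,\Delta_kf)\|_{\mathrm{L}^p}\lesssim 2^{sk}\|S_{k-1}g\|_{\mathrm{L}^{\infty}}\|\Delta_kf\|_{\mathrm{L}^p}\lesssim 2^{sk}\|g\|_{\mathrm{L}^{\infty}}\|\Delta_kf\|_{\mathrm{L}^p}$, while $T_{J^sg}f$ is treated the same way after noting $\|S_{k-1}(J^sg)\|_{\mathrm{L}^{\infty}}\lesssim 2^{sk}\|g\|_{\mathrm{L}^{\infty}}$ (a geometric sum, where $s>0$ enters); summing via the square function bounds this block by $C\,\|J^sf\|_{\mathrm{L}^p}\|g\|_{\mathrm{L}^{\infty}}$. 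Block (III) is the technically fussiest: the summands of $J^sR(f,g)$ and of $R(f,J^sg)$ are only ball-supported in frequency, so direct orthogonality is unavailable; one argues pointwise with the Hardy--Littlewood maximal function and invokes the Fefferman--Stein vector-valued maximal inequality (again using $s>0$ for a geometric summation), keeping $J^sR(f,g)$ and $R(f,J^sg)$ together so as to exploit the cancellation in the borderline contribution, and one again obtains the bound $C\,\|J^sf\|_{\mathrm{L}^p}\|g\|_{\mathrm{L}^{\infty}}$.

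Both the square-function equivalence $\|u\|_{\mathrm{L}^p}\sim\|(\sum_j|\Delta_ju|^2)^{1/2}\|_{\mathrm{L}^p}$ and the Fefferman--Stein inequality hold precisely for $1<p<\infty$, which is exactly where that hypothesis is used; collecting the bounds from blocks (I)--(III) gives \eqref{kato}. The step I expect to be the genuine obstacle is the symbol calculus in block (I)---turning the Taylor-remainder heuristic into the rigorous Coifman--Meyer/convolution-kernel estimate that exchanges one power $2^k$ of frequency on $g$ for the factor $\|\nabla f\|_{\mathrm{L}^{\infty}}$---since this is where the precise structure of the Bessel symbol $\langle\xi\rangle^s$ is essential and where the entire gain of one derivative is generated.
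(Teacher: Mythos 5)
First, a point of order: the paper does not prove this lemma at all --- it is quoted from Kato and Ponce \cite{KaPo} (their Lemma XI) as a known result, so there is no in-paper argument to compare yours against, and your proposal must stand on its own. The architecture you choose --- Bony decomposition of the commutator into low--high, high--low and resonant blocks, with the derivative gain extracted from a first-order Taylor expansion of the symbol $\langle\zeta\rangle^s-\langle\zeta-\eta\rangle^s$ in the low--high block --- is the standard modern route to Kato--Ponce type estimates, and your blocks (I) and (II) are essentially correct: the frequency separation in (I) really does trade one power $2^k$ on $g$ for $\nabla$ on $f$, and (II) is a routine paraproduct bound using $s>0$ and the square-function characterization of $\mathrm{L}^p$ (which is indeed where $1<p<\infty$ enters).

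The gap is in block (III), and it is not merely ``technically fussy.'' Parametrize that block by $\xi$ (frequency of $f$), $\eta$ (frequency of $g$), with $|\xi|\sim|\eta|\sim 2^k$ and output at $\xi+\eta$. For output frequencies $|\xi+\eta|\sim 2^j$ with $j\ll k$, the commutator symbol $\langle\xi+\eta\rangle^s-\langle\eta\rangle^s$ is of size $2^{js}-2^{ks}\sim -2^{ks}$: there is \emph{no} cancellation in this regime, and the combined term reduces to $-R(f,J^sg)$ restricted to low output frequencies. That piece is exactly the borderline case of the remainder estimate (it pairs regularity $s$ on $f$ against regularity $-s$ on $J^sg$, total $0$): your summation gives $\|\Delta_j R(f,J^sg)\|_{\mathrm{L}^p}\lesssim \sum_{k\ge j-3}c_k\,\|J^sf\|_{\mathrm{L}^p}\|g\|_{\mathrm{L}^\infty}$ with $(c_k)\in\ell^2$ only, and no geometric factor $2^{(j-k)s}$ is available for this term (it is available for $J^sR(f,g)$, which is why that half is harmless on its own). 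Neither the Hardy--Littlewood/Fefferman--Stein device nor ``keeping the two remainder terms together'' supplies the missing decay. The resonant block genuinely requires a different input: in Kato--Ponce's own argument it is the Coifman--Meyer bilinear multiplier theorem applied to $(J^sf,g)$ with the bounded symbol $\bigl(\langle\xi+\eta\rangle^s-\langle\eta\rangle^s\bigr)/\langle\xi\rangle^s$ cut off to $|\xi|\gtrsim|\eta|$, and crucially at the $\mathrm{L}^p\times\mathrm{L}^\infty\to\mathrm{L}^p$ endpoint, which is a piece of bilinear Calder\'on--Zygmund theory rather than a square-function/geometric-sum exercise. Until that input (or an equivalent) is supplied, the proposal is not a complete proof.
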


\begin{rem} \label{rem_kato} From (\ref{kato}), it can be easily seen that for
$s> 0$ and $1<p<\infty$, the nonlinear term satisfy the estimate:
\begin{align}\label{kato1}
\|J^s[(f\cdot\nabla)g]-(f\cdot\nabla)(J^s g)\|_{\mathrm{L}^p}\leq
C_p\left(\|\nabla f\|_{\mathrm{L}^{\infty}}\|J^{s-1}\nabla g\|_{\mathrm{L}^p}+\|J^sf\|_{\mathrm{L}^p}\|\nabla g\|_{\mathrm{L}^{\infty}}\right).
\end{align}
\end{rem}
\par
\noindent
Assuming the fact that $f$ is divergence free, we have
$((f\cdot\nabla)(J^sg),J^sg)_{\mathrm{L}^2}=0.$ Hence for $p=2$, we have the following estimate.
\begin{cor}\label{cor_kato}
For $s>0$, there exists a constant $c=c(d,s)$ such that, for all
$ f,g\in H^s(\mathbb{R}^d)$ and $\nabla\cdot f=0$, we have
\begin{align}
|(J^s[(f\cdot\nabla)g],J^s g)_{\mathrm{L}^2}|\leq
c\left(\|\nabla f\|_{\mathrm{L}^{\infty}}\|g\|_{H^s}+\|f\|_{H^s}\|\nabla g\|_{\mathrm{L}^{\infty}}\right)\|g\|_{H^s}.
\end{align}
\end{cor}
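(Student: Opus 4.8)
The plan is to deduce Corollary~\ref{cor_kato} from the commutator estimate \eqref{kato1} of Remark~\ref{rem_kato} together with the divergence-free cancellation property. The starting observation is the algebraic identity
\[
J^s[(f\cdot\nabla)g] = \bigl(J^s[(f\cdot\nabla)g] - (f\cdot\nabla)(J^sg)\bigr) + (f\cdot\nabla)(J^sg),
\]
so that, taking the $\mathrm{L}^2$ inner product with $J^sg$ and using linearity,
\[
(J^s[(f\cdot\nabla)g], J^sg)_{\mathrm{L}^2} = \bigl(J^s[(f\cdot\nabla)g] - (f\cdot\nabla)(J^sg),\, J^sg\bigr)_{\mathrm{L}^2} + \bigl((f\cdot\nabla)(J^sg),\, J^sg\bigr)_{\mathrm{L}^2}.
\]

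The second term on the right vanishes: since $\nabla\cdot f = 0$, for any sufficiently regular vector field $w$ one has $((f\cdot\nabla)w, w)_{\mathrm{L}^2} = \tfrac12\int_{\mathbb{R}^d} (f\cdot\nabla)|w|^2\,dx = -\tfrac12\int_{\mathbb{R}^d} (\nabla\cdot f)|w|^2\,dx = 0$, after an integration by parts with no boundary contribution on $\mathbb{R}^d$ (a density argument reduces to the case of smooth decaying $w$, here $w = J^sg$). This is exactly the fact flagged in the line preceding the corollary statement. Hence only the commutator term survives.

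For the commutator term I would apply Cauchy--Schwarz in $\mathrm{L}^2$ and then the Kato--Ponce-type bound \eqref{kato1} with $p = 2$:
\[
\bigl|\bigl(J^s[(f\cdot\nabla)g] - (f\cdot\nabla)(J^sg),\, J^sg\bigr)_{\mathrm{L}^2}\bigr|
\le \|J^s[(f\cdot\nabla)g] - (f\cdot\nabla)(J^sg)\|_{\mathrm{L}^2}\, \|J^sg\|_{\mathrm{L}^2},
\]
and \eqref{kato1} bounds the first factor by $C\bigl(\|\nabla f\|_{\mathrm{L}^\infty}\|J^{s-1}\nabla g\|_{\mathrm{L}^2} + \|J^sf\|_{\mathrm{L}^2}\|\nabla g\|_{\mathrm{L}^\infty}\bigr)$. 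Finally I would identify $\|J^sg\|_{\mathrm{L}^2} = \|g\|_{H^s}$, $\|J^sf\|_{\mathrm{L}^2} = \|f\|_{H^s}$ by the definition \eqref{hs}, and observe that $\|J^{s-1}\nabla g\|_{\mathrm{L}^2} \le C\|g\|_{H^s}$ since $J^{s-1}\nabla$ is a Fourier multiplier with symbol bounded by $C(1+|\xi|^2)^{s/2}$. Collecting these gives
\[
|(J^s[(f\cdot\nabla)g], J^sg)_{\mathrm{L}^2}| \le c\bigl(\|\nabla f\|_{\mathrm{L}^\infty}\|g\|_{H^s} + \|f\|_{H^s}\|\nabla g\|_{\mathrm{L}^\infty}\bigr)\|g\|_{H^s},
\]
which is the claim. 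There is no serious obstacle here; the only point requiring a word of care is the vanishing of $((f\cdot\nabla)(J^sg), J^sg)_{\mathrm{L}^2}$, which needs either enough regularity/decay to justify the integration by parts or a standard mollification–density argument, but for $f, g \in H^s$ with $s > d/2$ this is routine.
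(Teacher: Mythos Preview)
Your proof is correct and follows exactly the approach indicated in the paper: the line immediately preceding the corollary notes that $((f\cdot\nabla)(J^sg),J^sg)_{\mathrm{L}^2}=0$ by the divergence-free condition, and then the corollary is obtained from \eqref{kato1} with $p=2$ via Cauchy--Schwarz, precisely as you wrote out.
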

\begin{rem}\label{kato10}
For $s>d/2+1$, we have $\|\nabla f\|_{\mathrm{L}^{\infty}}\leq
c\|\nabla f\|_{H^{s-1}}$ and hence we get
\begin{align}
|(J^s[(f\cdot\nabla)g],J^sg)_{\mathrm{L}^2}|&\leq
c\left(\|\nabla f\|_{H^{s-1}}\|g\|_{H^s}+\|f\|_{H^s}\|\nabla g\|_{H^{s-1}}\right)\|g\|_{H^s}\nonumber\\&\leq
c\|f\|_{H^s}\|g\|_{H^s}^2.
\end{align}
\end{rem}

The next result is a partial generalization of the commutator
estimates of Kato and Ponce \cite{KaPo} given in Theorem $1.2$ of Fefferman et al.
\cite{FMRR}.
\begin{thm} \label{com_thm}
Given $s>d/2$, there is a constant $c=c(d,s)$ such that, for all
$f,g$ with $\nabla f,g\in H^s(\mathbb{R}^d)$ such that $\nabla \cdot f=0,$ we have
\begin{align}
\|J^s\left[(f\cdot\nabla)g\right]-(f\cdot\nabla)(J^sg)\|_{\mathrm{L}^2}\leq
c\|\nabla f\|_{H^s}\|g\|_{H^s}.
\end{align}
\end{thm}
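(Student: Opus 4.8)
The plan is to carry out the proof entirely on the Fourier side, adapting the strategy of Fefferman et al.~\cite{FMRR}; by a routine density argument it suffices to treat $f,g\in\mathcal S(\mathbb R^{d})$. Using $\widehat{(f\cdot\nabla)g}(\xi)=i\int_{\mathbb R^{d}}\bigl(\widehat f(\xi-\eta)\cdot\eta\bigr)\widehat g(\eta)\,\mathrm{d}\eta$ and $\widehat{J^{s}h}(\xi)=(1+|\xi|^{2})^{s/2}\widehat h(\xi)$, one has
\begin{align*}
\mathcal F\bigl[J^{s}[(f\cdot\nabla)g]-(f\cdot\nabla)(J^{s}g)\bigr](\xi)
=i\int_{\mathbb R^{d}}\bigl(\widehat f(\xi-\eta)\cdot\eta\bigr)\Bigl((1+|\xi|^{2})^{s/2}-(1+|\eta|^{2})^{s/2}\Bigr)\widehat g(\eta)\,\mathrm{d}\eta .
\end{align*}
Since $\nabla\cdot f=0$ we have $\widehat f(\xi-\eta)\cdot(\xi-\eta)=0$, hence $\widehat f(\xi-\eta)\cdot\eta=\widehat f(\xi-\eta)\cdot\xi$, which lets one trade powers of $|\eta|$ against powers of $|\xi|$; together with the pointwise identity $|\widehat{\nabla f}(\zeta)|=|\zeta|\,|\widehat f(\zeta)|$ this is what funnels all the $f$-dependence through $\nabla f$. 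The three elementary estimates I would lean on are: the mean value bound $\bigl|(1+|\xi|^{2})^{s/2}-(1+|\eta|^{2})^{s/2}\bigr|\le C\,|\xi-\eta|\,(1+\max(|\xi|,|\eta|)^{2})^{(s-1)/2}$, legitimate because $s>d/2\ge 1$; the subadditivity $(1+|\zeta+\eta|^{2})^{s/2}\le C_{s}\bigl((1+|\zeta|^{2})^{s/2}+(1+|\eta|^{2})^{s/2}\bigr)$; and the embedding $\|\widehat u\|_{\mathrm{L}^{1}}\le C_{s,d}\|u\|_{H^{s}}$, which holds exactly because $s>d/2$ (Cauchy--Schwarz against $(1+|\cdot|^{2})^{-s/2}\in\mathrm{L}^{2}$).

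Next I would split the $\eta$-integral according to whether $\max(|\xi|,|\eta|)$ is $|\eta|$ or $|\xi|$. On the region $|\xi|\le|\eta|$, the mean value bound and $|\eta|\,(1+|\eta|^{2})^{(s-1)/2}\le(1+|\eta|^{2})^{s/2}$ show the integrand is $\le C\,|\widehat{\nabla f}(\xi-\eta)|\,(1+|\eta|^{2})^{s/2}|\widehat g(\eta)|$, so this part of $\mathcal F[\cdot](\xi)$ is dominated by $C\bigl(|\widehat{\nabla f}|*|\widehat{J^{s}g}|\bigr)(\xi)$. On the region $|\eta|\le|\xi|$, I would first absorb $|\eta|(1+|\xi|^{2})^{(s-1)/2}\le(1+|\xi|^{2})^{s/2}$ and then use subadditivity, which bounds the integrand by $C\,|\widehat{\nabla f}(\xi-\eta)|\bigl((1+|\xi-\eta|^{2})^{s/2}+(1+|\eta|^{2})^{s/2}\bigr)|\widehat g(\eta)|$, i.e.\ by $C\bigl(|\widehat{J^{s}\nabla f}(\xi-\eta)||\widehat g(\eta)|+|\widehat{\nabla f}(\xi-\eta)||\widehat{J^{s}g}(\eta)|\bigr)$; hence this part of $\mathcal F[\cdot](\xi)$ is dominated by $C\bigl(|\widehat{J^{s}\nabla f}|*|\widehat g|\bigr)(\xi)+C\bigl(|\widehat{\nabla f}|*|\widehat{J^{s}g}|\bigr)(\xi)$. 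Applying Young's convolution inequality in the forms $\|F*G\|_{\mathrm{L}^{2}}\le\|F\|_{\mathrm{L}^{1}}\|G\|_{\mathrm{L}^{2}}$ and $\|F*G\|_{\mathrm{L}^{2}}\le\|F\|_{\mathrm{L}^{2}}\|G\|_{\mathrm{L}^{1}}$ to each piece yields
\[
\bigl\|\mathcal F\bigl[J^{s}[(f\cdot\nabla)g]-(f\cdot\nabla)(J^{s}g)\bigr]\bigr\|_{\mathrm{L}^{2}}
\le C\,\|\widehat{\nabla f}\|_{\mathrm{L}^{1}}\,\|J^{s}g\|_{\mathrm{L}^{2}}+C\,\|J^{s}\nabla f\|_{\mathrm{L}^{2}}\,\|\widehat g\|_{\mathrm{L}^{1}} .
\]

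Finally, invoking the embedding $\|\widehat u\|_{\mathrm{L}^{1}}\le C_{s,d}\|u\|_{H^{s}}$ with $u=\nabla f$ and $u=g$, and Plancherel's theorem to return to physical space, both terms on the right are $\le C\,\|\nabla f\|_{H^{s}}\|g\|_{H^{s}}$, which is the assertion. (The same estimate can be reached through a Littlewood--Paley / Bony paraproduct decomposition, separating low--high, high--low and resonant frequency interactions; the estimates in each régime are those above.) The step I expect to be the real obstacle is the bookkeeping of the derivative budget: the classical Kato--Ponce commutator estimate \eqref{kato1} does not suffice, because it would force one to control $f$ itself in $H^{s}$ (rather than merely $\nabla f\in H^{s}$) and $\nabla g$ in $\mathrm{L}^{\infty}$ (which for $s>d/2$ amounts to $g\in H^{s+1}$), neither of which is assumed. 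Getting the weights to land in the right places --- in particular absorbing the extra derivative $|\eta|(1+|\xi|^{2})^{(s-1)/2}\le(1+|\xi|^{2})^{s/2}$ in the ``output-high'' régime and then handling the leftover factor by convolution against an $\mathrm{L}^{1}$ (not merely $\mathrm{L}^{2}$) function, which is licit precisely because $s>d/2$ --- is the crux of the argument.
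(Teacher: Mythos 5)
The paper itself offers no proof of this statement: it is quoted from Fefferman, McCormick, Robinson and Rodrigo \cite{FMRR} (Theorem 1.2 there), and your argument is, in essence, the proof given in that reference --- pass to the Fourier side, write the commutator symbol as $\bigl((1+|\xi|^{2})^{s/2}-(1+|\eta|^{2})^{s/2}\bigr)\widehat f(\xi-\eta)\cdot\eta\,\widehat g(\eta)$, control the difference of Bessel weights by the mean value theorem (valid since $s>d/2\ge 1$), split according to which of $|\xi|,|\eta|$ dominates, use Peetre's subadditivity in the output-high regime, and close with Young's inequality together with $\|\widehat u\|_{\mathrm{L}^{1}}\le C\|u\|_{H^{s}}$ for $s>d/2$. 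I checked the derivative bookkeeping in both regimes and it is correct: the factor $|\xi-\eta|$ produced by the mean value bound is exactly what converts $\widehat f$ into $\widehat{\nabla f}$, and the two convolution terms land on $\|\widehat{\nabla f}\|_{\mathrm{L}^{1}}\|J^{s}g\|_{\mathrm{L}^{2}}$ and $\|J^{s}\nabla f\|_{\mathrm{L}^{2}}\|\widehat g\|_{\mathrm{L}^{1}}$ as claimed. One small point of attribution within your own write-up: the divergence-free hypothesis is announced as the mechanism that ``funnels the $f$-dependence through $\nabla f$,'' but your estimates never actually invoke the swap $\widehat f(\xi-\eta)\cdot\eta=\widehat f(\xi-\eta)\cdot\xi$ --- the absorption $|\eta|(1+|\xi|^{2})^{(s-1)/2}\le(1+|\xi|^{2})^{s/2}$ is justified by the region condition $|\eta|\le|\xi|$, and in the other region by $|\eta|\le(1+|\eta|^{2})^{1/2}$, so your proof in fact establishes the inequality without incompressibility. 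The condition $\nabla\cdot f=0$ becomes essential only in Corollary \ref{com_cor}, where one adds and subtracts $(f\cdot\nabla)(J^{s}g)$ and uses $\bigl((f\cdot\nabla)(J^{s}g),J^{s}g\bigr)_{\mathrm{L}^{2}}=0$.
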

\begin{cor}[Corollary 2.1, Fefferman et al. \cite{FMRR}] \label{com_cor}
Given $s>d/2$, there is a constant $c=c(s,d)$ such that, for all
$f,g$ with $\nabla f,g\in H^s(\mathbb{R}^d)$ and
$\nabla\cdot f=0$, we have
\begin{align}
\left|\left(J^s\left[(f\cdot\nabla)g\right],J^sg\right)_{\mathrm{L}^2}\right|
\leq c\|\nabla f\|_{H^s}\|g\|_{H^s}^2.
\end{align}
\end{cor}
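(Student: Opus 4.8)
The plan is to reduce everything to the commutator estimate of Theorem \ref{com_thm} by splitting off the convective part. Write
$$\left(J^s[(f\cdot\nabla)g], J^s g\right)_{\mathrm{L}^2} = \left(J^s[(f\cdot\nabla)g] - (f\cdot\nabla)(J^s g),\, J^s g\right)_{\mathrm{L}^2} + \left((f\cdot\nabla)(J^s g),\, J^s g\right)_{\mathrm{L}^2},$$
and treat the two terms separately. This is exactly the device already used in Corollary \ref{cor_kato}, only now with the stronger commutator bound of Theorem \ref{com_thm} in place of the Kato--Ponce estimate.

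First I would show that the convective term vanishes. Since $\nabla\cdot f=0$, for each scalar component $h$ of the vector-valued function $J^s g$ one has $\int_{\mathbb{R}^d}(f\cdot\nabla h)\,h\,dx = \tfrac12\int_{\mathbb{R}^d} f\cdot\nabla(h^2)\,dx = -\tfrac12\int_{\mathbb{R}^d}(\nabla\cdot f)\,h^2\,dx = 0$; summing over the components gives $\left((f\cdot\nabla)(J^s g),\, J^s g\right)_{\mathrm{L}^2}=0$. Because $f$ is only assumed to satisfy $\nabla f\in H^s$ (so $\nabla f\in \mathrm{L}^\infty\cap \mathrm{L}^2$ by $s>d/2$ and Lemma \ref{Si}) while $g\in H^s$ yields $J^s g\in H^s$ and $\nabla(J^s g)\in H^{s-1}$, the integration by parts should be justified by first checking the identity for smooth, compactly supported $f$ and $g$ and then passing to the limit through a standard density argument, using $\nabla f\in\mathrm{L}^\infty$ and $J^s g,\ \nabla(J^s g)\in\mathrm{L}^2$ to control the convergence.

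For the remaining term I would apply the Cauchy--Schwarz inequality,
$$\left|\left(J^s[(f\cdot\nabla)g] - (f\cdot\nabla)(J^s g),\, J^s g\right)_{\mathrm{L}^2}\right| \le \left\|J^s[(f\cdot\nabla)g] - (f\cdot\nabla)(J^s g)\right\|_{\mathrm{L}^2}\,\|J^s g\|_{\mathrm{L}^2},$$
then bound the first factor by $c\|\nabla f\|_{H^s}\|g\|_{H^s}$ using Theorem \ref{com_thm}, and observe that $\|J^s g\|_{\mathrm{L}^2}=\|g\|_{H^s}$ by the definition \eqref{hs}. Combining this with the vanishing of the convective term gives the asserted estimate $\left|(J^s[(f\cdot\nabla)g], J^s g)_{\mathrm{L}^2}\right|\le c\|\nabla f\|_{H^s}\|g\|_{H^s}^2$. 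The substantive input is entirely contained in Theorem \ref{com_thm}; the only point requiring care is making the integration by parts rigorous under the minimal regularity imposed on $f$, and I expect that to be the main (though routine) obstacle.
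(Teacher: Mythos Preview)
Your proposal is correct and follows exactly the approach the paper has in mind: the paper does not spell out a proof for this corollary (it simply cites Fefferman et al.), but immediately before Theorem~\ref{com_thm} it explicitly notes that $((f\cdot\nabla)(J^sg),J^sg)_{\mathrm{L}^2}=0$ when $\nabla\cdot f=0$ and uses this same splitting to derive Corollary~\ref{cor_kato} from the Kato--Ponce estimate. Corollary~\ref{com_cor} is obtained by the identical device with Theorem~\ref{com_thm} replacing \eqref{kato1}, which is precisely what you wrote.
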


\begin{lem} { Variants of Commutator estimates:}
Let $s>0$ and $1<p<\infty$ and  $ \mathlarger{p_2} ,p_3 \in (1, \infty)$ 
be such that 
$ \mathlarger{\frac{1}{p} \geq \frac{1}{p_1}+\frac{1}{p_2}},$ \qquad $\mathlarger{\frac{1}{p}} \geq \mathlarger{\frac{1}{p_3}+\frac{1}{p_4}}.$
Then
\begin{align}\label{kato.vari}
\|J^s(fg)\|_{\mathrm{L}^p}\leq C \left(\|
f\|_{\mathrm{L}^{p_1}}\|J^{s}g\|_{\mathrm{L}^{p_2}}+\|J^sf\|_{\mathrm{L}^{p_3}}\|g\|_{\mathrm{L}^{p_4}}\right).
\end{align}
\end{lem}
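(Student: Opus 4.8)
The plan is to prove \eqref{kato.vari} --- which is the fractional Leibniz (Kato--Ponce) product rule with general Lebesgue exponents --- by a Littlewood--Paley / paraproduct argument; alternatively it follows from the Coifman--Meyer multiplier theorem, in the circle of ideas of \cite{FMRR, KaPo}. First I would replace the inhomogeneous operator $J^s$ by the homogeneous one $|\nabla|^s$: since $\|J^s h\|_{\mathrm{L}^p}\le C(\|h\|_{\mathrm{L}^p}+\||\nabla|^s h\|_{\mathrm{L}^p})$ for $p\in(1,\infty)$ (Mikhlin), and the low-order term is controlled via $\|fg\|_{\mathrm{L}^p}\le C\|fg\|_{\mathrm{L}^r}\le C\|f\|_{\mathrm{L}^{p_1}}\|g\|_{\mathrm{L}^{p_2}}$ with $\tfrac1r=\tfrac1{p_1}+\tfrac1{p_2}\ge\tfrac1p$, it remains to estimate $\||\nabla|^s(fg)\|_{\mathrm{L}^p}$. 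For that I would insert Bony's paraproduct decomposition $fg=T_fg+T_gf+R(f,g)$, where $T_fg=\sum_j S_{j-1}f\,\Delta_j g$, $T_gf=\sum_j S_{j-1}g\,\Delta_j f$, $R(f,g)=\sum_j\Delta_j f\,\widetilde\Delta_j g$, with $\Delta_j,S_j$ the homogeneous Littlewood--Paley projections and $\widetilde\Delta_j=\sum_{|k-j|\le1}\Delta_k$.

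For the low--high paraproduct $T_fg$, every summand $S_{j-1}f\,\Delta_j g$ has Fourier support in a dyadic annulus $\{|\xi|\sim 2^j\}$, so $|\nabla|^s$ acts on it essentially as multiplication by $2^{js}$, and the Littlewood--Paley square-function estimate yields
\[
\big\||\nabla|^s T_fg\big\|_{\mathrm{L}^p}\le C\Big\|\Big(\sum_j 2^{2js}\,|S_{j-1}f|^2\,|\Delta_j g|^2\Big)^{1/2}\Big\|_{\mathrm{L}^p}\le C\,\Big\|\big(\sup_j|S_{j-1}f|\big)\Big(\sum_j 2^{2js}|\Delta_j g|^2\Big)^{1/2}\Big\|_{\mathrm{L}^p}.
\]
I would then apply H\"older's inequality, the pointwise bound $\sup_j|S_{j-1}f|\le C\,\mathcal{M}f$ together with the Hardy--Littlewood maximal inequality (legitimate since $p_1>1$), and the square-function characterisation $\big\|(\sum_j 2^{2js}|\Delta_j g|^2)^{1/2}\big\|_{\mathrm{L}^{p_2}}\le C\|J^sg\|_{\mathrm{L}^{p_2}}$ (legitimate since $p_2>1$), to obtain the first term on the right of \eqref{kato.vari}. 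The high--low term $T_gf$ is treated identically with $f$ and $g$ interchanged, producing the second term and using $p_3,p_4>1$.

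The step I expect to be the main obstacle is the high--high remainder $R(f,g)$: each product $\Delta_j f\,\widetilde\Delta_j g$ is frequency-supported only in a \emph{ball} of radius $\sim 2^j$ (which reaches the origin), not in an annulus, so one cannot directly extract the factor $2^{js}$. To handle it I would project onto dyadic shells, using that $\Delta_\ell\big(\Delta_j f\,\widetilde\Delta_j g\big)\equiv 0$ unless $2^j\gtrsim 2^\ell$, which gives
\[
\big\||\nabla|^s\Delta_\ell\big(\Delta_j f\,\widetilde\Delta_j g\big)\big\|_{\mathrm{L}^p}\le C\,2^{\ell s}\big\|\Delta_j f\,\widetilde\Delta_j g\big\|_{\mathrm{L}^p}=C\,2^{(\ell-j)s}\cdot 2^{js}\big\|\Delta_j f\,\widetilde\Delta_j g\big\|_{\mathrm{L}^p}.
\]
Since $s>0$, the weights $2^{(\ell-j)s}$ summed over $j\ge\ell-O(1)$ form a convergent geometric series, so after Minkowski's inequality in the shell index $\ell$ (combined with the vector-valued Littlewood--Paley inequality) the estimate reduces to $\big\|\big(\sum_j 2^{2js}|\Delta_j f|^2|\widetilde\Delta_j g|^2\big)^{1/2}\big\|_{\mathrm{L}^p}$, which is bounded exactly as the paraproducts above --- dominating the $g$-factor by its maximal function gives $C\|J^sf\|_{\mathrm{L}^{p_3}}\|g\|_{\mathrm{L}^{p_4}}$, dominating the $f$-factor instead gives $C\|f\|_{\mathrm{L}^{p_1}}\|J^sg\|_{\mathrm{L}^{p_2}}$, and either choice is admissible. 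Summing the contributions of $T_fg$, $T_gf$, $R(f,g)$ and the low-frequency part establishes \eqref{kato.vari}. The hypotheses enter precisely here: $s>0$ is what makes the geometric summation in the remainder converge, and $p_1,p_2,p_3,p_4\in(1,\infty)$ is what makes the maximal-function and square-function bounds available.
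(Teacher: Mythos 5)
The paper does not prove this lemma at all: it is quoted from the literature, with the reader referred to Lemma 2.6 of Bessaih and Ferrario \cite{BF} (which in turn rests on the Kato--Ponce/Coifman--Meyer circle of results). Your proposal therefore supplies something the paper deliberately omits, and the route you choose --- Bony's decomposition $fg=T_fg+T_gf+R(f,g)$, treating the two paraproducts by the square-function characterisation of $\|J^s\cdot\|_{\mathrm{L}^{p_i}}$ together with $\sup_j|S_{j-1}f|\le C\mathcal{M}f$ and the Hardy--Littlewood maximal inequality, and handling the high--high remainder by projecting onto dyadic shells and summing the geometric series $\sum 2^{(\ell-j)s}$, which is where $s>0$ enters --- is exactly the standard proof of the fractional Leibniz rule, correctly organised, with the roles of the hypotheses $s>0$ and $p_1,\dots,p_4\in(1,\infty)$ identified accurately. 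One caveat, which concerns the statement as printed rather than your argument: in your reduction of the low-order term you invoke $\|fg\|_{\mathrm{L}^p}\le C\|fg\|_{\mathrm{L}^r}$ with $\tfrac1r=\tfrac1{p_1}+\tfrac1{p_2}\ge\tfrac1p$, i.e.\ $r\le p$; on all of $\mathbb{R}^d$ there is no such embedding unless $r=p$, and the same issue recurs in the H\"older steps for the paraproducts. The estimate \eqref{kato.vari} is genuinely a statement with \emph{equalities} $\tfrac1p=\tfrac1{p_1}+\tfrac1{p_2}=\tfrac1{p_3}+\tfrac1{p_4}$ (as in the cited reference); with that reading, which is how the lemma is actually used in the paper, your proof goes through as written.
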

For details, see Lemma $2.6$ of Bessaih and Ferrario \cite{BF}.

\begin{property} \label{propQ}
As a consequence of the commutator estimate \eqref{kato}, the bilinear map $\Q$ satisfies the following (tame) estimate,
$$\|\Q(\mathbf{\tau},\nabla \v)\|_{H^s} \leq C \left( \|\mathbf{\tau}\|_{\mathrm{L}^\infty}\|\nabla \v\|_{H^s}+ \|\nabla \v\|_{\mathrm{L}^\infty}\|\mathbf{\tau}\|_{H^s}\right.).$$
\end{property}

\subsection{Basic Stochastic Analysis} \label{stoc.an}

In this Subsection we are going to introduce some definitions and properties of the Hilbert space valued stochastic processes. For further details, one can refer M\'{e}tivier
\cite{Me}, {Da Prato and Zabczyk \cite{DaZ}, Gawarecki and Mandrekar \cite{GaMa}.

Let $U$ and $H$ be two separable Hilbert spaces. A nonnegative operator $Q\in\mathcal{L}(U,U)$ is of trace
class if and only if
for an orthonormal basis $\{e_j\}$ on $U,$ $$\mathlarger{\sum_{j=1}^{\infty}}(Qe_j,e_j)_{U}<\infty.$$
A bounded linear operator $Q: U\to H$ is said to be
Hilbert-Schmidt if $\mathlarger{\sum_{k=1}^{\infty}}\|Q
e_k\|_{H}^2<\infty.$ The set $\mathcal{L}_2(U,H)$ of all
Hilbert-Schmidt operators from $U$ into $H$, equipped with the norm
$\|Q\|_{\mathcal{L}_2(U,H)}=\left(\mathlarger{\sum_{k=1}^{\infty}}\|Q
e_k\|_{H}^2\right)^{\frac{1}{2}}$ is a separable Hilbert space. \newline
Further assume that $Q$ is symmetric, positive, trace class
operator on $U.$ \newline
Let $\mathcal{L}_Q(U,H)=\mathcal{L}_2(U_0,H)$ denote the space of
all Hilbert-Schmidt operator from $U_0$ to $H$ where $U_0 = Q^{\frac{1}{2}}U.$


\qquad \qquad \newline
Let $\mathbb{M}$ be the totality of non-negative (possibly infinite)
integral valued measures on $(H,\mathscr{B}(H))$ and
$\mathscr{B}_{\mathbb{M}}$ be the smallest $\sigma$-field on
$\mathbb{M}$ with respect to which all $N\in\mathbb{M}\to N(B)\in
\mathbb{Z}^+\cup\{\infty\}, B\in\mathscr{B}(H)$, are measurable.

\begin{defi}
An $(\mathbb{M},\mathscr{B}(\mathbb{M}))$-valued random variable $N$
is called a \textit{Poisson random measure}
\begin{enumerate}
\item if for each $B\in\mathscr{B}(H), N(B)$ is Poisson distributed.
i.e., $\mathbb{P}(N(B) = n)
=\mathlarger{\frac{\eta(B)e^{-\eta(B)}}{n!}},n=0,1,2,\ldots,$ where
$\eta(B)=\mathbb{E}(N(B)), B\in\mathscr{B}(H);$
\item if $B_1,B_2,\ldots,B_n\in\mathscr{B}(H)$ are disjoint, then
$N(B_1), N(B_2),\ldots,N(B_n)$ are mutually independent.
\end{enumerate}
\end{defi}

\begin{defi}
A c\`{a}dl\`{a}g process $(\mathbf{X}_t)_{t\geq 0}$ is a stochastic process  for which the paths $t\mapsto \mathbf{X}(t)$ are right continuous with
left limits everywhere,with probability one. \newline
$(\mathbf{X}_t)_{t\geq 0}$, is called a L\'{e}vy
process if it has stationary independent increments and is
stochastically continuous.
\end{defi}

For a $H$-valued L\'{e}vy process $(\mathbf{X}_t)_{t\geq 0},$  let us define $$N(t, Z)= N(t, Z,
\omega)= \#\left\{s\in (0, \infty): \triangle\mathbf{X}_s(\omega)\in
Z\right\}, t>0, Z\in\mathscr{B}(H\backslash\{0\}), \omega\in\Omega$$
as the \emph{Poisson random measure associated with the L\'{e}vy
process} where $\triangle \mathbf{X}_t(\omega)(=\mathbf{X}_t(\omega) - \mathbf{X}_{t-}(\omega))$ denotes the corresponding jump for every
$\omega\in\Omega$.

The differential form of the measure $N(t,Z,\omega)$ is written as
$N(dt, dz)(\omega)$. We call $\tilde{N}(dt, dz) = N(dt, dz) -
\lambda(dz)dt $ a \emph{compensated Poisson random measure (cPrm)},
where $\lambda(dz)dt $ is known as \emph{compensator} of the
L\'{e}vy process $(\mathbf{X}_t)_{t\geq 0}$. Here $dt$ denotes the
Lebesgue measure on $\mathscr{B}(\mathbb{R}^{+})$, and $\lambda(dz)$
is a $\sigma$-finite L\'{e}vy measure on $(Z, \mathscr{B}(Z))$.

\begin{defi} [see Mandrekar and R\"{u}diger \cite{MR}]
Let $H$ and $F$ be separable Hilbert spaces. Let $F_t:=
\mathscr{B}(H)\otimes\mathscr{F}_t$ be the product $\sigma$-algebra
generated by the semi-ring $ \mathscr{B}(H)\times\mathscr{F}_t$ of
the product sets $Z\times F,$ $Z\in\mathscr{B}(H),$ $F\in
\mathscr{F}_t$ (where $\mathscr{F}_t$ is the filtration of the
additive process $(\mathbf{L}_t)_{t\geq 0}$). Let $T>0$, define
\begin{align*}
\mathbb{H}(Z) = & \big\{g : \mathbb{R}^+ \times Z \times \Omega
\rightarrow F,\text{ such that g is}\;\; F_T/\mathscr{B}(F) \;\;
measurable\;\;and \nonumber\\& \;\;\qquad g(t,z,\omega)\;\;is\;\;
\mathscr{F}_t - adapted\;\;\forall z\in Z, \forall t\in (0,T]\big\}.
\end{align*}
For $p\geq1$, let us define,
$$\mathbb{H}^{p}_\lambda([0,T]\times Z;F) = \left\{g\in \mathbb{H}(Z) :
 \int_0^T\int_Z\mathbb{E}[\|g(t,z,\omega)\|^p_F]\lambda(dz)dt < \infty \right\}.$$
\end{defi}

Let us denote $D([0, T ];H)$ as the space of all c\`adl\`ag paths from $[0, T ]$
into Hilbert space $H.$ The space
$D([0,T];H)$ is endowed with the Skorokhod $J$-topology.
For more details see Chapter
2, Metivier \cite{Me} and Chapter 3, Billingsley \cite{BiP}.

\begin{defi}[Quadratic variation process and Meyer process] Let $M$ be a square integrable martingale with right continuous paths with values in a separable Hilbert space $H$  on $(\Omega,\mathscr{F},(\mathscr{F}_{t})_{t\geq 0},\mathbb{P}).$ Then there exists two real right continuous
increasing processes $[M]$ and $\triangleleft M\triangleright$ with
$0=[M]_0=\triangleleft M\triangleright_0$ such that
\begin{align}
\|M_t\|_H^2=\|M_0\|_H^2+2\int_0^t(M_{s-},\d M_s)_H+[M]_t.
\end{align}
$\triangleleft M\triangleright$ is the unique real right continuous
increasing predictable process such that
\begin{align}
\|M_t\|_H^2-\|M_0\|_H^2-\triangleleft M\triangleright_t \textrm{ is
a martingale.}
\end{align}
Here $[M]$ is called the quadratic variation of $M$ and
$\triangleleft M\triangleright$ the Meyer process of $M$.
\end{defi}
\begin{rem}
If $M$ is continuous, then we have $\triangleleft
M\triangleright=[M]$.
\end{rem}

\begin{rem} \label{nlam}  [Section 2.3, M\'{e}tivier \cite{Me})] 
Let $H$ be Hilbert spaces and let $Q:H\to H$ be a trace class
operator. Let $$\d u(t)=\sigma(t,u)\d
W(t)+\mathlarger{\int_Z}g(u(t-),z)\tilde{N}(\d t,\d z),$$ where
$W(\cdot)$ is a $H$-valued Wiener process,
$\sigma(\cdot,\cdot):[0,t)\times H\to\mathcal{L}_Q(H,H)$, $Z$ is a
measurable subspace of $H$, $g(\cdot,\cdot) :H\times Z\to H$ and
$\tilde{N}(\cdot,\cdot)$ is the compensated Poisson random measure.
Then, 
\begin{align} \label{mt1}
 \mathbb{E}\left[\int_0^t\|\sigma(s,u)\|_{\mathcal{L}_Q(H,H)}^2\d
s+\int_0^t\int_Z\|g(u,z)\|_H^2N(\d s,\d
z)\right] \notag\\
=\mathbb{E}\left[\int_0^t\|\sigma(s,u)\|_{\mathcal{L}_Q(H,H)}^2\d
s+\int_0^t\int_Z\|g(u,z)\|_H^2\lambda(\d z)\d s\right].
\end{align}
\end{rem}

\begin{lem}(Burkholder-Davis-Gundy Inequality)\label{burk}
Let $M$ be a Hilbert space valued c\`{a}dl\`{a}g martingale with
$M_0=0$ and let $p\geq 1$ be fixed. Then for any
$\mathscr{F}$-stopping time $\mathbf{\tau}$, there exists constants $c_p$ and
$C_p$ such that
$$\mathbb{E}\left\{[M]_{\mathbf{\tau}}^{p/2}\right\}\leq c_p\mathbb{E}\left\{\sup_{0\leq t\leq\mathbf{\tau}}\|M_t\|_H^p\right\}\leq C_p
\mathbb{E}\left\{[M]_{\mathbf{\tau}}^{p/2}\right\}$$ for all $\mathbf{\tau}$, $0\leq
\mathbf{\tau}\leq \infty$, where $[M]$ is the quadratic variation of process
 $M$. The constants
are universal (independent of $M$).
\end{lem}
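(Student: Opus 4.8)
The plan is to deduce the Hilbert-valued inequality from the classical real-valued Burkholder--Davis--Gundy inequality by exploiting the quadratic variation identity recorded in the definition above. Write $M^*_t=\sup_{0\le s\le t}\|M_s\|_H$, so that (since $x\mapsto x^p$ is increasing) $\mathbb{E}\sup_{0\le t\le\tau}\|M_t\|_H^p=\mathbb{E}(M^*_\tau)^p$, and the asserted claim is the two-sided comparison of $\mathbb{E}[M]_\tau^{p/2}$ with $\mathbb{E}(M^*_\tau)^p$. Because $[M]_t$ and $M^*_t$ are non-decreasing in $t$, it suffices by monotone convergence to treat a bounded stopping time (replace $\tau$ by $\tau\wedge n$ and let $n\to\infty$), and by the usual localization---stopping when $[M]_t$ or $\|M_t\|_H$ first exceeds a level $k$---we may assume that $N$ (defined below) is a genuine square-integrable martingale and that $\mathbb{E}(M^*_\tau)^p<\infty$, the restrictions being removed at the end as $k\to\infty$; the final constants depend only on $p$.

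The central device is the real-valued martingale $N_t:=\int_0^t(M_{s-},\d M_s)_H$, which by the quadratic variation identity (with $M_0=0$) satisfies
\[\|M_t\|_H^2=2N_t+[M]_t.\]
First I would record the key bound on its bracket. The integrand $M_{s-}$ enters the bracket of $N$ quadratically against the operator-valued quadratic variation of $M$, whose trace is $[M]$; since each of its increments is a non-negative operator, its operator norm is dominated by its trace, which yields the pathwise estimate
\[[N]_t\le\int_0^t\|M_{s-}\|_H^2\,\d[M]_s\le (M^*_t)^2\,[M]_t.\]
This inequality, coupling the scalar bracket of $N$ to the Hilbert bracket of $M$, is the crux of the argument.

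With these in hand both inequalities follow by the same absorption scheme, uniformly in $p\ge1$. From $\|M_t\|_H^2=2N_t+[M]_t$ one gets the two pointwise bounds $(M^*_\tau)^2\le 2N^*_\tau+[M]_\tau$ and $[M]_\tau\le (M^*_\tau)^2+2N^*_\tau$, where $N^*_\tau=\sup_{t\le\tau}|N_t|$. Applying the classical scalar Burkholder--Davis--Gundy inequality to $N$ at exponent $p/2$ and then the bracket bound gives
\[\mathbb{E}(N^*_\tau)^{p/2}\le C_p\,\mathbb{E}[N]_\tau^{p/4}\le C_p\,\mathbb{E}\!\left((M^*_\tau)^{p/2}[M]_\tau^{p/4}\right)\le C_p\big(\mathbb{E}(M^*_\tau)^p\big)^{1/2}\big(\mathbb{E}[M]_\tau^{p/2}\big)^{1/2},\]
the last step by Cauchy--Schwarz. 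Raising the two pointwise bounds to the power $p/2$ (using $(a+b)^{p/2}\le C_p(a^{p/2}+b^{p/2})$), taking expectations, and feeding in the display produces in each case the cross term $\big(\mathbb{E}(M^*_\tau)^p\big)^{1/2}\big(\mathbb{E}[M]_\tau^{p/2}\big)^{1/2}$; invoking Young's inequality $ab\le\tfrac{\epsilon}{2}a^2+\tfrac{1}{2\epsilon}b^2$ I absorb this term into the left-hand side, obtaining $\mathbb{E}(M^*_\tau)^p\le C_p\,\mathbb{E}[M]_\tau^{p/2}$ and $\mathbb{E}[M]_\tau^{p/2}\le c_p\,\mathbb{E}(M^*_\tau)^p$, which are exactly the two asserted bounds. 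Finiteness of $\mathbb{E}(M^*_\tau)^p$, needed to justify the absorption, is guaranteed by the localization.

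The main obstacle I anticipate lies in the preparatory step rather than the absorption. First, one must justify the bracket estimate $[N]_t\le\int_0^t\|M_{s-}\|_H^2\,\d[M]_s$ rigorously in the genuinely infinite-dimensional, càdlàg setting, where the quadratic variation of $M$ is operator-valued and the jumps of $M$ contribute to $[M]$ through the left limits $\|M_{s-}\|_H^2$; the inequality ``operator norm $\le$ trace'' for the non-negative increments is what makes this clean, but it presupposes that the operator-valued bracket and the stochastic integral against a Hilbert-valued martingale have been set up carefully. Second, the localization must be arranged so that $N$ is a true (not merely local) square-integrable martingale on the stopped interval---which for a discontinuous $M$ with possibly large jumps requires some care---so that scalar Burkholder--Davis--Gundy applies and $\mathbb{E}(M^*_\tau)^p<\infty$; the two-sided estimate for the original $M$ and for $\tau\le\infty$ is then recovered by monotone convergence.
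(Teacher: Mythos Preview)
The paper does not actually prove this lemma: immediately after the statement it simply refers the reader to Theorem~1.1 of Marinelli and R\"{o}ckner and, for the scalar case, to Peszat--Zabczyk. Your sketch, by contrast, supplies a genuine argument, and it is essentially the classical reduction of the Hilbert-valued inequality to the real-valued one via the identity $\|M_t\|_H^2=2N_t+[M]_t$ with $N_t=\int_0^t(M_{s-},\d M_s)_H$; this is in fact the strategy used in the very reference the paper cites. The bracket estimate $[N]_t\le (M^*_t)^2\,[M]_t$ (from ``operator norm $\le$ trace'' for the non-negative increments of the tensor quadratic variation) and the Cauchy--Schwarz/Young absorption are correct, and your remarks about localization and passage to the limit are the right caveats.

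One point worth flagging: when you invoke the scalar Burkholder--Davis--Gundy inequality for $N$ at exponent $p/2$, this exponent lies in $[1/2,1)$ for $1\le p<2$. The scalar BDG inequality for c\`adl\`ag martingales is most commonly stated only for exponents $\ge 1$, so you should make explicit that you are relying on the full-strength version valid for all positive exponents (established e.g.\ by Lenglart--L\'epingle--Pratelli via good-$\lambda$/Lenglart domination). With that clarified, your argument goes through for the whole range $p\ge 1$.
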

\noindent For proof see Theorem $1.1$ of Marinelli and R\"{o}ckner
\cite{MCRM}. For real-valued c\`{a}dl\`{a}g martingales see Theorem
$3.50$ of Peszat and Zabczyk \cite{PZ}.

\section{The Stochastic Model and Statement of the Main Results}
Assume that $\big(\Omega, \mathcal{F}, \mathbb{F}, \mathbb{P}\big)$ is a filtered probability space, where $\mathbb{F}=\big(\mathcal{F}_t)_{t\geq 0}$ is the filtration, and this probability space satisfies the so called usual conditions, i.e.
	\begin{trivlist}
		\item[(i)] $\mathbb{P}$ is complete on $(\Omega, \mathcal{F})$,
		\item[(ii)] for each $t\geq 0$, $\mathcal{F}_t$ contains all $(\mathcal{F},\mathbb{P})$-null sets,
		\item[(iii)] the filtration $\mathbb{F}$ is right-continuous.
	\end{trivlist}
We consider the following stochastic viscoelastic equations \eqref{e1}-\eqref{e4}:
\begin{align}
&d \mathbf{v}(t)+[(\mathbf{v}(t)\cdot\nabla)\v(t)-\nu\Delta \v(t)+\nabla
p]dt =\mu_1 \nabla \cdot \mathbf{\tau}(t)dt+\sigma(t,\v(t))\d W_1(t)+\int_Z G(\v(t-), z)\tilde{N}_{1} (\d t,\d z),     \label{se1}\\
&d \mathbf{\tau}(t)+[(\v(t) \cdot \nabla)\mathbf{\tau} (t) +a\mathbf{\tau}(t) +\Q(\mathbf{\tau}(t), \nabla \v(t))]dt=\mu_2 \mathcal{D}(\v(t))dt+ (h \otimes \mathbf{\tau}(t))\circ d W_2(t) , \label{se2}\\
&\nabla\cdot\v=0 ,\label{se3}\\
&\v(0,\cdot)=\v_0,\,\,\mathbf{\tau}(0,\cdot)=\mathbf{\tau}_0,\label{se4}
\end{align}
where $\,W_1 $ is $H^s$-valued Wiener process with nuclear operator $Q_1$ and $(h \otimes \mathbf{\tau}(t))\circ d W_2(t)$ is understood in Stratonovich sense and $W_2=(W_2(t))_{t \geq 0}$ is a real-valued Wiener processes on $\left(\Omega,\mathscr{F},(\mathscr{F}_t)_{t\geq
0},\mathbb{P}\right)$. $h$ is an element of $\mathrm{L}^\infty (\mathbb{R}^{d \times d})$. The tensor product $h \otimes \mathbf{\tau}(t)$ denotes usual matrix multiplication, i.e.,
$(h \otimes \mathbf{\tau}(t))_{i,j}=\sum _{k=1}^{d} h_{ik} \mathbf{\tau} _{kj}(t)\quad ;\,\,i,j=1,2, \cdots ,d.$
$\tilde{N} (\d t,\d z)$ is a compensated Poisson random measure. Further it is assumed that $W_1,W_2$ and $\tilde{N}$ mutually independent processes.

Define a linear operator $\mathcal{S}$ from $\mathrm{L}^2$ into itself by
$\mathcal{S}(\mathbf{\tau})=h \otimes \mathbf{\tau}.$ Note that $\mathcal{S}$ is bounded and satisfies 
$\|\mathcal{S}(\mathbf{\tau})\|_{\mathrm{L}^2}=\|h \otimes \mathbf{\tau}\|_{\mathrm{L}^2} \leq \|h\|_{\mathrm{L}^{\infty}}\|\mathbf{\tau}\|_{\mathrm{L}^2}.$

Using the relation between Stratonovich and It\^{o} differentials (e.g. see Kuo \cite{Kuo}, page 122 ) one has
$$\mathcal{S}(\mathbf{\tau}) \circ dW_2=\frac{1}{2} \mathcal{S}^2(\mathbf{\tau}) dt+ \mathcal{S}(\mathbf{\tau}) dW_2,$$
where $\mathcal{S}^2(\mathbf{\tau})=\mathcal{S} \circ \mathcal{S}(\mathbf{\tau})=\mathcal{S} \circ (h \otimes \mathbf{\tau})= h \otimes(h \otimes \mathbf{\tau}),$ for any $\mathbf{\mathbf{\tau}} \in \mathrm{L}^2$ and $\circ $ is usual composition notation.
Further note that 
\begin{align} \label{propS.2}
\|\mathcal{S}^2(\mathbf{\tau})\|_{\mathrm{L}^2} \leq \|h\|_{\mathrm{L}^\infty} \|h \otimes \mathbf{\tau}\|_{\mathrm{L}^2} \leq \|h\|^2_{\mathrm{L}^{\infty}} \|\mathbf{\tau}\|_{\mathrm{L}^2}.
\end{align}

\begin{rem} \label{esti.S}
For $s > d/2$, let $h$ be an element of $H^s(\mathbb{R}^{d \times d}) \subset \mathrm{L}^\infty (\mathbb{R}^{d \times d}),$ and we extend the definition of $\mathcal{S}$ from $H^s$ into itself by $\mathcal{S}(\mathbf{\tau})=h \otimes \mathbf{\tau}$ and $\|\mathcal{S}(\mathbf{\tau})\|_{H^s} \leq \|h\|_{H^s} \|\mathbf{\tau}\|_{H^s}$ and $\|\mathcal{S}^2(\mathbf{\tau})\|_{H^s} \leq \|h\|_{H^s} \|h \otimes \mathbf{\tau}\|_{H^s} \leq \|h\|^2_{H^s}  \|\mathbf{\tau}\|_{H^s}.$
\end{rem}

\subsection{Analysis of the Noise Terms}

Let us assume the following properties of noise co-efficients $\sigma$ and $G$ namely joint continuity, linear growth and Lipscitz condition.
\begin{assum} \label{hypo}
For all $s\geq 0,$ the noise co-efficient $\sigma$ and $G$ satisfy
\begin{itemize}
    \item[(A.1)]  The function $\sigma \in C([0, T] \times H^{s}; \mathcal{L}_Q(\mathrm{L}^2,H^s))$
    and $G \in \mathbb{H}^2_{\lambda}([0, T] \times Z;
    H^s(\mathbb{R}^d))$.
    \item[(A.2)]  (Growth Condition)
For all $\v \in H^s(\mathbb{R}^d)$ and for all $t\in[0,T]$, there
exist positive constant $K$ such that
    $$\|\sigma(t, \v)\|^2_{\mathcal{L}_Q(\mathrm{L}^2,H^s)} + \int_{Z} \|G(\v, z)\|^2_{H^s}\lambda(\d z)
    \leq K \left(1 +\|\v\|_{H^s}^2 \right).$$

    \item[(A.3)]  (Lipschitz Condition)
    For all $t \in [0, T]$ and for all $\v_1, \v_2,\in H^s(\mathbb{R}^d)$,  there exist positive constant $L$ such that
    \begin{align*}
    \|\sigma(t, \v_1) - \sigma(t,
    \v_2)\|^2_{\mathcal{L}_Q(\mathrm{L}^2,H^s)}&
    + \int_{Z} \|G(\v_1, z)-G(\v_2, z)\|^2_{H^s}\lambda(\d z)
    \\&\leq L \|\v_1 -
    \v_2\|_{H^s}^2.
    \end{align*}
\end{itemize}
\end{assum}

\begin{rem}
	For $m>n$, by using (\ref{sr}), one can note that for any $l\geq 0$ and $\epsilon>0$,
	\begin{align}\label{sr2}
	\|(\mathcal{J}_n-\mathcal{J}_{m})\sigma(t,\v)\|_{\mathcal{L}_Q(\mathrm{L}^2,H^l)}^2&=
	\sum_{j=1}^{\infty}\lambda_j\|(\mathcal{J}_n-\mathcal{J}_{m})\sigma(t,\v)e_j\|_{H^l}^2\nonumber\\&
	\leq\frac{C}{n^{\epsilon}}\sum_{j=1}^{\infty}\lambda_j\|\sigma(t,\v)e_j\|_{H^{l+\epsilon}}^2
	=\frac{C}{n^{\epsilon}}
	\|\sigma(t,\v)\|_{\mathcal{L}_Q(\mathrm{L}^2,H^{l+\epsilon})}^2.
	\end{align}
\end{rem}
We now introduce the concept of local strong solution and maximal local strong solution of (\ref{se1})-(\ref{se4}). Throughout we will assume that $T$ is a fixed positive number.

\begin{defi}[Local Strong Solution]
We say that the triplet $(\v,\mathbf{\tau},\rho_{\infty})$ is a \textup{local strong
(path-wise) solution} to \eqref{se1}-\eqref{se2} if
\begin{enumerate}
\item [(i)] the symbol $\rho_{\infty}$ is a stopping time such that $\rho_{\infty}\leq T$ a.s., and there exists a non-decreasing sequence $\{\rho_N, N\in\mathbb{N}\}$ of stopping times with $\rho_N \uparrow\rho_{\infty}$  a. s. as $N\uparrow\infty$,
\item [(ii)] for $s>d/2$ and $t\in [0,\rho_{\infty})$, the symbols $\v$ and $\mathbf{\tau}$ denote progressively measurable stochastic processes such
that 
$$\v\in\mathrm{L}^2(\Omega;D(0,t;H^s(\mathbb{R}^d))\cap\mathrm{L}^2(0,t;H^{s+1}(\mathbb{R}^d))) \quad \mbox{with} \quad \nabla \cdot \v=0$$  $$ \quad \mbox{and} \quad \tau\in\mathrm{L}^2(\Omega;C(0,t;H^s(\mathbb{R}^d))).$$ 
Moreover, for any $t\in [0, T]$, $N\in\mathbb N$, and for any $\phi_i \in H^s(\mathbb{R}^d)\,;\,i=1,2$ with $\nabla\cdot \phi_1 =0,$ $\v$ and $\tau $ satisfy the following equations with probability 1:
\begin{align} \label{exist.1.def}
\left(J^s\v(t \wedge \rho_N),J^s\phi_1\right)_{\mathrm{L}^2}&=\left(J^s\v_0,J^s\phi_1\right)_{\mathrm{L}^2}+\int_0^{t \wedge \rho_N} \left(\nu\Delta
J^s\v-J^s[(\v\cdot\nabla)\v]+\mu_1\nabla\cdot J^s\tau,J^s \phi_1\right)_{\mathrm{L}^2}\d s\nonumber\\
&\quad+\int_0^{t \wedge \rho_N} \left(J^s\sigma(s,\v(s))\d W_1(s),J^s\phi_1\right)_{\mathrm{L}^2}\nonumber\\
&\quad+\int_0^{t \wedge \rho_N} \int_Z\left(J^sG(\v(s-),z),J^s\phi_1\right)_{\mathrm{L}^2}\tilde{N}_1(\d s,\d z),
\end{align}
\begin{align}
\left(J^s\tau(t \wedge \rho_N),J^s\phi\right)_{\mathrm{L}^2}&=\left(J^s\tau_0,J^s\phi_2\right)_{\mathrm{L}^2}
+\mu_2 \int_0^{t \wedge \rho_N} \left( J^s\mathcal{D}(\v),J^s\phi_2\right)_{\mathrm{L}^2}\d s\nonumber\\
&\quad-\int_0^{t \wedge \rho_N} \left(J^s[(\v\cdot\nabla)\tau]+J^s \Q(\tau,\nabla\v)+aJ^s\tau,J^s\phi_2\right)_{\mathrm{L}^2}\d s\nonumber\\
&\quad+\frac{1}{2}\int_0^{t \wedge \rho_N} \left(J^s \mathcal{S}^2(\tau),J^s\phi_2\right)_{\mathrm{L}^2}\d s+\int_0^{t \wedge \rho_N}\left(J^s \mathcal{S}(\tau(s)),
J^s\phi_2\right)_{\mathrm{L}^2}\d W_2(s).\label{exist.2.def}
\end{align}
\end{enumerate}
\end{defi}

\begin{defi}[Maximal Local Strong Solution]
Let $(\v,\tau, \rho_{\infty})$ be a local solution to \eqref{se1}-\eqref{se2} such that
$$\sup_{0\leq s\leq \rho_{\infty}}\|\v(s)\|^2_{H^s}+\sup_{0\leq s\leq \rho_{\infty}}\|\tau(s)\|^2_{H^s}+\int_0^{\rho_{\infty}}\|\nabla\v(s)\|^2_{H^s}\d s=\infty,$$
 on the set $\{\omega\in\Omega:\rho_{\infty}(\omega)\leq T\}$, then the local process $(\v,\tau, \rho_{\infty})$ is called a maximal local solution. If $\rho_{\infty}<T$, the stopping time $\rho_{\infty}$ is called the explosion time of the stochastic processes $(\v,\tau)$.\newline
 A maximal local solution $(\v^1,\tau^1, \rho^1_{\infty})$ is said to be unique if for any other maximal local solution $(\v^2,\tau^2, \rho^2_{\infty})$, we have $\rho^1_{\infty}=\rho^2_{\infty}$ and $\v^1(t)=\v^2(t), \tau^1(t)=\tau^2(t)$ for any $0\leq t\leq \rho^1_{\infty}$ with probability 1.
\end{defi}
\subsection{Statement of the Main Result}
 The main results of this work are stated below and are proven in the subsequent Sections.
\begin{main}\label{MR}
	Let $(\Omega,\mathscr{F},(\mathscr{F}_t)_{t\geq 0},\mathbb{P})$ be a
	given filtered probability space and let
	$\v_0,\mathbf{\tau}_0\in\mathrm{L}^2(\Omega;H^s(\R^d))$ with $\nabla\cdot \v_0 =0,$ $s>d/2$, be
	$\mathscr{F}_0$-measurable. 
	 Then there exists a unique local in
	time strong solution $(\v,\mathbf{\tau},\rho_N)$ to the stochastic viscoelastic system \eqref{se1}-\eqref{se4}, 
where \begin{align*} 
	\rho_N=\inf_{t\geq
		0}\left\{t:\mu_2\|\v(t)\|_{H^s}^2+\mu_1\|\mathbf{\tau}(t)\|_{H^s}^2
	+2 \mu_2\nu \int_0^t \| \nabla\v (r)\|_{H^s}^2dr>N\right\},
	\end{align*}
	such that
	\begin{enumerate}
	 \item $\mathbb{E}\left[\mathlarger{\sup_{0\leq t\leq
			T\wedge\rho_N}}\mu_2\|\v(t)\|^2_{H^s}+\mathlarger{\sup_{0\leq t\leq
			T\wedge\rho_N}}\mu_1\|\mathbf{\tau}(t)\|^2_{H^s}+2\mu_2\nu\mathlarger{\int_0^{T\wedge\rho_N}}\|\nabla\v(t)\|^2_{H^s}\d
	t\right]<\infty$ for $T>0$,
	\item $\rho_N$ is a predictable strictly positive stopping time
	satisfying 
	\begin{align*}
	\mathbb{P}\left(\rho_N>\delta\right)
	\geq 1-2\delta e^{(\tilde{C}+C_2 \delta)} \Big(2\mathbb{E}\left(\mu_2\|\v_0\|_{H^s}^2+\mu_1\|\mathbf{\tau}_0\|_{H^s}^2\right)+18K\mu_2\delta\Big)	
	\end{align*}
	for any $\delta\in (0,1)$, and for some positive constant $\tilde{C}$ independent of $\delta$.
	\item $\v\in\mathrm{L}^2(\Omega;\mathrm{L}^{\infty}(0,T\wedge\rho_N; H^s(\R^d))\cap\mathrm{L}^2(0,T\wedge\rho_N; H^{s+1}(\R^d)))$,
	
	and $\indent\mathbf{\tau}\in\mathrm{L}^2(\Omega;\mathrm{L}^{\infty}(0,T\wedge\rho_N; H^s(\R^d))),$
	\item paths of the $\mathscr{F}_t$-adapted processes
	$(\v,\rho_N)$ and $(\mathbf{\tau},\rho_N)$ are c\`{a}dl\`{a}g and continuous respectively.
	\end{enumerate}
	Moreover, there exists a unique maximal local strong solution $(\v,\mathbf{\tau},\rho_{\infty})$ to the system \eqref{se1}-\eqref{se4} such that 
	\begin{align*}
	\mathbb{P}\left(\rho_{\infty}>\delta\right)
\geq 1-2\delta e^{(\tilde{C}+C_2 \delta)} \Big(2\mathbb{E}\left(\mu_2\|\v_0\|_{H^s}^2+\mu_1\|\mathbf{\tau}_0\|_{H^s}^2\right)+18K\mu_2\delta\Big)	
	\end{align*}
	where $\rho_{\infty}(\omega):=\lim_{N \rightarrow \infty} \rho_{N}(\omega)$ for almost all $\omega \in \Omega.$
\end{main} 

\subsection{Strategy of the Proof}
We prove the main results in a few steps, which are outlined below.
\par
\noindent
\emph{Step (i)} We first show that the solutions $(\v_n,\mathbf{\tau}_n)$ of smoothed version of the
(Fourier) truncated stochastic Oldroyd system \eqref{se1}-\eqref{se4} exist and the
$H^s$-norm of $(\v_n,\mathbf{\tau}_n)$ are uniformly bounded up to  a stopping
time $\rho_N^n$. We also provide a probabilistic estimate of the stopping time $\rho_N^n$ (see Theorem \autoref{positive1}).
\par
\noindent
\emph{Step (ii)} We then show that family of strong solutions $\{(\v_n,\mathbf{\tau}_n)\}_{n\in\mathbb N}$ is Cauchy in
$\mathrm{L}^2(\Omega;\mathrm{L}^{\infty}(0,T\wedge\xi_N;\mathrm{L}^2(\R^d)))$, where $\xi_N :=\lim_{n\to\infty} \rho_n^N$ (see Theorem \autoref{cauchy} and Remark \autoref{stopping}).
\par
\noindent
\emph{Step (iii)} By using Sobolev interpolation, we prove
$(\v_n,\mathbf{\tau}_n)\to(\v,\mathbf{\tau})$ strongly in
$\mathrm{L}^2(\Omega;\mathrm{L}^{\infty}(0,T\wedge\xi_N;H^{s'}(\R^d)))$
for any $0<s'<s$.  Then we show in Theorem \ref{existence}:
\begin{enumerate}
	\item [(a)] $\rho_N$ as the pointwise limit of $\rho_N^n$ and identify $\xi_N$ as $\rho_N$.
	\item [(b)] $(\v,\mathbf{\tau})$ solve \eqref{se1}-\eqref{se4} as an
	equality in
	$\mathrm{L}^1(\Omega;\mathrm{L}^2(0,T\wedge\rho_N;H^{s'-1}(\R^d)))$,\item
	[(c)] $(\v,\mathbf{\tau},\rho_N)$ is a local in time strong solution such that
	\begin{align*}&\v\in\mathrm{L}^{2}(\Omega;\mathrm{L}^{\infty}(0,T\wedge\rho_N;H^s(\R^d))\cap\mathrm{L}^2(0,T\wedge\rho_N;H^{s+1}(\R^d))),\\
	&\mathbf{\tau}\in\mathrm{L}^{2}(\Omega;\mathrm{L}^{\infty}(0,T\wedge\rho_N;H^s(\R^d))),\end{align*}
	\item [(d)] the $\mathscr{F}_t$-adapted paths of $(\v,\rho_N)$ and $(\mathbf{\tau},\rho_N)$ are
	c\`{a}dl\`{a}g and continuous respectively.
	\end{enumerate}
Finally, in Theorem \ref{uniqueness}, we show that $(\v,\mathbf{\tau},\rho_N)$ is a unique local strong solution.
\par\noindent
\emph{Step (iv)} We then prove, in Theorem \ref{max},
the existence of a unique maximal local strong solution
$(\v,\mathbf{\tau},\rho_{\infty})$ to \eqref{se1}-\eqref{se4} using stopping time arguments, and provide a probabilistic estimate of $\rho_{\infty}$.
\par
Having proved the uniform estimates of $\v_n$ and $\mathbf{\tau}_n$ in $Step ~ (i)$, we could use the classical compactness theorem of Aubin and Lions to extract a subsequence $(\v_{n_k}, \mathbf{\tau}_{n_k})$ that converges strongly to $(\v, \mathbf{\tau})$ in some sense. While this approach is natural for bounded domain, on the whole space one only obtains the requisite strong convergence on compact subsets, and one must then show that the nonlinear terms converge as required. }

Our approach also deviates from the one due to Motyl \cite{Mot} where martingale solution of three dimensional Navier-Stokes equations in unbounded domains has been proved using compactness method (by certain generalisation of the classical Dubinsky Theorem) and Jakubowski version of the Skorokhod Theorem for nonmetric spaces. Recent papers \cite{NV}, \cite{Kim} on stochastic Euler equation discusses the probabilistic estimate of stopping times.

We will present complete details of the proof in the remaining Sections.

\section{Truncated Stochastic Model}
We get the truncated  stochastic equations of \eqref{se1}-\eqref{se4}  on $\mathbb{R}^d$ as:
\begin{align}
&d \mathbf{v}_n=\left[\nu \Delta \v_n -\nabla
p_n -\mathcal{J}_{n} \left[(\v_n\cdot \nabla)\v_n \right] + \mu_1 \nabla \cdot \mathbf{\tau}_n \right] dt
+\mathcal{J}_{n} \sigma(t,\v_n)\d W_1(t)\notag\\
&\qquad\qquad\qquad+\int_Z \mathcal{J}_{n} G(\v_n(t-)) \tilde{N}_1(\d t,\d z),\label{Trun1} \\
&d \mathbf{\mathbf{\tau}}_n=-\left[\mathcal{J}_{n}(\v_n \cdot \nabla)\mathbf{\tau}_n  +a\mathbf{\tau}_n +\mathcal{J}_{n} \Q(\mathbf{\tau}_n, \nabla \v_n)
-\mu_2 \mathcal{D}(\v_n)\right]dt\notag\\
&\qquad\qquad\qquad+ \frac{1}{2}\mathcal{J}_{n}\mathcal{S}^2(\mathbf{\tau}_n)dt+ \mathcal{J}_{n}\mathcal{S}(\mathbf{\tau}_n)d W_2(t) , \label{Trun2}\\
&\nabla\cdot\v_n =0 ,\label{Trun3}\\
&\v_n(0,\cdot)=\v_n(0),\,\,\mathbf{\tau}_n(0,\cdot)=\mathbf{\tau}_n(0)  .\label{Trun4}
\end{align} 

As the truncations are invariant under the flow of the equation, we ensure that $\v_n,\mathbf{\tau}_n$ lie in the space 
\begin{align*}
\mathcal{V}_n := \left\{ g \in L^2(\mathbb R^d) : supp(\widehat g) \subset B(0,n)  \right\}
\end{align*}
with $\nabla \cdot \v_n=0.$

\begin{prop} \label{lip.q}
Let $\v_n,\mathbf{\tau}_n \in H^s(\mathbb{R}^d)$, for $s>d/2$ with $\nabla \cdot \v_n=0.$ Then the
bilinear operator $\mathcal{J}_n\Q(\mathbf{\tau}_n, \nabla \v_n)$ is
locally Lipschitz in $\v_n$ and $\mathbf{\tau}_n$ on the space $\mathcal{V}_n$.
\end{prop}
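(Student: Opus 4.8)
The plan is to exploit two structural facts: (a) $\Q$ is \emph{bilinear} in $(\tau,\nabla\v)$, so differences telescope; and (b) on the band-limited space $\mathcal{V}_n$ the derivative loss hidden in the factor $\nabla\v_n$ costs only a multiplicative constant depending on $n$, because $\mathrm{supp}\,\widehat{\v}_n\subset B(0,n)$.

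First I would record the ingredients. Recall $\Q(\tau,\nabla\v)=\tau\mathcal{W}(\v)-\mathcal{W}(\v)\tau-b\big(\mathcal{D}(\v)\tau+\tau\mathcal{D}(\v)\big)$, which is a finite sum of pointwise matrix products of $\tau$ with components of $\nabla\v$. Since $H^s$ is an algebra for $s>d/2$ (Remark \ref{prohs}) and, for $\v_n\in\mathcal{V}_n$, the Bernstein-type inequality $\|\nabla\v_n\|_{H^s}\le n\|\v_n\|_{H^s}$ holds (immediate from the Fourier-support restriction and the definition \eqref{hs} of $\|\cdot\|_{H^s}$), each summand obeys $\|\tau\,\mathcal{W}(\v_n)\|_{H^s}\le C\|\tau\|_{H^s}\|\nabla\v_n\|_{H^s}\le Cn\|\tau\|_{H^s}\|\v_n\|_{H^s}$, and likewise for the remaining three terms, so that
\[ \|\Q(\tau,\nabla\v_n)\|_{H^s}\ \le\ C_n\,\|\tau\|_{H^s}\,\|\v_n\|_{H^s},\qquad C_n:=Cn. \]
(Alternatively this also follows from the tame estimate of Property \ref{propQ} combined with $H^s\hookrightarrow\mathrm{L}^\infty$ and the same Bernstein bound.) Composing with $\mathcal{J}_n$ only improves the estimate, by \eqref{intro1}.

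Then, given $\v_n^1,\v_n^2,\tau_n^1,\tau_n^2\in\mathcal{V}_n$, I would split
\[ \Q(\tau_n^1,\nabla\v_n^1)-\Q(\tau_n^2,\nabla\v_n^2)=\Q(\tau_n^1-\tau_n^2,\nabla\v_n^1)+\Q(\tau_n^2,\nabla(\v_n^1-\v_n^2)), \]
apply $\mathcal{J}_n$ together with \eqref{intro1}, and estimate each term by the display above, which yields
\[ \|\mathcal{J}_n\Q(\tau_n^1,\nabla\v_n^1)-\mathcal{J}_n\Q(\tau_n^2,\nabla\v_n^2)\|_{H^s}\ \le\ C_n\big(\|\v_n^1\|_{H^s}+\|\tau_n^2\|_{H^s}\big)\big(\|\tau_n^1-\tau_n^2\|_{H^s}+\|\v_n^1-\v_n^2\|_{H^s}\big). \]
Restricting to a ball $\{\|\v_n^i\|_{H^s},\|\tau_n^i\|_{H^s}\le R\}$ gives the claimed local Lipschitz property with constant $L(n,R)=2C_nR$; since on $\mathcal{V}_n$ all Sobolev norms are equivalent (with $n$-dependent constants), the same holds in the $\mathrm{L}^2$ topology of $\mathcal{V}_n$.

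I do not expect a genuine obstacle; the point to flag is (b). Without the cutoff, the factor $\nabla\v_n$ would consume a true derivative, so $\Q$ would be Lipschitz only from $H^s\times H^s$ into $H^{s-1}$ — not enough for the contraction-mapping argument used afterwards to solve the truncated system \eqref{Trun1}--\eqref{Trun4}. The purpose of working in $\mathcal{V}_n$ and inserting $\mathcal{J}_n$ is precisely to absorb this loss into $C_n$.
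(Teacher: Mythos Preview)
Your argument is correct, but it takes a different route from the paper's. The paper estimates in the $\mathrm{L}^2$ output norm: from the pointwise-product structure of $\Q$ it gets
\[
\|\Q(\tau_n,\nabla(\v_n^1-\v_n^2))\|_{\mathrm{L}^2}\le C\|\tau_n\|_{\mathrm{L}^\infty}\|\nabla(\v_n^1-\v_n^2)\|_{\mathrm{L}^2}\le C\|\tau_n\|_{H^s}\|\v_n^1-\v_n^2\|_{H^s},
\]
using only $H^s\hookrightarrow\mathrm{L}^\infty$ and $H^s\hookrightarrow H^1$ (the latter since $s>d/2\ge 1$), and similarly in the $\tau$-variable. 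In particular, the paper's constant in this $H^s\to\mathrm{L}^2$ bound is \emph{independent of $n$}; the Fourier cutoff is not needed at this step. You instead work in the $H^s$ output norm, invoke the algebra property, and absorb the derivative loss via the Bernstein inequality $\|\nabla\v_n\|_{H^s}\le n\|\v_n\|_{H^s}$ on $\mathcal{V}_n$, producing an explicitly $n$-dependent constant. Both are valid; on $\mathcal{V}_n$ the two bounds are equivalent after norm comparison. Your version is arguably cleaner and makes the role of $\mathcal{V}_n$ transparent, while the paper's version shows that for this particular nonlinearity the cutoff is not what rescues the estimate. Your closing remark that ``without the cutoff $\Q$ would be Lipschitz only $H^s\times H^s\to H^{s-1}$'' is true for the $H^s$ output norm you chose, but note that the paper sidesteps this by working in $\mathrm{L}^2$.
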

\begin{proof}
For $s>d/2\geq 1,$ let us use integration by parts, H\"{o}lder's inequality and Sobolev inequality  for $\tau_n \in H^s(\mathbb{R}^d),$ we have
\begin{align*}
&|\Big(\mathcal{J}_n \Q(\tau_n,\nabla \v_n^1)-\mathcal{J}_n \Q(\tau_n,\nabla \v_n^2),\v_n^1-\v_n^2\Big)_{\mathrm{L}^2}|
=|\Big( (\Q(\tau_n,\nabla (\v_n^1- \v_n^2)),\mathcal{J}_n(\v_n^1-\v_n^2)\Big)_{\mathrm{L}^2}|\notag\\
&\leq \|\Q(\tau_n,\nabla (\v_n^1- \v_n^2)\|_{\mathrm{L}^2} \|\mathcal{J}_n(\v_n^1-\v_n^2)\|_{\mathrm{L}^2}
 \leq C \|\tau_n\|_{\mathrm{L}^\infty}\|\nabla (\v_n^1- \v_n^2)\|_{\mathrm{L}^2}\|\v_n^1- \v_n^2\|_{\mathrm{L}^2}\notag \\
&\leq C \|\tau_n\|_{H^s}\|\v_n^1- \v_n^2\|_{H^1}\|\v_n^1- \v_n^2\|_{\mathrm{L}^2}.
\end{align*}
Hence for $\tau_n \in H^s(\mathbb{R}^d),$ and for $s>d/2,$
$$ \|\mathcal{J}_n \Q(\tau_n,\nabla \v_n^1)-\mathcal{J}_n \Q(\tau_n,\nabla \v_n^2)\|_{\mathrm{L}^2} \leq C \|\tau_n\|_{H^s}\| \v_n^1- \v_n^2\|_{H^s}.$$
Hence, $\mathcal{J}_n \Q(\cdot, \cdot)$ is locally Lipschitz in $\v_n.$
Again, for $s>d/2\geq 1,$ using same arguments as before for $\v_n \in H^s(\mathbb{R}^d),$ we have
\begin{align*}
&|\Big(\mathcal{J}_n \Q(\tau_n^1,\nabla \v_n)-\mathcal{J}_n \Q(\tau_n^2,\nabla \v_n),\tau_n^1-\tau_n^2\Big)_{\mathrm{L}^2}|
=|\Big( (\Q(\tau_n^1-\tau_n^2,\nabla \v_n),\mathcal{J}_n(\tau_n^1-\tau_n^2)\Big)_{\mathrm{L}^2}|\notag\\
&\leq \|\Q(\tau_n^1-\tau_n^2,\nabla \v_n)\|_{\mathrm{L}^2} \|\mathcal{J}_n(\tau_n^1-\tau_n^2)\|_{\mathrm{L}^2}
 \leq C \|\tau_n^1-\tau_n^2\|_{\mathrm{L}^\infty}\|\nabla \v_n\|_{\mathrm{L}^2}\|\tau_n^1-\tau_n^2\|_{\mathrm{L}^2}\notag \\
&\leq C \|\tau_n^1-\tau_n^2\|_{H^s}\| \v_n\|_{H^1}\|\tau_n^1-\tau_n^2\|_{\mathrm{L}^2} 
\leq C \|\tau_n^1-\tau_n^2\|_{H^s}\| \v_n\|_{H^s}\|\tau_n^1-\tau_n^2\|_{\mathrm{L}^2}.
\end{align*}
Hence, $\mathcal{J}_n \Q(\cdot, \cdot)$ is locally Lipschitz in $\tau_n.$
\end{proof}

\begin{rem}
\begin{itemize}
\item[1.] Similarly, for $\v_n,\mathbf{\tau}_n \in H^s(\mathbb{R}^d)$, for $s>d/2$ with $\nabla \cdot \v_n=0,$ the
nonlinear operators $\mathcal{J}_n\left[(\v_n \cdot\nabla)\mathbf{\tau}_n\right]$ and $\mathcal{J}_n\left[(\v_n \cdot\nabla)\mathbf{\v}_n\right]$ is
locally Lipschitz in $\v_n$ and $\mathbf{\tau}_n$ on the space $\mathcal{V}_n$.
\item[2.] By using Plancherel's Theorem, we observe that depending on $n,$ $\Delta\v_n$ has a
bounded linear growth in $\mathcal{V}_n$, since
\begin{align*}
\|\Delta\v_n\|_{\mathrm{L}^2(\mathbb{R}^d)}^2=\|\widehat{\Delta\v_n}\|_{\mathrm{L}^2(\mathbb{R}^d)}^2
=\|\widehat{\Delta\v_n}\|_{\mathcal{V}_n}^2=\||\xi|^2\widehat{\v_n}\|_{\mathcal{V}_n}^2 \leq
n^2\|\widehat{\v_n}\|_{\mathcal{V}_n}^2
=n^2\|\v_n\|_{\mathrm{L}^2(\mathbb{R}^d)}^2. 
\end{align*}
\end{itemize}
\end{rem}
\begin{cor}
By Theorem $4.9$ of Mandrekar and R\"{u}diger \cite{MaRu}, Ikeda and Watanbe \cite{IW} there exists a
path-wise unique strong solution $(\v_n,\tau_n)$ to problem
(\ref{Trun1})-(\ref{Trun4}) suct that 
$\v_n \in \mathrm{L}^2(\Omega;D(0,T;\mathcal{V}_n))$,  with $\nabla \cdot \v_n=0$ and $\tau_n \in \mathrm{L}^2(\Omega;C(0,T;\mathcal{V}_n))$, where $T$ depends on
$n$. The solution will exist as long as
$\|\v_n\|_{\mathrm{L}^2(\Omega;H^s(\mathbb{R}^d))}$,
$\|\tau_n\|_{\mathrm{L}^2(\Omega;H^s(\mathbb{R}^d))}$ remain finite.
\end{cor}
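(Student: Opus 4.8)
The plan is to verify that the truncated system \eqref{Trun1}--\eqref{Trun4}, once the pressure is eliminated, is an abstract stochastic evolution equation with locally Lipschitz, linearly growing coefficients on a Hilbert space, so that the quoted existence-uniqueness theory applies directly. First I would fix $n$ and work on the closed subspace $\mathcal{V}_n=\{g\in\mathrm{L}^2(\R^d):\mathrm{supp}(\widehat g)\subset B(0,n)\}$; by Plancherel's theorem all the norms $\|\cdot\|_{H^r}$, $r\geq 0$, are mutually equivalent on $\mathcal{V}_n$ (with constants depending on $n$), which is what allows every estimate to be carried out in the ambient $\mathrm{L}^2$ topology while still controlling the $H^s$ norm. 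I would then apply the Leray--Helmholtz projection to \eqref{Trun1} to remove $\nabla p_n$, using that this projection --- like $\mathcal{J}_n$ and the operators $\Delta$, $\nabla\cdot$, $\mathcal{D}(\cdot)$ --- is a Fourier multiplier and therefore leaves $\mathcal{V}_n$ and its divergence-free subspace invariant, while $\mathcal{S}$ (matrix multiplication by $h$) trivially preserves $\mathcal{V}_n$ as well. This establishes flow-invariance of $\mathcal{V}_n$ and recasts the system as an SDE on $\mathcal{H}_n:=(\mathcal{V}_n\cap\{\nabla\cdot g=0\})\times\mathcal{V}_n$.

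Next I would check the hypotheses on the coefficients. The drift decomposes into bounded linear terms --- $\nu\Delta\v_n$, $\mu_1\nabla\cdot\tau_n$, $a\tau_n$, $\mu_2\mathcal{D}(\v_n)$ and $\tfrac12\mathcal{J}_n\mathcal{S}^2(\tau_n)$, whose operator norms on $\mathcal{V}_n$ are controlled by powers of $n$ (e.g. $\|\Delta\v_n\|_{\mathrm{L}^2}\leq n^2\|\v_n\|_{\mathrm{L}^2}$ from the second item of the Remark after Proposition \ref{lip.q}) or by $\|h\|_{\mathrm{L}^\infty}^2$ via \eqref{propS.2} --- and the quadratic terms $\mathcal{J}_n[(\v_n\cdot\nabla)\v_n]$, $\mathcal{J}_n[(\v_n\cdot\nabla)\tau_n]$, $\mathcal{J}_n\Q(\tau_n,\nabla\v_n)$, which are locally Lipschitz on $\mathcal{V}_n$ by Proposition \ref{lip.q} and the first item of that same Remark. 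For the noise, Assumption \ref{hypo} gives that $\mathcal{J}_n\sigma$ is continuous into $\mathcal{L}_{Q_1}(\mathrm{L}^2,H^s)$ with linear growth and globally Lipschitz (also using \eqref{intro1}), that $G\in\mathbb{H}^2_\lambda([0,T]\times Z;H^s)$ satisfies the linear-growth and Lipschitz bounds (A.2)--(A.3), and $\mathcal{J}_n\mathcal{S}$ is bounded linear with $\|\mathcal{J}_n\mathcal{S}(\tau)\|_{\mathrm{L}^2}\leq\|h\|_{\mathrm{L}^\infty}\|\tau\|_{\mathrm{L}^2}$; all coefficients are progressively measurable, and Remark \ref{nlam} makes the compensated-Poisson integral well posed.

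With these checks in hand the result follows from Theorem $4.9$ of Mandrekar and R\"{u}diger \cite{MaRu} together with Ikeda and Watanabe \cite{IW}, via the usual localization: truncate the quadratic drift terms at radius $R$ so they become globally Lipschitz, solve the resulting equation globally, and then patch the solutions along the increasing sequence of stopping times at which $\|(\v_n,\tau_n)\|_{\mathrm{L}^2}$ (equivalently $\|(\v_n,\tau_n)\|_{H^s}$, by the norm equivalence on $\mathcal{V}_n$) first reaches $R$. This produces, for each $n$, a pathwise unique local strong solution on $[0,T]$ for some $T=T(n)>0$ that persists as long as $\|\v_n\|_{\mathrm{L}^2(\Omega;H^s)}$ and $\|\tau_n\|_{\mathrm{L}^2(\Omega;H^s)}$ stay finite. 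The divergence-free condition survives because the Leray projection annihilates $\nabla p_n$ and commutes with the other operators; $\v_n$ has c\`{a}dl\`{a}g paths owing to the jump integral $\int_Z\mathcal{J}_nG(\v_n(t-),z)\,\tilde{N}_1(\d t,\d z)$, while $\tau_n$ has continuous paths since \eqref{Trun2} is driven only by the real Wiener process $W_2$, so $\v_n\in\mathrm{L}^2(\Omega;D(0,T;\mathcal{V}_n))$ and $\tau_n\in\mathrm{L}^2(\Omega;C(0,T;\mathcal{V}_n))$.

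I expect the only genuinely delicate point --- everything else being routine once the lemmas already in the paper are invoked --- to be the simultaneous handling of the divergence constraint and the Fourier-support constraint when eliminating the pressure, i.e.\ checking carefully that the Leray projection and $\mathcal{J}_n$ commute and jointly leave the solution space invariant, so that the reduced equation really does stay inside $\mathcal{H}_n$ and the cited theorem can be applied verbatim.
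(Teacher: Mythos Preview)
Your proposal is correct and follows the same approach the paper takes: the corollary in the paper is not given an independent proof but is stated as a direct consequence of Proposition \ref{lip.q} and the subsequent Remark (local Lipschitz and linear growth of all coefficients on $\mathcal{V}_n$) together with the cited references, and your write-up simply spells out the standard verification that makes that citation legitimate. One small slip: $\mathcal{S}(\tau)=h\otimes\tau$ does \emph{not} in general preserve $\mathcal{V}_n$, since pointwise multiplication by $h$ is convolution on the Fourier side; what preserves $\mathcal{V}_n$ is the composite $\mathcal{J}_n\mathcal{S}$ appearing in \eqref{Trun2}, which is all you need and which you correctly use later in your argument.
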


\subsection{Energy Estimates}

In this Subsection we first obtain energy estimates of approximate solutions on the time interval
$[0,\Ln]$ for some stopping time $\rho^n_N.$  These estimates solely depend on the regularity of the initial data and noise terms.

\begin{thm}\label{positive1}
Let the initial data $\v_0,\mathbf{\tau}_0\in
\mathrm{L}^2(\Omega;H^s(\mathbb{R}^d))$, with $\nabla\cdot \v_0 =0,$  $s>d/2$ be
$\mathscr{F}_0$-measurable, and the Assumption \ref{hypo} be satisfied. For each $n \in \mathbb{N},$ let $(\v_n,\tau_n)$ be the unique strong solution of \eqref{Trun1}-\eqref{Trun4}. Define the stopping time
\begin{align} \label{stop1} 
\rho^n_N=\inf_{t\geq
0}\left\{t:\mu_2\|\v_n(t)\|_{H^s}^2+\mu_1\|\mathbf{\tau}_n(t)\|_{H^s}^2
+2 \mu_2\nu \int_0^t \| \nabla\v_n (r)\|_{H^s}^2dr>N\right\}.
\end{align} 
Then for any $\delta$ with $0< \delta<1$, 
\begin{align}\label{rhoN}
\mathbb{P}\left(\rho^n_N>\delta\right)
\geq 1-2\delta e^{(\tilde{C}+C_2 \delta)} \Big(2\mathbb{E}\left(\mu_2\|\v_0\|_{H^s}^2+\mu_1\|\mathbf{\tau}_0\|_{H^s}^2\right)+18K\mu_2\delta\Big),	
\end{align}
for some positive constant $\tilde{C}$ independent of $\delta$.
\end{thm}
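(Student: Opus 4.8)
The plan is to run a weighted $H^s$-energy estimate for the truncated system \eqref{Trun1}--\eqref{Trun4} and then convert it, via Chebyshev's inequality, into the asserted lower bound for $\mathbb P(\rho^n_N>\delta)$. Since $(\v_n,\tau_n)$ lives in the Fourier-localized space $\mathcal V_n$, on which the truncated equations are genuine stochastic differential equations with locally Lipschitz, linearly growing coefficients (Proposition \ref{lip.q} and the ensuing remark, together with the existence result for the truncated system), It\^o's formula applies to $\|J^s\v_n\|_{\mathrm L^2}^2$ and $\|J^s\tau_n\|_{\mathrm L^2}^2$; here one uses that $\mathcal J_n$ is a self-adjoint Fourier projector commuting with $J^s$ and fixing $\v_n,\tau_n$, so it disappears from all quadratic identities. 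Writing $E_n(t):=\mu_2\|\v_n(t)\|_{H^s}^2+\mu_1\|\tau_n(t)\|_{H^s}^2$ and adding the two It\^o identities with weights $\mu_2$ and $\mu_1$, two structural cancellations occur: the pressure term drops because $\nabla\cdot\v_n=0$; and the stress--velocity coupling cancels, because integration by parts gives $\mu_1\mu_2\big(J^s\nabla\cdot\tau_n,J^s\v_n\big)_{\mathrm L^2}=-\mu_1\mu_2\big(J^s\tau_n,J^s\nabla\v_n\big)_{\mathrm L^2}=-\mu_1\mu_2\big(J^s\tau_n,J^s\mathcal D(\v_n)\big)_{\mathrm L^2}$ (the skew part $\mathcal W(\v_n)$ of $\nabla\v_n$ dropping out by the symmetry of $\tau_n$), which is exactly the negative of the $\mu_2\mathcal D(\v_n)$-contribution from the $\tau_n$-equation.

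After these cancellations, set $\Phi_n(t):=E_n(t)+2\nu\mu_2\int_0^t\|\nabla\v_n(r)\|_{H^s}^2\,\d r$, the quantity defining $\rho^n_N$ in \eqref{stop1}. Its drift contains only: the transport terms $\big(J^s[(\v_n\cdot\nabla)\v_n],J^s\v_n\big)_{\mathrm L^2}$ and $\big(J^s[(\v_n\cdot\nabla)\tau_n],J^s\tau_n\big)_{\mathrm L^2}$, bounded by $c\|\nabla\v_n\|_{H^s}\|\v_n\|_{H^s}^2$ and $c\|\nabla\v_n\|_{H^s}\|\tau_n\|_{H^s}^2$ via Corollary \ref{com_cor} (valid for $s>d/2$, $\nabla\cdot\v_n=0$); the $\Q$-term $\big(J^s\Q(\tau_n,\nabla\v_n),J^s\tau_n\big)_{\mathrm L^2}$, bounded by $C\|\nabla\v_n\|_{H^s}\|\tau_n\|_{H^s}^2$ via Property \ref{propQ} and $H^s\hookrightarrow\mathrm L^\infty$ (Remark \ref{prohs}); the damping $-2a\mu_1\|\tau_n\|_{H^s}^2\le0$, discarded; the $\mathcal S$-terms $\big(J^s\mathcal S^2(\tau_n),J^s\tau_n\big)_{\mathrm L^2}+\|\mathcal S(\tau_n)\|_{H^s}^2$ (the Stratonovich--It\^o correction plus the It\^o correction of the $W_2$-integral), bounded by $2\|h\|_{H^s}^2\|\tau_n\|_{H^s}^2$ via Remark \ref{esti.S}; and, after replacing $N_1(\d r,\d z)$ by $\lambda(\d z)\,\d r$ in expectation (Remark \ref{nlam}) and using \eqref{intro1}, the noise term bounded by $\mu_2K(1+\|\v_n\|_{H^s}^2)$ via Assumption \ref{hypo}(A.2). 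Since $\mu_2\|\v_n\|_{H^s}^2,\ \mu_1\|\tau_n\|_{H^s}^2,\ E_n\le\Phi_n$, and the $2\nu\mu_2\|\nabla\v_n\|_{H^s}^2$ from the viscous term cancels the same term in $\d\Phi_n$ (so no Young's inequality is needed), one arrives at
\[
\d\Phi_n\ \le\ \big[(c_1\|\nabla\v_n\|_{H^s}+c_2)\,\Phi_n+\mu_2K\big]\,\d t+\d\mathcal M_n,
\]
with $\mathcal M_n$ a local martingale and $c_1,c_2$ depending only on $d,s,K,\|h\|_{H^s}$.

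To close, I would tame the random coefficient $c_1\|\nabla\v_n\|_{H^s}$ using the stopping time. Multiplying by the integrating factor $A_n(t):=\exp\!\big(-\int_0^t(c_1\|\nabla\v_n\|_{H^s}+c_2)\,\d r\big)\le1$ gives $\d(A_n\Phi_n)\le\mu_2K\,\d t+A_n\,\d\mathcal M_n$; integrating on $[0,\delta\wedge\rho^n_N]$ and taking expectations (the stopped stochastic integral is a true martingale by Lemma \ref{burk}, after, for full rigor, an auxiliary localization handling the jumps, legitimate by Assumption \ref{hypo}(A.2)) yields $\mathbb E\big[A_n(\delta\wedge\rho^n_N)\Phi_n(\delta\wedge\rho^n_N)\big]\le\mathbb E[E_n(0)]+\mu_2K\delta$. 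On $[0,\rho^n_N]$ one has $2\nu\mu_2\int_0^{\delta\wedge\rho^n_N}\|\nabla\v_n\|_{H^s}^2\,\d r\le N$, so by Cauchy--Schwarz $\int_0^{\delta\wedge\rho^n_N}\|\nabla\v_n\|_{H^s}\,\d r\le\sqrt{N\delta/(2\nu\mu_2)}$, and the arithmetic--geometric mean inequality turns $\int_0^{\delta\wedge\rho^n_N}(c_1\|\nabla\v_n\|_{H^s}+c_2)\,\d r$ into $\tilde C+C_2\delta$ with $\tilde C:=c_1^2N/(4\nu\mu_2)$ independent of $\delta$; hence $A_n(\delta\wedge\rho^n_N)\ge e^{-(\tilde C+C_2\delta)}$ and $\mathbb E\big[\Phi_n(\delta\wedge\rho^n_N)\big]\le e^{\tilde C+C_2\delta}\big(\mathbb E[E_n(0)]+\mu_2K\delta\big)$. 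Since $\Phi_n$ is c\`adl\`ag and $\Phi_n(\rho^n_N)\ge N$ on $\{\rho^n_N\le\delta\}$, Chebyshev's inequality combined with this energy bound, together with $\mathbb E[E_n(0)]\le\mathbb E\big(\mu_2\|\v_0\|_{H^s}^2+\mu_1\|\tau_0\|_{H^s}^2\big)$ and a careful tracking of the numerical constants, produces \eqref{rhoN}.

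The main obstacle is the absence of parabolic smoothing in the $\tau_n$-equation: no factor $\|\nabla\tau_n\|_{\mathrm L^\infty}$ or $\|\tau_n\|_{H^{s+1}}$ may appear anywhere, so $\big(J^s[(\v_n\cdot\nabla)\tau_n],J^s\tau_n\big)_{\mathrm L^2}$ must be estimated by the commutator bound of Corollary \ref{com_cor}, which throws the extra derivative onto $\v_n$ (whose $H^{s+1}$-regularity is furnished by the viscous dissipation), and the $\Q$-term likewise by the tame estimate of Property \ref{propQ}; equally crucial is the symmetry of $\tau_n$, without which the stress--velocity coupling would not cancel and the $H^s$ balance would not close. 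Once these two points are handled, the remaining ingredients --- the Stratonovich and jump corrections, the Cauchy--Schwarz control of $\int\|\nabla\v_n\|_{H^s}$ on $[0,\rho^n_N]$, and the Gronwall/Chebyshev bookkeeping --- are routine.
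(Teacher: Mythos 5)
Your argument is correct in substance and reaches the same estimate, but the Gronwall mechanism is genuinely different from the paper's. The paper integrates \eqref{Ener1}, takes $\mathbb{E}\sup$, controls the three stochastic integrals by Burkholder--Davis--Gundy, and then handles the cubic term $4C\|\nabla\v_n\|_{H^s}E_n$ by Young's inequality --- absorbing $2\nu\mu_2\|\nabla\v_n\|_{H^s}^2$ into the dissipation and using the stopping time to linearize $E_n^2\le NE_n$ --- before applying the standard Gronwall lemma, which produces the factor $\exp\{(\tfrac{2C^2N}{\nu\mu_2}+C_2)\delta\}$. You instead multiply by the pathwise integrating factor $A_n(t)=\exp(-\int_0^t(c_1\|\nabla\v_n\|_{H^s}+c_2)\,\d r)$ and bound $\int_0^{\delta\wedge\rho^n_N}\|\nabla\v_n\|_{H^s}\,\d r\le\sqrt{N\delta/(2\nu\mu_2)}$ by Cauchy--Schwarz and the definition of $\rho^n_N$. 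This buys you two things: you never need the $\mathbb{E}\sup$/BDG machinery for the probability estimate (evaluating the stopped process at $\delta\wedge\rho^n_N$ and using $\Phi_n(\rho^n_N)\ge N$ on $\{\rho^n_N\le\delta\}$ suffices), and you avoid the slightly awkward $E_n^2\le NE_n$ step. Your observation that the cancellation of $I_0$ rests on integration by parts \emph{and} the symmetry of $\tau_n$ (so that $(\tau_n,\mathcal W(\v_n))_{\mathrm L^2}=0$) is also more accurate than the paper's one-line justification via divergence-freeness alone.

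The one step you leave too vague is the final bookkeeping, and it hides the only place where the argument could fail. To convert Chebyshev's bound $\mathbb{P}(\rho^n_N\le\delta)\le\frac1N\,\mathbb{E}[\Phi_n(\delta\wedge\rho^n_N)]$ into the stated $2\delta(\cdots)$ one must invoke the coupling $\frac{1}{N+1}\le\delta<\frac1N$ (so that $\frac1N\le 2\delta$), exactly as the paper does; for a fixed $N$ and arbitrarily small $\delta$ the prefactor $\frac1N$ does not shrink and \eqref{rhoN} cannot be obtained from Markov's inequality. The same coupling also repairs your constant: as written, $\tilde C:=c_1^2N/(4\nu\mu_2)$ depends on $N$, whereas using $N\delta<1$ directly gives $c_1\sqrt{N\delta/(2\nu\mu_2)}\le c_1/\sqrt{2\nu\mu_2}=:\tilde C$, independent of both $N$ and $\delta$, matching the role played by $e^{\tilde CN\delta}\le e^{\tilde C}$ in \eqref{esti.rho.n}. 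With that choice made explicit, your proof is complete.
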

\par
\noindent
As a consequence of the above Theorem, we have the following result.
\begin{rem} \label{ener.estim}
	For any $T>0$, the quantities
	\begin{align*}
	\mathbb{E}\left[\sup_{0\leq t\leq
		T\wedge\rho^n_N}\|\v_n(t)\|_{H^s}^2\right],
	\mathbb{E}\left[\sup_{0\leq t\leq
		T\wedge\rho^n_N}\|\mathbf{\tau}_n(t)\|_{H^s}^2\right],
	\mathbb{E}\left[\int_0^{T\wedge\rho^n_N}\|\nabla\v_n(t)\|_{H^s}^2\,dt\right]
	\end{align*}
	are uniformly bounded.
\end{rem}

\begin{proof}
\textbf{Step I:} \newline

Applying $J^s$ to both the equations \eqref{Trun1}-\eqref{Trun2} we get,
\begin{align}
&d J^s \mathbf{v}_n=\left[\nu \Delta J^s \v_n -\nabla J^s p_n -\mathcal{J}_{n} J^s \left[(\v_n\cdot \nabla)\v_n \right] 
 + \mu_1 J^s \nabla \cdot \mathbf{\tau}_n \right] dt 
+  \mathcal{J}_{n} J^s \sigma(t,\v_n)\d W_1(t) \notag\\
&\qquad \qquad \qquad \qquad \qquad \qquad \qquad +\int_Z \mathcal{J}_{n} J^s G(\v_n(t-)) \tilde{N}_1(\d t,\d z),\label{Trun1s} \\
&d J^s \mathbf{\tau}_n=-\left[\mathcal{J}_{n} J^s (\v_n \cdot \nabla)\mathbf{\tau}_n  +a J^s \mathbf{\tau}_n 
+\mathcal{J}_{n} J^s \Q(\mathbf{\tau}_n, \nabla \v_n)-\mu_2 J^s \mathcal{D}(\v_n)\right]dt\notag\\
&\qquad\qquad\qquad\qquad\qquad\qquad\qquad+ \frac{1}{2}\mathcal{J}_{n} J^s\mathcal{S}^2(\mathbf{\tau}_n)dt+ \mathcal{J}_{n} J^s\mathcal{S}(\mathbf{\tau}_n)d W_2(t)  . \label{Trun2s}
\end{align} 
Applying It\^{o}'s Lemma [see Brze\'{z}niak et al. \cite{BHZ}, Ikeda and Watanabe \cite{IW}] to the function $\|x\|_{\mathrm{L}^2}^2$ to the process $J^s \v_n$ and exploiting divergence free condition  in \eqref{Trun1s}  we get,
\begin{align*}
d \| \v_n\|_{H^s}^2&=-2 \nu \| \nabla J^s \v_n \|_{\mathrm{L}^2}^2dt -2\left( J^s  (\v_n\cdot \nabla)\v_n,J^s \v_n \right)_{\mathrm{L}^2} dt \\
& \quad + 2\mu_1 \left( J^s \nabla \cdot \mathbf{\tau}_n, J^s \v_n \right)_{\mathrm{L}^2} dt 
+  \| \mathcal{J}_{n} J^s \sigma(t,\v_n)\|_{\mathcal{L}_\Q(\mathrm{L}^2,\mathrm{L}^2)} dt 
 +\int_Z \| \mathcal{J}_{n} J^s G(\v_n(t-),z)\|_{\mathrm{L}^2}^2 N_1(d t,d z)\\
 & \quad +2 \int_Z ( \mathcal{J}_{n} J^s G(\v_n(t-),z),J^s \v_n(t-))_{\mathrm{L}^2} \tilde{N}_1 (d t,d z).
\end{align*}
Exploiting the cut off property \eqref{intro1} the above equality is reduced to:
\begin{align} \label{Ito1}
d \| \v_n\|_{H^s}^2+2 \nu \| \nabla J^s \v_n \|_{\mathrm{L}^2}^2dt  
& \leq -2\left( J^s  (\v_n\cdot \nabla)\v_n,J^s \v_n \right)_{\mathrm{L}^2} dt 
+ 2\mu_1 \left( J^s \nabla \cdot \mathbf{\tau}_n, J^s \v_n \right)_{\mathrm{L}^2} dt \notag\\
& \quad + \| J^s \sigma(t,\v_n)\|^2_{\mathcal{L}_\Q(\mathrm{L}^2,\mathrm{L}^2)} dt \notag\\
& \quad + 2( \mathcal{J}_{n} J^s \sigma(t, \v_n)dW_1(t),J^s \v_n)_{\mathrm{L}^2}
 +\int_Z \| J^s G(\v_n(t-),z)\|_{\mathrm{L}^2}^2 N_1(d t,d z)\notag\\
 &\quad +2 \int_Z ( \mathcal{J}_{n} J^s G(\v_n(t-),z),J^s \v_n(t-))_{\mathrm{L}^2} \tilde{N}_1 (d t,d z).
\end{align}
Again let us apply It\^{o}'s Lemma to the function $\|x\|_{\mathrm{L}^2}^2$ to the process $J^s \mathbf{\tau}_n$ in \eqref{Trun2s}  we obtain,
\begin{align*}
d \|J^s \mathbf{\tau}_n\|_{\mathrm{L}^2}^2
 \leq & -2 \left(J^s (\v_n \cdot \nabla)\mathbf{\tau}_n,J^s \mathbf{\tau}_n\right)_{\mathrm{L}^2}dt 
 -2\left( J^s \mathbf{Q}(\mathbf{\tau}_n, \nabla \v_n),J^s \mathbf{\tau}_n\right)_{\mathrm{L}^2}dt\notag\\
 &+2\mu_2 \left(J^s \mathcal{D}(\v_n),J^s \mathbf{\tau}_n\right)_{\mathrm{L}^2} dt
 + \left(\mathcal{J}_{n} J^s\mathcal{S}^2(\mathbf{\tau}_n),J^s \mathbf{\tau}_n\right)_{\mathrm{L}^2}dt  \notag\\
 &+\| \mathcal{J}_{n} J^s \mathcal{S}(\mathbf{\tau}_n)\|^2_{\mathrm{L}^2} dt
 + 2\left(\mathcal{J}_{n} J^s \mathcal{S}(\mathbf{\tau}_n) ,J^s\mathbf{\tau}_n\right)_{\mathrm{L}^2}d W_2(t)+2a\|J^s \mathbf{\tau}_n\|_{\mathrm{L}^2}^2dt.
\end{align*}
\begin{align}\label{Ito2}
d \|J^s \mathbf{\tau}_n\|_{\mathrm{L}^2}^2
 \leq & -2 \left(J^s (\v_n \cdot \nabla)\mathbf{\tau}_n,J^s \mathbf{\tau}_n\right)_{\mathrm{L}^2}dt 
 -2\left( J^s \mathbf{Q}(\mathbf{\tau}_n, \nabla \v_n),J^s \mathbf{\tau}_n\right)_{\mathrm{L}^2}dt\notag\\
 &+2\mu_2 \left(J^s \mathcal{D}(\v_n),J^s \mathbf{\tau}_n\right)_{\mathrm{L}^2} dt
 + \left(\mathcal{J}_{n} J^s\mathcal{S}^2(\mathbf{\tau}_n),J^s \mathbf{\tau}_n\right)_{\mathrm{L}^2} dt  \notag\\
 &+\|h\|_{H^s}^2\|\mathbf{\tau}_n\|_{H^s}^2 dt
 + 2\left(\mathcal{J}_{n} J^s \mathcal{S}(\mathbf{\tau}_n),J^s\mathbf{\tau}_n\right)_{\mathrm{L}^2} d W_2(t)+2a\|J^s \mathbf{\tau}_n\|_{\mathrm{L}^2}^2 dt.
\end{align}

Let $\lambda_j$ be the eigenvalues of $Q$ such that $Qe_j=\lambda_je_j$ for all
$j=1,2,\cdots$, where $\{e_j\}_{j=1}^{\infty}$ are the orthonormal basis in
$\mathrm{L}^2(\mathbb{R}^d).$ Hence we have,
\begin{align}\label{Ito3}
\|J^s \sigma(t,\v_n)\|^2_{\mathcal{L}_Q(\mathrm{L}^2,\mathrm{L}^2)}&\leq
\sum_{j=1}^{\infty}\lambda_j\|J^s\sigma(t,\v_n)e_j\|^2_{\mathrm{L}^2}\nonumber\\&=
\sum_{j=1}^{\infty}\lambda_j\|\sigma(t,\v_n)e_j\|^2_{H^s}=\|\sigma(t,\v_n)\|^2_{\mathcal{L}_Q(\mathrm{L}^2,H^s)}.
\end{align}
Multiplying $\mu_2$ with \eqref{Ito1} and $\mu_1$ with \eqref{Ito2} and then on adding we have
\begin{align} \label{Ito4}
&d \left[\mu_2\|J^s \v_n\|_{\mathrm{L}^2}^2+\mu_1\|J^s \mathbf{\tau}_n\|_{\mathrm{L}^2}^2\right]
 +2 \mu_2\nu \| \nabla J^s \v_n \|_{\mathrm{L}^2}^2dt \notag \\
 &\leq\underbrace{-2\mu_2\left( J^s  (\v_n\cdot \nabla)\v_n,J^s \v_n \right)_{\mathrm{L}^2} } _{I_1} dt \underbrace{ -2 \mu_1 \left(J^s (\v_n \cdot \nabla)\mathbf{\tau}_n,J^s \mathbf{\tau}_n\right)_{\mathrm{L}^2}}_{I_2} dt
 \underbrace{-2\mu_1\left( J^s \mathbf{Q}(\mathbf{\tau}_n, \nabla \v_n),J^s \mathbf{\tau}_n\right)_{\mathrm{L}^2}}_{I_3} dt\notag\\
&\quad +\mu_2 \| J^s \sigma(t,\v_n)\|^2_{\mathcal{L}_\Q(\mathrm{L}^2,\mathrm{L}^2)} dt
+ 2\mu_2( \mathcal{J}_{n} J^s \sigma(t, \v_n)dW_1(t),J^s \v_n)_{\mathrm{L}^2}\notag\\
&\quad +\mu_2\int_Z \| J^s G(\v_n(t-),z)\|_{\mathrm{L}^2}^2 N_1(d t,d z) +2 \mu_2\int_Z ( \mathcal{J}_{n} J^s G(\v_n(t-),z),J^s \v_n(t-))_{\mathrm{L}^2} \tilde{N}_1 (d t,d z)\notag\\
 &\quad + \underbrace{ 2\mu_1\mu_2 \left( J^s \nabla \cdot \mathbf{\tau}_n, J^s \v_n \right)_{\mathrm{L}^2} dt +2\mu_1\mu_2 \left(J^s \mathcal{D}(\v_n),J^s \mathbf{\tau}_n\right)_{\mathrm{L}^2} }_{I_0} dt 
 + \underbrace{\mu_1\left(\mathcal{J}_{n} J^s\mathcal{S}^2(\mathbf{\tau}_n),J^s \mathbf{\tau}_n\right)_{\mathrm{L}^2}}_{I_4} dt \notag\\
 &\quad +\mu_1\|h\|_{H^s}^2\|\mathbf{\tau}_n\|_{H^s}^2 dt
 + 2\mu_1\left(\mathcal{J}_{n} J^s \mathcal{S}(\mathbf{\tau}_n) ,J^s\mathbf{\tau}_n\right)_{\mathrm{L}^2}d W_2(t)+2a\mu_1\|J^s \mathbf{\tau}_n\|_{\mathrm{L}^2}^2 dt.
\end{align}

Using the divergence free condition of $\v_n,$ we directly have $I_0$ is zero.
We now recall the Commutator estimates (mentioned in Subsection \ref{comm.est}) and separately estimate each $I_{i}\,;i=1,\cdots, 4.$
In order to estimate the term ($I_1$), we recall the fact that $H^s$ is an
algebra for $s>d/2$ to get
\begin{align}\label{I_1}
|I_1|=\left|-2\mu_2\left( J^s\left[(\v_n\cdot\nabla)\v_n\right],J^s\v_n\right)_{\mathrm{L}^2}\right|
&\leq
2\mu_2\| J^s\left[(\v_n\cdot\nabla)\v_n\right]\|_{\mathrm{L^2}}\|J^s\v_n\|_{\mathrm{L}^2}\nonumber\\
&\leq
2\mu_2\|\left[(\v_n\cdot\nabla)\v_n\right]\|_{H^s}\|\v_n\|_{H^s} \nonumber\\
&\leq C\mu_2 \|\nabla\v_n\|_{H^s}\|\v_n\|_{H^s}^2.
\end{align}
 Once again we use the fact that $H^s$ is an
algebra for $s>d/2,$ and we recall Theorem \ref{com_thm}. Hence $I_2$ is reduced to:
\begin{align}\label{I_2}
|I_2|=\left|-2\mu_1\left( J^s\left[(\v_n\cdot\nabla)\mathbf{\tau}_n\right],J^s\mathbf{\tau}_n\right)_{\mathrm{L}^2}\right|
&\leq
2\mu_1\| J^s\left[(\v_n\cdot\nabla)\mathbf{\tau}_n\right]\|_{\mathrm{L^2}}\|J^s\mathbf{\tau}_n\|_{\mathrm{L}^2}\nonumber\\
&\leq C\mu_1 \|\nabla\v_n\|_{H^s}\|\mathbf{\tau}_n\|_{H^s}^2.
\end{align}
For $I_3,$ using the classical tame estimate for $\Q$ (Property \ref{propQ}) we get
\begin{align}\label{I_3}
|I_3|=\left|-2\mu_1\left( J^s Q(\mathbf{\tau}_n,\nabla\v_n),J^s\mathbf{\tau}_n\right)_{\mathrm{L}^2}\right|
&\leq
2\mu_1\| J^s Q(\mathbf{\tau}_n,\nabla\v_n)\|_{\mathrm{L^2}}\|J^s\mathbf{\tau}_n\|_{\mathrm{L}^2}\nonumber\\
&\leq
2\mu_1\|Q(\mathbf{\tau}_n,\nabla\v_n)\|_{H^s}\|\mathbf{\tau}_n\|_{H^s} \nonumber\\
&\leq C\mu_1 \|\nabla\v_n\|_{H^s}\|\mathbf{\tau}_n\|_{H^s}^2.
\end{align}
For $I_4$ we recall \eqref{intro1} and \eqref{propS.2} and achieve,
\begin{align}\label{I_4}
 |I_4|=\mu_1 |\left(\mathcal{J}_{n} J^s\mathcal{S}^2(\mathbf{\tau}_n),J^s \mathbf{\tau}_n\right)_{\mathrm{L}^2}|
 &\leq
\mu_1\|\mathcal{J}_{n} J^s \mathcal{S}^2(\mathbf{\tau}_n)\|_{\mathrm{L^2}}\|J^s\mathbf{\tau}_n\|_{\mathrm{L}^2}\nonumber\\
&\leq
\mu_1\| J^s \mathcal{S}^2(\mathbf{\tau}_n)\|_{\mathrm{L^2}}\|J^s\mathbf{\tau}_n\|_{\mathrm{L}^2}\nonumber\\
&\leq C\mu_1 \|\mathcal{S}^2(\mathbf{\tau}_n)\|_{H^s}\|\mathbf{\tau}_n\|_{H^s}\notag\\
&\leq C\mu_1 \|h\|_{H^s}^2\|\mathbf{\tau}_n\|_{H^s}^2.
\end{align}
Using $(\ref{I_1}), (\ref{I_2}), (\ref{I_3})$ and $(\ref{I_4}),$ from $(\ref{Ito4})$ we have
\begin{align}\label{Ener1}
&d \left[\mu_2\| \v_n\|_{H^s}^2+\mu_1\|\mathbf{\tau}_n\|_{H^s}^2\right]
 +2 \mu_2\nu \| \nabla\v_n \|_{H^s}^2dt\notag\\
 &\leq C\left[\mu_2\| \v_n\|_{H^s}^2+\mu_1\|\mathbf{\tau}_n\|_{H^s}^2\right]\| \nabla\v_n \|_{H^s}
 +\left((C+1)\|h\|_{H^s}^2+2a\right)\mu_1\|\mathbf{\tau}_n\|_{H^s}^2 dt\notag\\
&\quad +\mu_2 \| \sigma(t,\v_n)\|^2_{\mathcal{L}_\Q(\mathrm{L}^2,H^s)} dt
+\mu_2\int_Z \|G(\v_n(t-),z)\|_{H^s}^2 N_1(d t,d z)\notag\\
 &\quad + 2\mu_2( \mathcal{J}_{n} J^s \sigma(t, \v_n)dW_1(t),J^s \v_n)_{\mathrm{L}^2}
 +2 \mu_2\int_Z ( \mathcal{J}_{n} J^s G(\v_n(t-),z),J^s \v_n(t-))_{\mathrm{L}^2} \tilde{N}_1 (d t,d z)\notag\\
  &\quad 
 + 2\mu_1\left(\mathcal{J}_{n} J^s \mathcal{S}(\mathbf{\tau}_n),J^s\mathbf{\tau}_n\right)_{\mathrm{L}^2}d W_2(t).
\end{align}
It is to be noted that
\begin{align}\label{m43}
\|\v_n(0)\|^2_{{H}^s}+\|\mathbf{\tau}_n(0)\|^2_{{H}^s}\leq
\|\v_0\|^2_{H^s}+\|\mathbf{\tau}_0\|^2_{H^s}.
\end{align}
For any $T>0$, let us integrate (\ref{Ener1}) from $0$ to $t$, take
supremum from $0$ to $\Ln$ (where $\rho_N^n$ is given by \eqref{stop1}) and then take expectation in (\ref{Ener1})
(use Remark \ref{nlam}) to get
\begin{align}
&\mathbb{E}\left[\sup_{0\leq t\leq
\Ln}\left[\mu_2\|\v_n(t)\|^2_{H^s}+\mu_1\|\mathbf{\tau}_n(t)\|^2_{H^s}\right]\right]
+2\nu\mu_2 \mathbb{E} \left[\int_0^{\Ln} \|\nabla\v_n(r)\|^2_{H^s} dr\right] \label{2.68}\\
&\leq\mathbb{E}\left[\mu_2\|\v_0\|^2_{{H}^s}+\mu_1\|\mathbf{\tau}_0\|^2_{{H}^s}\right]+
2C\mathbb{E}\left[\int_0^{\Ln}\|\nabla\v_n(r)\|_{H^s}\left(\mu_2\|\v_n(r)\|^2_{H^s}+\mu_1\|\mathbf{\tau}_n(r)\|^2_{H^s}\right)dr\right]\nonumber\\
&\quad+\left((C+1)\|h\|_{H^s}^2+2a\right)\mu_1\mathbb{E}\left[\int_0^{\Ln}\|\mathbf{\tau}_n(r)\|^2_{H^s}\d r\right]\nonumber\\
&\quad+\mu_2\mathbb{E}\left[\int_0^{\Ln}\left[\|\sigma(r,\v_n(r))\|^2_{\mathcal{L}_Q(\mathrm{L}^2,H^s)}+
\int_Z\|G(\v_n(r),z)\|^2_{H^s}\lambda(dz)\right]\d r\right]\nonumber\\
&\quad+2\mu_2\mathbb{E}\left[\sup_{0\leq t\leq
\Ln}\left|\int_0^{t}\left(\mathcal{J}_{n} J^s\sigma(r,\v_n)\d
W_1(r),J^s\v_n(r)\right)_{\mathrm{L}^2}\right|\right]\tag{$M_1$}\label{Ib1}\nonumber\\
&\quad+2\mu_1\mathbb{E}\left[\sup_{0\leq t\leq
\Ln}\left|\int_0^{t}\left(\mathcal{J}_{n} J^s \mathcal{S}(\mathbf{\tau}_n(r)),J^s\mathbf{\tau}_n(r)\right)_{\mathrm{L}^2}\d W_2(r)\right|\right]\tag{$M_2$}\label{Ib2}
\nonumber\\
&\quad+2\mu_2\mathbb{E}\left[\sup_{0\leq t\leq
\Ln}\left|\int_0^{t}\int_Z\left(\mathcal{J}_{n} J^s G(\v_n(r-),z),J^s\v_n(r-)\right)_{\mathrm{L}^2}
\tilde{N}_1(\d r,\d z)\right|\right].\tag{$M_3$}\label{Ib3}
\end{align}
Now by applying Burkholder-Davis-Gundy inequality, Young's inequality to the term
(\ref{Ib1}), we get
\begin{align}\label{m1}
M_1 &\leq
2\sqrt{2}\mu_2\mathbb{E}\left(\int_0^{\Ln}\|\mathcal{J}_n J^s\sigma(t,\v_n(t))\|^2_{\mathcal{L}_Q(\mathrm{L}^2,\mathrm{L}^2)}
\|J^s\v_n(t)\|^2_{\mathrm{L}^2}\d t\right)^{1/2}\nonumber\\
&\leq 2\sqrt{2}\mu_2\mathbb{E}\left[\left(\sup_{0\leq t\leq
\Ln}\|\v_n(t)\|^2_{H^s}\right)^{1/2}\left(\int_0^{\Ln}\|J^s\sigma(t,\v_n(t))\|^2_{\mathcal{L}_Q(\mathrm{L}^2,\mathrm{L}^2)}\d
t\right)^{1/2}\right]\nonumber\\&\leq
\frac{\mu_2}{4}\mathbb{E}\left(\sup_{0\leq t\leq
\Ln}\|\v_n(t)\|^2_{{H}^s}\right)+
8\mu_2\mathbb{E}\left(\int_0^{\Ln}\|\sigma(t,\v_n(t))\|^2_{\mathcal{L}_Q(\mathrm{L}^2,H^s)}\d
t\right).
\end{align}
 Again applying Burkholder-Davis-Gundy inequality, Young's inequality  in similar manner, the term \ref{Ib2} is reduced to :
\begin{align}\label{m2}
M_2 &\leq
2\sqrt{2}\mu_1\mathbb{E}\left(\int_0^{\Ln}\|\mathcal{J}_n J^s \mathcal{S}(\mathbf{\tau}_n(t))\|^2_{\mathrm{L}^2}
\|J^s\mathbf{\tau}_n(t)\|^2_{\mathrm{L}^2}\d t\right)^{1/2}\nonumber\\
&\leq\frac{\mu_1}{2}\mathbb{E}\left(\sup_{0\leq t\leq
\Ln}\|\mathbf{\tau}_n(t)\|^2_{{H}^s}\right)+
4\mu_1\|h\|^2_{H^s}\mathbb{E}\left(\int_0^{\Ln}\|\mathbf{\tau}_n(t)\|^2_{H^s}\d
t\right).
\end{align}
Let us now apply Burkholder-Davis-Gundy inequality, Young's inequality and Assumption \ref{hypo} to the term \ref{Ib3} to
obtain
\begin{align}\label{m3}
M_3&\leq
2\sqrt{2}\mu_2\mathbb{E}\left[\int_0^{\Ln}\int_Z\|\mathcal{J}_n J^s G(\v_n(t),z)\|_{\mathrm{L}^2}^2\|J^s\v_n\|_{\mathrm{L}^2}^2
\lambda(\d z)\d t\right]^{1/2}\nonumber\\ 
&\leq
\frac{\mu_2}{4}\mathbb{E}\left(\sup_{0\leq t\leq
\Ln}\|\v_n(t)\|^2_{{H}^s}\right)+8\mu_2
\mathbb{E}\left(\int_{0}^{\Ln}\int_Z\|G(\v_n(t),z)\|_{H^s}^2\lambda(\d
z)\d t\right) \notag\\
&\leq
\frac{\mu_2}{4}\mathbb{E}\left(\sup_{0\leq t\leq
\Ln}\|\v_n(t)\|^2_{{H}^s}\right)+8\mu_2K
\mathbb{E}\left(\int_{0}^{\Ln}(1+\|\v_n(t)\|_{H^s}^2)\d t\right).
\end{align}
By using (\ref{m1}),  (\ref{m2}), and (\ref{m3}) in
(\ref{2.68}), we get
\begin{align}\label{Ener3}
&\mathbb{E}\left[\sup_{0\leq t\leq
\Ln}\left[\mu_2\|\v_n(t)\|^2_{H^s}+\mu_1\|\mathbf{\tau}_n(t)\|^2_{H^s}\right]\right]+
4\nu\mu_2\mathbb{E}\left[\int_0^{\Ln}\|\nabla\v_n(t)\|^2_{H^s}\d
t\right]\nonumber\\
&\leq2\mathbb{E}\left[\mu_2\|\v_0\|^2_{{H}^s}+\mu_1\|\mathbf{\tau}_0\|^2_{{H}^s}\right]+
4C\mathbb{E}\left[\int_0^{\Ln}\|\nabla\v_n\|_{H^s}\left(\mu_2\|\v_n(t)\|^2_{H^s}+\mu_1\|\mathbf{\tau}_n(t)\|^2_{H^s}\right)\d t\right]\nonumber\\
&\quad+2\left((C+5)\|h\|_{H^s}^2+2a\right)\mu_1\mathbb{E}\left[\int_0^{\Ln}\|\mathbf{\tau}_n(t)\|^2_{H^s}\d t\right]\nonumber\\
&\quad+18K\mu_2\mathbb{E}\left[\int_0^{\Ln}\left(1+\|\v_n(t)\|^2_{H^s}+\|\mathbf{\tau}_n(t)\|_{H^s}^2\right)\d
t\right].
\end{align}
Hence, on rearranging the constants of the last two terms of \eqref{Ener3}, the inequality further reduces to 
\begin{align}\label{Ener5}
&\mathbb{E}\left[\sup_{0\leq t\leq
\Ln}\left[\mu_2\|\v_n(t)\|^2_{H^s}+\mu_1\|\mathbf{\tau}_n(t)\|^2_{H^s}\right]\right]+
4\nu\mu_2\mathbb{E}\left[\int_0^{\Ln}\|\nabla\v_n(t)\|^2_{H^s}\d
t\right]\nonumber\\
&\leq2\mathbb{E}\left[\mu_2\|\v_0\|^2_{{H}^s}+\mu_1\|\mathbf{\tau}_0\|^2_{{H}^s}\right]+
4C\mathbb{E}\left[\int_0^{\Ln}\|\nabla\v_n(t)\|_{H^s}\left(\mu_2\|\v_n(t)\|^2_{H^s}+\mu_1\|\mathbf{\tau}_n(t)\|^2_{H^s}\right)dt \right]\nonumber\\
&\quad+C_2\mathbb{E}\left[\int_0^{\Ln}\left(\mu_2\|\v_n(t)\|^2_{H^s}+\mu_1\|\mathbf{\tau}_n(t)\|^2_{H^s}\right)\d t\right]
+18K\mu_2T,
\end{align}
where $C_2=\left(2(C+5)\|h\|_{H^s}^2+4a+\frac{18K\mu_2}{\mu_1}+18K\right).$
By using Young's inequality, we get
\begin{align}\label{young1}
4C\|\nabla\v_n(t)\|_{H^s}\left(\mu_2\|\v_n(t)\|^2_{H^s}+\mu_1\|\mathbf{\tau}_n(t)\|^2_{H^s}\right)\leq
2\nu\mu_2\|\nabla\v_n(t) \|_{H^s}^2+\frac{2C^2}{\nu\mu_2}\left(\mu_2\|\v_n(t)\|^2_{H^s}+\mu_1\|\mathbf{\tau}_n(t)\|^2_{H^s}\right)^2.
\end{align}
By using (\ref{young1}) and making use of the stopping time defined
by (\ref{stop1}), we obtain
\begin{align*}
&\mathbb{E}\left[\sup_{0\leq t\leq
\Ln}\left[\mu_2\|\v_n(t)\|^2_{H^s}+\mu_1\|\mathbf{\tau}_n(t)\|^2_{H^s}\right]\right]+
2\nu\mu_2\mathbb{E}\left[\int_0^{\Ln}\|\nabla\v_n(t)\|^2_{H^s}\d
t\right]\nonumber\\
&\leq2\mathbb{E}\left[\mu_2\|\v_0\|^2_{{H}^s}+\mu_1\|\mathbf{\tau}_0\|^2_{{H}^s}\right]+18K\mu_2T+
\frac{2C^2}{\nu\mu_2}
\mathbb{E}\left[\int_0^{\Ln}\left(\mu_2\|\v_n(t)\|^2_{H^s}+\mu_1\|\mathbf{\tau}_n(t)\|^2_{H^s}\right)^2dt\right]\nonumber\\
&\quad+C_2\mathbb{E}\left[\int_0^{\Ln}\left(\mu_2\|\v_n(t)\|^2_{H^s}+\mu_1\|\mathbf{\tau}_n(t)\|^2_{H^s}\right)\d t\right]\notag\\
&\leq2\mathbb{E}\left[\mu_2\|\v_0\|^2_{{H}^s}+\mu_1\|\mathbf{\tau}_0\|^2_{{H}^s}\right]+18K\mu_2T\nonumber\\
&\quad+\left(\frac{2C^2N}{\nu\mu_2}+C_2\right)
\mathbb{E}\left[\int_0^{\Ln}\left(\mu_2\|\v_n(t)\|^2_{H^s}+\mu_1\|\mathbf{\tau}_n(t)\|^2_{H^s}\right)\d t\right].
\end{align*}
Finally, we have
\begin{align}\label{Ener6}
&\mathbb{E}\left[\sup_{0\leq t\leq
\Ln}\left[\mu_2\|\v_n(t)\|^2_{H^s}+\mu_1\|\mathbf{\tau}_n(t)\|^2_{H^s}\right]\right]+
2\nu\mu_2\mathbb{E}\left[\int_0^{\Ln}\|\nabla\v_n(t)\|^2_{H^s}\d
t\right]\nonumber\\
&\leq2\mathbb{E}\left[\mu_2\|\v_0\|^2_{{H}^s}+\mu_1\|\mathbf{\tau}_0\|^2_{{H}^s}\right]+18K\mu_2T\nonumber\\
&\quad+\left(\frac{2C^2N}{\nu\mu_2}+C_2\right)
\mathbb{E}\left[\int_0^{T}\sup_{0\leq t\leq \Ln}\left(\mu_2\|\v_n(t)\|^2_{H^s}+\mu_1\|\mathbf{\tau}_n(t)\|^2_{H^s}\right)\d t\right].
\end{align}
Now for any $T>0$ and $\rho^n_N$ defined in (\ref{stop1}), an application of Gronwall's inequality yields
\begin{align}\label{Ener.last}
&\mathbb{E}\left[\sup_{0\leq t\leq
\Ln}\left[\mu_2\|\v_n(t)\|^2_{H^s}+\mu_1\|\mathbf{\tau}_n(t)\|^2_{H^s}\right]\right]+
2\nu\mu_2\mathbb{E}\left[\int_0^{\Ln}\|\nabla\v_n(t)\|^2_{H^s}\d
t\right]\nonumber\\
&\leq\left(2\mathbb{E}\left[\mu_2\|\v_0\|^2_{{H}^s}+\mu_1\|\mathbf{\tau}_0\|^2_{{H}^s}\right]+18K\mu_2T\right)
\exp\left\{\left(\frac{2C^2N}{\nu\mu_2}+C_2\right)T\right\}.
\end{align}

\textbf{Step II:} \newline

We further assume that $\mathbb{E}\left[\|\v_0\|_{H^s}^2\right]<\infty$ and
$\mathbb{E}\left[\|\mathbf{\tau}_0\|_{H^s}^2\right]<\infty$.
By using (\ref{Ener.last}), we get
\begin{align}\label{Ener7}
&\mathbb{E}\left[\sup_{0\leq t\leq\delta}\left[\mu_2\|\v_n(t \wedge \rho^n_N)\|^2_{H^s}+\mu_1\|\mathbf{\tau}_n(t \wedge \rho^n_N)\|^2_{H^s}\right]\right]+
2\nu\mu_2\mathbb{E}\left[\int_0^{\delta}\|\nabla\v_n(t)\|^2_{H^s}\d
t\right]\nonumber\\
&\leq\Big(2\mathbb{E}\left[\mu_2\|\v_0\|^2_{{H}^s}+\mu_1\|\mathbf{\tau}_0\|^2_{{H}^s}\right]+18K\mu_2\delta\Big)
\exp\left\{\left(\frac{2C^2 N}{\nu\mu_2}+C_2\right) \delta\right\},
\end{align}
where $C$ is a positive constant independent of $N$ and $\delta.$
Let $0<\delta<1$ be given. Then there exists  a positive integer $N$
such that $$\frac{1}{N+1}\leq \delta<\frac{1}{N}.$$ By the
definition of $\rho^n_N$, one can easily observe that
\begin{align}
\left\{\sup_{0\leq t\leq
\delta}\left[\mu_2\|\v_n(t)\|_{H^s}^2+\mu_1\|\mathbf{\tau}_n(t)\|_{H^s}^2\right]+2\nu\mu_2\int_0^{\delta}\|\nabla\v_n(s)\|_{H^s}^2\d
s\leq N\right\}\subset\left\{\rho^n_N> \delta\right\}.
\end{align}
Now as an application of Markov's inequality for $0<\delta<1,$ and using $1< \frac{N+1}{N}<2,\,\, \frac{1}{N+1} < \delta <\frac{1}{N}$ and \eqref{Ener7}, we get
\begin{align} \label{esti.rho.n}
\mathbb{P}\left(\rho^n_N>\delta\right)&>\mathbb{P}\left(\left\{\sup_{0\leq
t\leq
\delta}\left[\mu_2\|\v_n(t)\|_{H^s}^2+\mu_1\|\mathbf{\tau}_n(t)\|_{H^s}^2\right]+2\nu\mu_2\int_0^{\delta}\|\nabla\v_n(s)\|_{H^s}^2\d
s\leq N\right\}\right)\nonumber\\
&\geq 1-\frac{1}{N}\mathbb{E}\left(\sup_{0\leq t\leq
\delta}\left[\mu_2\|\v_n(t)\|_{H^s}^2+\mu_1\|\mathbf{\tau}_n(t)\|_{H^s}^2\right]+2\nu\mu_2\int_0^{\delta}\|\nabla\v_n(s)\|_{H^s}^2\d
s\right)\nonumber\\
&\geq 1-\frac{1}{N}\left\{\Big(2\mathbb{E}\left(\mu_2\|\v_0\|_{H^s}^2+\mu_1\|\mathbf{\tau}_0\|_{H^s}^2\right)+18K\mu_2\delta\Big) 
\exp \left(\left(\frac{2C^2N}{\nu\mu_2}+C_2\right)\delta\right)\right\}
\nonumber\\
&\geq 1-\frac{e^{\tilde{C}N \delta} e^{C_2 \delta}}{N}\Big(2\mathbb{E}\left(\mu_2\|\v_0\|_{H^s}^2+\mu_1\|\mathbf{\tau}_0\|_{H^s}^2\right)+18K\mu_2\delta\Big)
\nonumber\\
&\geq 1-e^{\tilde{C}} e^{C_2 \delta} \frac{N+1}{N(N+1)}\Big(2\mathbb{E}\left(\mu_2\|\v_0\|_{H^s}^2+\mu_1\|\mathbf{\tau}_0\|_{H^s}^2\right)+18K\mu_2\delta\Big)
\nonumber\\
&\geq 1-2\delta e^{\tilde{C}} e^{C_2 \delta} \Big(2\mathbb{E}\left(\mu_2\|\v_0\|_{H^s}^2+\mu_1\|\mathbf{\tau}_0\|_{H^s}^2\right)+18K\mu_2\delta\Big)\nonumber\\
&\geq 1-2\delta e^{(\tilde{C}+C_2 \delta)} \Big(2\mathbb{E}\left(\mu_2\|\v_0\|_{H^s}^2+\mu_1\|\mathbf{\tau}_0\|_{H^s}^2\right)+18K\mu_2\delta\Big)
\nonumber\\
\end{align}
where
 $\tilde{C}=\frac{2C^2}{\nu\mu_2}$ is a constant independent of $\delta$.
\end{proof}

\section{Strong Convergence of the Truncated Solutions}

We first prove that the solutions $(\v_n,\mathbf{\tau}_n)$ of
\eqref{Trun1}-\eqref{Trun4} converge strongly in $\mathrm L^2(\Omega, \mathrm L^{\infty}(0, T; \mathrm L^2(\R^d)))$.
In the proof we have used certain results, which have been proved in the Appendix.

\begin{thm}\label{cauchy}
Let $\rho^n_N$ be the stopping time defined in \eqref{stop1} and $T>0$. Then the
family of strong solutions $\left\{(\v_n,\mathbf{\tau}_n)\right\}_{n\in\mathbb N}$ of
\eqref{Trun1}-\eqref{Trun4} satisfy the following convergence results:
\begin{itemize}
\item[(i)] \label{cau.v.mn}
$\lim_{n\to\infty}\sup_{m \geq n} \mathbb{E}\left(\sup_{t\in[0,\Lm]}
\|\v_n-\v_m\|_{\mathrm{L}^2}^2+\sup_{t\in[0,\Lm]}\|\mathbf{\tau}_n-\mathbf{\tau}_m\|_{\mathrm{L}^2}^2\right)=0,$
\item[(ii)] $\lim_{n\to\infty}\sup_{m \geq n} \mathbb{E}\left[\int_0^{\Lm}\|\nabla(\v_n-\v_m)\|^2_{\mathrm{L}^2}\d
t\right]=0,$
\end{itemize}
where $ \Rn:= \rho^n_N \wedge \rho^m_N.$

\end{thm}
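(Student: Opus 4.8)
The plan is to carry out a difference estimate at the $\mathrm{L}^2$ level. Fix $m\ge n$ and set $\w:=\v_n-\v_m$ and $\z:=\mathbf{\tau}_n-\mathbf{\tau}_m$; since $\v_k,\mathbf{\tau}_k$ have Fourier support in $B(0,k)\subset B(0,m)$ for $k\in\{n,m\}$, both $\w$ and $\z$ are supported in $B(0,m)$, so that $\mathcal{J}_m\w=\w$ and $\mathcal{J}_m\z=\z$. Subtracting \eqref{Trun1}-\eqref{Trun2} for the indices $n$ and $m$, applying It\^o's formula to $x\mapsto\mu_2\|x\|_{\mathrm{L}^2}^2$ on $\w$ and to $x\mapsto\mu_1\|x\|_{\mathrm{L}^2}^2$ on $\z$ (as in the proof of Theorem~\ref{positive1}), and adding, I get an identity for $\mu_2\|\w(t)\|_{\mathrm{L}^2}^2+\mu_1\|\z(t)\|_{\mathrm{L}^2}^2+2\nu\mu_2\int_0^t\|\nabla\w\|_{\mathrm{L}^2}^2\d r$. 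The linear, non-truncated contributions are straightforward: $\nabla(p_n-p_m)$ vanishes against the divergence-free $\w$, the term $-a\z$ is dissipative, and the parabolic-hyperbolic coupling terms cancel \emph{exactly}, $2\mu_1\mu_2\bigl[(\nabla\cdot\z,\w)_{\mathrm{L}^2}+(\mathcal{D}(\w),\z)_{\mathrm{L}^2}\bigr]=0$, by symmetry of $\z$ and $\nabla\cdot\w=0$ --- the very cancellation that made $I_0$ vanish in the proof of Theorem~\ref{positive1}. For every $\mathcal{J}_n$-truncated nonlinear or noise term, writing $F_k:=F(\v_k,\mathbf{\tau}_k)$ for its value at index $k$, I split $\mathcal{J}_nF_n-\mathcal{J}_mF_m=(\mathcal{J}_n-\mathcal{J}_m)F_n+\mathcal{J}_m(F_n-F_m)$.

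For the truncation-gap pieces $(\mathcal{J}_n-\mathcal{J}_m)F_n$, \eqref{sr} and \eqref{sr2} extract a positive power of $1/n$ at the price of a higher Sobolev norm of $\v_n,\mathbf{\tau}_n$; e.g.\ $\|(\mathcal{J}_n-\mathcal{J}_m)[(\v_n\cdot\nabla)\v_n]\|_{\mathrm{L}^2}\le n^{-(s-1)}\|(\v_n\cdot\nabla)\v_n\|_{H^{s-1}}\le Cn^{-(s-1)}\|\v_n\|_{H^s}^2$ (Remark~\ref{div}), and similarly for $(\v_n\cdot\nabla)\mathbf{\tau}_n$, $\Q(\mathbf{\tau}_n,\nabla\v_n)$, $\mathcal{S}^2(\mathbf{\tau}_n)$, $\sigma(t,\v_n)$ and $G(\v_n,\cdot)$. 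Since $\Rn=\rho^n_N\wedge\rho^m_N$, Theorem~\ref{positive1} and Remark~\ref{ener.estim} bound, for \emph{both} indices and on the interval $[0,\Lm]$, all these $H^s$-norms and $\int_0^{\Lm}\|\v_n\|_{H^{s+1}}^2\d r=\int_0^{\Lm}\bigl(\|\nabla\v_n\|_{H^s}^2+\|\v_n\|_{H^s}^2\bigr)\d r$ by a constant $C(N,T)$ (the first summand by the defining inequality of $\rho^n_N$ in \eqref{stop1}). Pairing the gap terms against $\w$ resp.\ $\z$ by Cauchy-Schwarz in $\mathrm{L}^2$, taking $\sup_t$ and $\mathbb{E}$, and estimating the Wiener and compensated-Poisson ones by the Burkholder-Davis-Gundy inequality (Lemma~\ref{burk}), they contribute in total at most some $\eta_n$ with $\eta_n\to0$ as $n\to\infty$, uniformly over $m\ge n$.

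For the main pieces $\mathcal{J}_m(F_n-F_m)$ I would use, in turn: (a) $\bigl((\v_m\cdot\nabla)\w,\w\bigr)_{\mathrm{L}^2}=\bigl((\v_m\cdot\nabla)\z,\z\bigr)_{\mathrm{L}^2}=0$ (using $\mathcal{J}_m\w=\w$, $\mathcal{J}_m\z=\z$, $\nabla\cdot\v_m=0$); (b) $\bigl((\w\cdot\nabla)\v_n,\w\bigr)_{\mathrm{L}^2}=-\bigl((\w\cdot\nabla)\w,\v_n\bigr)_{\mathrm{L}^2}\le C\|\v_n\|_{H^s}\|\w\|_{\mathrm{L}^2}\|\nabla\w\|_{\mathrm{L}^2}$ (integration by parts, $\nabla\cdot\w=0$, $H^s\hookrightarrow\mathrm{L}^\infty$ by Remark~\ref{prohs}); (c) the genuinely delicate term $\bigl((\w\cdot\nabla)\mathbf{\tau}_n,\z\bigr)_{\mathrm{L}^2}$, for which $\z$ has no smoothing --- by Remark~\ref{r23} it is $\le C\|\w\|_{H^1}\|\mathbf{\tau}_n\|_{H^s}\|\z\|_{\mathrm{L}^2}$, so the derivative falls onto $\w$, which \emph{does} carry the dissipation $2\nu\mu_2\|\nabla\w\|_{\mathrm{L}^2}^2$; (d) $\bigl(\Q(\z,\nabla\v_n),\z\bigr)_{\mathrm{L}^2}\le C\|\nabla\v_n\|_{\mathrm{L}^\infty}\|\z\|_{\mathrm{L}^2}^2\le C\|\v_n\|_{H^{s+1}}\|\z\|_{\mathrm{L}^2}^2$ (as $H^{s+1}\hookrightarrow C^1$ for $s>d/2$), with only time-integrable coefficient, $\int_0^{\Lm}\|\v_n\|_{H^{s+1}}\d r\le C(N,T)$ a.s.; (e) $\bigl(\Q(\mathbf{\tau}_m,\nabla\w),\z\bigr)_{\mathrm{L}^2}\le C\|\mathbf{\tau}_m\|_{H^s}\|\nabla\w\|_{\mathrm{L}^2}\|\z\|_{\mathrm{L}^2}$; and (f) for the drift $\tfrac12\mathcal{J}_m\mathcal{S}^2(\z)$ and the noise differences $\mathcal{J}_m(\sigma(t,\v_n)-\sigma(t,\v_m))$, $\mathcal{J}_m(G(\v_n,\cdot)-G(\v_m,\cdot))$, $\mathcal{J}_m\mathcal{S}(\z)$, the bounds of Remark~\ref{esti.S} and --- \emph{crucially} --- the $\mathrm{L}^2$-Lipschitz estimate read off from Assumption~\ref{hypo} at $s=0$, $\|\sigma(t,\v_n)-\sigma(t,\v_m)\|^2_{\mathcal{L}_Q(\mathrm{L}^2,\mathrm{L}^2)}+\int_Z\|G(\v_n,z)-G(\v_m,z)\|^2_{\mathrm{L}^2}\lambda(\d z)\le L\|\w\|_{\mathrm{L}^2}^2$, together with $\|\mathcal{S}(\z)\|^2_{\mathrm{L}^2}+\|\mathcal{S}^2(\z)\|_{\mathrm{L}^2}\|\z\|_{\mathrm{L}^2}\le C\|h\|^2_{\mathrm{L}^\infty}\|\z\|^2_{\mathrm{L}^2}$; these control the It\^o-correction terms directly and, after $\sup_t$, $\mathbb{E}$ and Burkholder-Davis-Gundy, the stochastic integrals. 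Each term carrying $\|\nabla\w\|_{\mathrm{L}^2}$ is absorbed into $2\nu\mu_2\|\nabla\w\|_{\mathrm{L}^2}^2$ by Young's inequality, and the stochastic-integral estimates leave behind a Young term absorbing a fixed fraction of $\mathbb{E}\sup_{[0,\Lm]}\bigl(\mu_2\|\w\|_{\mathrm{L}^2}^2+\mu_1\|\z\|_{\mathrm{L}^2}^2\bigr)$ and a gap term (as in the previous paragraph).

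Collecting everything, with $\Psi(r):=\mu_2\|\w(r)\|_{\mathrm{L}^2}^2+\mu_1\|\z(r)\|_{\mathrm{L}^2}^2$, one is led to $\mathbb{E}\sup_{r\in[0,t\wedge\Rn]}\Psi(r)+\nu\mu_2\mathbb{E}\int_0^{t\wedge\Rn}\|\nabla\w\|_{\mathrm{L}^2}^2\d r\le\mathbb{E}\Psi(0)+\eta_n+\mathbb{E}\int_0^{t\wedge\Rn}g(r)\Psi(r)\d r$, where $g(r)=C_N\bigl(1+\|\v_n(r)\|_{H^{s+1}}\bigr)$ satisfies $\int_0^{\Lm}g(r)\d r\le C(N,T)$ a.s. Since $\v_n(0)=\mathcal{J}_n\v_0$, $\mathbf{\tau}_n(0)=\mathcal{J}_n\mathbf{\tau}_0$ (consistent with \eqref{m43}), $\mathbb{E}\Psi(0)=\mathbb{E}\bigl[\mu_2\|(\mathcal{J}_n-\mathcal{J}_m)\v_0\|_{\mathrm{L}^2}^2+\mu_1\|(\mathcal{J}_n-\mathcal{J}_m)\mathbf{\tau}_0\|_{\mathrm{L}^2}^2\bigr]\to0$ by dominated convergence. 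A stochastic Gronwall argument --- multiply by $\exp\bigl(-\int_0^{t\wedge\Rn}g\bigr)$, which is pathwise bounded below by $e^{-C(N,T)}$, then take $\sup_t$, $\mathbb{E}$ and re-absorb the martingale remainder via Burkholder-Davis-Gundy --- yields $\mathbb{E}\sup_{[0,\Lm]}\bigl(\mu_2\|\w\|_{\mathrm{L}^2}^2+\mu_1\|\z\|_{\mathrm{L}^2}^2\bigr)+\nu\mu_2\mathbb{E}\int_0^{\Lm}\|\nabla\w\|_{\mathrm{L}^2}^2\d r\le C(N,T)\bigl(\mathbb{E}\Psi(0)+\eta_n\bigr)$; taking $\sup_{m\ge n}$ and $n\to\infty$ proves (i) and (ii) at once (the positive weights $\mu_1,\mu_2$ being harmless). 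I expect the main obstacle to be item~(c): the transport term $(\w\cdot\nabla)\mathbf{\tau}_n$ in the $\mathbf{\tau}$-equation carries no dissipation of its own, and it is precisely Remark~\ref{r23} (which moves the derivative onto $\w$) together with the $\int_0^{\cdot}\|\v_n\|_{H^{s+1}}^2\d r$ control built into \eqref{stop1} that let the Gronwall loop close --- which is why the Cauchy estimate must be run in $\mathrm{L}^2$, where the noise coefficients are Lipschitz and the transport terms cancel, rather than in $H^s$.
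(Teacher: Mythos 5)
Your proposal is correct and follows essentially the same route as the paper: an $\mathrm{L}^2$-level It\^o difference estimate with the splitting $\mathcal{J}_nF_n-\mathcal{J}_mF_m=(\mathcal{J}_n-\mathcal{J}_m)F_n+\mathcal{J}_m(F_n-F_m)$, truncation-gap decay from \eqref{sr}--\eqref{sr2}, the divergence-free cancellations, Remark~\ref{r23} to shift the derivative off $\mathbf{\tau}_n$ onto the dissipated velocity difference, the $\mathrm{L}^2$-Lipschitz noise bounds, Burkholder--Davis--Gundy, and the exponential weight $\eta(t)=\exp\bigl(-2\int_0^t\|\nabla\v_n\|_{H^s}^2\,\d s\bigr)$ (pathwise bounded below via \eqref{stop1}) followed by Gronwall --- exactly the content of Step I--II of the paper's proof and the appendix Lemmas~\ref{v}--\ref{M3}. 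The only (harmless) deviations are cosmetic: you integrate $\bigl((\w\cdot\nabla)\v_n,\w\bigr)_{\mathrm{L}^2}$ by parts where the paper uses $\|\nabla\v_n\|_{\mathrm{L}^\infty}$ directly, and your Gronwall weight uses $1+\|\v_n\|_{H^{s+1}}$ instead of $\|\nabla\v_n\|_{H^s}^2$.
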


\begin{proof} We split the proof in three steps.

\textbf{Step I:} \newline
Let $(\v_n,\mathbf{\tau}_n)$ and $(\v_m,\mathbf{\tau}_m)$  be
two strong solutions of (\ref{Trun1})-(\ref{Trun4}) in
$\mathcal{V}_n$ and $\mathcal{V}_{m}$ respectively. 
 Consider the difference between the
equations \eqref{Trun1}-\eqref{Trun4} for $n$ and $m$ to get
\begin{align}
\d(\mathbf{v}_n-\v_m)&=\nu\Delta
(\v_n-\v_m)\d t-\nabla(p_n-p_m)\d t-\left(\mathcal{J}_n [(\mathbf{v}_n \cdot\nabla)\v_n ]-\mathcal{J}_m [(\mathbf{v}_m \cdot\nabla)\v_m ]\right)dt\notag \\
&\quad +\mu_1 \nabla \cdot (\mathbf{\tau}_n-\mathbf{\tau}_m)\d t +\left(\mathcal{J}_n \sigma(t,\v_n)-\mathcal{J}_{m}\sigma(t,\v_{m})\right)\d W_1(t)
\nonumber\\
&\quad+\int_Z \left(\mathcal{J}_n G(\v_n,z)-\mathcal{J}_{m} G(\v_{m},z)\right) \tilde{N}_1(\d t,\d z),\label{cs1}\\
\d(\mathbf{\tau}_n-\mathbf{\tau}_m)&=-\left( \mathcal{J}_n \left[(\mathbf{v}_n \cdot \nabla)\mathbf{\tau}_n \right] 
-\mathcal{J}_m [(\mathbf{v}_m \cdot\nabla)\mathbf{\tau}_m] \right)\d t
-\left(\mathcal{J}_n \Q(\mathbf{\tau}_n, \nabla \v_n)-\mathcal{J}_m \Q(\mathbf{\tau}_m, \nabla \v_m)\right)\d t \notag \\
&\quad+\mu_2 D(\v_n-\v_m)dt -a (\mathbf{\tau}_n-\mathbf{\tau}_m)\d t
+ \frac{1}{2}\left[\mathcal{J}_{n}\mathcal{S}^2(\mathbf{\tau}_n)-\mathcal{J}_{m} \mathcal{S}^2(\mathbf{\tau}_m)\right]\d t\notag \\
&\quad+ \left(\mathcal{J}_{n}\mathcal{S}(\mathbf{\tau}_n)-\mathcal{J}_{m} \mathcal{S}(\mathbf{\tau}_m)\right)\d W_2(t).\label{cs2}
\end{align}

Applying It\^{o}'s Lemma to the function
$\|x\|^2_{\mathrm{L}^2}$ and to the process $\mu_2(\v_n-\v_m)$
in (\ref{cs1}) and to the process $\mu_1(\mathbf{\tau}_n-\mathbf{\tau}_m)$ in (\ref{cs2}),
and  adding these two equations we get
\begin{align}
&\d\left(\mu_2\|\v_n-\v_m\|_{\mathrm{L}^2}^2+\mu_1 \|\mathbf{\tau}_n-\mathbf{\tau}_m\|_{\mathrm{L}^2}^2\right)
+2\mu_2 \nu\|\nabla(\v_n-\v_m)\|^2_{\mathrm{L}^2}\d
t\nonumber\\
&=-2 \mu_2 \left(\mathcal{J}_n [(\mathbf{v}_n \cdot\nabla)\v_n ]-\mathcal{J}_m [(\mathbf{v}_m \cdot\nabla)\v_m ]
,\v_n-\v_m \right)_{\mathrm{L}^2}\d t\notag \\
&\quad + \underbrace{\left[ 2\mu_1 \mu_2 \left\lbrace (\nabla \cdot (\mathbf{\tau}_n-\mathbf{\tau}_m), \v_n-\v_m)_{\mathrm{L}^2}
+(D(\v_n-\v_m),\mathbf{\tau}_n-\mathbf{\tau}_m)_{\mathrm{L}^2}\right\rbrace -2\mu_2\left(\nabla (p_n-p_m),\v_n-\v_m\right)_{\mathrm{L}^2}\right]}_{J_0} \d t\notag\\
&\quad+\mu_2\|\mathcal{J}_n \sigma(t,\v_n)-\mathcal{J}_{m}\sigma(t,\v_{m})\|_{\mathcal{L}_Q(\mathrm{L}^2,\mathrm{L}^2)}^2\d t
+2\mu_2\left(\left(\mathcal{J}_n \sigma(t,\v_n)-\mathcal{J}_{m}\sigma(t,\v_{m})\right)\d W_1(t),\v_n-\v_m \right)\notag\\
&\quad+\mu_2\int_Z\|\mathcal{J}_{n} G(\v_n(t-),z)-\mathcal{J}_{m}G
(\v_m(t-),z)\|^2_{\mathrm{L}^2}N_1(\d t,\d z)\nonumber\\
&\quad+2\mu_2\int_Z\left(\mathcal{J}_{n} G(\v_n(t-),z)-\mathcal{J}_{m} G(\v_m(t-),z),\v_n-\v_m \right)_{\mathrm{L}^2}\tilde{N}_1(\d
t,\d z) \nonumber\\
&\quad-2 \mu_1 \left(\mathcal{J}_n [(\mathbf{v}_n \cdot\nabla)\mathbf{\tau}_n ]-\mathcal{J}_m [(\mathbf{v}_m \cdot\nabla)\mathbf{\tau}_m ]
,\mathbf{\tau}_n-\mathbf{\tau}_m \right)_{\mathrm{L}^2}\d t\notag \\
&\quad-2\mu_1 \left(\mathcal{J}_n \Q(\mathbf{\tau}_n, \nabla \v_n)-
\mathcal{J}_m \Q(\mathbf{\tau}_m, \nabla \v_m), \mathbf{\tau}_n-\mathbf{\tau}_m\right)_{\mathrm{L}^2}\d t-2a\mu_1 \|\mathbf{\tau}_n-\mathbf{\tau}_m\|_{\mathrm{L}^2}^2\d t
\nonumber\\
&\quad+ \mu_1\left(\mathcal{J}_{n}\mathcal{S}^2(\mathbf{\tau}_n)-\mathcal{J}_{m} \mathcal{S}^2(\mathbf{\tau}_m),\mathbf{\tau}_n-\mathbf{\tau}_m\right)_{\mathrm{L}^2}
\d t+ \mu_1\| \mathcal{J}_{n}\mathcal{S}(\mathbf{\tau}_n)-\mathcal{J}_{m} \mathcal{S}(\mathbf{\tau}_m)\|_{\mathrm{L}^2}^2\d t \notag \\
&\quad +2 \mu_1 \left(\mathcal{J}_n \mathcal{S}(\mathbf{\tau}_n)
-\mathcal{J}_m \mathcal{S}(\mathbf{\tau}_m),\mathbf{\tau}_n-\mathbf{\tau}_m\right)_{\mathrm{L}^2}\d W_2(t).
\end{align}
Using the fact that $\nabla \cdot \v_n=\nabla \cdot \v_m=0 $ we have directly $J_0=0.$ 
\begin{align} \label{ito.fin}
&\d\left(\mu_2\|\v_n-\v_m\|_{\mathrm{L}^2}^2+\mu_1 \|\mathbf{\tau}_n-\mathbf{\tau}_m\|_{\mathrm{L}^2}^2\right)
+2\mu_2 \nu\|\nabla(\v_n-\v_m)\|^2_{\mathrm{L}^2}\d
t\nonumber\\
&=\underbrace{-2 \mu_2 \left(\mathcal{J}_n [(\mathbf{v}_n \cdot\nabla)\v_n ]-\mathcal{J}_m [(\mathbf{v}_m \cdot\nabla)\v_m ]
,\v_n-\v_m \right)_{\mathrm{L}^2}}_{J_1}\d t\notag \\
&\quad\underbrace{-2 \mu_1\left(\mathcal{J}_n [(\mathbf{v}_n \cdot\nabla)\mathbf{\tau}_n ]-\mathcal{J}_m [(\mathbf{v}_m \cdot\nabla)\mathbf{\tau}_m ]
,\mathbf{\tau}_n-\mathbf{\tau}_m \right)_{\mathrm{L}^2}}_{J_2}\d t\notag \\
&\quad\underbrace{-2\mu_1\left(\mathcal{J}_n \Q(\mathbf{\tau}_n, \nabla \v_n)-
\mathcal{J}_m \Q(\mathbf{\tau}_m, \nabla \v_m), \mathbf{\tau}_n-\mathbf{\tau}_m\right)_{\mathrm{L}^2}}_{J_3} \d t-2a\mu_1 \|\mathbf{\tau}_n-\mathbf{\tau}_m\|_{\mathrm{L}^2}^2\d t
\nonumber\\
&\quad+ \underbrace{\mu_1\left(\mathcal{J}_{n}\mathcal{S}^2(\mathbf{\tau}_n)-\mathcal{J}_{m} \mathcal{S}^2(\mathbf{\tau}_m),\mathbf{\tau}_n-\mathbf{\tau}_m\right)_{\mathrm{L}^2}
}_{J_4}\d t+ \underbrace{\mu_1\|\mathcal{J}_{n}\mathcal{S}(\mathbf{\tau}_n)-\mathcal{J}_{m} \mathcal{S}(\mathbf{\tau}_m)\|_{\mathrm{L}^2}^2}_{J_5} \d t\notag \\
&\quad+\mu_2\|\mathcal{J}_n \sigma(t,\v_n)-\mathcal{J}_{m}\sigma(t,\v_{m})\|_{\mathcal{L}_Q(\mathrm{L}^2,\mathrm{L}^2)}^2\d t
\nonumber\\
&\quad+\mu_2\int_Z\|\mathcal{J}_{n} G(\v_n(t-),z)-\mathcal{J}_{m}G
(\v_m(t-),z)\|^2_{\mathrm{L}^2}N_1(\d t,\d z)\nonumber\\
&\quad+2\mu_2\left(\left(\mathcal{J}_n \sigma(t,\v_n)-\mathcal{J}_{m}\sigma(t,\v_{m})\right)\d W_1(t),\v_n-\v_m \right)_{\mathrm{L}^2}\notag\\
&\quad +2 \mu_1 \left(\mathcal{J}_n \mathcal{S}(\mathbf{\tau}_n)-\mathcal{J}_m \mathcal{S}(\mathbf{\tau}_m),\mathbf{\tau}_n-\mathbf{\tau}_m\right)_{\mathrm{L}^2}\d W_2(t)\nonumber\\
&\quad+2\mu_2\int_Z\left(\mathcal{J}_{n} G(\v_n(t-),z)-\mathcal{J}_{m} G(\v_m(t-),z),\v_n-\v_m \right)_{\mathrm{L}^2}\tilde{N}_1(\d
t,\d z).
\end{align}
Now  using \eqref{propS.2} and Remark \ref{esti.S}, $J_4$ can be simplified further as: 
\begin{align} \label{J_4}
|\mu_1 \left(\mathcal{J}_{n}\mathcal{S}^2(\mathbf{\tau}_n)-\mathcal{J}_{m} \mathcal{S}^2(\mathbf{\tau}_m),\mathbf{\tau}_n-\mathbf{\tau}_m\right)_{\mathrm{L}^2}|
  \leq \frac{C \mu_1}{n^{\epsilon}}\|h\|_{H^{s}}^2\|\mathbf{\tau}_n\|_{H^{s}}\|\mathbf{\tau}_n-\mathbf{\tau}_{m}\|_{\mathrm{L}^2}
+\mu_1 \|h\|^2_{\mathrm{L}^\infty}\|\mathbf{\tau}_n-\mathbf{\tau}_m\|_{\mathrm{L}^2}^2.
\end{align}

Using Lemma \ref{v}, we obtain
\begin{align} \label{J_1}
|J_1| \leq \frac{2C\sqrt{\mu_2}}{n^\epsilon}\|\v_n\|_{H^s}^2 \sqrt{\mu_2}\|\v_n-\v_m\|_{\mathrm{L}^2}+2 \mu_2 C \|\v_n-\v_m\|_{\mathrm{L}^2}^2 \|\nabla \v_n\|_{H^s}.
\end{align}
Using Lemma \ref{tau}, $J_2$ is reduced to
\begin{align} \label{J_2}
|J_2| \leq \frac{2 \sqrt{\mu_1}C}{n^\epsilon}(\|\v_n\|_{H^s}^2+\|\tau_n\|_{H^s}^2) \sqrt{\mu_1} \|\tau_n-\tau_m\|_{\mathrm{L}^2}+\frac{\nu \mu_2}{2}
\|\v_n-\v_m\|_{H^1}^2+\frac{2C \mu_1^2}{\nu \mu_2} \|\tau_n\|_{H^s}^2 \|\tau_n-\tau_m\|_{\mathrm{L}^2}^2.
\end{align}
Exploiting Lemma \ref{Q}, $J_3$ becomes:
\begin{align} \label{J_3}
|J_3| &\leq \frac{2 \sqrt{\mu_1}C}{n^\epsilon}(\|\v_n\|_{H^s}^2+\|\tau_n\|_{H^s}^2) \sqrt{\mu_1} \|\tau_n-\tau_m\|_{\mathrm{L}^2}+\frac{\nu \mu_2}{2}
\|\v_n-\v_m\|_{H^1}^2+\frac{2C \mu_1^2}{\nu \mu_2} \|\tau_m\|_{H^s}^2 \|\tau_n-\tau_m\|_{\mathrm{L}^2}^2\notag\\
&\quad +2\mu_1C \|\nabla \v_n\|_{H^s}\|\tau_n-\tau_m\|_{\mathrm{L}^2}^2+\frac{\mu_1 C}{n^\epsilon} \| \nabla \v_n\|_{H^s}^2 \|\tau_n-\tau_m\|_{\mathrm{L}^2}.
\end{align}

Exploiting \eqref{S.2} in Lemma \ref{M2}, $J_5,$ becomes
\begin{align} \label{J_5}
|J_5| \leq \frac{2 \mu_1}{n^{\epsilon}}\|h\|_{H^s}^2\|\tau_n\|_{H^s}^2+2\mu_1 \|h\|_{\mathrm{L}^\infty}^2 \|\tau_n-\tau_m\|_{\mathrm{L}^2}^2.
\end{align}

Combining \eqref{J_4},\eqref{J_1},\eqref{J_2},\eqref{J_3} and \eqref{J_5} together and using the fact that $ 4C\|\nabla \v_n\|_{H^s} \leq C^2+ 2\|\nabla \v_n\|_{H^s}^2 $, \eqref{ito.fin} becomes:
\begin{align}\label{estim}
&\d\left(\mu_2\|\v_n-\v_m\|_{\mathrm{L}^2}^2+\mu_1 \|\mathbf{\tau}_n-\mathbf{\tau}_m\|_{\mathrm{L}^2}^2\right)
+\mu_2 \nu\|\nabla(\v_n-\v_m)\|^2_{\mathrm{L}^2}\d
t\nonumber\\
&\leq \frac{C}{n^{\epsilon}}\Big(\left(2\sqrt{\mu_2}+4\sqrt{\mu_1}\right)\|\v_n\|_{H^{s}}^2
+4\sqrt{\mu_1}\|\mathbf{\tau}_n\|_{H^{s}}^2+\sqrt{\mu_1}\|h\|_{H^{s}}^2\|\mathbf{\tau}_n\|_{H^{s}}
+\sqrt{\mu_1}\|\nabla \v_n\|_{H^s}^2\Big)\notag\\
&\qquad \qquad \times \Big(\sqrt{\mu_2}\|\v_n-\v_{m}\|_{\mathrm{L}^2}+\sqrt{\mu_1}\|\mathbf{\tau}_n-\mathbf{\tau}_{m}\|_{\mathrm{L}^2} \Big)\notag \\
&\quad+\left((2C^2+2a)+2\|\nabla \v_n\|_{H^s}^2
+\frac{2C\mu_1}{\nu\mu_2}\left(\|\mathbf{\tau}_n\|_{H^{s}}^2+\|\tau_m\|_{H^s}^2\right)+2\|h\|_{\mathrm{L}^\infty}^2\right)\nonumber\\
&\qquad \qquad\times \Big(\mu_2\|\v_n-\v_{m}\|_{\mathrm{L}^2}^2+\mu_1\|\mathbf{\tau}_n-\mathbf{\tau}_{m}\|_{\mathrm{L}^2}^2 \Big)
+\frac{2\mu_1}{n^{\epsilon}}\|h\|_{H^{s}}^2\|\mathbf{\tau}_n\|_{H^{s}}^2 \notag \\
&\quad+\mu_2\|\mathcal{J}_n \sigma(t,\v_n)-\mathcal{J}_{m}\sigma(t,\v_{m})\|_{\mathcal{L}_Q(\mathrm{L}^2,\mathrm{L}^2)}^2\d t
\nonumber\\
&\quad+\mu_2\int_Z\|\mathcal{J}_{n} G(\v_n(t-),z)-\mathcal{J}_{m}G
(\v_m(t-),z)\|^2_{\mathrm{L}^2}N_1(\d t,\d z)\nonumber\\
&\quad+2\mu_2\left(\left(\mathcal{J}_n \sigma(t,\v_n)-\mathcal{J}_{m}\sigma(t,\v_{m})\right)\d W_1(t),\v_n-\v_m \right)_{\mathrm{L}^2}\notag\\
&\quad +2 \mu_1 \left(\mathcal{J}_n \mathcal{S}(\mathbf{\tau}_n)-\mathcal{J}_m \mathcal{S}(\mathbf{\tau}_m),\mathbf{\tau}_n-\mathbf{\tau}_m\right)_{\mathrm{L}^2}\d W_2(t)\nonumber\\
&\quad+2\mu_2\int_Z\left(\mathcal{J}_{n} G(\v_n(t-),z)-\mathcal{J}_{m} G(\v_m(t-),z),\v_n-\v_m \right)_{\mathrm{L}^2}\tilde{N}_1(\d
t,\d z).
\end{align}

\textbf{Step II:} \newline
Let us define the process
$\eta(t):=\exp\left(-2\mathlarger{\int_{0}^t}\|\nabla\v_n \|^2_{H^s}
\d s\right),$ $t\in[0,\Lm)$ and apply It\^{o} product formula
(see Theorem 4.4.13, Applebaum \cite{Ap}) to the process
$\eta(t)(\mu_2\|\v_n-\v_m\|_{\mathrm{L}^2}^2+\mu_1\|\mathbf{\tau}_n-\mathbf{\tau}_m\|^2_{\mathrm{L}^2})$ in the
interval $[0,t]$ to get
\begin{align} \label{ito.pro}
&\eta(t)\left(\mu_2\|\v_n-\v_m\|_{\mathrm{L}^2}^2+\mu_1 \|\mathbf{\tau}_n-\mathbf{\tau}_m\|_{\mathrm{L}^2}^2\right)
+\mu_2 \nu\int_0^t\eta(s)\|\nabla(\v_n-\v_m)\|^2_{\mathrm{L}^2}\d s\nonumber\\
&\leq\left(\mu_2\|\v_n(0)-\v_m(0)\|_{\mathrm{L}^2}^2+\mu_1 \|\mathbf{\tau}_n(0)-\mathbf{\tau}_m(0)\|_{\mathrm{L}^2}^2\right)\notag\\
&\quad +\frac{C}{n^{\epsilon}}\int_0^t\eta(s)\Big(\left(2\sqrt{\mu_2}+4\sqrt{\mu_1}\right)\|\v_n\|_{H^{s}}^2
+4\sqrt{\mu_1}\|\mathbf{\tau}_n\|_{H^{s}}^2+\sqrt{\mu_1}\|h\|_{H^{s}}^2\|\mathbf{\tau}_n\|_{H^{s}}
+\sqrt{\mu_1}\|\nabla \v_n\|_{H^s}^2\Big)\notag \\
&\qquad \qquad \times \Big(\sqrt{\mu_2}\|\v_n-\v_{m}\|_{\mathrm{L}^2}+\sqrt{\mu_1}\|\mathbf{\tau}_n-\mathbf{\tau}_{m}\|_{\mathrm{L}^2} \Big) \d s \notag \\
&\quad+\int_{0}^{t} \eta(s) \left((2C^2+2a)
+\frac{2C\mu_1}{\nu\mu_2}\left(\|\mathbf{\tau}_n\|_{H^{s}}^2+\|\tau_m\|_{H^s}^2\right)+2\|h\|_{\mathrm{L}^\infty}^2\right)\nonumber\\
&\qquad \qquad \times \Big(\mu_2\|\v_n-\v_{m}\|_{\mathrm{L}^2}^2+\mu_1\|\mathbf{\tau}_n-\mathbf{\tau}_{m}\|_{\mathrm{L}^2}^2 \Big) \d s
+\frac{2\mu_1 \|h\|_{H^{s}}^2}{n^{\epsilon}} \int_{0}^{t} \eta(s) \|\mathbf{\tau}_n\|_{H^{s}}^2 \d s\notag \\
&\quad+\mu_2 \int_{0}^{t} \eta(s) \|\mathcal{J}_n \sigma(s,\v_n)-\mathcal{J}_{m}\sigma(s,\v_{m})\|_{\mathcal{L}_Q(\mathrm{L}^2,\mathrm{L}^2)}^2
 \d s\nonumber\\
&\quad+\mu_2 \int_{0}^{t} \eta(s) \int_Z\|\mathcal{J}_{n} G(\v_n(s-),z)-\mathcal{J}_{m}G
(\v_m(s-),z)\|^2_{\mathrm{L}^2}N_1(\d s,\d z)\nonumber\\
&\quad+2\mu_2\int_{0}^{t} \eta(s)\left(\left(\mathcal{J}_n \sigma(s,\v_n)-\mathcal{J}_{m}\sigma(s,\v_{m})\right)\d W_1(s),\v_n-\v_m \right)_{\mathrm{L}^2}\notag\\
&\quad +2 \mu_1 \int_{0}^{t} \eta(s)\left(\mathcal{J}_n \mathcal{S}(\mathbf{\tau}_n)-\mathcal{J}_m \mathcal{S}(\mathbf{\tau}_m),\mathbf{\tau}_n-\mathbf{\tau}_m\right)_{\mathrm{L}^2}\d W_2(s)\nonumber\\
&\quad+2\mu_2\int_{0}^{t} \eta(s)\int_Z\left(\mathcal{J}_{n} G(\v_n(s-),z)-\mathcal{J}_{m} G(\v_m(s-),z),\v_n-\v_m \right)_{\mathrm{L}^2}\tilde{N}_1(\d
s,\d z).
\end{align}
It is to be noted that the quadratic variation of the product of these two adapted processes $Y_1(t)=\eta(t)$ and $Y_2(t)=\mu_2\|\v_n-\v_m\|_{\mathrm{L}^2}^2+\mu_1\|\mathbf{\tau}_n-\mathbf{\tau}_m\|^2_{\mathrm{L}^2},$ i.e., $[Y_1,Y_2](t)$ is zero (see, Section 4.4.3, page-257 of Applebaum \cite{Ap}).
\newline
 Let us now take the supremum from $0$ to
$T\wedge\Rn$, for any $T>0$ in  \eqref{ito.pro} and then on taking expectation and thereafter using \eqref{mt1} (in Remark \ref{nlam}), we get,
\begin{align} \label{Cau.1}
&\mathbb{E}\Big[\sup_{0\leq t\leq\Lm}\eta(t)\Big(\mu_2\|\v_n-\v_m\|_{\mathrm{L}^2}^2+\mu_1 \|\mathbf{\tau}_n-\mathbf{\tau}_m\|_{\mathrm{L}^2}^2\Big)\Big]
+\mu_2 \nu\mathbb{E}\left[\int_0^{\Lm}\eta(s)\|\nabla(\v_n-\v_m)\|^2_{\mathrm{L}^2}\d s\right]\nonumber\\
&\leq \mathbb{E}\Big(\mu_2\|\v_n(0)-\v_m(0)\|_{\mathrm{L}^2}^2+\mu_1 \|\mathbf{\tau}_n(0)-\mathbf{\tau}_m(0)\|_{\mathrm{L}^2}^2\Big)\notag\\
&\quad +\frac{C}{n^{\epsilon}}\mathbb{E}\Big[\int_0^{\Lm}\eta(s)\Big(\left(2\sqrt{\mu_2}+4\sqrt{\mu_1}\right)\|\v_n\|_{H^{s}}^2
+4\sqrt{\mu_1}\|\mathbf{\tau}_n\|_{H^{s}}^2+\sqrt{\mu_1}\|h\|_{H^{s}}^2\|\mathbf{\tau}_n\|_{H^{s}}
+\sqrt{\mu_1}\|\nabla \v_n\|_{H^s}^2\Big)\notag \\
&\qquad \qquad \times \Big(\sqrt{\mu_2}\|\v_n-\v_{m}\|_{\mathrm{L}^2}+\sqrt{\mu_1}\|\mathbf{\tau}_n-\mathbf{\tau}_{m}\|_{\mathrm{L}^2} \Big) \d s \Big]\notag \\
&\quad+\mathbb{E}\Big[\int_0^{\Lm}\eta(s)\left((2C^2+2a)
+\frac{2C\mu_1}{\nu\mu_2}\left(\|\mathbf{\tau}_n\|_{H^{s}}^2+\|\tau_m\|_{H^s}^2\right)+2\|h\|_{\mathrm{L}^\infty}^2\right)\nonumber\\
&\qquad \qquad \times \Big(\mu_2\|\v_n-\v_{m}\|_{\mathrm{L}^2}^2+\mu_1\|\mathbf{\tau}_n-\mathbf{\tau}_{m}\|_{\mathrm{L}^2}^2 \Big) \d s \Big]\notag\\
&\quad +\frac{2\mu_1}{n^{\epsilon}}\|h\|_{H^{s}}^2\mathbb{E}\Big[\int_0^{\Lm}\eta(s)\|\mathbf{\tau}_n\|_{H^{s}}^2\d s\Big] 
+\mu_2\mathbb{E}\Big[\int_0^{\Lm}\eta(s)\|\mathcal{J}_n \sigma(s,\v_n)-\mathcal{J}_{m}\sigma(s,\v_{m})
\|_{\mathcal{L}_Q(\mathrm{L}^2,\mathrm{L}^2)}^2\d s\Big]
\nonumber\\
&\quad+\mu_2\mathbb{E}\Big[\int_0^{\Lm}\int_Z\eta(s)\|\mathcal{J}_{n} G(\v_n(s),z)-\mathcal{J}_{m}G
(\v_m(s),z)\|^2_{\mathrm{L}^2}\lambda(\d z) \d s\Big]\nonumber\\
&\quad+\underbrace{2\mu_2\mathbb{E}\Big[\sup_{0\leq t\leq\Lm}\int_0^{t}\eta(s)\left(\left(\mathcal{J}_n \sigma(s,\v_n)-\mathcal{J}_{m}
\sigma(s,\v_{m})\right)\d W_1(s),\v_n-\v_m \right)_{\mathrm{L}^2}\Big]}_{J_6}\notag\\
&\quad +\underbrace{2 \mu_1 \mathbb{E}\Big[\sup_{0\leq t\leq\Lm}\int_0^{t}\eta(s)\left(\mathcal{J}_n S(\mathbf{\tau}_n)-\mathcal{J}_m S(\mathbf{\tau}_m),
\mathbf{\tau}_n-\mathbf{\tau}_m\right)_{\mathrm{L}^2}\d W_2(s)\Big]}_{J_7}\nonumber\\
&\quad+\underbrace{2\mu_2\mathbb{E}\Big[\sup_{0\leq t\leq\Lm}\int_0^{t}\int_Z\eta(s)\left(\mathcal{J}_{n} G(\v_n(s-),z)-\mathcal{J}_{m}
G(\v_m(s-),z),\v_n-\v_m \right)_{\mathrm{L}^2}\tilde{N}_1(\d s,\d z)\Big]}_{J_8}
\end{align} Using Lemma \ref{M1}, we have
\begin{align} \label{M1.cau}
|J_6| &\leq \frac{\mu_2}{4}\mathbb{E}\left(\sup_{0\leq t\leq \Lm}\eta(t)\|\v_n(t)-\v_m(t)\|^2_{\mathrm{L}^2}\right) \notag\\
  & \quad +\frac{8CK\mu_2}{n^{\epsilon}}\mathbb{E}\left[\int_{0}^{\Lm}\eta(t)(1+\|\v_n\|_{H^s}^2)\d
t\right]+8L\mu_2 \,\mathbb{E}\left[\int_{0}^{\Lm}\eta(t)\|\v_n-\v_{m}\|_{\mathrm{L}^2}^2\d
t\right].
\end{align} 
Exploiting Lemma \ref{M2}, we achieve
\begin{align}\label{M2.cau}
|J_7| &\leq \frac{\mu_1}{2}\mathbb{E}\left(\sup_{0\leq t\leq \Lm}\eta(t)\|\mathbf{\tau}_n(t)-\mathbf{\tau}_m(t)\|^2_{\mathrm{L}^2}\right)
+\frac{8 \mu_1}{n^{\epsilon}}\mathbb{E}\left(\int_0^{\Lm}\eta(t)\|h\|_{H^{s}}^2\|\mathbf{\tau}_n\|_{H^{s}}^2 \d t\right) \notag\\
&\quad +8 \mu_1\|h\|^2_{\mathrm{L}^\infty}\mathbb{E}\left(\int_0^{\Lm}\eta(t)\|\mathbf{\tau}_n-\mathbf{\tau}_m\|_{\mathrm{L}^2}^2 \d t\right).
\end{align}
Exploiting Lemma \ref{M3}, we have
\begin{align}\label{M3.cau}
|J_8| &\leq \frac{\mu_2}{4}\mathbb{E}\left(\sup_{0\leq t\leq \Lm}\eta(t)\|\v_n(t)-\v_m(t)\|^2_{\mathrm{L}^2}\right)\notag \\
&\leq \frac{8\mu_2CK}{n^{\epsilon}}\mathbb{E}\left[\int_{0}^{\Lm}\eta(t)(1+\|\v_n\|_{H^s}^2)d
t\right]+8\mu_2L\,\mathbb{E}\left[\int_{0}^{\Lm}\eta(t)\|\v_n-\v_{m}\|_{\mathrm{L}^2}^2dt\right].
\end{align}
Combining \eqref{M1.cau}, \eqref{M2.cau} and \eqref{M3.cau}, and using \eqref{s.tau.2}, \eqref{G.lam.2} and further using the fact that $$ \sqrt{\mu_2}\|\v_n-\v_{m}\|_{\mathrm{L}^2}+\sqrt{\mu_1}\|\mathbf{\tau}_n-\mathbf{\tau}_{m}\|_{\mathrm{L}^2} \leq C (1+\mu_2\|\v_n-\v_{m}\|_{\mathrm{L}^2}^2+\mu_1\|\mathbf{\tau}_n-\mathbf{\tau}_{m}\|_{\mathrm{L}^2}^2), $$  equation \eqref{Cau.1} is reduced to

\begin{align}
&\mathbb{E}\Big[\sup_{0\leq t\leq\Lm}\eta(t)\Big(\mu_2\|\v_n-\v_m\|_{\mathrm{L}^2}^2+\mu_1 \|\mathbf{\tau}_n-\mathbf{\tau}_m\|_{\mathrm{L}^2}^2\Big)\Big]
+2\mu_2 \nu\mathbb{E}\left[\int_0^{\Lm}\eta(s)\|\nabla(\v_n-\v_m)\|^2_{\mathrm{L}^2}\d s\right]\nonumber\\
&\leq 2\mathbb{E}\Big(\mu_2\|\v_n(0)-\v_m(0)\|_{\mathrm{L}^2}^2+\mu_1 \|\mathbf{\tau}_n(0)-\mathbf{\tau}_m(0)\|_{\mathrm{L}^2}^2\Big)\notag\\
&\quad +\frac{2\sqrt{2}C}{n^{\epsilon}}\mathbb{E}\Big[\int_0^{\Lm}\eta(s)\Big(\left(2\sqrt{\mu_2}+4\sqrt{\mu_1}\right)\|\v_n\|_{H^{s}}^2
+4\sqrt{\mu_1}\|\mathbf{\tau}_n\|_{H^{s}}^2+\sqrt{\mu_1}\|h\|_{H^{s}}^2\|\mathbf{\tau}_n\|_{H^{s}}\notag\\
&\qquad \qquad +\sqrt{\mu_1}\|\nabla \v_n\|_{H^s}^2\Big)
 \times \Big(1+\mu_2\|\v_n-\v_{m}\|_{\mathrm{L}^2}^2+\mu_1\|\mathbf{\tau}_n-\mathbf{\tau}_{m}\|_{\mathrm{L}^2}^2 \Big) \d s \Big]\notag \\
&\quad+2\mathbb{E}\Big[\int_0^{\Lm}\eta(s)\left((2C^2+2a+18L)
+\frac{2C\mu_1}{\nu\mu_2}\left(\|\mathbf{\tau}_n\|_{H^{s}}^2
+\|\mathbf{\tau}_m\|_{H^{s}}^2\right)+10\|h\|_{\mathrm{L}^\infty}^2\right)\nonumber\\
&\qquad \qquad \times \Big(\mu_2\|\v_n-\v_{m}\|_{\mathrm{L}^2}^2+\mu_1\|\mathbf{\tau}_n-\mathbf{\tau}_{m}\|_{\mathrm{L}^2}^2 \Big) \d s \Big]\notag\\
&\quad +\frac{1}{n^{\epsilon}}\left(20\|h\|_{H^{s}}^2
+36CK\right)\mathbb{E}\Big[\int_0^{\Lm}\eta(s)\left(\mu_2+\mu_2\|\v_n\|_{H^s}^2+\mu_1\|\mathbf{\tau}_n\|_{H^{s}}^2\right)\d s\Big] .
\end{align}
Using definition of stopping time,  H\"{o}lder's inequality and rearranging, the above inequality reduces to
\begin{align} \label{Cau.7}
&\mathbb{E}\Big[\sup_{0\leq t\leq\Lm}\eta(t)\Big(\mu_2\|\v_n-\v_m\|_{\mathrm{L}^2}^2+\mu_1 \|\mathbf{\tau}_n-\mathbf{\tau}_m\|_{\mathrm{L}^2}^2\Big)\Big]
+2\mu_2 \nu\mathbb{E}\left[\int_0^{\Lm}\eta(s)\|\nabla(\v_n-\v_m)\|^2_{\mathrm{L}^2}\d s\right]\nonumber\\
&\leq 2\mathbb{E}\Big(\mu_2\|\v_n(0)-\v_m(0)\|_{\mathrm{L}^2}^2
+\mu_1 \|\mathbf{\tau}_n(0)-\mathbf{\tau}_m(0)\|_{\mathrm{L}^2}^2\Big)\notag\\
&\quad +\frac{1}{n^{\epsilon}}\frac{\sqrt{2\mu_1}CN}{\mu_2\nu}\mathbb{E}\Big[\sup_{0\leq t\leq\Lm}\eta(t)
\Big(\mu_2\|\v_n-\v_{m}\|_{\mathrm{L}^2}^2
+\mu_1\|\mathbf{\tau}_n-\mathbf{\tau}_{m}\|_{\mathrm{L}^2}^2 \Big) \Big]\notag \\
&\quad+\left(4\left((C^2+a+9L)
+\frac{2CN}{\nu\mu_2}+5\|h\|_{\mathrm{L}^\infty}^2\right)
+\frac{2\sqrt{2}C}{n^{\epsilon}}\left(\frac{(2\sqrt{\mu_2}
+4\sqrt{\mu_1})N}{\mu_2}+\frac{4N}{\sqrt{\mu_1}}+\|h\|_{H^{s}}^2N\right)\right)\nonumber\\
&\qquad \qquad \times \mathbb{E}\Big[\int_0^{\Lm}\eta(s)\Big(\mu_2\|\v_n-\v_{m}\|_{\mathrm{L}^2}^2
+\mu_1\|\mathbf{\tau}_n-\mathbf{\tau}_{m}\|_{\mathrm{L}^2}^2 \Big) \d s \Big]\notag\\
&\quad +\frac{1}{n^{\epsilon}}\left(\left(20\|h\|_{H^{s}}^2
+36CK\right)\left(\mu_2+N\right)+\frac{2\sqrt{2}C}{n^\epsilon}\left(\frac{(2\sqrt{\mu_2}
+4\sqrt{\mu_1})N}{\mu_2}+\frac{4N}{\sqrt{\mu_1}}+\|h\|_{H^{s}}^2N\right)\right)\nonumber\\
&\qquad \qquad \times\mathbb{E}\Big[\int_0^{\Lm}\eta(s)\d s\Big]
+\frac{1}{n^{\epsilon}}\frac{\sqrt{2\mu_1}CN}{\mu_2\nu}.
\end{align}
Note that the second term on the right hand side of \eqref{Cau.7} can be balanced with the first term of the left hand side of \eqref{Cau.7} for sufficiently large $n$, so that 
$ \frac{2\sqrt{2}CN\sqrt{\mu_1}}{n^{\epsilon}} <<1.$
 Therefore, using
$$\mathbb{E}\left[\int_{0}^{\Lm}\eta(t)\d
t\right]\leq \mathbb{E}\left[\int_{0}^{T}\eta(t)\d
t\right]=\mathbb{E}\left[\int_{0}^{T}\exp\left(-2C\int_{0}^t\|\nabla\v_n\|_{H^s}^2\d
s\right)\d t\right]\leq T,$$
finally \eqref{Cau.7} becomes
\begin{align}\label{last.1}
&\mathbb{E}\Big[\sup_{0\leq t\leq\Lm}\eta(t)\Big(\mu_2\|\v_n-\v_m\|_{\mathrm{L}^2}^2
+\mu_1 \|\mathbf{\tau}_n-\mathbf{\tau}_m\|_{\mathrm{L}^2}^2\Big)\Big]
+4\mu_2 \nu\mathbb{E}\left[\int_0^{\Lm}\eta(s)\|\nabla(\v_n-\v_m)\|^2_{\mathrm{L}^2}\d s\right]\nonumber\\
&\leq 4\mathbb{E}\Big(\mu_2\|\v_n(0)-\v_m(0)\|_{\mathrm{L}^2}^2
+\mu_1 \|\mathbf{\tau}_n(0)-\mathbf{\tau}_m(0)\|_{\mathrm{L}^2}^2\Big) +\frac{1}{n^{\epsilon}}\frac{\sqrt{2\mu_1}CN}{\mu_2\nu} \notag\\
&\quad +\frac{2T}{n^{\epsilon}}\left(\left(20\|h\|_{H^{s}}^2
+36CK\right)\left(\mu_2+N\right)+2\sqrt{2}C\left(\frac{(2\sqrt{\mu_2}
+4\sqrt{\mu_1})N}{\mu_2}+\frac{4N}{\sqrt{\mu_1}}+\|h\|_{H^{s}}^2N\right)\right)\notag\\
&\quad+2\left(4\left((C^2+a+9L)
+\frac{2CN}{\nu\mu_2}+5\|h\|_{\mathrm{L}^\infty}^2\right)
+\frac{2\sqrt{2}C}{n^{\epsilon}}\left(\frac{(2\sqrt{\mu_2}
+4\sqrt{\mu_1})N}{\mu_2}+\frac{4N}{\sqrt{\mu_1}}+\|h\|_{H^{s}}^2N\right)\right)\nonumber\\
&\qquad \qquad \times \mathbb{E}\Big[\int_0^{\Lm}\sup_{0\leq s\leq t}\eta(s)\Big(\mu_2\|\v_n-\v_{m}\|_{\mathrm{L}^2}^2
+\mu_1\|\mathbf{\tau}_n-\mathbf{\tau}_{m}\|_{\mathrm{L}^2}^2 \Big) \d t \Big].
\end{align}
An application of standard
Gronwall's inequality yields
\begin{align}\label{gron}
&\mathbb{E}\Big[\sup_{0\leq t\leq\Lm}\eta(t)\Big(\mu_2\|\v_n-\v_m\|_{\mathrm{L}^2}^2+\mu_1 \|\mathbf{\tau}_n-\mathbf{\tau}_m\|_{\mathrm{L}^2}^2\Big)\Big]
+4\mu_2 \nu\mathbb{E}\left[\int_0^{\Lm}\eta(s)\|\nabla(\v_n-\v_m)\|^2_{\mathrm{L}^2}\d s\right]\nonumber\\
&\leq \Big(4\mathbb{E}\left[\mu_2\|\v_n(0)-\v_m(0)\|_{\mathrm{L}^2}^2
+\mu_1 \|\mathbf{\tau}_n(0)-\mathbf{\tau}_m(0)\|_{\mathrm{L}^2}^2\right] + C_1\Big)e^{C_2 T},
\end{align}
where 
\begin{align*}
C_1
&=\frac{2T}{n^{\epsilon}}\left(\left(20\|h\|_{H^{s}}^2
+36CK\right)\left(\mu_2+N\right)+2\sqrt{2}C\left(\frac{(2\sqrt{\mu_2}
+4\sqrt{\mu_1})N}{\mu_2}+\frac{4N}{\sqrt{\mu_1}}+\|h\|_{H^{s}}^2N\right)\right)\\ &\quad+\frac{1}{n^{\epsilon}}\frac{\sqrt{2\mu_1}CN}{\mu_2\nu}
\end{align*}
and $$ C_2=2\left(4\left((C^2+a+9L)
+\frac{2CN}{\nu\mu_2}+5\|h\|_{\mathrm{L}^\infty}^2\right)
+\frac{2\sqrt{2}C}{n^{\epsilon}}\left(\frac{(2\sqrt{\mu_2}
+4\sqrt{\mu_1})N}{\mu_2}+\frac{4N}{\sqrt{\mu_1}}+\|h\|_{H^{s}}^2N\right)\right).
$$ The
right hand side of (\ref{gron}) tends to zero, since
$\v_n(0)=\mathcal{J}_n\v_0$, $\v_m(0)=\mathcal{J}_m\v_0$,
$\mathbf{\tau}_n(0)=\mathcal{J}_n\mathbf{\tau}_0$ and $\mathbf{\tau}_m(0)=\mathcal{J}_m\mathbf{\tau}_0$,
as $n,m\to\infty$. Also from (\ref{last.1}) and (\ref{gron}), we
get
\begin{align}\label{last.2}
\mathbb{E}\left[\int_0^{\Lm}\eta(t)\|\nabla(\v_n-\v_m)\|^2_{\mathrm{L}^2}\d
t\right]\to 0\text{ as }n,m\to\infty.
\end{align}
Since $\eta(\cdot)$ is a bounded measurable $\mathscr{F}_t$ adapted process, it directly yields the required results $(i)$ and $(ii).$ 
\end{proof}

\begin{rem}\label{stopping} As a cosequence of Theorems \ref{positive1} and \ref{cauchy}, we conclude that there exists a stopping time $\xi_N$ and processes $(\v,\\tau)$ such that $(\v_n,\tau_n) \rightarrow (\v,\tau)$ in $\mathrm{L}^2(\Omega;\mathrm{L}^{\infty}(0,\xi_N \wedge T;\mathrm{L}^2(\mathbb{R}^d))).$ 
We later (in Theorem \ref{existence}) identify $\xi_N$  as $\rho_N$ (as defined in the Main Result \ref{MR}), which is the pointwise limit of $\rho_N^n$.
\end{rem}


\section{Existence and Uniqueness of Local Strong Solutions}

\begin{prop} \label{conv.v}
For any $s'<s$ with $s'>d/2$ and $T>0$, the following convergences hold:
\begin{itemize}
\item[(i)] the family of solutions $(\v_n,\mathbf{\tau}_n)\to(\v,\mathbf{\tau})$ strongly in the space 
$\mathrm{L}^2(\Omega;\mathrm{L}^{\infty}(0,\xi_N \wedge T;H^{s'}(\mathbb{R}^d)))$ as $n\to \infty$
\item[(ii)]$\nabla\v_n \to\nabla\v$ strongly in the space
$\mathrm{L}^2(\Omega;\mathrm{L}^2(0,\xi_N \wedge T;H^{s'}(\mathbb{R}^d)))$ as $n\to \infty$
 \item[(iii)]$\Delta\v_n\to\Delta\v$ strongly in the space
$\mathrm{L}^2(\Omega;\mathrm{L}^2(0,\xi_N \wedge T;H^{s'-1}(\mathbb{R}^d)))$ as $n\to \infty$
\item[(iv)] $\nabla \cdot \tau_n \to \nabla \cdot \tau$ strongly in the space $\mathrm{L}^2(\Omega;\mathrm{L}^\infty(0,\xi_N \wedge T;H^{s'-1}(\mathbb{R}^d)))$ as $n\to \infty$
 \item[(v)] $\mathcal{D}(\v_n)\to \mathcal{D}(\v)$ strongly in the space $\mathrm{L}^2(\Omega;\mathrm{L}^2(0,\xi_N \wedge T;H^{s'}(\mathbb{R}^d)))$ as $n\to \infty$.
\end{itemize} 

\end{prop}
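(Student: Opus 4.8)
The plan is to obtain all five assertions from a single mechanism: interpolate between the $\mathrm{L}^2$-Cauchy estimates already established in Theorem~\ref{cauchy} and the uniform $H^s$-energy bounds of Remark~\ref{ener.estim}, via Lemma~\ref{iss}, and then observe that (iii)--(v) are purely algebraic consequences of (i)--(ii), requiring no further probabilistic input. No new a~priori estimate is needed.

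First I would prove (i) and (ii) in parallel. Fix $s'\in(d/2,s)$ and set $\theta:=s'/s\in(0,1)$. Lemma~\ref{iss}, applied pointwise in $t$ and $\omega$, gives
\[
\|\v_n-\v_m\|_{H^{s'}}^2\le C\,\|\v_n-\v_m\|_{\mathrm{L}^2}^{2(1-\theta)}\,\|\v_n-\v_m\|_{H^{s}}^{2\theta},
\]
and the same with $\mathbf{\tau}_n-\mathbf{\tau}_m$, and with $\nabla(\v_n-\v_m)$, in place of $\v_n-\v_m$. For (i) I would take $\sup_{0\le t\le\Lm}$ (bounding $\sup(ab)$ by $(\sup a)(\sup b)$), then $\mathbb{E}$, then H\"older in $\omega$ with exponents $1/(1-\theta)$ and $1/\theta$, arriving at
\[
\mathbb{E}\Big[\sup_{[0,\Lm]}\|\v_n-\v_m\|_{H^{s'}}^2\Big]\le C\Big(\mathbb{E}\Big[\sup_{[0,\Lm]}\|\v_n-\v_m\|_{\mathrm{L}^2}^2\Big]\Big)^{1-\theta}\Big(\mathbb{E}\Big[\sup_{[0,\Lm]}\|\v_n-\v_m\|_{H^{s}}^2\Big]\Big)^{\theta}.
\]
Since $\|\v_n-\v_m\|_{H^s}^2\le 2\|\v_n\|_{H^s}^2+2\|\v_m\|_{H^s}^2$ and $\Lm\le T\wedge\rho_N^n$ and $\Lm\le T\wedge\rho_N^m$, the second factor is bounded uniformly in $n,m$ by Remark~\ref{ener.estim}, while the first tends to $0$ by Theorem~\ref{cauchy}(i); hence the left side tends to $0$, and identically with $\mathbf{\tau}$ in place of $\v$. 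For (ii) the only change is to integrate in $t$ rather than take a supremum: H\"older in time with exponents $1/(1-\theta),1/\theta$ followed by H\"older in $\omega$ bounds $\mathbb{E}\big[\int_0^{\Lm}\|\nabla(\v_n-\v_m)\|_{H^{s'}}^2\d t\big]$ by a product of a factor controlled by Theorem~\ref{cauchy}(ii) and a factor controlled by Remark~\ref{ener.estim}, hence $\to0$. Passing to the limit $m\to\infty$ as in Remark~\ref{stopping} (using lower semicontinuity of the norms / Fatou to transfer the uniform $H^s$-bound to the limit pair $(\v,\mathbf{\tau})$ on $[0,\xi_N\wedge T]$) identifies the limits as $(\v,\mathbf{\tau})$ and $\nabla\v$, giving (i) and (ii).

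Parts (iii), (iv), (v) then follow with no probability: $\|\Delta(\v_n-\v)\|_{H^{s'-1}}=\|\nabla\cdot\nabla(\v_n-\v)\|_{H^{s'-1}}\le C\|\nabla(\v_n-\v)\|_{H^{s'}}$ yields (iii) from (ii); $\|\nabla\cdot(\mathbf{\tau}_n-\mathbf{\tau})\|_{H^{s'-1}}\le C\|\mathbf{\tau}_n-\mathbf{\tau}\|_{H^{s'}}$ yields (iv) from (i); and $\|\mathcal{D}(\v_n-\v)\|_{H^{s'}}\le C\|\nabla(\v_n-\v)\|_{H^{s'}}$ yields (v) from (ii), applying each estimate inside the relevant $\mathbb{E}\sup_t$ or $\mathbb{E}\int_t$ norm. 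The one genuinely delicate point — already present in Theorem~\ref{cauchy} and Remark~\ref{stopping} — is the stopping-time bookkeeping: the Cauchy estimates live on the random interval $[0,\Lm]$ with $\Rn=\rho_N^n\wedge\rho_N^m$, whereas the target interval is $[0,\xi_N\wedge T]$ with $\xi_N=\lim_n\rho_N^n$, so when letting $m\to\infty$ (and, where needed, $n\to\infty$) one must match the intervals, the uniform bounds, and the limit process consistently; everything else is routine interpolation plus H\"older.
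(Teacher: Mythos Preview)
Your proof is correct and follows essentially the same route as the paper: Sobolev interpolation (Lemma~\ref{iss}) between the $\mathrm{L}^2$-Cauchy property from Theorem~\ref{cauchy} and the uniform $H^s$-bounds, combined with H\"older's inequality in $\omega$. The only cosmetic differences are that the paper works directly with $\|\v_n-\v\|$ rather than the Cauchy differences $\|\v_n-\v_m\|$ (bounding the $H^s$-factor by $(2N)^{s'/s}$ via the stopping-time definition), and for (iii)--(v) simply writes ``proceeding in similar manner'' where you give the explicit pointwise Sobolev inequalities reducing them to (i)--(ii); your version is if anything more transparent on those last three items.
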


\begin{proof}
We first prove (i). It follows from (\ref{gron}) that the sequence of solutions $(\v_n,\tau_n)\to(\v,\tau)$ strongly in
$\mathrm{L}^2(\Omega;\mathrm{L}^{\infty}(0,\xi_N\wedge T,\mathrm{L}^2(\mathbb{R}^d)))$.
Also from the estimate (\ref{last.2}), we have
$\nabla\v_n\to\nabla\v$ strongly in
$\mathrm{L}^2(\Omega;\mathrm{L}^2(0,\xi_N\wedge T;\mathrm{L}^2(\mathbb{R}^d)))$.
 Exploiting the interpolation inequality
$\Big($Lemma \autoref{iss}, with exponents
$\mathlarger{\frac{s}{s-s'}}$ and $\mathlarger{\frac{s}{s'}}\Big)$ and H\"{o}lder's inequality for $0<s'<s$, we
obtain
\begin{align}
&\mathbb{E}\left[\sup_{0\leq t\leq\xi_N \wedge T}\|\v_n-\v\|_{H^{s'}}^2\right]
\leq C \left\{\mathbb{E}\left[\sup_{0\leq
t\leq\xi_N \wedge T}\|\v_n-\v\|_{\mathrm{L}^2}^2\right]\right\}^{1-s'/s}
\left\{\mathbb{E}\left[\sup_{0\leq
t\leq\xi_N \wedge T}\|\v_n-\v\|_{H^s}^2\right]\right\}^{s'/s}
\nonumber\\ 
&\leq C\left\{\mathbb{E}\left[\sup_{0\leq
t\leq\xi_N \wedge T}\|\v_n-\v\|_{\mathrm{L}^2}^2\right]\right\}^{1-s'/s}
\left\{\mathbb{E}\left[\sup_{0\leq t\leq\xi_N \wedge T}\|\v_n\|_{H^s}^2+\sup_{0\leq
t\leq\xi_N \wedge T}\|\v\|_{H^s}^2\right]\right\}^{s'/s}\nonumber\\
&\leq (2N)^{s'/s}C \left\{\mathbb{E}\left[\sup_{0\leq
t\leq\xi_N \wedge T}\|\v_n-\v\|_{\mathrm{L}^2}^2\right]\right\}^{1-s'/s}
\to 0, \quad \textit{as}\quad n\to\infty.
\end{align}
\noindent
Combining  Remark \ref{ener.estim} and Theorem \autoref{cauchy} and
using Sobolev interpolation for any $s'<s,$ we infer $(\v_n,\tau_n)\to(\v,\tau)$ strongly in
$\mathrm{L}^2(\Omega;\mathrm{L}^{\infty}(0,\xi_N \wedge T;H^{s'}(\mathbb{R}^d))).$ This proves (i). 
\par
\noindent
We also have $\nabla\v_n \to\nabla\v$ strongly in
$\mathrm{L}^2(\Omega;\mathrm{L}^2(0,\xi_N \wedge T;\mathrm{L}^2(\mathbb{R}^d)))$, hence for any $s'<s,$ using Sobolev interpolation and similar arguments as above, we have  $\nabla\v_n\to\nabla\v$
strongly in
$\mathrm{L}^2(\Omega;\mathrm{L}^2(0,\xi_N \wedge T;H^{s'}(\mathbb{R}^d))).$
This directly implies (ii). 
\par
\noindent
Proceeding in similar manner as above we directly have (iii), (iv) and (v).
\end{proof}

\begin{prop} \label{conv.non}
For any $s'>d/2$ and $T>0$,
\begin{itemize}
\item[(i)] the quadratic form $\mathcal{J}_n \Q(\mathbf{\tau}_n ,\nabla\v_n)\to \Q(\mathbf{\tau},\nabla\v)$
 strongly in $\mathrm{L}^1(\Omega;\mathrm{L}^{\infty}(0,\xi_N \wedge T;H^{s'-1}(\mathbb{R}^d)))$
as $n\to \infty$
and
\item[(ii)] the non-linear term
$\mathcal{J}_n[(\v_n \cdot\nabla)\mathbf{\tau}_n]\to(\v\cdot\nabla)\mathbf{\tau}$ strongly
in
$\mathrm{L}^1(\Omega;\mathrm{L}^{2}(0,\xi_N \wedge T;H^{s'-1}(\mathbb{R}^d)))$
as $n\to \infty.$
\end{itemize}

\end{prop}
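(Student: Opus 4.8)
The plan is to prove both convergences by the same two-step splitting, carried out on the random interval $[0,\xi_N\wedge T]$, on which the uniform energy bounds of Remark \ref{ener.estim} and the strong convergences of Proposition \ref{conv.v} are available. First I would isolate the Fourier-truncation tail $(\mathcal{J}_n-I)(\cdot)$ using property \eqref{intro2a}; then I would write the remaining bilinear difference, using bilinearity of $\Q$ (resp. of $(\v\cdot\nabla)\tau$), as a sum of two terms each containing exactly one ``small'' factor, and estimate each by the tame product estimates of Subsection \ref{comm.est}. The structural point to keep in mind is that there is no smoothing for $\tau$ and we control $\v_n,\tau_n$ only in $H^{s'}$ (not $H^{s'+1}$) in $\mathrm{L}^\infty$ in time, so every quadratic term must be placed in $H^{s'-1}$ using the fact that, for $s'>d/2$, multiplication by an $H^{s'}$-function is bounded on $H^{s'-1}$ (a consequence of the Kato--Ponce commutator estimate \eqref{kato}), together with Remark \ref{div} for the transport terms.

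For (i), write $\mathcal{J}_n\Q(\tau_n,\nabla\v_n)-\Q(\tau,\nabla\v)=(\mathcal{J}_n-I)\Q(\tau_n,\nabla\v_n)+\big(\Q(\tau_n,\nabla\v_n)-\Q(\tau,\nabla\v)\big)$ and fix $\delta\in(0,s-s')$. For the first summand, \eqref{intro2a} gives a factor $cn^{-\delta}$ times $\|\Q(\tau_n,\nabla\v_n)\|_{H^{s'-1+\delta}}\le C\|\tau_n\|_{H^s}\|\nabla\v_n\|_{H^{s'-1+\delta}}\le C\|\tau_n\|_{H^s}\|\v_n\|_{H^s}$, using the product estimate, $\|\nabla\v_n\|_{H^{s'-1+\delta}}\le\|\v_n\|_{H^{s'+\delta}}$ and $H^s\hookrightarrow H^{s'+\delta}$; taking $\sup_{t\le\xi_N\wedge T}$ and $\mathbb{E}$, Remark \ref{ener.estim} makes this $O(n^{-\delta})\to0$. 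For the second summand, bilinearity gives $\Q(\tau_n-\tau,\nabla\v_n)+\Q(\tau,\nabla(\v_n-\v))$, and the product estimate together with $\|\nabla g\|_{H^{s'-1}}\le\|g\|_{H^{s'}}$ yields the pointwise bound $C\|\tau_n-\tau\|_{H^{s'}}\|\v_n\|_{H^{s'}}+C\|\tau\|_{H^{s'}}\|\v_n-\v\|_{H^{s'}}$. Taking $\sup_{t\le\xi_N\wedge T}$, then $\mathbb{E}$, and applying Cauchy--Schwarz in $\Omega$, these are bounded by $(\mathbb{E}\sup\|\tau_n-\tau\|_{H^{s'}}^2)^{1/2}(\mathbb{E}\sup\|\v_n\|_{H^{s'}}^2)^{1/2}$ and $(\mathbb{E}\sup\|\tau\|_{H^{s'}}^2)^{1/2}(\mathbb{E}\sup\|\v_n-\v\|_{H^{s'}}^2)^{1/2}$, both of which tend to $0$ by Proposition \ref{conv.v}(i) (which provides $\v_n\to\v$, $\tau_n\to\tau$ strongly in $\mathrm{L}^2(\Omega;\mathrm{L}^\infty(0,\xi_N\wedge T;H^{s'}))$) together with the uniform $H^{s'}$-bounds. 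This proves (i).

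For (ii), split $\mathcal{J}_n[(\v_n\cdot\nabla)\tau_n]-(\v\cdot\nabla)\tau=(\mathcal{J}_n-I)[(\v_n\cdot\nabla)\tau_n]+\big(((\v_n-\v)\cdot\nabla)\tau_n+(\v\cdot\nabla)(\tau_n-\tau)\big)$, which is legitimate since $\v_n$ and $\v$ are divergence free. By \eqref{intro2a} and Remark \ref{div} applied with the index $s'+\delta>d/2$, the tail term is bounded in $H^{s'-1}$ by $cn^{-\delta}\|(\v_n\cdot\nabla)\tau_n\|_{H^{s'-1+\delta}}\le Cn^{-\delta}\|\v_n\|_{H^{s'+\delta}}\|\tau_n\|_{H^{s'+\delta}}\le Cn^{-\delta}\|\v_n\|_{H^s}\|\tau_n\|_{H^s}$; taking the $\mathrm{L}^2(0,\xi_N\wedge T)$-norm in time (dominated by $T^{1/2}\sup_{t\le\xi_N\wedge T}$), then $\mathbb{E}$, Remark \ref{ener.estim} again gives $O(n^{-\delta})\to0$. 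For the bilinear difference, Remark \ref{div} with index $s'$ gives $\|((\v_n-\v)\cdot\nabla)\tau_n\|_{H^{s'-1}}\le C\|\v_n-\v\|_{H^{s'}}\|\tau_n\|_{H^{s'}}$ and $\|(\v\cdot\nabla)(\tau_n-\tau)\|_{H^{s'-1}}\le C\|\v\|_{H^{s'}}\|\tau_n-\tau\|_{H^{s'}}$; dominating the time integral by $T^{1/2}\sup_{t\le\xi_N\wedge T}$, taking $\mathbb{E}$ and invoking Cauchy--Schwarz in $\Omega$ exactly as in (i) shows that the $\mathrm{L}^1(\Omega;\mathrm{L}^2(0,\xi_N\wedge T;H^{s'-1}))$-norm of the difference tends to $0$.

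The hard part will not be any single estimate but getting the functional setting right: because of the absence of a diffusive term for $\tau$, one only has a uniform $\mathrm{L}^\infty$-in-time bound for $\v_n,\tau_n$ in $H^{s'}$ rather than in $H^{s'+1}$, so each quadratic term must be estimated in $H^{s'-1}$ via the ``low--high'' product law $H^{s'}\cdot H^{s'-1}\hookrightarrow H^{s'-1}$ (valid for $s'>d/2$), thereby absorbing exactly into the $-1$ of the target space the derivative lost in $\Q$ and in the transport operator $(\v\cdot\nabla)$. Once this is arranged, everything else reduces to a routine combination of Remark \ref{ener.estim}, Proposition \ref{conv.v}, and the Cauchy--Schwarz inequality; the truncation tails cost nothing thanks to the gain $n^{-\delta}$ in \eqref{intro2a} and the room $s'<s$.
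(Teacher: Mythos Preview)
Your proof is correct and follows the same overall strategy as the paper: isolate the truncation tail via \eqref{intro2a}, split the remaining bilinear difference by bilinearity, and close with Cauchy--Schwarz in $\Omega$ together with Proposition \ref{conv.v}(i) and the uniform $H^s$-energy bounds of Remark \ref{ener.estim}.

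There is one substantive variation worth noting. For the bilinear pieces the paper applies the tame estimate of Property \ref{propQ}, which produces factors $\|\nabla\v_n\|_{\mathrm{L}^\infty}$ and, after Sobolev embedding, $\sup_t\|\nabla\v_n\|_{H^{s'}}$ and $\sup_t\|\nabla(\v_n-\v)\|_{H^{s'}}$. You instead use the product law $H^{s'}\cdot H^{s'-1}\hookrightarrow H^{s'-1}$ (valid for $s'>d/2$, and derivable from \eqref{kato.vari} with suitable Sobolev exponents), which yields only $\|\tau_n-\tau\|_{H^{s'}}\|\v_n\|_{H^{s'}}$ and $\|\tau\|_{H^{s'}}\|\v_n-\v\|_{H^{s'}}$. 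Your route is cleaner: $\sup_t\|\v_n\|_{H^{s'}}$ is controlled directly by the stopping-time bound for every $d/2<s'<s$, whereas the paper's $\sup_t\|\nabla\v_n\|_{H^{s'}}$ is only available (from the $H^s$-bound on $\v_n$) under the extra restriction $s'\le s-1$. Aside from this, and the cosmetic choice of placing $(\mathcal{J}_n-I)$ on $\Q(\tau_n,\nabla\v_n)$ rather than on the limit $\Q(\tau,\nabla\v)$, the two arguments coincide.
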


\begin{proof}
For $s'>d/2$, by using (\ref{intro1}), (\ref{intro2a}), bilinear property of $\Q$, Remark \ref{div} and H\"{o}lder's inequality, for $0<\epsilon<1$, we have
\begin{align}
&\mathbb{E}\left[\sup_{0\leq t\leq
\xi_N \wedge T}\|\mathcal{J}_n \Q(\mathbf{\tau}_n,\nabla\v_n)-\Q(\mathbf{\tau},\nabla\v) \|_{H^{s'-1}}\right]
\nonumber\\&\leq  \mathbb{E}\left[\sup_{0\leq t\leq
\xi_N \wedge T}\|\mathcal{J}_n \Q((\mathbf{\tau}_n-\mathbf{\tau}),\nabla\v_n)\|_{H^{s'-1}}\right]+\mathbb{E}\left[\sup_{0\leq
t\leq\xi_N \wedge T}\|\mathcal{J}_n \Q(\mathbf{\tau},\nabla(\v_n-\v))\|_{H^{s'-1}}\right]\nonumber\\&\quad+\mathbb{E}\left[\sup_{0\leq
t\leq \xi_N \wedge T}\|\mathcal{J}_n \Q(\mathbf{\tau},\nabla\v)-\Q(\mathbf{\tau},\nabla\v)\|_{H^{s'-1}}\right] \nonumber\\&\leq
C\mathbb{E}\left[\sup_{0\leq t\leq
\xi_N \wedge T}\| \Q((\mathbf{\tau}_n-\mathbf{\tau}),\nabla\v_n)\|_{H^{s'-1}}\right]+C\mathbb{E}\left[\sup_{0\leq
t\leq
\xi_N \wedge T}\|\Q(\mathbf{\tau},\nabla(\v_n-\v))\|_{H^{s'-1}}\right]\nonumber\\&\quad+\frac{C}{n^{\epsilon}}\mathbb{E}\left[\sup_{0\leq
t\leq \xi_N \wedge T}\|\Q(\mathbf{\tau},\nabla\v)\|_{H^{s'-1+\epsilon}}\right]\nonumber\\
&\leq C\mathbb{E}\left[\sup_{0\leq t\leq\xi_N \wedge T}\left(\|\mathbf{\tau}_n-\mathbf{\tau}\|_{\mathrm{L}^\infty}\|\nabla\v_n\|_{H^{s'}}\right)
+\sup_{0\leq t\leq\xi_N \wedge T}\left(\|\mathbf{\tau}_n-\mathbf{\tau}\|_{H^{s'}}\|\nabla\v_n\|_{\mathrm{L}^\infty}\right)\right]\nonumber\\
& \quad +C\mathbb{E}\left[\sup_{0\leq t\leq \xi_N \wedge T}\left(\|\nabla(\v_n-\v)\|_{H^{s'}}\|\mathbf{\tau}\|_{\mathrm{L}^\infty}\right)+
\sup_{0\leq t\leq \xi_N \wedge T}\left(\|\nabla(\v_n-\v)\|_{\mathrm{L}^\infty}\|\mathbf{\tau}\|_{H^{s'}}\right)\right]\nonumber\\
&\quad +\frac{C}{n^{\epsilon}}\mathbb{E}\left[\sup_{0\leq t\leq \xi_N \wedge T}\|\v\|_{H^{s'+\epsilon-1}}\|\mathbf{\tau}\|_{\mathrm{L}^\infty}+
\sup_{0\leq t\leq \xi_N \wedge T}\|\nabla\v\|_{\mathrm{L}^\infty}\|\mathbf{\tau}\|_{H^{s'+\epsilon-1}}\right]
\nonumber\\
&\leq 2C\mathbb{E}\left[\sup_{0\leq t\leq\xi_N \wedge T}\left(\|\mathbf{\tau}_n-\mathbf{\tau}\|_{H^{s'}}\|\nabla\v_n\|_{H^{s'}}\right)\right]
+2C\mathbb{E}\left[\sup_{0\leq t\leq \xi_N \wedge T}\left(\|\nabla(\v_n-\v)\|_{H^{s'}}\|\mathbf{\tau}\|_{H^{s'}}\right)\right]\nonumber\\
& \quad +\frac{C}{n^{\epsilon}}\mathbb{E}\left[\sup_{0\leq t\leq \xi_N \wedge T}\|\v\|_{H^{s'+\epsilon}}\|\mathbf{\tau}\|_{H^{s'}}+
\sup_{0\leq t\leq \xi_N \wedge T}\|\nabla\v\|_{H^{s'}}\|\mathbf{\tau}\|_{H^{s'+\epsilon-1}}\right]
\nonumber\\
&\leq 2C\left[\mathbb{E}\left(\sup_{0\leq t\leq \xi_N \wedge T}\|\mathbf{\tau}_n-\mathbf{\tau}\|_{H^{s'}}^2\right)\right]^{1/2}\left[\mathbb{E}\left(\sup_{0\leq
t\leq \xi_N \wedge T}\|\nabla\v_n\|_{H^{s'}}^2\right)\right]^{1/2}\nonumber\\&\quad+
2C\left[\mathbb{E}\left(\sup_{0\leq t\leq \xi_N \wedge T}\|\nabla(\v_n-\v)\|_{H^{s'}}^2\right)\right]^{1/2}\left[\mathbb{E}\left(\sup_{0\leq
t\leq \xi_N \wedge T}\|\mathbf{\tau}\|_{H^{s'}}^2\right)\right]^{1/2}\nonumber\\
&\quad+\frac{C}{2n^{\epsilon}}\mathbb{E}\left[\sup_{0\leq t\leq \xi_N \wedge T}\|\v\|_{H^{s'+\epsilon}}^2\right]+\frac{C}{2n^{\epsilon}}
\mathbb{E}\left[\sup_{0\leq t\leq \xi_N \wedge T}\|\mathbf{\tau}\|_{H^{s'+\epsilon}}^2\right]\nonumber\\
&\quad+\frac{C}{2n^{\epsilon}}\mathbb{E}\left[\sup_{0\leq t\leq \xi_N \wedge T}\|\nabla\v\|_{H^{s'+\epsilon}}^2\right]+\frac{C}{2n^{\epsilon}}
\mathbb{E}\left[\sup_{0\leq t\leq \xi_N \wedge T}\|\mathbf{\tau}\|_{H^{s'+\epsilon}-1}^2\right]\nonumber\\
&\to 0\text{ as }n\to\infty.
\end{align}
This completes the proof for (i).
\par\noindent
For $s'>d/2$, since $(\v_n,\mathbf{\tau}_n)\to(\v,\mathbf{\tau})$ strongly in
$\mathrm{L}^2(\Omega;\mathrm{L}^{\infty}(0,\xi_N \wedge T;H^{s'}(\mathbb{R}^d)))$, we proceed in the similar way as in the proof of (i) to infer  
$\mathcal{J}_n[(\v_n \cdot\nabla)\mathbf{\tau}_n]\to(\v\cdot\nabla)\mathbf{\tau}$ strongly 
in $\mathrm{L}^1(\Omega;\mathrm{L}^{\infty}(0,\xi_N \wedge T;H^{s'-1}(\mathbb{R}^d))).$
Since 
\begin{align} \label{emb}
\mathrm{L}^{1}(\Omega;\mathrm{L}^{\infty}(0,\xi_N \wedge T;H^{s'-1}(\mathbb{R}^d)))
\subset\mathrm{L}^{1}(\Omega;\mathrm{L}^{2}(0,\xi_N \wedge T;H^{s'-1}(\mathbb{R}^d))),
\end{align}
we have the strong convergence in $\mathrm{L}^{1}(\Omega;\mathrm{L}^{2}(0,\xi_N \wedge T;H^{s'-1}(\mathbb{R}^d))).$ This completes the proof for (ii).
\end{proof}

\begin{prop} \label{all.nois}
For any $s'>d/2$ and $T>0$,
\begin{itemize}
\item[1.] $\mathcal{S}^2(\mathbf{\tau}_n)\to \mathcal{S}^2(\mathbf{\tau})\,\,\text{strongly in}\,\, 
\mathrm{L}^2(\Omega;\mathrm{L}^{\infty}(0,\xi_N \wedge T;H^{s'-1}(\mathbb{R}^d)))$ as $n\to \infty$.
\item[2.] $\mathcal{S}(\mathbf{\tau}_n)\to \mathcal{S}(\mathbf{\tau})\,\,\text{strongly in}\,\, 
\mathrm{L}^2(\Omega;\mathrm{L}^{\infty}(0,\xi_N \wedge T;H^{s'-1}(\mathbb{R}^d)))$ as $n\to \infty$.
\item[3.] $\sigma(t,\v_n)\to\sigma(t,\v)\,\,\text{strongly in}\,\, \mathrm{L}^2(\Omega;\mathrm{L}^2(0,\xi_N \wedge T;\mathcal{L}_Q(\mathrm{L}^2,H^{s'-1})))$ as $n\to \infty$.
\item[4.]$G(\v_n,z)\to
G(\v,z)$ strongly in $\mathbb{H}^2_{\lambda}([0,\xi_N \wedge T]\times
Z;H^{s'-1}(\mathbb{R}^d))$, as $n\to \infty$.
\end{itemize}
\end{prop}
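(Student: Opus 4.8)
The plan is to treat each of the four convergences by reducing it to the already-established strong convergence $(\v_n,\mathbf{\tau}_n)\to(\v,\mathbf{\tau})$ in $\mathrm{L}^2(\Omega;\mathrm{L}^\infty(0,\xi_N\wedge T;H^{s'}(\mathbb{R}^d)))$ from Proposition \ref{conv.v}(i), combined with the boundedness, linearity, and Lipschitz properties of the relevant coefficients. For items (1) and (2), recall from Remark \ref{esti.S} that for $h\in H^s$, $\mathcal{S}$ and $\mathcal{S}^2$ are bounded linear operators on $H^{s'}$ (hence on $H^{s'-1}$) with $\|\mathcal{S}(\mathbf{\tau})\|_{H^{s'-1}}\le\|h\|_{H^{s}}\|\mathbf{\tau}\|_{H^{s'-1}}$ and $\|\mathcal{S}^2(\mathbf{\tau})\|_{H^{s'-1}}\le\|h\|_{H^{s}}^2\|\mathbf{\tau}\|_{H^{s'-1}}$. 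By linearity, $\mathcal{S}(\mathbf{\tau}_n)-\mathcal{S}(\mathbf{\tau})=\mathcal{S}(\mathbf{\tau}_n-\mathbf{\tau})$, so
\[
\mathbb{E}\Big[\sup_{0\le t\le\xi_N\wedge T}\|\mathcal{S}(\mathbf{\tau}_n)-\mathcal{S}(\mathbf{\tau})\|_{H^{s'-1}}^2\Big]\le\|h\|_{H^s}^2\,\mathbb{E}\Big[\sup_{0\le t\le\xi_N\wedge T}\|\mathbf{\tau}_n-\mathbf{\tau}\|_{H^{s'-1}}^2\Big]\to0,
\]
using $H^{s'}\hookrightarrow H^{s'-1}$ and Proposition \ref{conv.v}(i); the argument for $\mathcal{S}^2$ is identical with $\|h\|_{H^s}^2$ replaced by $\|h\|_{H^s}^4$.

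For items (3) and (4), I would invoke the Lipschitz condition (A.3) from Assumption \ref{hypo}. Since (A.3) holds \emph{for all $s\ge0$}, in particular for $s'-1$ (which is $>d/2-1\ge0$ in dimensions $d=2,3$, and in any case one may also use the $H^{s'}$ version and the embedding $H^{s'}\hookrightarrow H^{s'-1}$), we get
\begin{align*}
&\mathbb{E}\Big[\int_0^{\xi_N\wedge T}\|\sigma(t,\v_n)-\sigma(t,\v)\|_{\mathcal{L}_Q(\mathrm{L}^2,H^{s'-1})}^2\,\d t\Big]
+\mathbb{E}\Big[\int_0^{\xi_N\wedge T}\!\!\int_Z\|G(\v_n,z)-G(\v,z)\|_{H^{s'-1}}^2\lambda(\d z)\,\d t\Big]\\
&\qquad\le L\,\mathbb{E}\Big[\int_0^{\xi_N\wedge T}\|\v_n-\v\|_{H^{s'-1}}^2\,\d t\Big]
\le L\,T\,\mathbb{E}\Big[\sup_{0\le t\le\xi_N\wedge T}\|\v_n-\v\|_{H^{s'-1}}^2\Big]\to0
\end{align*}
by Proposition \ref{conv.v}(i) again. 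This gives (3) directly, and gives (4) since by definition the norm on $\mathbb{H}^2_\lambda([0,\xi_N\wedge T]\times Z;H^{s'-1}(\mathbb{R}^d))$ is exactly the square root of the second quantity above; the fact that $G(\v_n,\cdot)$ and $G(\v,\cdot)$ lie in this space follows from the growth condition (A.2) together with the uniform energy bounds in Remark \ref{ener.estim}.

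I do not anticipate a genuine obstacle here; the only point requiring a little care is that the exponents in Assumption \ref{hypo} are stated for generic $s$, so one must confirm that applying (A.2)--(A.3) at level $s'-1$ (or at level $s'$ followed by the Sobolev embedding $H^{s'}\hookrightarrow H^{s'-1}$) is legitimate — which it is, since the assumptions are postulated uniformly in $s\ge0$ and $s'-1\ge0$. The remaining work is the purely mechanical bookkeeping of constants, which I would not spell out. Each of the four statements then follows by combining the displayed estimates with the strong convergence $(\v_n,\mathbf{\tau}_n)\to(\v,\mathbf{\tau})$ in $\mathrm{L}^2(\Omega;\mathrm{L}^\infty(0,\xi_N\wedge T;H^{s'}(\mathbb{R}^d)))$.
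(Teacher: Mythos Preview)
Your proposal is correct and follows essentially the same approach as the paper: for items (1)--(2) you use the linearity and boundedness of $\mathcal{S}$, $\mathcal{S}^2$ from Remark~\ref{esti.S} together with Proposition~\ref{conv.v}(i), and for items (3)--(4) you apply the Lipschitz condition (A.3) at level $s'-1$, bound the time integral by $T$ times the supremum, and again invoke Proposition~\ref{conv.v}(i). The paper's proof is identical in structure; your added remark about the legitimacy of applying (A.3) at level $s'-1\ge0$ is a reasonable clarification that the paper leaves implicit.
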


\begin{proof} Since for any $d/2<s'<s$ and $T>0$, $(\v_n,\mathbf{\tau}_n)\to(\v,\mathbf{\tau})$ strongly in
$\mathrm{L}^2(\Omega;\mathrm{L}^2(0,\xi_N \wedge T;H^{s'}(\mathbb{R}^d)))$, by Remark \ref{esti.S}, we infer
 \begin{align} \label{conv.noiseS2}
&\mathbb{E}\left[\sup_{0\leq t\leq \xi_N \wedge T}\|\mathcal{S}^2(\mathbf{\tau}_n)-\mathcal{S}^2(\mathbf{\tau})\|^{2}_{H^{s'-1}}\right]
=\mathbb{E}\left[\sup_{0\leq t\leq \xi_N \wedge T}\|\mathcal{S}^2(\mathbf{\tau}_n-\mathbf{\tau})\|^{2}_{H^{s'-1}}\right] \nonumber\\ &\leq\|h\|_{H^{s'-1}}^4\mathbb{E}\left[\sup_{0\leq t\leq \xi_N \wedge T}\|\mathbf{\tau}_n-\mathbf{\tau}\|^{2}_{H^{s'-1}}\right]
\to 0\text{ as }n\to\infty,
\end{align}
 and \begin{align} \label{conv.noiseS}
&\mathbb{E}\left[\sup_{0\leq t\leq \xi_N \wedge T}\|\mathcal{S}(\mathbf{\tau}_n)-\mathcal{S}(\mathbf{\tau})\|^{2}_{H^{s'-1}}\right]
\leq\|h\|^{2}_{H^{s'-1}}\mathbb{E}\left[\sup_{0\leq t\leq \xi_N \wedge T}\|\mathbf{\tau}_n-\mathbf{\tau}\|^{2}_{H^{s'-1}}\right]
\to 0\text{ as }n\to\infty.
\end{align}
Hence, convergences in $1$ and $2$ are established.\newline
Again implementing Assumption \ref{hypo} and H\"{o}lder's inequality, we have
\begin{align} \label{conv.noise1}
&\mathbb{E}\left[\int_0^{\xi_N \wedge T}\|\sigma(t,\v_n)-\sigma(t,\v)\|_{\mathcal{L}_Q(\mathrm{L}^2,H^{s'-1})}^2\d
t\right]\leq
L \,\mathbb{E}\left[\int_0^{\xi_N \wedge T} \|\v_n-\v\|_{H^{s'-1}}^2\d
t\right]\notag \\
&\leq L\, \mathbb{E}\left[(\sup_{0\leq t\leq
\xi_N \wedge T}\|\v_n-\v\|_{H^{s'}}^2) (\xi_N \wedge T)\right]
\leq L\, T \, \mathbb{E}\left[\sup_{0\leq t\leq
\xi_N \wedge T}\|\v_n-\v\|_{H^{s'}}^2 \right] \to 0\text{ as }n\to\infty
\end{align}
and
 \begin{align} \label{conv.G.1}
\mathbb{E}\left[\int_0^{\xi_N \wedge T}\int_Z\|G(\v_n,z)-G(\v,z)\|^2_{H^{s'-1}}\lambda(\d
z)\d t\right]
\leq LT\,\mathbb{E}\left[\sup_{0\leq t\leq
\xi_N \wedge T}\|\v_n-\v\|_{H^{s'}}^2 \right] \to 0\text{ as }n\to\infty.
\end{align}
Hence we have the convergences in 3 and $4.$
\end{proof}




Next we prove the main result of this section on the existence of local strong solution of the original problem \eqref{se1}-\eqref{se4}.

\begin{thm}\label{existence}
Let $\v_0,\mathbf{\tau}_0$ be $\mathscr{F}_0$-measurable and $\nabla\cdot \v_0 =0$. Let $\v_0,\mathbf{\tau}_0\in\mathrm{L}^2(\Omega;H^s(\mathbb{R}^d))$ for $s>d/2$. Then there exists a local in
time strong solution of the problem (\ref{se1})-(\ref{se4}) such that
\begin{enumerate}
\item [(i)] $\v\in\mathrm{L}^{2}(\Omega;\mathrm{L}^{\infty}(0,\rho_N \wedge T;H^s(\mathbb{R}^d))\cap\mathrm{L}^2(0,\rho_N \wedge T;H^{s+1}(\mathbb{R}^d))),
\mathbf{\tau}\in\mathrm{L}^{2}(\Omega;\mathrm{L}^{\infty}(0,\rho_N \wedge T;H^s(\mathbb{R}^d))),$
where \begin{align} \label{stop.lim} 
\rho_N=\inf_{t\geq
0}\left\{t:\mu_2\|\v(t)\|_{H^s}^2+\mu_1\|\mathbf{\tau}(t)\|_{H^s}^2
+2 \mu_2\nu \int_0^t \| \nabla\v (r)\|_{H^s}^2dr>N\right\},
\end{align} 
\item [(ii)] the $\mathscr{F}_t$-adapted paths of $(\v,\rho_N)$ and $(\mathbf{\tau},\rho_N)$ are c\`{a}dl\`{a}g and continuous respectively,
\item[(iii)] $\rho_N$ is a predictable strictly positive stopping time
	satisfying 
	\begin{align*}
	\mathbb{P}\left(\rho_N>\delta\right)
	\geq 1-2\delta e^{(\tilde{C}+C_2 \delta)} \Big(2\mathbb{E}\left(\mu_2\|\v_0\|_{H^s}^2+\mu_1\|\mathbf{\tau}_0\|_{H^s}^2\right)+18K\mu_2\delta\Big)	
	\end{align*}
	for any $\delta\in (0,1)$, and for some positive constant $\tilde{C}$ independent of $\delta$.
\end{enumerate}

\end{thm}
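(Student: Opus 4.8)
The plan is to pass to the limit $n\to\infty$ in the Fourier-truncated system \eqref{Trun1}-\eqref{Trun4}, using the strong convergences of Propositions \ref{conv.v}, \ref{conv.non}, \ref{all.nois} together with the $\mathrm L^2$-convergence of Theorem \ref{cauchy} and Remark \ref{stopping}, then to recover the $H^s$-regularity of the limit by weak-$*$ compactness, and finally to identify the stopping time $\xi_N$ produced by the Cauchy argument with the intrinsically defined $\rho_N$ of \eqref{stop.lim}.

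First I would test the truncated equations \eqref{Trun1s}-\eqref{Trun2s} against $J^s\phi_1$ and $J^s\phi_2$, with $\phi_i\in H^s(\mathbb R^d)$ and $\nabla\cdot\phi_1=0$, integrate over $[0,t\wedge\rho_N^n]$, and let $n\to\infty$ term by term. The viscous term $\nu\Delta J^s\v_n$, the transport terms $\mathcal J_nJ^s[(\v_n\cdot\nabla)\v_n]$ and $\mathcal J_nJ^s[(\v_n\cdot\nabla)\tau_n]$, the coupling terms $\mu_1J^s\nabla\cdot\tau_n$ and $\mu_2J^s\mathcal D(\v_n)$, the quadratic form $\mathcal J_nJ^s\Q(\tau_n,\nabla\v_n)$ and the Stratonovich correction $\tfrac12\mathcal J_nJ^s\mathcal S^2(\tau_n)$ all pass to the limit, because by Propositions \ref{conv.v}, \ref{conv.non}, \ref{all.nois} (and, for $(\v_n\cdot\nabla)\v_n$, an argument of the type of Proposition \ref{conv.non}(ii)) the relevant quantities converge strongly in $\mathrm L^2(\Omega;\mathrm L^2(0,\rho_N\wedge T;H^{s'-1}))$ — respectively in $\mathrm L^1(\Omega;\cdot)$ — while $J^s\phi_i$ is a fixed $\mathrm L^2$ element and $\|\mathcal J_n\psi-\psi\|_{H^{s'-1}}\to0$. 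The stochastic integrals are handled by the It\^{o} isometry for $\int_0^{\cdot}\mathcal J_nJ^s\sigma(\cdot,\v_n)\,dW_1$ and $\int_0^{\cdot}\mathcal J_nJ^s\mathcal S(\tau_n)\,dW_2$, by the identity \eqref{mt1} for the compensated Poisson integral, and by the Burkholder-Davis-Gundy inequality (Lemma \ref{burk}), combined with the convergences of $\sigma(\cdot,\v_n)$, $\mathcal S(\tau_n)$ and $G(\v_n,\cdot)$ from Proposition \ref{all.nois}; this yields convergence of the stochastic integrals in $\mathrm L^2(\Omega)$, uniformly in $t$, hence almost surely along a subsequence. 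One must also verify that replacing the random upper limit $t\wedge\rho_N^n$ by $t\wedge\rho_N$ is legitimate, which follows from $\rho_N^n\to\rho_N$ a.s. together with the integrability of the integrands.

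The identification $\xi_N=\rho_N$ (equivalently $\rho_N^n\to\rho_N$ a.s.) is the delicate point. On one hand, weak-$*$ lower semicontinuity of $t\mapsto\mu_2\|\v(t)\|_{H^s}^2+\mu_1\|\tau(t)\|_{H^s}^2+2\mu_2\nu\int_0^t\|\nabla\v(r)\|_{H^s}^2\,dr$, together with the fact that this functional stays $\le N$ for $(\v_n,\tau_n)$ on $[0,\rho_N^n]$, gives $\rho_N\ge\limsup_n\rho_N^n\ge\xi_N$; on the other hand, on $[0,\xi_N\wedge T]$ the $H^{s'}$-convergence and the uniform $H^s$-bound of Remark \ref{ener.estim} force the limiting functional to reach $N$ no later than $\lim_n\rho_N^n$, which gives $\rho_N\le\xi_N$. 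Once this is established, the regularity in (i) follows from Remark \ref{ener.estim}: the sequences $\{\v_n\}$, bounded in $\mathrm L^2(\Omega;\mathrm L^\infty(0,\rho_N\wedge T;H^s))\cap\mathrm L^2(\Omega;\mathrm L^2(0,\rho_N\wedge T;H^{s+1}))$, and $\{\tau_n\}$, bounded in $\mathrm L^2(\Omega;\mathrm L^\infty(0,\rho_N\wedge T;H^s))$, admit weak-$*$ limits which must coincide with $(\v,\tau)$ by the already-known strong $\mathrm L^2$-convergence, and the relevant norms are weakly-$*$ lower semicontinuous, with a Fatou argument in $\omega$. For (ii), the limiting identities \eqref{exist.1.def}-\eqref{exist.2.def} show that $\v$ inherits a jump part from the compensated Poisson integral, so that $(\v,\rho_N)$ is c\`{a}dl\`{a}g, whereas every term in the $\tau$-identity (the drift integrals and the It\^{o} integral against the real Wiener process $W_2$) is continuous in $t$, so $(\tau,\rho_N)$ has continuous paths.

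For (iii), passing to the limit in the estimate \eqref{rhoN} of Theorem \ref{positive1} along $\rho_N^n\to\rho_N$ a.s., via Fatou's lemma applied to $\mathbf{1}_{\{\rho_N^n>\delta\}}$, gives $\mathbb P(\rho_N>\delta)\ge 1-2\delta e^{(\tilde{C}+C_2\delta)}\big(2\mathbb E(\mu_2\|\v_0\|_{H^s}^2+\mu_1\|\tau_0\|_{H^s}^2)+18K\mu_2\delta\big)$ for every $\delta\in(0,1)$; letting $\delta\downarrow0$ shows $\mathbb P(\rho_N>0)=1$, and $\rho_N$, being the first hitting time of the level $N$ by the above functional, is predictable (it is announced, for instance, by the hitting times of the levels $N-1/k$). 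I expect the \emph{main obstacle} to be precisely this consistent handling of the stopping times: $\rho_N$ can only be defined once the limit $(\v,\tau)$ has been produced on $[0,\xi_N\wedge T]$, so the identification $\xi_N=\rho_N$ and the justification that the limiting equations hold up to $t\wedge\rho_N$ carry an unavoidable bootstrapping character; a close second is the careful passage to the limit in the jump stochastic integral with a random upper limit.
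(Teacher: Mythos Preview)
Your approach is essentially the paper's: pass to the limit in the weak formulation via Propositions \ref{conv.v}--\ref{all.nois}, recover $H^s$-regularity by Banach--Alaoglu on the uniform bounds of Remark \ref{ener.estim}, identify $\xi_N$ with $\rho_N$, and read off (ii)--(iii) from the limiting identities and \eqref{rhoN}. You also correctly flag the stopping-time identification as the crux.

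The one substantive difference is in how the inequality $\rho_N\le\xi_N$ is obtained. Your sketch (``the $H^{s'}$-convergence and the uniform $H^s$-bound force the limiting functional to reach $N$ no later than $\lim_n\rho_N^n$'') is vague and does not obviously close: $H^{s'}$-convergence with $s'<s$ together with a uniform $H^s$-bound does \emph{not} by itself give convergence of the $H^s$-norms, so you cannot directly compare $\mathcal E(t)$ with $\mathcal E_n(t)$ this way. The paper instead argues that, once $(\v,\tau)$ is known to solve \eqref{se1}--\eqref{se4}, uniqueness of the truncated system lets one identify $\v_n=\mathcal J_n\v$ and $\tau_n=\mathcal J_n\tau$; then \eqref{intro1} gives $\|\v_n\|_{H^s}\le\|\v\|_{H^s}$ and $\|\tau_n\|_{H^s}\le\|\tau\|_{H^s}$, hence $\mathcal E_n(t)\le\mathcal E(t)$ and $\rho_N\le\rho_N^n$ for every $n$, which immediately yields $\rho_N\le\xi_N$. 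For the reverse inequality the paper uses weak/weak-$*$ lower semicontinuity of the $H^s$-norms under \eqref{conv.weak}, in the same spirit as your argument. Your treatment of (ii) (structure of the limiting equations) and (iii) (Fatou on $\mathbf 1_{\{\rho_N^n>\delta\}}$) is at least as clean as the paper's.
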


\begin{proof} 
Using the above Propositions, we can see from $(\ref{Trun1})-(\ref{Trun4})$ that with probability 1
\begin{align} \label{exist.lim1}
\left(J^s\v_n(t),J^s\phi_1\right)_{\mathrm{L}^2}&=\left(J^s\v_0,J^s\phi_1\right)_{\mathrm{L}^2}+\int_0^t\left(\nu\Delta
J^s\v_n-J^s[(\v_n\cdot\nabla)\v_n]+\mu_1\nabla\cdot J^s\tau_n,J^s \phi_1\right)_{\mathrm{L}^2}\d s\nonumber\\
& \quad +\int_0^t\left(J^s\sigma(s,\v_n(s))\d W_1(s),J^s\phi_1\right)_{\mathrm{L}^2}
+\int_0^t\int_Z\left(J^sG(\v_n(s-),z),J^s\phi_1\right)_{\mathrm{L}^2}\tilde{N}_1(\d s,\d z),\\
\left(J^s\tau_n(t),J^s\phi_2\right)_{\mathrm{L}^2}&=\left(J^s\tau_0,J^s\phi_2\right)_{\mathrm{L}^2}
+\mu_2 \int_0^t \left( J^s\mathcal{D}(\v_n),J^s\phi_2\right)_{\mathrm{L}^2}\d s\nonumber\\
&\quad -\int_0^t\left(J^s[(\v_n\cdot\nabla)\tau_n]+J^sQ(\tau_n,\nabla\v_n)+aJ^s\tau_n,J^s\phi_2\right)_{\mathrm{L}^2}\d s\nonumber\\
& \quad +\frac{1}{2}\int_0^t\left(J^sS^2(\tau_n),J^s\phi_2\right)_{\mathrm{L}^2}\d s+\int_0^t\left(J^sS(\tau_n(s)),
J^s\phi_2\right)_{\mathrm{L}^2}\d W_2(s),\label{exist.lim2} \\
\quad\quad \nabla\cdot\v_n&=0,\nonumber
\end{align}
are satisfied for any $t\in[0,\rho_N^n \wedge T)$ and $\phi_i \in H^s(\mathbb{R}^d)\,;\,i=1,2$ with
$\nabla\cdot \phi_1=0.$ \newline

Note, $\mathrm{L}^2(\Omega;\mathrm{L}^{\infty}(0,\xi_N \wedge T;H^{s}(\mathbb{R}^d)))$
is the dual of
$\mathrm{L}^2(\Omega;\mathrm{L}^{1}(0,\xi_N \wedge T;H^{-s}(\mathbb{R}^d)))$ and 
$\mathrm{L}^2(\Omega;\mathrm{L}^{1}(0,\xi_N \wedge T;H^{-s}(\mathbb{R}^d)))$ is separable Hilbert space
(see Remark 10.1.10 and Theorem 10.1.13 of Papageorgiou and
Kyritsi-Yiallourou \cite{PNKS}). Therefore, due to uniform boundedness of the sequences $\v_n$ and $\tau_n$ from Remark $\ref{ener.estim}$, we can apply Banach-Alaoglu Theorem (see Theorem 4.18 of
Robinson \cite{RJC}), to extract subsequences $\v_{n_k}$  and $\tau_{n_k}$ such that 
\begin{align} \label{conv.weak}
\v_{n_k}\xrightarrow{w^{*}}\v,\quad&\tau_{n_k}\xrightarrow{w^{*}}\tau\,\,\text{
in
}\,\,\mathrm{L}^{2}(\Omega;\mathrm{L}^{\infty}(0,\xi_N \wedge T;H^s(\mathbb{R}^d)))\notag\\
&\mbox{and}\quad \nabla\v_{n_k}\xrightarrow{w}\nabla\v \,\,\text{ in
}\,\,\mathrm{L}^{2}(\Omega;\mathrm{L}^{2}(0,\xi_N \wedge T;H^s(\mathbb{R}^d))).
\end{align}
This assures the limit satisfies
$$\v\in\mathrm{L}^{2}(\Omega;\mathrm{L}^{\infty}(0,\xi_N \wedge T;H^s(\mathbb{R}^d))\cap\mathrm{L}^2(0,\xi_N \wedge T;H^{s+1}(\mathbb{R}^d))),
\;\;\tau \in\mathrm{L}^{2}(\Omega;\mathrm{L}^{\infty}(0,\xi_N \wedge T;H^s(\mathbb{R}^d))).$$
Passing to the limit to \eqref{exist.lim1}-\eqref{exist.lim2} as $n\to \infty,$ it yields with probability 1
\begin{align} \label{exist.1}
\left(J^s\v(t),J^s\phi_1\right)_{\mathrm{L}^2}&=\left(J^s\v_0,J^s\phi_1\right)_{\mathrm{L}^2}+\int_0^t\left(\nu\Delta
J^s\v-J^s[(\v\cdot\nabla)\v]+\mu_1\nabla\cdot J^s\tau,J^s \phi_1\right)_{\mathrm{L}^2}\d s\nonumber\\
&\quad +\int_0^t\left(J^s\sigma(s,\v(s))\d W_1(s),J^s\phi_1\right)_{\mathrm{L}^2}
+\int_0^t\int_Z\left(J^sG(\v(s-),z),J^s\phi_1\right)_{\mathrm{L}^2}\tilde{N}_1(\d s,\d z),\\
\left(J^s\tau(t),J^s\phi_2\right)_{\mathrm{L}^2}&=\left(J^s\tau_0,J^s\phi_2\right)_{\mathrm{L}^2}
+\mu_2 \int_0^t \left( J^s\mathcal{D}(\v),J^s\phi_2\right)_{\mathrm{L}^2}\d s\nonumber\\
&\quad -\int_0^t\left(J^s[(\v\cdot\nabla)\tau]+J^sQ(\tau,\nabla\v)+aJ^s\tau,J^s\phi_2\right)_{\mathrm{L}^2}\d s\nonumber\\
& \quad +\frac{1}{2}\int_0^t\left(J^sS^2(\tau),J^s\phi_2\right)_{\mathrm{L}^2}\d s+\int_0^t\left(J^sS(\tau(s)),
J^s\phi_2\right)_{\mathrm{L}^2}\d W_2(s),\notag \\
\quad \textit{with}\quad \nabla\cdot\v&=0,\label{exist.2}
\end{align}
for any $t\in[0,\xi_N \wedge T)$.
Hence, $(\v,\tau)$ solves (\ref{exist.1})-(\ref{exist.2}) for $s>d/2.$
\par\noindent
We now define
 \begin{align} \label{stop.lim.1} 
\rho_N:=\inf_{t\geq
0}\left\{t:\mu_2\|\v(t)\|_{H^s}^2+\mu_1\|\mathbf{\tau}(t)\|_{H^s}^2
+2 \mu_2\nu \int_0^t \| \nabla\v (r)\|_{H^s}^2dr>N\right\}.
\end{align}
\noindent
\textbf{Claim:} For fixed $N \geq 1,\,\,T>0,\,\,\, \lim_{n \rightarrow \infty}\rho^n_N \wedge T=\rho_N \wedge T = \xi_N\wedge T.$ 
\par\noindent
\textit{Proof.}

 Recalling the arguments as used for \eqref{conv.weak}, we have (the subsequences still denoted by the same)
 \begin{align} \label{conv.mod1}
\v_{n}\xrightarrow{w^{*}}\v,\quad&\tau_{n}\xrightarrow{w^{*}}\tau\,\,\text{
in
}\,\,\mathrm{L}^{2}(\Omega;\mathrm{L}^{\infty}(0,\xi_N \wedge T;H^s(\mathbb{R}^d)))\notag\\
&\mbox{and}\quad \nabla\v_{n}\xrightarrow{w}\nabla\v \,\,\text{ in
}\,\,\mathrm{L}^{2}(\Omega;\mathrm{L}^{2}(0,\xi_N \wedge T;H^s(\mathbb{R}^d))).
\end{align} 
Therefore by the lower semicontinuity property of weak and weak-star convergences (see Chapter 10 of Lax \cite{PL}), and using the inequality $\liminf_{n \rightarrow \infty}(f_n+g_n) \geq \liminf_{n \rightarrow \infty} f_n + \liminf_{n \rightarrow \infty} g_n$ for bounded sequence of functions $f_n$ and $g_n$, we obtain 
\begin{align}\label{E1111}
&\liminf_{n \rightarrow \infty} \mathbb{E}\Big[ \sup_{0 \leq t \leq \xi_N \wedge T} \Big(\mu_2\|\v_n(t)\|_{H^s}^2+\mu_1\|\mathbf{\tau}_n(t)\|_{H^s}^2\Big)
+2 \mu_2\nu \int_0^{\xi_N \wedge T} \| \nabla\v_n (t)\|_{H^s}^2dt \Big] \notag\\
&\geq \mathbb{E}\Big[ \sup_{0 \leq t \leq \xi_N \wedge T} \Big(\mu_2\|\v(t)\|_{H^s}^2+\mu_1\|\mathbf{\tau}(t)\|_{H^s}^2\Big)
+2 \mu_2\nu \int_0^{\xi_N \wedge T} \| \nabla\v (t)\|_{H^s}^2dt \Big].
\end{align}
\noindent
Define 
\begin{align}
\mathcal{E}_n(t)&:=\mu_2\|\v_n(t)\|_{H^s}^2+\mu_1\|\mathbf{\tau}_n(t)\|_{H^s}^2
+2 \mu_2\nu \int_0^t \| \nabla\v_n (r)\|_{H^s}^2dr,\notag\\ \mbox{and}\quad \mathcal{E}(t)&:=\mu_2\|\v(t)\|_{H^s}^2+\mu_1\|\mathbf{\tau}(t)\|_{H^s}^2
+2 \mu_2\nu \int_0^t \| \nabla\v(r)\|_{H^s}^2dr.
\end{align}
Therefore we may consider the case when  
\begin{align}\label{xxxx}
\liminf_{n \rightarrow \infty} \mathbb{E}\Big[ \mathcal{E}_n(t)\Big] \geq \mathbb{E}\Big[ \mathcal{E}(t)\Big], \,\,\, \mbox{for Lebesgue-almost all}\,\, t \in [0,\xi_N \wedge T],
\end{align}
as this would imply \eqref{E1111} due to Fubini's theorem.
\par\noindent 
By the definition of $\rho_N$ in \eqref{stop.lim.1}, for each $\epsilon>0$, there exists a $t_0 \in [0,T]$
such that $\rho_N \leq t_0 < \rho_N+\epsilon$ with
\begin{align} \label{ener.t0}
\mathcal{E}(t_0)>N.
\end{align}
If $[\rho_N, \rho_N+\epsilon) \subset [0, \xi_N \wedge T]$, by \eqref{xxxx}, along a subsequence of $\mathbb{E}\Big[\mathcal{E}_n(t_0)\Big]$ (still denoted by the same) it converges to $\liminf_{n \rightarrow \infty} \mathbb{E}\Big[\mathcal{E}_n(t_0)\Big]$, and this yields
\begin{align} \label{conv.ener}
\lim_{n \rightarrow \infty}\mathbb{E}\Big[\mathcal{E}_{n}(t_0)\Big] \geq \mathbb{E}\Big[\mathcal{E}(t_0)\Big] >N.
\end{align}
\noindent
Hence there exists $\tilde{n} \in \mathbb{N}$ such that $\mathbb{E} \Big[ \mathcal{E}_{n}(t_0)\Big] >N, \, \forall \, n \geq \tilde{n}
$, from which we claim that $t_0 \geq \rho_N^n, \,\forall \, n \geq \tilde{n}$. If not, then there exists a natural number $n_1>\tilde{n}$ such that $t_0 < \rho_N^{n_1}$. Hence $\mathcal{E}_{n_1}(t_0)\leq N$
and thus
$\mathbb{E} \Big[ \mathcal{E}_{n_1}(t_0)\Big] \leq N 
$, a contradiction. Hence the claim is true.
\par\noindent
Thus we have 
\begin{align} \label{ineq1.1}
\rho_N^n \wedge T \leq t_0 < (\rho_N+ \epsilon)\wedge T, \quad \forall \,\, n \geq \tilde{n}.
\end{align}
Now, since $(\v,\tau)$ is a local strong solution of \eqref{se1}-\eqref{se4} and the approximate equations \eqref{Trun1}-\eqref{Trun4} have unique strong solutions, we can identify $\mathcal{J}_n \v$ as $\v_n$ and $\mathcal{J}_n \tau$ as $\tau_n.$ Since by \eqref{intro1},  for every $s>0$, $\|\v_n\|_{H^s} \leq \|\v\|_{H^s}$ and $\|\tau_n\|_{H^s} \leq \|\tau\|_{H^s}$, we have
\begin{align}
& N < \mu_2\|\v_n(t)\|_{H^s}^2+\mu_1\|\mathbf{\tau}_n(t)\|_{H^s}^2
+2 \mu_2\nu \int_0^t \| \nabla\v_n (r)\|_{H^s}^2dr \notag \\ & \leq \mu_2\|\v(t)\|_{H^s}^2+\mu_1\|\mathbf{\tau}(t)\|_{H^s}^2
+2 \mu_2\nu \int_0^t \| \nabla\v (r)\|_{H^s}^2dr, \quad \forall \, n \geq 1.
\end{align}
Hence,
\begin{align} \label{ineq1}
\rho_N \wedge T \leq \rho_N^n \wedge T, \quad \forall \,\, n \geq 1.
\end{align}
Therefore, for each $\epsilon >0$,
\begin{align} \label{ineq2}
(\rho_N-\epsilon)\wedge T \leq \rho_N^n\wedge T, \quad \forall \,\, n \geq 1.
\end{align}
Combining \eqref{ineq1.1} and \eqref{ineq2} we have for all $n \geq \tilde{n}$ and for each $\epsilon >0$,
\begin{align*}
(\rho_N-\epsilon)\wedge T \leq \rho_N^n\wedge T < (\rho_N+\epsilon)\wedge T.
\end{align*}
Taking limit as $n\to\infty$, and using the definition of $\xi_N$, we have
\begin{align*}
(\rho_N-\epsilon)\wedge T \leq \xi_N\wedge T < (\rho_N+\epsilon)\wedge T.
\end{align*}
Since $\epsilon>0$ is arbitrary, we finally infer $\lim_{n\to\infty} \rho_N^n\wedge T = \xi_N\wedge T = \rho_N\wedge T.$ This proves the claim and (i).


From Remark \ref{ener.estim}, it is assured that $(\v_n,\tau_n)$ is almost surely
uniformly convergent to
$(\v,\tau)$ on finite interval $[0, \rho_N \wedge T)$, from which it follows that $\v$ is
adapted and c\`{a}dl\`{a}g (Theorem 6.2.3, Applebaum \cite{Ap}) and $\tau$ is continuous. Hence (ii) follows.

Now using the continuity argument and using \eqref{esti.rho.n} we achieve, \begin{align}
	&\mathbb{P}\left(\rho_N>\delta\right)=\lim_{n \rightarrow \infty}\mathbb{P}\left(\rho^n_N>\delta\right) 
	\geq 1-2\delta e^{(\tilde{C}+C_2 \delta)} \Big(2\mathbb{E}\left(\mu_2\|\v_0\|_{H^s}^2+\mu_1\|\mathbf{\tau}_0\|_{H^s}^2\right)+18K\mu_2\delta\Big)	
	\end{align}
	for any $\delta\in (0,1)$, and for some positive constant $\tilde{C}$ independent of $\delta$.
\end{proof}

Next we proceed to prove uniqueness of the local strong solution of \eqref{se1}-\eqref{se4}.

\begin{thm}\label{uniqueness}
Let $\v_0,\mathbf{\tau}_0$ be $\mathscr{F}_0$-measurable and $\nabla\cdot \v_0 =0$. Let $\v_0,\mathbf{\tau}_0\in\mathrm{L}^2(\Omega;H^s(\mathbb{R}^d))$ for $s>d/2$.. Let $\v_i$ and $\tau_i$ $i=1,2$ be $\mathscr{F}_t$-adapted c\`adl\`ag and continuous processes respectively such that $(\v_i,\tau_i,\rho^{i}_N)$ are local strong solutions of (\ref{se1})-(\ref{se4})
with the same initial conditions $\v_i(0)=\v_0,\\tau_i(0)=\\tau_0$, and
$$\v_i\in\mathrm{L}^{2}(\Omega;\mathrm{L}^{\infty}(0,\rho^i_N \wedge T; H^s(\mathbb{R}^d))\cap\mathrm{L}^2(0,\rho^i_N \wedge T;H^{s+1}(\mathbb{R}^d))),
\tau_i\in\mathrm{L}^{2}(\Omega;\mathrm{L}^{\infty}(0,\rho^i_N \wedge T;H^s(\mathbb{R}^d))),$$
for $s>d/2$. Then $$ \v_1(t)=\v_2(t), \tau_1(t)=\tau_2(t)\quad \mbox{a.s.} \quad \forall\, t \in [0,\rho^1_N \wedge \rho^2_N \wedge T]$$ 
 as functions in $\mathrm{L}^2(\Omega;\mathrm{L}^{\infty}(0,T;\mathrm{L}^2(\mathbb{R}^d)))$. Moreover, $\rho^1_N= \rho^2_N \,\,\mathbb{P}-\mbox{a.s.}$
\end{thm}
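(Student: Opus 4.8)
The plan is to set up the difference of two local strong solutions and run a pathwise Gronwall argument of essentially the same type as in Theorem \ref{cauchy}, but now on a single probability space with identical initial data, and with a truncation supplied by the two stopping times rather than by the Fourier cut-off. First I would fix $N$, set $\tau^{*}:=\rho^1_N\wedge\rho^2_N\wedge T$, and write $\v:=\v_1-\v_2$, $\mathbf{\tau}:=\tau_1-\tau_2$. Subtracting equations \eqref{exist.1}-\eqref{exist.2} for the two solutions (with $\phi_1=\v(t)$, $\phi_2=\mathbf{\tau}(t)$, which is legitimate since $\v_i,\tau_i\in H^s$ with $s>d/2$ and $\v$ is divergence free), and applying the It\^o formula to $\mu_2\|\v(t)\|_{\mathrm{L}^2}^2+\mu_1\|\mathbf{\tau}(t)\|_{\mathrm{L}^2}^2$ on $[0,\tau^{*}]$, one obtains an identity in which the pressure and the cross terms $2\mu_1\mu_2\{(\nabla\cdot\mathbf{\tau},\v)_{\mathrm{L}^2}+(\mathcal D(\v),\mathbf{\tau})_{\mathrm{L}^2}\}$ cancel by the divergence-free condition exactly as the term $J_0$ did in \eqref{ito.fin}, and the initial term vanishes because $\v(0)=0$, $\mathbf{\tau}(0)=0$.

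The next step is to estimate each nonlinear difference term in $\mathrm{L}^2$ in terms of $\|\v\|_{\mathrm{L}^2}^2+\|\mathbf{\tau}\|_{\mathrm{L}^2}^2$ times a coefficient that is integrable in time on $[0,\tau^{*}]$. For the Navier--Stokes nonlinearity I would split $(\v_1\cdot\nabla)\v_1-(\v_2\cdot\nabla)\v_2=(\v\cdot\nabla)\v_1+(\v_2\cdot\nabla)\v$, use the antisymmetry $((\v_2\cdot\nabla)\v,\v)_{\mathrm{L}^2}=0$, and bound $|((\v\cdot\nabla)\v_1,\v)_{\mathrm{L}^2}|\le C\|\v\|_{\mathrm{L}^2}^2\|\nabla\v_1\|_{\mathrm{L}^\infty}\le C\|\v\|_{\mathrm{L}^2}^2\|\v_1\|_{H^s}$. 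For the transport term in the $\tau$ equation, split $(\v_1\cdot\nabla)\tau_1-(\v_2\cdot\nabla)\tau_2=(\v\cdot\nabla)\tau_1+(\v_2\cdot\nabla)\tau$, kill the second piece by antisymmetry, and bound $|((\v\cdot\nabla)\tau_1,\mathbf{\tau})_{\mathrm{L}^2}|\le\|\v\|_{\mathrm{L}^2}\|\nabla\tau_1\|_{\mathrm{L}^\infty}\|\mathbf{\tau}\|_{\mathrm{L}^2}$ --- here the difficulty of "no control on $\|\nabla\tau_1\|$'' does not arise because this is only an $\mathrm{L}^2$ estimate, not an $H^s$ estimate, and one can instead integrate by parts to put the derivative on $\v$, using $\|(\v\cdot\nabla)\tau_1\|_{\mathrm{L}^2}$ replaced by $\|\mathrm{Div}(\v\otimes\tau_1)\|_{\mathrm{L}^2}$ and then $|(\v\otimes\tau_1,\nabla\mathbf{\tau})_{\mathrm{L}^2}|\le\|\v\|_{\mathrm{L}^2}\|\tau_1\|_{\mathrm{L}^\infty}\|\nabla\mathbf{\tau}\|_{\mathrm{L}^2}$; but $\|\nabla\mathbf{\tau}\|_{\mathrm{L}^2}$ is also uncontrolled, so the cleaner route is the $\mathrm{L}^\infty$ bound $\|\nabla\tau_1\|_{\mathrm{L}^\infty}\le C\|\tau_1\|_{H^s}$ valid since $s>d/2$ (indeed one needs $s>d/2+1$ for this; if only $s>d/2$, use instead the Lemma \ref{Q}-type argument from the Appendix as in $J_2,J_3$ of Theorem \ref{cauchy}, which on the difference yields $\frac{\nu\mu_2}{2}\|\v\|_{H^1}^2$ plus $C(\|\tau_1\|_{H^s}^2+\|\tau_2\|_{H^s}^2)\|\mathbf{\tau}\|_{\mathrm{L}^2}^2$, absorbing the gradient term into the viscous dissipation). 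The quadratic term $\Q$ is handled identically, splitting bilinearly and using Property \ref{propQ} together with integration by parts on $\mathcal{J}_n$-free expressions; the $\mathcal{S},\mathcal{S}^2$ terms are globally Lipschitz in $\mathrm{L}^2$ by \eqref{propS.2} and \eqref{propS.2.2}. The noise terms $\sigma,G$ are Lipschitz by Assumption \ref{hypo}.

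After taking expectations, using that the martingale parts have zero expectation up to the stopping time $\tau^{*}$ (justified by localizing with the stopping times in the definition of $\rho^i_N$, which make all the $H^s$ norms bounded by $N$), one arrives at
\begin{align*}
\mathbb{E}\Big[\mu_2\|\v(t\wedge\tau^{*})\|_{\mathrm{L}^2}^2+\mu_1\|\mathbf{\tau}(t\wedge\tau^{*})\|_{\mathrm{L}^2}^2\Big]\le C(N)\int_0^t\mathbb{E}\Big[\mu_2\|\v(s\wedge\tau^{*})\|_{\mathrm{L}^2}^2+\mu_1\|\mathbf{\tau}(s\wedge\tau^{*})\|_{\mathrm{L}^2}^2\Big]\,\d s,
\end{align*}
since on $[0,\tau^{*}]$ the coefficients $\|\v_i\|_{H^s}^2$, $\|\tau_i\|_{H^s}^2$, $\|\nabla\v_i\|_{H^s}^2$ are controlled by $N$ (the $\int\|\nabla\v_i\|_{H^s}^2$ contribution is handled by the same $\eta(t)$-weight trick as in \eqref{ito.pro} if needed). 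Gronwall then forces $\v\equiv0$, $\mathbf{\tau}\equiv0$ on $[0,\rho^1_N\wedge\rho^2_N\wedge T]$, a.s., giving $\v_1=\v_2$, $\tau_1=\tau_2$ there. Finally, to get $\rho^1_N=\rho^2_N$ a.s.: on the event $\{\rho^1_N<\rho^2_N\}$ the two solutions coincide on $[0,\rho^1_N]$, so the energy functional $\mathcal E_1$ equals $\mathcal E_2$ on that interval; but by right-continuity and the definition \eqref{stop.lim.1} of $\rho^i_N$ as a first hitting time of level $N$, the hitting times must agree, contradicting strict inequality, and symmetrically for $\{\rho^2_N<\rho^1_N\}$; hence $\rho^1_N=\rho^2_N$ a.s. The main obstacle, as always in this system, is the $\tau$-transport and $\Q$ terms: because there is no smoothing for $\tau$, one must not attempt an $H^s$ difference estimate but rather work in $\mathrm{L}^2$ and shift all derivatives onto $\v$, paying for them with the viscous dissipation $\nu\mu_2\|\nabla\v\|_{\mathrm{L}^2}^2$ on the left-hand side --- exactly the mechanism already used for $J_2$ and $J_3$ in Theorem \ref{cauchy}, which I would invoke verbatim.
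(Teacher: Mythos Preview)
Your proposal is correct and follows essentially the same route as the paper: It\^o's formula on $\mu_2\|\v_1-\v_2\|_{\mathrm{L}^2}^2+\mu_1\|\tau_1-\tau_2\|_{\mathrm{L}^2}^2$, the same splitting and $\mathrm{L}^2$ estimates for the nonlinear differences (shifting the derivative onto $\v$ and absorbing with the viscous term, exactly as in $I_5$--$I_9$), Lipschitz bounds on the noise, and Gronwall. Two small points of comparison: (i) the $\eta(t)=\exp(-2\int_0^t\|\nabla\v_1\|_{H^s}^2)$ weight is not optional but is used explicitly in the paper (Step II), since the stopping time bounds only $\int_0^t\|\nabla\v_1\|_{H^s}^2$, not the pointwise value, so your displayed Gronwall with a flat $C(N)$ does not follow without it; (ii) for the conclusion $\rho^1_N=\rho^2_N$, the paper (Step III) introduces indicator processes $\alpha^i$ and argues separately that $\rho^1_N\le\rho^2_N$ and $\rho^2_N\le\rho^1_N$, whereas your direct hitting-time contradiction on $\{\rho^1_N<\rho^2_N\}$ from coincidence on $[0,\rho^1_N\wedge\rho^2_N]$ is a cleaner, equivalent way to reach the same conclusion.
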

 \quad

\begin{proof}
\textbf{Step I} \newline
 Let $(\v_1,\tau_1,\rho^1_N)$ and $(\v_2,\tau_2,\rho^2_N)$ be two local strong solutions of the system
of equations (\ref{se1})-(\ref{se4}) having common initial data
$\v_1(0)=\v_2(0)=\v_0$ and $\tau_1(0)=\tau_2(0)=\tau_0$ such that
$\mathbb{E}\left(\|\v_0\|_{H^s}^2\right)<\infty\textrm{ and
}\mathbb{E}\left(\|\tau_0\|_{H^s}^2\right)<\infty$.

Considering the difference between the two equations satisfied by
$(\v_1,\tau_1)$ and $(\v_2,\tau_2)$ we obtain
\begin{align}
\d(\mathbf{v}_1-\v_2)&=\nu\Delta
(\v_1-\v_2)\d t-\nabla(p_1-p_2)\d t-[(\mathbf{v}_1 \cdot\nabla)\v_1 -(\mathbf{v}_2 \cdot\nabla)\v_2] dt\notag \\
&\quad +\mu_1 \nabla \cdot (\mathbf{\tau}_1-\mathbf{\tau}_2)\d t +\left( \sigma(t,\v_1)-\sigma(t,\v_{2})\right)\d W_1(t)
\nonumber\\
&\quad+\int_Z \left( G(\v_1,z)- G(\v_{2},z)\right) \tilde{N}_1(\d t,\d z),\label{uni1}\\
\d(\mathbf{\tau}_1-\mathbf{\tau}_2)&=-\left( \left[(\mathbf{v}_1 \cdot \nabla)\mathbf{\tau}_1 \right] 
-[(\mathbf{v}_2 \cdot\nabla)\mathbf{\tau}_2] \right)\d t
-\left( \Q(\mathbf{\tau}_1, \nabla \v_1)- \Q(\mathbf{\tau}_2, \nabla \v_2)\right)\d t \notag \\
&\quad+\mu_2 \mathcal{D}(\v_1-\v_2)dt -a (\mathbf{\tau}_1-\mathbf{\tau}_2)\d t
+ \frac{1}{2}\left[\mathcal{S}^2(\mathbf{\tau}_1)- \mathcal{S}^2(\mathbf{\tau}_2)\right]\d t\notag \\
&\quad+ \left(\mathcal{S}(\mathbf{\tau}_1)- \mathcal{S}(\mathbf{\tau}_2)\right)\d W_2(t).\label{uni2}
\end{align}
Let us apply It\^{o}'s Lemma to the function
$\|x\|^2_{\mathrm{L}^2}$ and to the process $\mu_2(\v_1-\v_2)$
in (\ref{uni1}) and to the process $\mu_1(\mathbf{\tau}_1-\mathbf{\tau}_2)$ in (\ref{uni2}),
and  adding these two equations and further exploiting the fact that $2\mu_1 \mu_2 \left\lbrace (\nabla \cdot (\mathbf{\tau}_1-\mathbf{\tau}_2), \v_1-\v_2)_{\mathrm{L}^2}
+(\mathcal{D}(\v_1-\v_2),\mathbf{\tau}_1-\mathbf{\tau}_2)_{\mathrm{L}^2}\right\rbrace -2\mu_2\left(\nabla (p_1-p_2),\v_1-\v_2\right)_{\mathrm{L}^2}=0$ as $\nabla \cdot \v_1=\nabla \cdot \v_2=0,$ we achieve 

\begin{align} \label{uni1.1}
&\d\left(\mu_2\|\v_1-\v_2\|_{\mathrm{L}^2}^2+\mu_1 \|\tau_1-\tau_2\|_{\mathrm{L}^2}^2\right)
+2\mu_2 \nu\|\nabla(\v_1-\v_2)\|^2_{\mathrm{L}^2}\d
t\nonumber\\
&=\underbrace{-2 \mu_2 \left([(\v_1 \cdot\nabla)\v_1 ]- [(\v_2 \cdot\nabla)\v_2 ]
,\v_1-\v_2 \right)_{\mathrm{L}^2}}_{I_5}\d t\notag \\
&\quad\underbrace{-2 \mu_1\left( [(\v_1 \cdot\nabla)\tau_1 ]- [(\v_2 \cdot\nabla)\tau_2 ]
,\tau_1-\tau_2 \right)_{\mathrm{L}^2}}_{I_6}\d t\notag \\
&\quad\underbrace{-2\mu_1\left( \Q(\tau_1, \nabla \v_1)-
\Q(\tau_2, \nabla \v_2), \tau_1-\tau_2\right)_{\mathrm{L}^2}}_{I_7} \d t-2a\mu_1 \|\tau_1-\tau_2\|_{\mathrm{L}^2}^2\d t
\nonumber\\
&\quad+ \underbrace{\mu_1\left(\mathcal{S}^2(\tau_1)-\mathcal{S}^2(\tau_2),\tau_1-\tau_2\right)_{\mathrm{L}^2}
}_{I_8}\d t+ \underbrace{\mu_1\|\mathcal{S}(\tau_1)-\mathcal{S}(\tau_2)\|_{\mathrm{L}^2}^2}_{I_9} \d t\notag \\
&\quad+\mu_2\|\sigma(t,\v_1)-\sigma(t,\v_{m})\|_{\mathcal{L}_Q(\mathrm{L}^2,\mathrm{L}^2)}^2\d t
\nonumber\\
&\quad+\mu_2\int_Z\| G(\v_1(t-),z)-G
(\v_2(t-),z)\|^2_{\mathrm{L}^2}N_1(\d t,\d z)\nonumber\\
&\quad+2\mu_2\left(\left( \sigma(t,\v_1)-\sigma(t,\v_{m})\right)\d W_1(t),\v_1-\v_2 \right)_{\mathrm{L}^2}\notag\\
&\quad +2 \mu_1 \left(\mathcal{S}(\tau_1)-\mathcal{S}(\tau_2),\tau_1-\tau_2\right)_{\mathrm{L}^2}\d W_2(t)\nonumber\\
&\quad+2\mu_2\int_Z\left( G(\v_1(t-),z)- G(\v_2(t-),z),\v_1-\v_2 \right)_{\mathrm{L}^2}\tilde{N}_1(\d
t,\d z).
\end{align}
Using Lemmas (\ref{v},\ref{tau},\ref{Q}), \eqref{S.2} in Lemma \ref{M2} and \eqref{propS.2} for $\v_i,\tau_i;i=1,2,$ estimating some of these integrals separately we have
\begin{align} 
&|I_5| \leq 2 C \mu_2 \|\v_1-\v_2\|_{\mathrm{L}^2}^2 \|\nabla \v_1\|_{H^s},\label{I_5}\\
&|I_6| \leq \frac{\nu \mu_2}{2}
\|\v_1-\v_2\|_{H^1}^2+\frac{2C \mu_1^2}{\nu \mu_2} \|\tau_1\|_{H^s}^2 \|\tau_1-\tau_2\|_{\mathrm{L}^2}^2, \label{I_6}\\
& |I_7| \leq \frac{\nu \mu_2}{2}
\|\v_1-\v_2\|_{H^1}^2+\frac{2C \mu_1^2}{\nu \mu_2} \|\tau_2\|_{H^s}^2 \|\tau_2-\tau_2\|_{\mathrm{L}^2}^2
+2\mu_1C \|\nabla \v_1\|_{H^s}\|\tau_1-\tau_2\|_{\mathrm{L}^2}^2,\label{I_7}\\
&|I_8| \leq \mu_1 \|h\|_{\mathrm{L}^\infty}^2 \|\tau_1-\tau_2\|_{\mathrm{L}^2} \label{I_8}\\ 
&|I_9| \leq 2\mu_1 \|h\|_{\mathrm{L}^\infty}^2 \|\tau_1-\tau_2\|_{\mathrm{L}^2}^2.\label{I_9}
\end{align}
Combining \eqref{I_5}-\eqref{I_9} and implementing the fact that $ 4C\|\nabla \v_1\|_{H^s} \leq C^2+ 2\|\nabla \v_1\|_{H^s}^2,$ \eqref{uni1.1} reduces to:
\begin{align} \label{esti.u}
&\d\left(\mu_2\|\v_1-\v_2\|_{\mathrm{L}^2}^2+\mu_1 \|\mathbf{\tau}_1-\mathbf{\tau}_2\|_{\mathrm{L}^2}^2\right)
+\mu_2 \nu\|\nabla(\v_1-\v_2)\|^2_{\mathrm{L}^2}\d t\nonumber\\
&\leq \left((2C^2+2a)+2\|\nabla \v_1\|_{H^s}^2
+\frac{2C\mu_1}{\nu\mu_2}\Big(\|\mathbf{\tau}_1\|_{H^{s}}^2 +\|\mathbf{\tau}_2\|_{H^{s}}^2\Big)
+2\|h\|_{\mathrm{L}^\infty}^2\right)\nonumber\\
&\qquad \qquad \times\Big(\mu_2\|\v_1-\v_2\|_{\mathrm{L}^2}^2+\mu_1\|\tau_1-\tau_2\|_{\mathrm{L}^2}^2 \Big) \notag \\
&\quad+\mu_2\|\sigma(t,\v_1)-\sigma(t,\v_2)\|_{\mathcal{L}_Q(\mathrm{L}^2,\mathrm{L}^2)}^2\d t
\nonumber\\
&\quad+\mu_2\int_Z\|G(\v_1(t-),z)-G(\v_2(t-),z)\|^2_{\mathrm{L}^2}N_1(\d t,\d z)\nonumber\\
&\quad+2\mu_2\left(\left(\sigma(t,\v_1)-\sigma(t,\v_2)\right)\d W_1(t),\v_1-\v_2 \right)_{\mathrm{L}^2}\notag\\
&\quad +2 \mu_1 \left(\mathcal{S}(\mathbf{\tau}_1)- \mathcal{S}(\tau_2),
\tau_1-\tau_2\right)_{\mathrm{L}^2}\d W_2(t)\nonumber\\
&\quad+2\mu_2\int_Z\left(G(\v_1(t-),z)-G(\v_2(t-),z),\v_1-\v_2 \right)_{\mathrm{L}^2}\tilde{N}_1(\d
t,\d z).
\end{align}

\textbf{Step II:} \newline
For the stopping time $\rho^1_N$, let us take the process
$\eta(t)=\exp\left(-2\mathlarger{\int_{0}^t}\|\nabla\v_1 \|^2_{H^s}
\d s\right),$ $t\in[0,T \wedge \rho^1_N)$ and apply It\^{o} product formula
(see Theorem 4.4.13, Applebaum \cite{Ap}) to the process
$\eta(t)(\mu_2\|\v_1-\v_2\|_{\mathrm{L}^2}^2+\mu_1\|\tau_1-\tau_2\|^2_{\mathrm{L}^2})$ in the
interval $[0,t]$ to get
\begin{align} \label{ito.pro.u}
&\eta(t)\left(\mu_2\|\v_1-\v_2\|_{\mathrm{L}^2}^2+\mu_1 \|\mathbf{\tau}_1-\mathbf{\tau}_2\|_{\mathrm{L}^2}^2\right)
+\mu_2 \nu\int_0^t\eta(s)\|\nabla(\v_1-\v_2)\|^2_{\mathrm{L}^2}\d s\nonumber\\
&\leq\int_{0}^{t} \eta(s) \left((2C^2+2a)+\frac{2C\mu_1}{\nu\mu_2}\left(\|\mathbf{\tau}_1\|_{H^{s}}^2
+\|\tau_2\|_{H^s}^2\right)+2\|h\|_{\mathrm{L}^\infty}^2\right)\nonumber\\
&\qquad \qquad \times \Big(\mu_2\|\v_1-\v_2\|_{\mathrm{L}^2}^2+\mu_1\|\mathbf{\tau}_1-\mathbf{\tau}_2\|_{\mathrm{L}^2}^2 \Big) \d s
\notag \\
&\quad+\mu_2 \int_{0}^{t} \eta(s) \|\sigma(s,\v_1)-\sigma(s,\v_2\|_{\mathcal{L}_Q(\mathrm{L}^2,\mathrm{L}^2)}^2
 \d s\nonumber\\
&\quad+\mu_2 \int_{0}^{t} \eta(s) \int_Z\|G(\v_1(s-),z)-G(\v_2(s-),z)\|^2_{\mathrm{L}^2}N_1(\d s,\d z)\nonumber\\
&\quad+2\mu_2\int_{0}^{t} \eta(s)\left(\left(\sigma(s,\v_1)-\sigma(s,\v_2)\right)\d W_1(s),\v_1-\v_2 \right)_{\mathrm{L}^2}\notag\\
&\quad +2 \mu_1 \int_{0}^{t} \eta(s)\left(\mathcal{S}
(\mathbf{\tau}_1)-\mathcal{S}(\mathbf{\tau}_2),\mathbf{\tau}_1-\mathbf{\tau}_2\right)_{\mathrm{L}^2}\d W_2(s)\nonumber\\
&\quad+2\mu_2\int_{0}^{t} \eta(s)\int_Z\left(G(\v_1(s-),z)-G(\v_2(s-),z),\v_1-\v_2 \right)_{\mathrm{L}^2}\tilde{N}_1(\d
s,\d z).
\end{align}
It is to be noted that the quadratic variation of the product of these two adapted processes
$Z_1(t)=\eta(t)$ and $Z_2(t)=\mu_2\|\v_1-\v_2\|_{\mathrm{L}^2}^2
+\mu_1\|\mathbf{\tau}_1-\mathbf{\tau}_2\|^2_{\mathrm{L}^2},$ i.e., $[Z_1,Z_2](t)$ is zero (see, Section 4.4.3, page-257 of Applebaum \cite{Ap}).
\newline
 Let us now take the supremum from $0$ to
$T\wedge\rho^1_N$, for any $T>0$ in  \eqref{ito.pro.u} and then on taking expectation and thereafter using \eqref{mt1} (in Remark \ref{nlam}),  we get
\begin{align} \label{Cau.1.u}
&\mathbb{E}\Big[\sup_{0\leq t\leq T\wedge \rho^1_N}\eta(t)\Big(\mu_2\|\v_1-\v_2\|_{\mathrm{L}^2}^2+\mu_1 \|\mathbf{\tau}_1-\mathbf{\tau}_2\|_{\mathrm{L}^2}^2\Big)\Big]
+\mu_2 \nu\mathbb{E}\left[\int_0^{T \wedge \rho^1_N} \eta(s)\|\nabla(\v_1-\v_2)\|^2_{\mathrm{L}^2}\d s\right]\nonumber\\
&\leq \mathbb{E}\Big[\int_0^{T \wedge \rho^1_N} \eta(s)\left((2C^2+2a)
+\frac{2C\mu_1}{\nu\mu_2}\left(\|\mathbf{\tau}_1\|_{H^{s}}^2+\|\tau_2\|_{H^s}^2\right)+2\|h\|_{\mathrm{L}^\infty}^2\right)\nonumber\\
&\qquad \qquad\times \Big(\mu_2\|\v_1-\v_2\|_{\mathrm{L}^2}^2+\mu_1\|\mathbf{\tau}_1-\mathbf{\tau}_2\|_{\mathrm{L}^2}^2 \Big) \d s \Big]\notag\\
&\quad +\mu_2\mathbb{E}\Big[\int_0^{T \wedge \rho^1_N}\eta(s)\| \sigma(s,\v_1)-\sigma(s,\v_2)
\|_{\mathcal{L}_Q(\mathrm{L}^2,\mathrm{L}^2)}^2\d s\Big]
\nonumber\\
&\quad+\mu_2\mathbb{E}\Big[\int_0^{T \wedge \rho^1_N}\int_Z\eta(s)\| G(\v_1(s),z)-G
(\v_2(s),z)\|^2_{\mathrm{L}^2}\lambda(\d z) \d s\Big]\nonumber\\
&\quad+\underbrace{2\mu_2\mathbb{E}\Big[\sup_{0\leq t\leq T \wedge \rho^1_N}\int_0^{t}\eta(s)\left(\left(\mathcal{J}_1 \sigma(s,\v_1)-\mathcal{J}_2
\sigma(s,\v_2)\right)\d W_1(s),\v_1-\v_2 \right)_{\mathrm{L}^2}\Big]}_{I_{10}}\notag\\
&\quad +\underbrace{2 \mu_1 \mathbb{E}\Big[\sup_{0\leq t\leq T \wedge \rho^1_N}\int_0^{t}\eta(s)\left(S(\mathbf{\tau}_1)-S(\mathbf{\tau}_2),
\mathbf{\tau}_1-\mathbf{\tau}_2\right)_{\mathrm{L}^2}\d W_2(s)\Big]}_{I_{11}}\nonumber\\
&\quad+\underbrace{2\mu_2\mathbb{E}\Big[\sup_{0\leq t\leq T \wedge \rho^1_N}\int_0^{t}\int_Z\eta(s)\left(G(\v_1(s-),z)-
G(\v_2(s-),z),\v_1-\v_2 \right)_{\mathrm{L}^2}\tilde{N}_1(\d s,\d z)\Big]}_{I_{12}}.
\end{align}
We note that direct application of Burkholder-Davis-Gundy inequality, Young's inequality and  Assumption \ref{hypo} produces the following estimates:
\begin{align} \label{M1.cau.u}
|I_{10}| \leq \frac{\mu_2}{4}\mathbb{E}\left(\sup_{0\leq t\leq T \wedge \rho^1_N}\eta(t)\|\v_1(t)-\v_2(t)\|^2_{\mathrm{L}^2}\right) 
 +8\mu_2L\, \mathbb{E}\left[\int_{0}^{T \wedge \rho^1_N}\eta(t)\|\v_1-\v_{2}\|_{\mathrm{L}^2}^2 \d
t\right],
\end{align} 
\begin{align}\label{M2.cau.u}
|I_{11}| &\leq \frac{\mu_1}{2}\mathbb{E}\left(\sup_{0\leq t\leq T \wedge \rho^1_N}\eta(t)
\|\mathbf{\tau}_1(t)-\mathbf{\tau}_2(t)\|^2_{\mathrm{L}^2}\right)
  +8\mu_1\|h\|^2_{\mathrm{L}^\infty} \mathbb{E}\left(\int_0^{T \wedge \rho^1_N}\eta(t)
  \|\mathbf{\tau}_1-\mathbf{\tau}_2\|_{\mathrm{L}^2}^2\d t \right),
\end{align}
and
\begin{align}\label{M3.cau.u}
|I_{12}| &\leq \frac{\mu_2}{4}\mathbb{E}\left(\sup_{0\leq t\leq T \wedge \rho^1_N}\eta(t)
\|\v_1(t)-\v_2(t)\|^2_{\mathrm{L}^2}\right)+8\mu_2L\, \mathbb{E}\left[\int_{0}^{T \wedge \rho^1_N}\eta(t)\|\v_1-\v_{2}\|_{\mathrm{L}^2}^2 \d
t\right].
\end{align}
Combining \eqref{M1.cau.u}-\eqref{M3.cau.u} and Assumption \ref{hypo}, equation \eqref{Cau.1.u} is reduced to
\begin{align} \label{Cau.4.u}
&\frac{1}{2}\mathbb{E}\Big[\sup_{0\leq t\leq T \wedge \rho^1_N}\eta(t)
\Big(\mu_2\|\v_1-\v_2\|_{\mathrm{L}^2}^2+\mu_1 \|\mathbf{\tau}_1-\mathbf{\tau}_2\|_{\mathrm{L}^2}^2\Big)\Big]
+\mu_2 \nu\mathbb{E}\left[\int_0^{T \wedge \rho^1_N} \eta(s) \|\nabla(\v_1-\v_2)\|^2_{\mathrm{L}^2} \d s \right]\nonumber\\
&\leq \mathbb{E}\Big[\int_0^{T \wedge \rho^1_N}\eta(s)\left((2C^2+2a+18L)
+\frac{2C\mu_1}{\nu\mu_2}\left(\|\mathbf{\tau}_1\|_{H^{s}}^2
+\|\mathbf{\tau}_2\|_{H^{s}}^2\right)+10\|h\|_{\mathrm{L}^\infty}^2\right)\nonumber\\
&\qquad\times \Big(\mu_2\|\v_1-\v_2\|_{\mathrm{L}^2}^2
+\mu_1\|\mathbf{\tau}_1-\mathbf{\tau}_2\|_{\mathrm{L}^2}^2 \Big) \d s \Big].
\end{align}
Using the definition of stopping time we have
\begin{align} \label{Cau.5.u}
&\mathbb{E}\Big[\sup_{0\leq t\leq T \wedge \rho^1_N}\eta(t)
\Big(\mu_2\|\v_1-\v_2\|_{\mathrm{L}^2}^2+\mu_1 \|\mathbf{\tau}_1-\mathbf{\tau}_2\|_{\mathrm{L}^2}^2\Big)\Big]
+2\mu_2 \nu\mathbb{E}\left[\int_0^{T \wedge \rho^1_N} \eta(s) \|\nabla(\v_1-\v_2)\|^2_{\mathrm{L}^2} \d s \right]\nonumber\\
&\leq 4\left((C^2+a+9L)+\frac{2CN\mu_1}{\nu\mu_2}+5\|h\|_{\mathrm{L}^\infty}^2\right)\mathbb{E}\Big[\int_0^{T \wedge \rho^1_N}
\sup_{0\leq s\leq t}\eta(s)
\Big(\mu_2\|\v_1-\v_2\|_{\mathrm{L}^2}^2+\mu_1\|\mathbf{\tau}_1-\mathbf{\tau}_2\|_{\mathrm{L}^2}^2 \Big) \d t \Big].
\end{align}

\textbf{Step III:} \newline
Define $A_N=\left\{(\v,\tau):\mu_2\|\v(t)\|_{H^s}^2+\mu_1\|\mathbf{\tau}(t)\|_{H^s}^2
+2 \mu_2\nu \int_0^t \| \nabla\v (r)\|_{H^s}^2dr \leq N \right\}.$
For $i=1,2,$ let us define a process $\alpha^i=\{\alpha^i(t)\},\,t \geq 0,$ by
\begin{align}
\alpha^i(t,\omega):=\left\{\begin{aligned}
                  &1, \quad \mbox{if} \quad (\v_i,\tau_i)(s, \omega) \in A_N,\, \forall s\in [0, t)\\
                   &0,\quad \mbox{otherwise}.
                   \end{aligned}
   \right.
\end{align}
Note that for all $t\geq 0$, $\alpha^i(t) (\v_1(0) - \v_2(0)) = 0 ~ \mbox{a.s., \ and} ~ \alpha^i(t) (\tau_1(0) - \tau_2(0)) = 0 ~ \mbox{a.s.}$ for $i=1, 2$. 
\par\noindent
For any fixed $t\geq 0$, and for each $i=1,2$, $(\alpha^i)^{-1}(\{1\}) = \{\omega : (\v_i, \tau_i)(s, \omega)\in A_N, ~\forall s \in [0,t) \} = \bigcap_{s \in [0,t)}\{\omega : (\v_i, \tau_i)(s, \omega)\in A_N \} \in \mathscr{F}$ as $\{\omega : (\v_i, \tau_i)(s, \omega)\in A_N  \} \in \mathscr{F}$. Similarly we can conclude  $(\alpha^i)^{-1}(\{0\}) \in \mathscr{F}$ for $i=1,2$. Hence for each $i=1,2$, $\alpha^i(t)$ is a random variable for each $t \geq 0$. Hence for each $i=1,2$ $\alpha^i$ is a well-defined stochastic process.
\par\noindent
Hence from \eqref{Cau.5.u} we have
\begin{align} \label{Cau.new}
&\mathbb{E}\Big[\sup_{0\leq t\leq T \wedge \rho^1_N} \alpha^1(t) \eta(t)
\Big(\mu_2\|\v_1-\v_2\|_{\mathrm{L}^2}^2+\mu_1 \|\mathbf{\tau}_1-\mathbf{\tau}_2\|_{\mathrm{L}^2}^2\Big)\Big]\nonumber\\
&\leq 4\left((C^2+a+9L)+\frac{2CN\mu_1}{\nu\mu_2}+5\|h\|_{\mathrm{L}^\infty}^2\right)\mathbb{E}\Big[\int_0^{T \wedge \rho^1_N}
\sup_{0\leq s\leq t}\alpha^1(s) \eta(s)
\Big(\mu_2\|\v_1-\v_2\|_{\mathrm{L}^2}^2+\mu_1\|\mathbf{\tau}_1-\mathbf{\tau}_2\|_{\mathrm{L}^2}^2 \Big) \d t \Big]\nonumber\\
&\leq 4\left((C^2+a+9L)+\frac{2CN\mu_1}{\nu\mu_2}+5\|h\|_{\mathrm{L}^\infty}^2\right)\mathbb{E}\Big[\int_0^{T}
\sup_{0\leq s\leq t \wedge \rho^1_N} \alpha^1(s) \eta(s)
\Big(\mu_2\|\v_1-\v_2\|_{\mathrm{L}^2}^2+\mu_1\|\mathbf{\tau}_1-\mathbf{\tau}_2\|_{\mathrm{L}^2}^2 \Big) \d t \Big].
\end{align}
\noindent
Applying Gronwall's inequality we obtain
\begin{align} \label{Cau65.u}
&\mathbb{E}\Big[\sup_{0\leq t\leq\L} \alpha^1(t) \eta(t)
\Big(\mu_2\|\v_1-\v_2\|_{\mathrm{L}^2}^2+\mu_1 \|\mathbf{\tau}_1-\mathbf{\tau}_2\|_{\mathrm{L}^2}^2\Big)\Big] \leq 0.
\end{align}
Hence we infer that 
\begin{align} \label{cau.alpha1}
\v_1(t)=\v_2(t) \quad \mbox{and} \quad \tau_1(t)=\tau_2(t)\quad \mbox{a.s. for}\quad 0 \leq t \leq T \wedge \rho^1_N.
\end{align}
Thus we conclude $\mathbb{P}(\omega \in \Omega: \rho^1_N(\omega) \leq \rho^2_N(\omega))=1$, because for $t<\rho^1_N$, $(\v_1, \tau_1)(t, \omega)\in A_N$ and hence $(\v_2, \tau_2)(t, \omega)\in A_N$.
\par\noindent
Proceeding in the similar way, replacing $\rho_N^1$ and $\alpha^1$ in \eqref{Cau.new} and \eqref{Cau65.u} by $\rho_N^2$ and $\alpha^2$ respectively, we have
\begin{align} \label{cauchy.alpha1.1}
\v_1(t)=\v_2(t) \quad \mbox{and} \quad \tau_1(t)=\tau_2(t)\quad \mbox{a.s. for}\quad 0 \leq t \leq T \wedge \rho^2_N
\end{align}
 and this implies
$\mathbb{P}(\omega \in \Omega: \rho^2_N(\omega) \leq \rho^1_N(\omega))=1.$
\par\noindent
Finally combining \eqref{cau.alpha1} and \eqref{cauchy.alpha1.1} we have,
$$\v_1(t)=\v_2(t) \quad \mbox{and} \quad \tau_1(t)=\tau_2(t)\quad \mbox{a.s. for}\quad 0 \leq t \leq T \wedge \rho^1_N \wedge \rho^2_N$$ and $\mathbb{P}(\omega \in \Omega: \rho^1_N(\omega)= \rho^2_N(\omega))=1.$
This provides uniqueness of the local strong solution of the system \eqref{se1}-\eqref{se4}
in
$\mathrm{L}^2(\Omega;\mathrm{L}^{\infty}(0,T;\mathrm{L}^2(\mathbb{R}^d)))$.
\end{proof}

\begin{thm}\label{remunique}
Let the initial data $\v_0,\tau_0\in\mathrm{L}^2(\Omega;H^s(\mathbb{R}^d))$  with $\nabla\cdot \v_0 =0$ be
$\mathscr{F}_0$-measurable for $s>d/2$. Let $\v_j$ and $\tau_j$
;\,$j=1,2$ be two $\mathscr{F}_t$-adapted processes with c\`{a}dl\`{a}g and continuous
paths respectively such that $(\v_j, \tau_j, \rho^{j}_N), j=1,2$ are local strong solutions of (\ref{se1})-(\ref{se4})
having same initial conditions $\v_1(0)=\v_2(0)=\v_0,\tau_1(0)=\tau_2(0)=\tau_0$, and for $s>d/2$
\begin{align} \label{esti.vt}
&\v_j\in\mathrm{L}^{2}(\Omega;\mathrm{L}^{\infty}(0,\rho^j_N \wedge T;H^s(\mathbb{R}^d))\cap\mathrm{L}^2(0,\rho^j_N \wedge T;H^{s+1}(\mathbb{R}^d))),\\
&\tau_j\in\mathrm{L}^{2}(\Omega;\mathrm{L}^{\infty}(0,\rho^j_N \wedge T;H^s(\mathbb{R}^d))),\label{esti.vt1}.
\end{align}
Then $$ \v_1(t)=\v_2(t), \tau_1(t)=\tau_2(t)\quad \mbox{a.s.} \quad \forall\, t \in [0,\rho^1_N \wedge \rho^2_N \wedge T],$$ 
as functions in $\left(\mathrm{L}^2(\Omega;\mathrm{L}^{\infty}(0,T;H^{s'}(\mathbb{R}^d))\cap\mathrm{L}^2(0,T;H^{s'+1}(\mathbb{R}^d))),
\mathrm{L}^2(\Omega;\mathrm{L}^{\infty}(0,T;H^{s'}(\mathbb{R}^d)))\right)$,
for any $0<s'<s$. Moreover, $\rho^1_N= \rho^2_N \,\,\mathbb{P}-\mbox{a.s.}$
\end{thm}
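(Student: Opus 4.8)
The plan is to obtain this theorem as a direct consequence of the $\mathrm{L}^2$-uniqueness already established in Theorem~\ref{uniqueness}, bootstrapping the low-regularity identification up to the $H^{s'}$-scale by the Sobolev interpolation inequality of Lemma~\ref{iss}, using the a~priori $H^s$- and $H^{s+1}$-bounds \eqref{esti.vt}--\eqref{esti.vt1} that are built into the notion of local strong solution.

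First I would apply Theorem~\ref{uniqueness} to the two local strong solutions $(\v_1,\tau_1,\rho^1_N)$ and $(\v_2,\tau_2,\rho^2_N)$: since they share the initial data $\v_1(0)=\v_2(0)=\v_0$, $\tau_1(0)=\tau_2(0)=\tau_0$, it yields $\v_1(t)=\v_2(t)$ and $\tau_1(t)=\tau_2(t)$ a.s.\ for $t\in[0,\rho^1_N\wedge\rho^2_N\wedge T]$ as elements of $\mathrm{L}^2(\Omega;\mathrm{L}^{\infty}(0,T;\mathrm{L}^2(\mathbb{R}^d)))$, and moreover $\rho^1_N=\rho^2_N$ a.s.\ — which is already the second assertion of the present theorem. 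In particular $\v_1=\v_2$ and $\tau_1=\tau_2$ Lebesgue-a.e.\ in $(t,x)$ for a.e.\ $\omega$, so their differences vanish distributionally, hence $\nabla\v_1=\nabla\v_2$, and the differences vanish in every function-space norm in which they are well defined — well-definedness and finiteness being exactly what \eqref{esti.vt}--\eqref{esti.vt1} provide.

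Next, fix $d/2<s'<s$ and work on the common horizon $[0,\rho^1_N\wedge\rho^2_N\wedge T]$. Applying Lemma~\ref{iss} pointwise in $(t,\omega)$ with exponents $\tfrac{s}{s-s'}$ and $\tfrac{s}{s'}$, taking the supremum in $t$ and then expectation, and splitting the resulting product by H\"older's inequality exactly as in the proof of Proposition~\ref{conv.v}, I obtain
\[
\mathbb{E}\Big[\sup_{0\leq t\leq\rho^1_N\wedge\rho^2_N\wedge T}\|\v_1-\v_2\|_{H^{s'}}^2\Big]
\leq C\Big(\mathbb{E}\big[\sup\|\v_1-\v_2\|_{\mathrm{L}^2}^2\big]\Big)^{1-s'/s}
\Big(\mathbb{E}\big[\sup(\|\v_1\|_{H^s}^2+\|\v_2\|_{H^s}^2)\big]\Big)^{s'/s}=0,
\]
since the first factor is zero by the previous step and the second is finite by \eqref{esti.vt}. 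The identical argument applied to $\tau_1-\tau_2$ with the bound \eqref{esti.vt1}, and to $\nabla(\v_1-\v_2)$ and $\v_1-\v_2$ integrated in time — interpolating $H^{s'}$ (resp.\ $H^{s'+1}$) between $\mathrm{L}^2$ and $H^{s}$ (resp.\ $H^{s+1}$) and using H\"older's inequality in $\d\mathbb{P}\otimes\d t$ together with \eqref{esti.vt}--\eqref{esti.vt1} — delivers the vanishing of $\v_1-\v_2$ in $\mathrm{L}^2(\Omega;\mathrm{L}^{\infty}(0,T;H^{s'}(\mathbb{R}^d))\cap\mathrm{L}^2(0,T;H^{s'+1}(\mathbb{R}^d)))$ and of $\tau_1-\tau_2$ in $\mathrm{L}^2(\Omega;\mathrm{L}^{\infty}(0,T;H^{s'}(\mathbb{R}^d)))$ over the stopped interval, which is the first assertion.

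This reduction is essentially routine once Theorem~\ref{uniqueness} is in hand; I do not expect a genuine obstacle. The only points deserving attention are the bookkeeping of the stopping times — one must restrict to $[0,\rho^1_N\wedge\rho^2_N\wedge T]$ and invoke $\rho^1_N=\rho^2_N$ a.s.\ so that the $H^s$- and $H^{s+1}$-bounds of \emph{both} solutions, a~priori valid only up to their own stopping times, can be used simultaneously — and the legitimacy of the H\"older split of the expectation of the product of the two interpolation factors, which is licit precisely because each factor is finite by the energy estimates.
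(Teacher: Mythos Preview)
Your proposal is correct and follows essentially the same route as the paper: invoke Theorem~\ref{uniqueness} for the $\mathrm{L}^2$-identification and the a.s.\ equality $\rho^1_N=\rho^2_N$, then interpolate via Lemma~\ref{iss} and H\"older's inequality (exactly as in Proposition~\ref{conv.v}) against the $H^s$- and $H^{s+1}$-bounds \eqref{esti.vt}--\eqref{esti.vt1} to upgrade to $H^{s'}$ and $H^{s'+1}$. The only cosmetic difference is that the paper bounds the second interpolation factor explicitly by $(2N)^{s'/s}$ using the definition of the stopping times, whereas you simply note it is finite; also, your restriction ``$d/2<s'<s$'' should read ``$0<s'<s$'' to match the statement, but the argument is unchanged.
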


\begin{proof}
Now by using the Sobolev interpolation Lemma \autoref{iss} and H\"{o}lder's inequality
for $0<s'<s$, 
\begin{align} \label{esti.rho1.2}
&\mathbb{E}\left[\sup_{0\leq
t\leq\rho^1_N \wedge \rho^2_N \wedge T}\|\v_1-\v_2\|_{H^{s'}}^2\right] \nonumber\\&\leq
C \left\{\mathbb{E}\left[\sup_{0\leq
t\leq\rho^1_N \wedge \rho^2_N \wedge T}\|\v_1-\v_2\|_{\mathrm{L}^2}^2\right]\right\}^{1-s'/s}
\left\{\mathbb{E}\left[\sup_{0\leq
t\leq\rho^1_N \wedge \rho^2_N \wedge T}\|\v_1-\v_2\|_{H^s}^2\right]\right\}^{s'/s}
\nonumber\\&\leq C\left\{\mathbb{E}\left[\sup_{0\leq
t\leq\rho^1_N \wedge \rho^2_N \wedge T}\|\v_1-\v_2\|_{\mathrm{L}^2}^2\right]\right\}^{1-s'/s}
\left\{\mathbb{E}\left[\sup_{0\leq
t\leq\rho^1_N \wedge \rho^2_N \wedge T}\|\v_1\|_{H^s}^2+\sup_{0\leq
t\leq\rho^1_N \wedge \rho^2_N \wedge T}\|\v_2\|_{H^s}^2\right]\right\}^{s'/s}\nonumber\\
&\leq (2N)^{s'/s}C \left\{\mathbb{E}\left[\sup_{0\leq
t\leq \rho^1_N \wedge \rho^2_N \wedge T}\|\v_1-\v_2\|_{\mathrm{L}^2}^2\right]\right\}^{1-s'/s}.
\end{align}
Since by Theorem \ref{uniqueness}, $$ \v_1(t)=\v_2(t), \tau_1(t)=\tau_2(t)\quad \mbox{a.s.} \quad \forall\, t \in [0,\rho^1_N \wedge \rho^2_N  \wedge T] \quad \mbox{and} \quad \rho^1_N= \rho^2_N \,\,\mathbb{P}-\mbox{a.s.}$$ 
 as functions in $\mathrm{L}^2(\Omega;\mathrm{L}^{\infty}(0,\rho^1_N \wedge T;\mathrm{L}^2(\mathbb{R}^d)))$, we infer from 
\eqref{esti.rho1.2} that $$\mathbb{E}\left[\sup_{0\leq
t\leq\rho^1_N \wedge \rho^2_N \wedge T}\|\v_1-\v_2\|_{H^{s'}}^2\right] =0.$$
Hence we achieve  $\v_1(\cdot)=\v_2(\cdot)$ in
$\mathrm{L}^2(\Omega;\mathrm{L}^{\infty}(0,\rho^1_N \wedge \rho^2_N \wedge T;H^{s'}(\mathbb{R}^d)))$
for any $0<s'<s$. 

By using (\ref{esti.vt}) and Sobolev interpolation, we
also get $\v_1(\cdot)=\v_2(\cdot)$ in
$\mathrm{L}^2(\Omega;\mathrm{L}^{2}(0,\rho^1_N \wedge \rho^2_N \wedge T;H^{s'+1}(\mathbb{R}^d))).$
Similar calculation reveals that
$\tau_1(\cdot)=\tau_2(\cdot)$ in
$\mathrm{L}^2(\Omega;\mathrm{L}^{\infty}(0,\rho^1_N \wedge \rho^2_N \wedge T;H^{s'}(\mathbb{R}^d)))$
for any $0<s'<s$. This implies $\mathbb{P}(\omega \in \Omega: \rho^1_N(\omega)= \rho^2_N(\omega))=1,$ i.e., $\rho^1_N= \rho^2_N \,\,\mathbb{P}-\mbox{a.s.}$

\end{proof}

\section{Existence and Uniqueness of Local Maximal Solutions}
\begin{thm}\label{max}
Under Assumption \autoref{hypo}, Theorem \autoref{existence} and Theorem \autoref{uniqueness}, there exists a unique triplet
$(\v,\tau,\rho_{\infty})$, which is a maximal strong solution of
(\ref{se1})-(\ref{se4}) such that
\begin{align}
\sup_{0\leq s\leq
\rho_{\infty}}\|\v(s)\|^2_{H^s}+\sup_{0\leq s\leq
\rho_{\infty}}\|\tau(s)\|^2_{H^s}+\int_0^{\rho_{\infty}}\|\nabla\v(s)\|^2_{H^s}\d
s=\infty,\,\,\mathbb{P}-\text{ a. s. on
}\{\omega:\rho_{\infty}(\omega)<\infty\}.
\end{align}
\end{thm}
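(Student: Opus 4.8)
The plan is to obtain the maximal solution by concatenating the family $\{(\v,\tau,\rho_N)\}_{N\in\mathbb N}$ of local strong solutions furnished by Theorem \ref{existence}, where, writing $\mathcal E(t):=\mu_2\|\v(t)\|_{H^s}^2+\mu_1\|\tau(t)\|_{H^s}^2+2\mu_2\nu\int_0^t\|\nabla\v(r)\|_{H^s}^2\,\d r$, the stopping time $\rho_N=\inf\{t:\mathcal E(t)>N\}$ is as in \eqref{stop.lim}. First I would note that the thresholds are nested: since the superlevel sets shrink, $\rho_N\le\rho_M$ $\mathbb P$-a.s. for $N\le M$; and by the pathwise uniqueness of Theorem \ref{uniqueness} (see also Theorem \ref{remunique}), the parameter-$M$ solution stopped at its own threshold-$N$ exit time coincides with the parameter-$N$ solution. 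Hence $(\rho_N)_N$ is $\mathbb P$-a.s. non-decreasing, $\rho_\infty(\omega):=\lim_{N\to\infty}\rho_N(\omega)$ is a stopping time (an increasing limit of stopping times), and one may define a pair $(\v,\tau)$ on $\bigcup_N[0,\rho_N\wedge T)=[0,\rho_\infty\wedge T)$ that restricts to the parameter-$N$ solution on each $[0,\rho_N\wedge T]$; consistency makes this unambiguous. Progressive measurability and the regularity $\v\in\mathrm L^2(\Omega;D(0,t;H^s)\cap\mathrm L^2(0,t;H^{s+1}))$, $\tau\in\mathrm L^2(\Omega;C(0,t;H^s))$ for $t<\rho_\infty$ are inherited termwise, so $(\v,\tau,\rho_\infty)$ is a local strong solution in the sense of the definition, with approximating sequence $\rho_N\uparrow\rho_\infty$.

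Next I would establish maximality, namely
\[
\sup_{0\le s\le\rho_\infty}\|\v(s)\|_{H^s}^2+\sup_{0\le s\le\rho_\infty}\|\tau(s)\|_{H^s}^2+\int_0^{\rho_\infty}\|\nabla\v(s)\|_{H^s}^2\,\d s=\infty
\qquad\mathbb P\text{-a.s. on }\{\rho_\infty<T\}.
\]
Arguing by contradiction, suppose the event $\Omega_0=\{\rho_\infty<T,\ \mathcal E(\rho_\infty)<\infty\}$ has positive probability. On $\Omega_0$, using that $t\mapsto\|\v(t)\|_{H^s}^2$ is c\`adl\`ag, $t\mapsto\|\tau(t)\|_{H^s}^2$ continuous and $t\mapsto\int_0^t\|\nabla\v\|_{H^s}^2$ continuous, the pair $(\v(\rho_\infty),\tau(\rho_\infty))$ is a well-defined $\mathscr F_{\rho_\infty}$-measurable element of $\mathrm L^2(\Omega_0;H^s(\R^d))$ with $\nabla\cdot\v(\rho_\infty)=0$. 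Invoking Theorem \ref{existence} on the shifted filtration $(\mathscr F_{\rho_\infty+t})_{t\ge0}$ with this initial datum yields, on $\Omega_0$, a local strong solution on $[\rho_\infty,\rho_\infty+\delta')$ for some $\delta'>0$ a.s.; concatenating it with $(\v,\tau)$ gives a local strong solution $(\tilde\v,\tilde\tau)$ on $[0,\rho_\infty+\delta')$. Since $\mathcal E(\rho_\infty)<N_0$ for some (random) integer $N_0$ and $\mathcal E$ is right-continuous along $(\tilde\v,\tilde\tau)$, the threshold-$(N_0+1)$ exit time of $(\tilde\v,\tilde\tau)$ is strictly larger than $\rho_\infty$ on $\Omega_0$; but by pathwise uniqueness $(\tilde\v,\tilde\tau)$ agrees with the parameter-$(N_0+1)$ solution up to the smaller of their exit times, forcing that exit time to equal $\rho_{N_0+1}\le\rho_\infty$, a contradiction. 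Hence $\mathbb P(\Omega_0)=0$.

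Finally, for uniqueness, let $(\v^1,\tau^1,\rho^1_\infty)$ and $(\v^2,\tau^2,\rho^2_\infty)$ be two maximal local strong solutions. For each $N$ the threshold-$N$ exit time $\sigma^i_N$ of $(\v^i,\tau^i)$ stays within the existence interval $[0,\rho^i_\infty)$ (on $\{\rho^i_\infty<T\}$ maximality forces the exit to occur before $\rho^i_\infty$, the energy being $\le N$ up to that time), so $(\v^i,\tau^i,\sigma^i_N)$ is itself a local strong solution of \eqref{se1}-\eqref{se4}; Theorem \ref{uniqueness}/\ref{remunique} then gives $\sigma^1_N=\sigma^2_N=\rho_N$ and $\v^1=\v^2$, $\tau^1=\tau^2$ on $[0,\rho_N\wedge T]$. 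Letting $N\to\infty$, the two processes coincide on $[0,(\sup_N\rho_N)\wedge T)$ and $\sup_N\sigma^i_N=\sup_N\rho_N=:\rho_\infty$. The blow-up property at $\rho^i_\infty$, together with the fact that $\mathcal E(t)\to\infty$ as $t\uparrow\sup_N\sigma^i_N$, then yields $\rho^i_\infty=\sup_N\sigma^i_N=\rho_\infty$: were $\rho^i_\infty$ strictly larger on a set of positive probability, the solution would have finite energy slightly past $\sup_N\sigma^i_N$, contradicting the explosion of $\mathcal E$ there. Thus $\rho^1_\infty=\rho^2_\infty$ $\mathbb P$-a.s. and $(\v^1,\tau^1)=(\v^2,\tau^2)$ on $[0,\rho^1_\infty)$, which is the asserted uniqueness.

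The principal obstacle is the maximality step, where one must justify restarting the system at time $\rho_\infty$: concretely, that Theorem \ref{existence} (with the lower bound on the existence time of the type \eqref{rhoN}) applies on the shifted filtration with the random, $\mathscr F_{\rho_\infty}$-measurable initial datum $(\v(\rho_\infty),\tau(\rho_\infty))$ restricted to the positive-probability set $\Omega_0$, and that this strictly positive lower bound persists after the localization to $\Omega_0$. The remaining steps are routine bookkeeping with pathwise uniqueness and monotone limits of stopping times.
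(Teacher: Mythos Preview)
Your concatenation and uniqueness steps parallel the paper's closely. The substantive difference is in the maximality (blow-up) step.

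You argue by contradiction via a restart: assuming $\mathcal E$ stays finite at $\rho_\infty$ on a set $\Omega_0$ of positive probability, you invoke Theorem~\ref{existence} on the shifted filtration $(\mathscr F_{\rho_\infty+t})_{t\ge0}$ with the $\mathscr F_{\rho_\infty}$-measurable datum $(\v(\rho_\infty),\tau(\rho_\infty))$, concatenate, and reach a contradiction. As you correctly flag, this requires justifying the restart at a random time with a random initial condition localized to $\Omega_0$; in particular one must check that $(\v(\rho_\infty),\tau(\rho_\infty))\in \mathrm L^2(\Omega_0;H^s)$ (finiteness a.s.\ is not integrability, so a further truncation to $\{\sup_{s<\rho_\infty}\mathcal E(s)\le M\}$ is needed) and that the lower bound \eqref{rhoN} survives the conditioning. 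This is all doable but imports machinery the paper never develops.

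The paper sidesteps the obstacle entirely with a direct argument. On $\{\rho_\infty<T\}$ one has $\rho_N\le\rho_\infty<T$ for every $N$; since $\rho_N=\inf\{t:\mathcal E(t)>N\}$ and $t\mapsto\mathcal E(t)$ is right-continuous (being built from c\`adl\`ag and continuous pieces), it follows that $\sup_{0\le s\le\rho_N}\mathcal E(s)\ge N$. Hence $\sup_{0\le s<\rho_\infty}\mathcal E(s)\ge N$ for every $N$, and letting $N\to\infty$ gives the blow-up immediately. No restart, no shifted filtration, no conditioning: the very definition of the exit times $\rho_N$ already encodes that the energy reaches level $N$ by time $\rho_N$, and the monotone limit finishes. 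Your ``principal obstacle'' is therefore self-inflicted; the simpler route is to read the blow-up directly off the thresholds.
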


\begin{proof}
Let us construct a sequence of stopping times $\{\rho_{k},k \in \mathbb{N}\}$ as follows:
\begin{align}
\rho_k=\inf_{t\geq 0}\left\{t:\mu_2\|\v_k(t)\|_{H^s}^2+\mu_1\|\tau_k(t)\|_{H^s}^2+2\mu_2\nu\int_0^t\|\nabla\v_k(s)\|_{H^s}^2\d
s>k\right\}.
\end{align} 
For $n>k$, let us define a sequence of stopping times $\rho^n_{k}$
such that
\begin{align}
\rho^n_{k}=\inf_{t\geq 0}\left\{t:\mu_2\|\v_n(t)\|_{H^s}^2+\mu_1\|\tau_n(t)\|_{H^s}^2+2\mu_2\nu\int_0^t\|\nabla\v_n(s)\|_{H^s}^2\d
s>k\right\},\,\,\,k, n \in\mathbb{N}. 
\end{align}
It is evident from the definition of $\rho_{n}$ that $\rho^n_{k}\leq \rho_{n}$ a.s. for $n>k$.
Hence $(\v_n,\tau_n,\rho^n_{k})$ is a local strong solution to (\ref{Trun1})-(\ref{Trun4}) and $(\v_k,\tau_k,\rho_{k})$ is also a local
strong solution to (\ref{Trun1})-(\ref{Trun4}). Hence by the uniqueness (see Theorems 
\autoref{uniqueness},\autoref{remunique}), we conclude that $(\v_k(t),\tau_k(t))=(\v_n(t),\tau_n(t))$ a.s. for all
$t\in[0,\rho_{k} \wedge \rho^n_{k} \wedge T)$. 
This proves that
$(\v_k(t),\tau_k(t))=(\v_n(t),\tau_n(t))$ a.s. for all $t\in[0,\rho_{k} \wedge T)$
and hence $\rho_{k}<\rho_{n}$ a.s. for all $k<n$. Thus
$\{\rho_{k}:k\in\mathbb{N}\}$ is an increasing sequence and has a limit $\rho_{\infty}:=\mathlarger{\lim_{k\to\infty}}\rho_{k}$ a.s.
By letting $k\to\infty$, let $\{(\v(t),\tau(t)),0\leq t<\rho_{\infty}\}$ be the stochastic processes defined by
\begin{align}
\v(t)=\v_k(t),\tau(t)=\tau_k(t),\;\;t\in[\rho_{k-1}\wedge T,\rho_{k}\wedge T),\;\;k\geq 1,
\end{align}
where $\rho_{0}=0$. Hence, $(\v,\tau,\rho_{k})$ is local strong solution to (\ref{se1})-(\ref{se4}).
We now have a triplet $(\v,\tau,\rho_{\infty})$ such that $(\v,\tau,\rho_{\infty})$ is local strong solution to (\ref{se1})-(\ref{se4}). On the set
$\{\omega:\rho_{\infty}(\omega)<T\}$ we have
\begin{align}
&\lim_{t\uparrow\rho_{\infty}}\left[\sup_{0\leq s\leq
t}\mu_2\|\v(s)\|^2_{H^s}+\sup_{0\leq s\leq
t}\mu_1\|\tau(s)\|^2_{H^s}+2\mu_2 \nu\int_0^{t}\|\nabla\v(s)\|^2_{H^s}\d
s\right]\nonumber\\&\quad\geq  \lim_{k\uparrow\infty}
\left[\sup_{0\leq s\leq \rho_{k}\wedge T}\mu_2\|\v(s)\|^2_{H^s}+\sup_{0\leq s\leq
\rho_{k}\wedge T}\mu_1\|\tau(s)\|^2_{H^s}+2\mu_2 \nu\int_0^{t}\|\nabla\v(s)\|^2_{H^s}\d
s\right]&\nonumber\\&\quad\geq \lim_{k\uparrow\infty}
\left[\sup_{0\leq s\leq \rho_{k}\wedge T}\mu_2\|\v_k(s)\|^2_{H^s}+\sup_{0\leq s\leq
\rho_{k}\wedge T}\mu_1\|\tau_k(s)\|^2_{H^s}+2\mu_2 \nu\int_0^{t}\|\nabla\v_k(s)\|^2_{H^s}\d
s\right]=\infty.
\end{align} Thus $(\v,\tau,\rho_{\infty})$ is a maximal local strong solution to (\ref{se1})-(\ref{se4}).
Now, in order to prove that this maximal strong solution is unique we let the triplet $(\tilde{\v},\tilde{\tau},\sigma_{\infty})$ be
another maximal solution and $\{\sigma_k,k\geq 0\}$ is an
increasing sequence of stopping times converging to
$\sigma_{\infty}$ and is defined by
\begin{align}\sigma_k=\inf_{t\geq 0}\left\{t:\mu_2\|\tilde{\v}(t)\|_{H^s}^2+\mu_1\|\tilde{\tau}(s)\|^2_{H^s}+2\mu_2 \nu\int_0^{t}\|\nabla\tilde{\v}(s)\|^2_{H^s}\d
s> k\right\},k\in\mathbb{N}.\end{align}   
Exploiting the same arguments as above and by the uniqueness Theorems (\autoref{uniqueness}
and \autoref{remunique}) one can prove that $\v(t)=\tilde{\v}(t)$,
$\tau(t)=\tilde{\tau}(t)$ for all $t\in[0,\rho_{k}\wedge\sigma_{k}\wedge T]$ a. s. for
$k\geq 0$. Hence,
\begin{align}\label{mx10}
\v(t)=\tilde{\v}(t), \tau(t)=\tilde{\tau}(t)\text{ for all
}t\in[0,\rho_{\infty}\wedge\sigma_{\infty}\wedge T]\text{ a. s. }
\end{align}
 on letting $k\uparrow\infty.$
From (\ref{mx10}), one can easily verify that
$\rho_{\infty}=\sigma_{\infty}$ a.s. However, if not, then either $\rho_{\infty}>\sigma_{\infty}$ or $\rho_{\infty}<\sigma_{\infty}$.
Now for the first case we have
\begin{align}\label{mx11}
&\lim_{t\uparrow\rho_{\infty}} \left[\sup_{0\leq s\leq
t}\mu_2\|\mathbf{1}_{\{\sigma_{\infty}<\rho_{\infty}\}}\v\|^2_{H^s}+\sup_{0\leq
s\leq
t}\mu_1 \|\mathbf{1}_{\{\sigma_{\infty}<\rho_{\infty}\}}\tau\|^2_{H^s}+2\mu_2\nu\int_0^{t}\|\nabla\mathbf{1}_{\{\sigma_{\infty}<\rho_{\infty}\}}\v\|^2_{H^s}\d
s\right]\nonumber\\
&=\lim_{k\uparrow\infty} \left[\sup_{0\leq s\leq
\rho_{k} \wedge T}\mu_2\|\mathbf{1}_{\{\sigma_{\infty}<\rho_{\infty}\}}
\v\|^2_{H^s}+\sup_{0\leq s\leq
\rho_{k} \wedge T}\mu_1\|\mathbf{1}_{\{\sigma_{\infty}<\rho_{\infty}\}}
\tau\|^2_{H^s}+2\mu_2\nu\int_0^{\rho_{k} \wedge T}\|\nabla\mathbf{1}_{\{\sigma_{\infty}<\rho_{\infty}\}}
\v\|^2_{H^s}\d s\right]
\nonumber\\
&=\lim_{k\uparrow\infty} \left[\sup_{0\leq s\leq
\sigma_k \wedge T}\mu_2\|\mathbf{1}_{_{\{\sigma_{\infty}<\rho_{\infty}\}}}
\tilde{\v}\|^2_{H^s}+\sup_{0\leq s\leq
\sigma_k \wedge T}\mu_1\|\mathbf{1}_{_{\{\sigma_{\infty}<\rho_{\infty}\}}}
\tilde{\tau}\|^2_{H^s}+2\mu_2\nu\int_0^{\sigma_k \wedge T}\|\nabla\mathbf{1}_{_{\{\sigma_{\infty}<\rho_{\infty}\}}}
\tilde{\v}\|^2_{H^s}\d s\right]\nonumber\\&=\infty.
\end{align}
and for the second case,
\begin{align}\label{mx12}
&\lim_{t\uparrow\sigma_{\infty}} \left[\sup_{0\leq s\leq
t}\mu_2\|\mathbf{1}_{\{\sigma_{\infty}>\rho_{\infty}\}}\tilde{\v}\|^2_{H^s}+\sup_{0\leq
s\leq
t}\mu_1\|\mathbf{1}_{\{\sigma_{\infty}>\rho_{\infty}\}}\tilde{\tau}\|^2_{H^s}+2\mu_2\nu\int_0^{t}\|\nabla\mathbf{1}_{\{\sigma_{\infty}>\rho_{\infty}\}}\tilde{\v}\|^2_{H^s}\d
s\right]\nonumber\\
&=\lim_{k\uparrow\infty} \left[\sup_{0\leq s\leq
\sigma_k \wedge T}\mu_2\|\mathbf{1}_{\{\sigma_{\infty}>\rho_{\infty}\}}\tilde{\v}\|^2_{H^s}+\sup_{0\leq s\leq
\sigma_k \wedge T}\mu_1\|\mathbf{1}_{\{\sigma_{\infty}>\rho_{\infty}\}}\tilde{\tau}\|^2_{H^s}+2\mu_2\nu\int_0^{t}\|\nabla\mathbf{1}_{\{\sigma_{\infty}>\rho_{\infty}\}}\tilde{\v}\|^2_{H^s}\d
s\right]
\nonumber\\
&=\lim_{k\uparrow\infty} \left[\sup_{0\leq s\leq
\rho_{k} \wedge T}\mu_2\|\mathbf{1}_{\{\sigma_{\infty}>\rho_{\infty}\}}
\v\|^2_{H^s}+\sup_{0\leq s\leq
\rho_{k} \wedge T}\mu_1\|\mathbf{1}_{\{\sigma_{\infty}>\rho_{\infty}\}}\tau\|^2_{H^s}+2\mu_2\nu\int_0^{t}\|\nabla\mathbf{1}_{\{\sigma_{\infty}>\rho_{\infty}\}}\v\|^2_{H^s}\d
s\right]\nonumber\\&=\infty.
\end{align}
 Identity (\ref{mx11}) contradicts the fact that $(\v,\tau)$
does not explode before the time $\rho_{\infty}$ and the next
identity  \eqref{mx12} contradicts the fact that $(\tilde{\v},\tilde{\tau})$ does not explode
before the time $\sigma_{\infty}$. Hence, the only possibility is
$\rho_{\infty}=\sigma_{\infty}$ a. s. and this proves the uniqueness
of  the maximal local strong solution $(\v,\tau,\rho_{\infty})$ of the Stochastic equations (\ref{se1})-(\ref{se4}).
\end{proof} 

Similar ideas of proving maximal local solutions in Proposition 3.11 of
Brze\'{z}niak et al. \cite{BHR}, Theorem 3.5 of Bessaih et al. \cite{BHHR}, and Theorem 5.4 of Manna et al. \cite{MaMo}.
 
Finally observing that since $\rho_{k} \uparrow \rho_{\infty}$, for any fixed $0 < \delta <1$ and for the choice of $k$ with $\dfrac{1}{k+1}\leq \delta<\dfrac{1}{k}$, we infer from \eqref{rhoN} in Theorem \ref{positive1},
	\begin{align*}
	\mathbb{P}\left(\rho_{\infty}>\delta\right)\geq 1-2\delta e^{(\tilde{C}+C_2 \delta)} \Big(2\mathbb{E}\left(\mu_2\|\v_0\|_{H^s}^2+\mu_1\|\mathbf{\tau}_0\|_{H^s}^2\right)+18K\mu_2\delta\Big).	
	\end{align*} 
\begin{appendix}
\section{}
The Appendix is devoted to prove some small Lemmas which were useful in proving Theorem \ref{cauchy} and Theorem \ref{uniqueness}.

\begin{lem} \label{v}
Let $\nabla\cdot\v=0.$ Then for $m>n$ there exists $C>0$ (independent of $n,m,\v, \epsilon$) such that
\begin{align}\label{cau.l1}
|\left(\left(\mathcal{J}_n [(\mathbf{v}_n \cdot\nabla)\v_n ]-\mathcal{J}_m [(\mathbf{v}_m \cdot\nabla)\v_m ]\right),
\v_n-\v_m \right)_{\mathrm{L}^2}|
&\leq \frac{C}{n^{\epsilon}}\|\v_n\|_{H^{s}}^2\|\v_n-\v_{m}\|_{\mathrm{L}^2} \notag \\
&\quad+ C\|\v_n-\v_{m}\|_{\mathrm{L}^2}^2\|\nabla\v_n\|_{H^s}.
\end{align}
\end{lem}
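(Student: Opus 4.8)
The plan is to reduce the left-hand side to a difference involving a single advection term plus a correction coming from the Fourier cutoff, and then estimate each piece using the commutator/algebra structure already recorded in the paper. First I would write
\[
\mathcal{J}_n[(\v_n\cdot\nabla)\v_n]-\mathcal{J}_m[(\v_m\cdot\nabla)\v_m]
=\mathcal{J}_m\big[(\v_n\cdot\nabla)\v_n-(\v_m\cdot\nabla)\v_m\big]+(\mathcal{J}_n-\mathcal{J}_m)[(\v_n\cdot\nabla)\v_n],
\]
and since $\v_n-\v_m$ has Fourier support in $B(0,m)$, the operator $\mathcal{J}_m$ can be moved onto $\v_n-\v_m$ in the inner product, where it acts as the identity. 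Thus the inner product splits as
\[
\big((\v_n\cdot\nabla)\v_n-(\v_m\cdot\nabla)\v_m,\,\v_n-\v_m\big)_{\mathrm{L}^2}
+\big((\mathcal{J}_n-\mathcal{J}_m)[(\v_n\cdot\nabla)\v_n],\,\v_n-\v_m\big)_{\mathrm{L}^2}=:A+B.
\]

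For the term $A$, I would decompose the bilinear difference in the standard way,
$(\v_n\cdot\nabla)\v_n-(\v_m\cdot\nabla)\v_m=((\v_n-\v_m)\cdot\nabla)\v_n+(\v_m\cdot\nabla)(\v_n-\v_m)$. The second summand pairs against $\v_n-\v_m$ to give zero by the divergence-free condition $\nabla\cdot\v_m=0$ (integration by parts). The first summand is estimated by Hölder's inequality, placing $\|\v_n-\v_m\|_{\mathrm{L}^2}$ in $\mathrm{L}^2$ twice and $\nabla\v_n$ in $\mathrm{L}^\infty$, followed by the Sobolev embedding $\|\nabla\v_n\|_{\mathrm{L}^\infty}\le C\|\nabla\v_n\|_{H^s}$ valid since $s>d/2$ (Remark~\ref{prohs}); this produces the term $C\|\v_n-\v_m\|_{\mathrm{L}^2}^2\|\nabla\v_n\|_{H^s}$.

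For the term $B$, I would use Cauchy–Schwarz to get $|B|\le\|(\mathcal{J}_n-\mathcal{J}_m)[(\v_n\cdot\nabla)\v_n]\|_{\mathrm{L}^2}\|\v_n-\v_m\|_{\mathrm{L}^2}$, then apply property \eqref{sr} of the Fourier truncation with $k=\epsilon$ and the base space $H^0=\mathrm{L}^2$ (using $m>n$, so the max is $(1/n)^\epsilon$):
$\|(\mathcal{J}_n-\mathcal{J}_m)[(\v_n\cdot\nabla)\v_n]\|_{\mathrm{L}^2}\le (1/n)^\epsilon\|(\v_n\cdot\nabla)\v_n\|_{H^\epsilon}$. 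Since $\epsilon<s-1$ one has $\|(\v_n\cdot\nabla)\v_n\|_{H^\epsilon}\le\|(\v_n\cdot\nabla)\v_n\|_{H^{s-1}}\le C\|\v_n\|_{H^s}\|\v_n\|_{H^s}$ by Remark~\ref{div} (divergence-free $\v_n$), giving the contribution $\tfrac{C}{n^\epsilon}\|\v_n\|_{H^s}^2\|\v_n-\v_m\|_{\mathrm{L}^2}$. Adding the bounds for $A$ and $B$ yields \eqref{cau.l1}.

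The only mild subtlety—and the step I would be most careful about—is the legitimacy of moving $\mathcal{J}_m$ onto the test function $\v_n-\v_m$ and of discarding $\mathcal{J}_m$ there: this rests on the self-adjointness of $\mathcal{J}_m$ on $\mathrm{L}^2$ together with the fact that $\widehat{\v_n-\v_m}$ is supported in $B(0,m)$, so $\mathcal{J}_m(\v_n-\v_m)=\v_n-\v_m$. Everything else is a routine combination of Hölder's inequality, the divergence-free cancellation, the Sobolev embedding for $s>d/2$, and the truncation estimate \eqref{sr}; no genuinely hard estimate is needed, since the smoothing of $\v_n$ is not used (consistently with the fact that the lemma is stated with $\nabla\v_n\in H^s$ only).
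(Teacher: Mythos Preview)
Your proof is correct and is essentially the same as the paper's: the paper also splits into the $(\mathcal{J}_n-\mathcal{J}_m)$ piece plus the two parts of the bilinear difference $((\v_n-\v_m)\cdot\nabla)\v_n$ and $(\v_m\cdot\nabla)(\v_n-\v_m)$, handling them respectively with the truncation estimate \eqref{sr} and the $H^{s-1}$ bound, H\"older plus Sobolev embedding, and the divergence-free cancellation after moving $\mathcal{J}_m$ onto $\v_n-\v_m$. The only cosmetic difference is that the paper writes the three-term split first and moves $\mathcal{J}_m$ afterwards, whereas you move $\mathcal{J}_m$ first and then split; the estimates are identical.
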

\begin{proof}
In order to estimate the term $\left(\left(\mathcal{J}_n [(\mathbf{v}_n \cdot\nabla)\v_n ]-\mathcal{J}_m [(\mathbf{v}_m \cdot\nabla)\v_m ]\right),\v_n-\v_m \right)_{\mathrm{L}^2}$ we split it into three parts 
\begin{align} \label{split}
&\left((\mathcal{J}_n-\mathcal{J}_{m})[(\v_n \cdot\nabla)\v_n],\v_n-\v_{m}\right)_{\mathrm{L}^2}+\left(\mathcal{J}_{m}[((\v_n-\v_{m})\cdot\nabla)\v_n],\v_n-\v_{m}\right)_{\mathrm{L}^2} \notag
\\&\qquad+\left(\mathcal{J}_{m}[(\v_{m} \cdot\nabla)(\v_n-\v_{m})],\v_n-\v_{m}\right)_{\mathrm{L}^2}.
\end{align}
For $m>n,$ we exploit H\"{o}lder's inequality, the cut off property
[see (\ref{sr})] and $H^s$ is an algebra for $0< \epsilon <s-1,s>d/2$ to the first term of \eqref{split} to achieve,
\begin{align}\label{split.1}
 \left|\left((\mathcal{J}_n-\mathcal{J}_{m})[(\v_n \cdot\nabla)\v_n],\v_n-\v_{m}\right)_{\mathrm{L}^2}\right|
&\leq \|(\mathcal{J}_n-\mathcal{J}_{m})(\v_n \cdot\nabla)\v_n \|_{\mathrm{L}^2}\|\v_n-\v_{m}\|_{\mathrm{L}^2}\nonumber\\
&\leq
\frac{C}{n^{\epsilon}}\|\v_n\|_{H^{s}}^2\|\v_n-\v_{m}\|_{\mathrm{L}^2}.
\end{align}
Direct application of H\"{o}lder's inequality to the second term of \eqref{split} yields
\begin{align}\label{split.2}
&\left|\left(\mathcal{J}_{m}[((\v_n-\v_{m})\cdot\nabla)\v_n],\v_n-\v_{m}\right)_{\mathrm{L}^2}\right|
=\left|\left([((\v_n-\v_{m})\cdot\nabla)\v_n],\mathcal{J}_{m}(\v_n-\v_{m})\right)_{\mathrm{L}^2}\right|\nonumber\\&\leq
\|\v_n-\v_{m}\|_{\mathrm{L}^2}\|\nabla\v_n\|_{\mathrm{L}^{\infty}}\|\v_n-\v_{m}\|_{\mathrm{L}^2}\leq
C\|\v_n-\v_{m}\|_{\mathrm{L}^2}^2\|\nabla\v_n\|_{H^s}.
 \end{align}
Now using the Parseval's identity, integration by parts and $\nabla \cdot \v_n= \nabla \cdot \v_m=0,$
we directly have the third term of \eqref{split} is zero.
Hence, we have \eqref{cau.l1}.
\end{proof}

\begin{lem} \label{tau}
Let $\nabla\cdot\v=0.$ Then for $m>n$ there exists $C>0$(independent of $n,m,\v, \epsilon$) such that
\begin{align}\label{cau.l2}
&|\left(\left(\mathcal{J}_n [(\mathbf{v}_n \cdot\nabla)\mathbf{\tau}_n ]-\mathcal{J}_m [(\mathbf{v}_m \cdot\nabla)\mathbf{\tau}_m ]\right),
\mathbf{\tau}_n-\mathbf{\tau}_m \right)_{\mathrm{L}^2}| \notag\\
&\leq \frac{C}{n^{\epsilon}}\left(\|\v_n\|_{H^{s}}^2+\|\mathbf{\tau}_n\|_{H^{s}}^2\right)\|\mathbf{\tau}_n-\mathbf{\tau}_{m}\|_{\mathrm{L}^2} 
+\frac{\nu\mu_2}{4\mu_1}\|\v_n-\v_{m}\|_{H^1}^2+\frac{C\mu_1}{\nu\mu_2}\|\mathbf{\tau}_n\|_{H^{s}}^2\|\mathbf{\tau}_n-\mathbf{\tau}_{m}\|_{\mathrm{L}^2}^2.
\end{align}
\end{lem}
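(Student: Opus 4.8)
The plan is to repeat the three–term decomposition used in Lemma \ref{v}, but to compensate for the absence of smoothing in the $\mathbf{\tau}$–equation by trading a derivative through the bilinear estimate of Remark \ref{r23}. Write
\[
\mathcal{J}_n[(\v_n\cdot\nabla)\mathbf{\tau}_n]-\mathcal{J}_m[(\v_m\cdot\nabla)\mathbf{\tau}_m]
=(\mathcal{J}_n-\mathcal{J}_m)[(\v_n\cdot\nabla)\mathbf{\tau}_n]
+\mathcal{J}_m[((\v_n-\v_m)\cdot\nabla)\mathbf{\tau}_n]
+\mathcal{J}_m[(\v_m\cdot\nabla)(\mathbf{\tau}_n-\mathbf{\tau}_m)],
\]
pair each summand with $\mathbf{\tau}_n-\mathbf{\tau}_m$ in $\mathrm{L}^2$, and estimate the three contributions separately. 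Throughout I use that $m>n$ forces $\mathcal{V}_n\subset\mathcal{V}_m$, so $\mathbf{\tau}_n-\mathbf{\tau}_m\in\mathcal{V}_m$ and hence $\mathcal{J}_m(\mathbf{\tau}_n-\mathbf{\tau}_m)=\mathbf{\tau}_n-\mathbf{\tau}_m$, together with the fact that $\mathcal{J}_m$ is self-adjoint on $\mathrm{L}^2$.

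For the first contribution, Cauchy--Schwarz reduces matters to bounding $\|(\mathcal{J}_n-\mathcal{J}_m)[(\v_n\cdot\nabla)\mathbf{\tau}_n]\|_{\mathrm{L}^2}$; applying the cut-off property \eqref{sr} with $k=\epsilon$ (taking $0<\epsilon<s-1$, legitimate since $s>d/2\geq 1$), then Remark \ref{div} and Young's inequality gives
\[
\|(\mathcal{J}_n-\mathcal{J}_m)[(\v_n\cdot\nabla)\mathbf{\tau}_n]\|_{\mathrm{L}^2}
\leq \frac{C}{n^{\epsilon}}\|(\v_n\cdot\nabla)\mathbf{\tau}_n\|_{H^{s-1}}
\leq \frac{C}{n^{\epsilon}}\|\v_n\|_{H^s}\|\mathbf{\tau}_n\|_{H^s}
\leq \frac{C}{n^{\epsilon}}\big(\|\v_n\|_{H^s}^2+\|\mathbf{\tau}_n\|_{H^s}^2\big),
\]
which yields the first term on the right of \eqref{cau.l2} after multiplying by $\|\mathbf{\tau}_n-\mathbf{\tau}_m\|_{\mathrm{L}^2}$. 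For the third contribution, moving $\mathcal{J}_m$ onto $\mathbf{\tau}_n-\mathbf{\tau}_m$ and using $\nabla\cdot\v_m=0$, an integration by parts (Parseval) shows $\big(\mathcal{J}_m[(\v_m\cdot\nabla)(\mathbf{\tau}_n-\mathbf{\tau}_m)],\mathbf{\tau}_n-\mathbf{\tau}_m\big)_{\mathrm{L}^2}=0$, so this term disappears.

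The main obstacle is the middle contribution, precisely because there is no smoothing available for $\mathbf{\tau}_n$ and hence no control on $\|\nabla\mathbf{\tau}_n\|_{H^s}$. I would again move $\mathcal{J}_m$ onto $\mathbf{\tau}_n-\mathbf{\tau}_m$, reducing the term to $\big(((\v_n-\v_m)\cdot\nabla)\mathbf{\tau}_n,\mathbf{\tau}_n-\mathbf{\tau}_m\big)_{\mathrm{L}^2}$, and then invoke the product bound $\|fg\|_{\mathrm{L}^2}\leq C\|f\|_{H^1}\|g\|_{H^{s-1}}$ of Remark \ref{r23} (uniform in $d=2,3$) with $f=\v_n-\v_m$ and $g=\nabla\mathbf{\tau}_n$, so that only $\|\nabla\mathbf{\tau}_n\|_{H^{s-1}}\leq\|\mathbf{\tau}_n\|_{H^s}$ is needed:
\[
\big|\big(((\v_n-\v_m)\cdot\nabla)\mathbf{\tau}_n,\mathbf{\tau}_n-\mathbf{\tau}_m\big)_{\mathrm{L}^2}\big|
\leq C\,\|\v_n-\v_m\|_{H^1}\,\|\mathbf{\tau}_n\|_{H^s}\,\|\mathbf{\tau}_n-\mathbf{\tau}_m\|_{\mathrm{L}^2}.
\]
A final Young's inequality, with the weight chosen so that the $\|\v_n-\v_m\|_{H^1}^2$ coefficient comes out $\tfrac{\nu\mu_2}{4\mu_1}$, produces $\tfrac{\nu\mu_2}{4\mu_1}\|\v_n-\v_m\|_{H^1}^2+\tfrac{C\mu_1}{\nu\mu_2}\|\mathbf{\tau}_n\|_{H^s}^2\|\mathbf{\tau}_n-\mathbf{\tau}_m\|_{\mathrm{L}^2}^2$. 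Adding the three bounds gives \eqref{cau.l2}. (The companion estimate for $J_3$ in Lemma \ref{Q} would be handled in the same spirit, using Property \ref{propQ} and the cut-off property for the extra $\nabla\v_n$–dependent terms.)
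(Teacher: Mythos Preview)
Your proof is correct and follows essentially the same approach as the paper's own proof: the same three-term decomposition, the same use of the cut-off property \eqref{sr} together with the $H^{s-1}$ product bound for the first term, the same application of Remark \ref{r23} followed by Young's inequality for the middle term, and the same vanishing of the third term via self-adjointness of $\mathcal{J}_m$ and the divergence-free cancellation. Your additional remarks that $\mathcal{V}_n\subset\mathcal{V}_m$ forces $\mathcal{J}_m(\mathbf{\tau}_n-\mathbf{\tau}_m)=\mathbf{\tau}_n-\mathbf{\tau}_m$ make explicit a point the paper leaves implicit.
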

\begin{proof}
Let us consider the term $\left(\mathcal{J}_n \left[(\v_n \cdot \nabla)\mathbf{\tau}_n \right]-\mathcal{J}_{m}[(\v_{m}\cdot\nabla)\mathbf{\tau}_{m}],\mathbf{\tau}_n-\mathbf{\tau}_{m}\right)_{\mathrm{L}^2}$ and split it into three parts
\begin{align} \label{split2}
&\left((\mathcal{J}_n-\mathcal{J}_{m})[(\v_n \cdot\nabla)\mathbf{\tau}_n],\mathbf{\tau}_n-\mathbf{\tau}_m \right)_{\mathrm{L}^2}
+\left(\mathcal{J}_{m}[((\v_n-\v_{m})\cdot\nabla)\mathbf{\tau}_n],\mathbf{\tau}_n-\mathbf{\tau}_m\right)_{\mathrm{L}^2}\notag \\
&\quad+\left(\mathcal{J}_{m}[(\v_{m}\cdot\nabla)(\mathbf{\tau}_n-\mathbf{\tau}_{m})],\mathbf{\tau}_n-\mathbf{\tau}_{m}\right)_{\mathrm{L}^2}.
\end{align}
We use here same arguments as used in the previous  Lemma \ref{v}. Hence the first term in \eqref{split2} is reduced to
\begin{align}
|\left((\mathcal{J}_n-\mathcal{J}_{m})[(\v_n \cdot\nabla)\mathbf{\tau}_n],\mathbf{\tau}_n-\mathbf{\tau}_m \right)_{\mathrm{L}^2}|
&\leq
\frac{C}{n^{\epsilon}}\|\v_n \|_{H^{s}}\|\mathbf{\tau}_n\|_{H^{s}}\|\mathbf{\tau}_n-\mathbf{\tau}_{m}\|_{\mathrm{L}^2}\nonumber\\&
\leq
\frac{C}{n^{\epsilon}}\left(\|\v_n\|_{H^{s}}^2+\|\mathbf{\tau}_n\|_{H^{s}}^2\right)\|\mathbf{\tau}_n-\mathbf{\tau}_{m}\|_{\mathrm{L}^2}.
\end{align}
Consider the term $\left(\mathcal{J}_{m}[((\v_n-\v_{m})\cdot\nabla)\mathbf{\tau}_n],\mathbf{\tau}_n-\mathbf{\tau}_m\right)_{\mathrm{L}^2}$ and using and use H\"{o}lder's inequality, Remark \ref{r23} to get
\begin{align} \label{split2.1}
&\left|\left(\mathcal{J}_{m}[((\v_n-\v_{m})\cdot\nabla)\mathbf{\tau}_n],\mathbf{\tau}_n-\mathbf{\tau}_m\right)_{\mathrm{L}^2}\right|
\quad=\left|\left([((\v_n-\v_{m})\cdot \nabla)\mathbf{\tau}_n],\mathcal{J}_{m}(\mathbf{\tau}_n-\mathbf{\tau}_{m})\right)_{\mathrm{L}^2}\right|
\nonumber\\&\quad\leq
\|((\v_n-\v_{m})\cdot\nabla)\mathbf{\tau}_n\|_{\mathrm{L}^2}\|\mathcal{J}_{m}(\mathbf{\tau}_n-\mathbf{\tau}_{m})\|_{\mathrm{L}^2}
\leq
C\|\v_n-\v_{m}\|_{H^1}\|\nabla\mathbf{\tau}_n\|_{H^{s-1}}\|\mathbf{\tau}_n-\mathbf{\tau}_{m}\|_{\mathrm{L}^2}\nonumber\\
&\quad\leq
\frac{\nu\mu_2}{4\mu_1}\|\v_n-\v_{m}\|_{H^1}^2+\frac{C\mu_1}{\nu\mu_2}\|\mathbf{\tau}_n\|_{H^{s}}^2\|\mathbf{\tau}_n-\mathbf{\tau}_{m}\|_{\mathrm{L}^2}^2.
\end{align} 
Once again on applying Parseval's identity, integration by parts and divergence free condition on $\v_m$ we get the third term of
\eqref{split2} to be zero. Hence, we have \eqref{cau.l2}.
\end{proof}

\begin{lem} \label{Q}
For $m>n$ there exists $C>0$ (independent of $n,m,\v, \epsilon$) such that
\begin{align}\label{cau.l3}
&|\left(\mathcal{J}_n \Q(\mathbf{\tau}_n , \nabla \mathbf{v}_n)-\mathcal{J}_m \Q(\mathbf{\tau}_m ,\nabla \mathbf{\v}_m ),
\mathbf{\tau}_n-\mathbf{\tau}_m \right)_{\mathrm{L}^2}|\notag\\
&\leq \frac{C}{n^{\epsilon}}\left(\|\v_n\|_{H^{s}}^2
+\|\mathbf{\tau}_n\|_{H^{s}}^2\right)\|\mathbf{\tau}_n-\mathbf{\tau}_{m}\|_{\mathrm{L}^2}
+\frac{C}{2n^\epsilon} \|\nabla\v_n\|^2_{H^{s}}\|\mathbf{\tau}_n-\mathbf{\tau}_m\|_{\mathrm{L}^2}\notag \\
&\quad +\frac{\nu\mu_2}{4\mu_1}\|\v_n-\v_{m}\|_{H^1}^2
+\frac{C\mu_1}{\nu\mu_2}\|\mathbf{\tau}_m\|_{H^{s}}^2\|\mathbf{\tau}_n-\mathbf{\tau}_{m}\|_{\mathrm{L}^2}^2
+C\|\nabla\v_n \|_{H^s}\|\mathbf{\tau}_n-\mathbf{\tau}_m\|_{\mathrm{L}^2}^2.
\end{align}
\end{lem}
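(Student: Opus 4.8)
The plan is to imitate the three-term decomposition used in Lemmas~\ref{v} and~\ref{tau}, but now keeping track of both arguments of the bilinear form $\Q$ and using that $\Q(\cdot,\cdot)$ is, pointwise, a finite sum of products of matrix entries. By bilinearity of $\Q$,
\begin{align*}
\mathcal{J}_n\Q(\mathbf{\tau}_n,\nabla\v_n)-\mathcal{J}_m\Q(\mathbf{\tau}_m,\nabla\v_m)
&=(\mathcal{J}_n-\mathcal{J}_m)\Q(\mathbf{\tau}_n,\nabla\v_n)+\mathcal{J}_m\Q(\mathbf{\tau}_n-\mathbf{\tau}_m,\nabla\v_n)\\
&\qquad+\mathcal{J}_m\Q(\mathbf{\tau}_m,\nabla(\v_n-\v_m)),
\end{align*}
and I would pair each summand in $\mathrm{L}^2$ with $\mathbf{\tau}_n-\mathbf{\tau}_m$ and estimate the three terms separately. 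In contrast to the transport terms of Lemmas~\ref{v} and~\ref{tau}, no divergence-free cancellation kills any of the three pieces here, because $\Q$ is not a transport operator; this is exactly why the right-hand side of \eqref{cau.l3} carries more terms than \eqref{cau.l2}.

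For the first term I would use Cauchy--Schwarz, the cut-off estimate \eqref{sr} (with $s=0$, exponent $k=\epsilon$ and $m>n$) together with $\|\cdot\|_{H^\epsilon}\le\|\cdot\|_{H^{s-1}}$ since $0<\epsilon<s-1$, and then the tame estimate for $\Q$ at regularity $s-1$ (the $H^{s-1}$ version of Property~\ref{propQ}, which again follows from the commutator bound \eqref{kato}) combined with $H^s\hookrightarrow\mathrm{L}^\infty$ for $s>d/2$ and $\|\nabla\v_n\|_{H^{s-1}}\le\|\v_n\|_{H^s}$, to get
\[
\big|\big((\mathcal{J}_n-\mathcal{J}_m)\Q(\mathbf{\tau}_n,\nabla\v_n),\mathbf{\tau}_n-\mathbf{\tau}_m\big)_{\mathrm{L}^2}\big|
\le\frac{C}{n^{\epsilon}}\big(\|\v_n\|_{H^s}+\|\nabla\v_n\|_{H^s}\big)\|\mathbf{\tau}_n\|_{H^s}\,\|\mathbf{\tau}_n-\mathbf{\tau}_m\|_{\mathrm{L}^2};
\]
Young's inequality on the two products then yields the $\frac{C}{n^\epsilon}(\|\v_n\|_{H^s}^2+\|\mathbf{\tau}_n\|_{H^s}^2)\|\mathbf{\tau}_n-\mathbf{\tau}_m\|_{\mathrm{L}^2}$ and $\frac{C}{2n^\epsilon}\|\nabla\v_n\|_{H^s}^2\|\mathbf{\tau}_n-\mathbf{\tau}_m\|_{\mathrm{L}^2}$ contributions. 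For the second term, since $\mathcal{J}_m$ is a self-adjoint orthogonal projection on $\mathrm{L}^2$, I would move it onto $\mathbf{\tau}_n-\mathbf{\tau}_m$, bound $|\Q(\mathbf{\tau}_n-\mathbf{\tau}_m,\nabla\v_n)|\le C\,|\mathbf{\tau}_n-\mathbf{\tau}_m|\,|\nabla\v_n|$ pointwise, and use $\|\nabla\v_n\|_{\mathrm{L}^\infty}\le C\|\nabla\v_n\|_{H^s}$, obtaining $C\|\nabla\v_n\|_{H^s}\|\mathbf{\tau}_n-\mathbf{\tau}_m\|_{\mathrm{L}^2}^2$. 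For the third term, after again transferring $\mathcal{J}_m$, I would use $\|\Q(\mathbf{\tau}_m,\nabla(\v_n-\v_m))\|_{\mathrm{L}^2}\le C\|\mathbf{\tau}_m\|_{\mathrm{L}^\infty}\|\nabla(\v_n-\v_m)\|_{\mathrm{L}^2}\le C\|\mathbf{\tau}_m\|_{H^s}\|\v_n-\v_m\|_{H^1}$ and split by Young's inequality, taking the weight in front of $\|\v_n-\v_m\|_{H^1}^2$ equal to $\tfrac{\nu\mu_2}{4\mu_1}$ (so that, after the factor $2\mu_1$ from $J_3$ in \eqref{J_3}, it fits a quarter of the viscous term), which produces $\tfrac{\nu\mu_2}{4\mu_1}\|\v_n-\v_m\|_{H^1}^2+\tfrac{C\mu_1}{\nu\mu_2}\|\mathbf{\tau}_m\|_{H^s}^2\|\mathbf{\tau}_n-\mathbf{\tau}_m\|_{\mathrm{L}^2}^2$.

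Adding the three estimates gives \eqref{cau.l3}. There is no deep analytic difficulty; the only delicate point is bookkeeping. Because $\Q$ involves $\nabla\v$ rather than $\v$, one derivative has already been spent, so in the $(\mathcal{J}_n-\mathcal{J}_m)$-term the extra $\epsilon$ derivatives traded for the gain $n^{-\epsilon}$ must be placed on a factor that is only controlled in $H^{s-1}$ or $\mathrm{L}^\infty$, while in the two difference terms $\mathbf{\tau}_n-\mathbf{\tau}_m$ must stay in $\mathrm{L}^2$ and $\v_n-\v_m$ in no better than $H^1$. Arranging the splitting and the Young weights so that the surviving $\|\v_n-\v_m\|_{H^1}^2$ is absorbed by the dissipation when \eqref{cau.l3} is inserted into the Gronwall argument of Theorem~\ref{cauchy} is the main, and essentially only, obstacle.
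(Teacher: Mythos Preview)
Your proposal is correct and follows essentially the same approach as the paper: the same three-term decomposition via bilinearity of $\Q$, the same use of the cut-off estimate \eqref{sr} plus a tame bound on the first piece, the $\|\nabla\v_n\|_{\mathrm{L}^\infty}$ bound on the second piece, and the Young splitting with weight $\tfrac{\nu\mu_2}{4\mu_1}$ on the third. The only cosmetic difference is that the paper writes out one representative summand of $\Q$ (namely $\mathbf{\tau}_n\mathcal{W}(\v_n)$) explicitly and then says the remaining pieces are similar, whereas you apply the decomposition directly to $\Q$ as a bilinear map.
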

\begin{proof}
We recall from the definition of $\Q$ that $\mathcal{J}_n \Q(\mathbf{\tau}_n, \nabla \v_n)=\mathcal{J}_n \Big(\mathbf{\tau}_n \mathcal{W}(\v_n)\Big)-\mathcal{J}_n \Big(\mathcal{W}(\v_n) \mathbf{\tau}_n\Big) -b \mathcal{J}_n\Big(\mathcal{D}(\v_n)\mathbf{\tau}_n-\mathbf{\tau}_n \mathcal{D}(\v_n)\Big).$
Let us estimate the following term for $m>n,$
$$\mathcal{J}_n \Big(\mathbf{\tau}_n \mathcal{W}(\v_n)\Big)-\mathcal{J}_m \Big(\mathbf{\tau}_m \mathcal{W}(\v_m)\Big)=\Big(\mathcal{J}_n - \mathcal{J}_m \Big) \mathbf{\tau}_n \mathcal{W}(\v_n)
+\mathcal{J}_m \Big((\mathbf{\tau}_n-\mathbf{\tau}_m) \mathcal{W}(\v_n)\Big)+\mathcal{J}_m \Big(\mathbf{\tau}_m (\mathcal{W}(\v_n)-\mathcal{W}(\v_m))\Big).$$
Hence,
\begin{align*}
&\Big|\Big(\mathcal{J}_n \Big(\mathbf{\tau}_n \mathcal{W}(\v_n)\Big)-\mathcal{J}_m \Big(\mathbf{\tau}_m \mathcal{W}(\v_m)\Big),\tau_n-\tau_m \Big)_{\mathrm{L}^2}\Big|\notag \\
&\leq \Big|\underbrace{\Big((\mathcal{J}_n - \mathcal{J}_m) \mathbf{\tau}_n \mathcal{W}(\v_n),\tau_n-\tau_m\Big)_{\mathrm{L}^2}}_{I_{13}}\Big|
+\Big|\underbrace{\Big( \mathcal{J}_m \Big((\mathbf{\tau}_n-\mathbf{\tau}_m) \mathcal{W}(\v_n)\Big),\tau_n-\tau_m \Big)_{\mathrm{L}^2}}_{I_{14}}\Big| \notag\\
&\quad +\Big|\underbrace{\mathcal{J}_m \Big(\mathbf{\tau}_m (\mathcal{W}(\v_n)-\mathcal{W}(\v_m))\Big),\tau_n -\tau_m \Big)_{\mathrm{L}^2}}_{I_{15}}\Big|.
\end{align*}
Now, for $0<\epsilon<s-1,$ using Remark \ref{prohs}, properties of $\mathcal{J}_n$ ( \eqref{intro1}, \eqref{intro2a}, \eqref{sr} in Subsection \ref{FTO}) and for $s>0,$ using the embedding $H^s \subset H^{s-1}$ and using Young's inequality we get
\begin{align*}
|I_{13}| &\leq \|(\mathcal{J}_n - \mathcal{J}_m) \mathbf{\tau}_n \mathcal{W}(\v_n)\|_{\mathrm{L}^2} \|\tau_n-\tau_m\|_{\mathrm{L}^2}
\leq \frac{C}{n^\epsilon} \|\mathbf{\tau}_n \mathcal{W}(\v_n)\|_{H^\epsilon}\|\tau_n-\tau_m\|_{\mathrm{L}^2}\\
&\leq  \frac{C}{n^\epsilon} \Big[\|\mathbf{\tau}_n\|_{H^s} \|\nabla \v_n\|_{H^s}+\|\tau_n\|_{H^s} \|\v_n\|_{H^s}\Big] \|\tau_n-\tau_m\|_{\mathrm{L}^2}\\
&\leq  \frac{C}{n^\epsilon} \Big( \|\v_n\|_{H^s}^2+\|\tau_n\|_{H^s}^2\Big) \|\tau_n-\tau_m\|_{\mathrm{L}^2}+ \frac{C}{2n^\epsilon} \|\nabla \v_n\|_{H^s}^2 \|\tau_n-\tau_m\|_{\mathrm{L}^2}.
\end{align*}
Again applying similar arguments we have, 
\begin{align}
|I_{14}| \leq \|(\mathbf{\tau}_n-\mathbf{\tau}_m) \mathcal{W}(\v_n)\|_{\mathrm{L}^2}\|\mathcal{J}_m (\tau_n-\tau_m)\|_{\mathrm{L}^2} 
\leq C \|\nabla \v_n\|_{H^s} \|\tau_n-\tau_m\|_{\mathrm{L}^2}^2.
\end{align}

Similarly the term $I_{15}$ is reduced to
\begin{align}
|I_{15}| &\leq C \|\mathbf{\tau}_m (\mathcal{W}(\v_n)-\mathcal{W}(\v_m))\|_{\mathrm{L}^2} \|\tau_n-\tau_m\|_{\mathrm{L}^2} 
 \leq C \|\mathbf{\tau}_m \|_{\mathrm{L}^\infty} \|\mathcal{W}(\v_n)-\mathcal{W}(\v_m)\|_{\mathrm{L}^2} \|\tau_n-\tau_m\|_{\mathrm{L}^2} \notag\\
 &\leq \frac{\nu \mu_2}{16 \mu_1} \|\nabla (\v_n-\v_m)\|_{\mathrm{L}^2}^2+\frac{C \mu_1}{\nu \mu_2} \|\tau_m\|_{H^s}^2 \|\tau_n-\tau_m\|_{\mathrm{L}^2}^2.
\end{align}
Therefore after combining all the similar estimates for other terms of $\mathcal{J}_n \Q(\tau_n, \nabla \v_n),$ finally we have \eqref{cau.l3}.

\end{proof}


 

\begin{lem}\label{M1}
 Let $\eta(t)$ be a stochastic process. Let $\rho_N^n$ be the stopping time given by \eqref{stop1}. Denote $ \Rn= \rho^n_N \wedge \rho^m_N.$ Then
 \begin{align}\label{est.m1}
  &\mathbb{E}\left[\sup_{0\leq t\leq \Lm}\left|\int_0^{t}\eta(s)
  \Big(\Big(\mathcal{J}_{n}\sigma(s,\v_n)-\mathcal{J}_{m}\sigma(s,\v_m)\Big)
  \d W_1(s),\v_n(s)-\v_m(s)\Big)_{\mathrm{L}^2}\right|\right] \notag\\
  &\leq \frac{1}{8}\mathbb{E}\left(\sup_{0\leq t\leq \Lm}\eta(t)\|\v_n(t)-\v_m(t)\|^2_{\mathrm{L}^2}\right)\notag\\
  &\quad+\frac{4CK}{n^{\epsilon}}\mathbb{E}\left[\int_{0}^{\Lm}\eta(t)(1+\|\v_n\|_{H^s}^2)\d
t\right]+4L\,\mathbb{E}\left[\int_{0}^{\Lm}\eta(t)\|\v_n-\v_{m}\|_{\mathrm{L}^2}^2\d
t\right].
 \end{align}
\end{lem}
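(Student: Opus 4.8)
The plan is to apply the Burkholder--Davis--Gundy inequality (Lemma~\ref{burk}) to the real-valued c\`adl\`ag martingale
\[
M_t := \int_0^{t}\eta(s)\Big(\big(\mathcal{J}_n\sigma(s,\v_n)-\mathcal{J}_m\sigma(s,\v_m)\big)\d W_1(s),\v_n(s)-\v_m(s)\Big)_{\mathrm{L}^2},
\]
stopped at $\Lm$. Since $\eta$ is bounded, adapted and measurable (and has zero quadratic variation, so it does not contribute a correction term), $M$ is a genuine martingale and its quadratic variation is
\[
[M]_{\Lm}=\int_0^{\Lm}\eta(s)^2\big\|\mathcal{J}_n\sigma(s,\v_n)-\mathcal{J}_m\sigma(s,\v_m)\big\|_{\mathcal{L}_Q(\mathrm{L}^2,\mathrm{L}^2)}^2\|\v_n(s)-\v_m(s)\|_{\mathrm{L}^2}^2\,\d s.
\]
By Lemma~\ref{burk} with $p=1$, $\mathbb{E}\big[\sup_{0\le t\le\Lm}|M_t|\big]\le c\,\mathbb{E}\big[[M]_{\Lm}^{1/2}\big]$, and since $0\le\eta\le1$ I can drop one factor of $\eta$ inside the square root. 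This reduces the problem to estimating
\[
\mathbb{E}\left[\left(\int_0^{\Lm}\eta(s)\big\|\mathcal{J}_n\sigma(s,\v_n)-\mathcal{J}_m\sigma(s,\v_m)\big\|_{\mathcal{L}_Q(\mathrm{L}^2,\mathrm{L}^2)}^2\|\v_n(s)-\v_m(s)\|_{\mathrm{L}^2}^2\,\d s\right)^{1/2}\right].
\]

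Next I would pull out $\big(\sup_{0\le t\le\Lm}\eta(t)\|\v_n(t)-\v_m(t)\|_{\mathrm{L}^2}^2\big)^{1/2}$ from the integral and apply the Cauchy--Schwarz inequality in $\omega$ together with Young's inequality $ab\le\tfrac18 a^2+2b^2$ (tuned to produce the constant $\tfrac18$ in the statement) to get
\[
\mathbb{E}\big[\sup_{0\le t\le\Lm}|M_t|\big]\le\frac18\mathbb{E}\Big(\sup_{0\le t\le\Lm}\eta(t)\|\v_n(t)-\v_m(t)\|_{\mathrm{L}^2}^2\Big)
+C\,\mathbb{E}\left[\int_0^{\Lm}\eta(s)\big\|\mathcal{J}_n\sigma(s,\v_n)-\mathcal{J}_m\sigma(s,\v_m)\big\|_{\mathcal{L}_Q(\mathrm{L}^2,\mathrm{L}^2)}^2\,\d s\right].
\]
The remaining task is to bound the $\mathcal{L}_Q(\mathrm{L}^2,\mathrm{L}^2)$-norm of the difference $\mathcal{J}_n\sigma(s,\v_n)-\mathcal{J}_m\sigma(s,\v_m)$ by splitting it as
\[
\mathcal{J}_n\sigma(s,\v_n)-\mathcal{J}_m\sigma(s,\v_m)=(\mathcal{J}_n-\mathcal{J}_m)\sigma(s,\v_n)+\mathcal{J}_m\big(\sigma(s,\v_n)-\sigma(s,\v_m)\big).
\]
For the first piece I use the estimate \eqref{sr2} (with $l=0$), which gives a gain of $C/n^{\epsilon}$ times $\|\sigma(s,\v_n)\|_{\mathcal{L}_Q(\mathrm{L}^2,H^{\epsilon})}^2\le\|\sigma(s,\v_n)\|_{\mathcal{L}_Q(\mathrm{L}^2,H^{s})}^2$, and then the growth condition (A.2) bounds this by $K(1+\|\v_n\|_{H^s}^2)$; this yields the $\frac{4CK}{n^{\epsilon}}\mathbb{E}\int_0^{\Lm}\eta(1+\|\v_n\|_{H^s}^2)\,\d t$ term. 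For the second piece I use \eqref{intro1} to remove $\mathcal{J}_m$ and then the Lipschitz condition (A.3), which gives $L\|\v_n-\v_m\|_{H^s}^2$; but since the target term in \eqref{est.m1} has only the $\mathrm{L}^2$-norm of $\v_n-\v_m$, I must instead invoke the $\mathrm{L}^2$-Lipschitz bound $\|\sigma(s,\v_n)-\sigma(s,\v_m)\|_{\mathcal{L}_Q(\mathrm{L}^2,\mathrm{L}^2)}^2\le L\|\v_n-\v_m\|_{\mathrm{L}^2}^2$ (the relevant lower-order version of (A.3)), giving the $4L\,\mathbb{E}\int_0^{\Lm}\eta\|\v_n-\v_m\|_{\mathrm{L}^2}^2\,\d t$ term. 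Collecting the three contributions and tracking the numerical constants through the BDG and Young steps produces exactly \eqref{est.m1}.

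The main obstacle is purely bookkeeping rather than conceptual: one has to be careful that the norm appearing in the Lipschitz/growth estimates for the second split-piece is the $\mathrm{L}^2$-operator norm (not the $H^s$-operator norm), so that the resulting term genuinely matches $\|\v_n-\v_m\|_{\mathrm{L}^2}^2$ in \eqref{est.m1}; this is where one tacitly uses that $\mathcal{L}_Q(\mathrm{L}^2,\mathrm{L}^2)$-boundedness of $\sigma(t,\cdot)$ restricted to the low-frequency subspace is controlled by the $H^s$-data, while the difference $\sigma(s,\v_n)-\sigma(s,\v_m)$ of two functions supported in $B(0,n)\cup B(0,m)$ can be estimated in the weaker norm. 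A second, minor point is justifying that $M$ is a true (not merely local) martingale up to $\Lm$, which follows from the uniform energy bounds of Remark~\ref{ener.estim} together with the boundedness of $\eta$. Once these are in place, the proof is a direct chain of BDG $\to$ Cauchy--Schwarz/Young $\to$ frequency-truncation estimate \eqref{sr2} $\to$ Assumption~\ref{hypo}.
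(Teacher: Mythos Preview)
Your proposal is correct and follows essentially the same route as the paper: BDG on the stochastic integral, pull out $\sup_t\eta(t)\|\v_n-\v_m\|_{\mathrm{L}^2}^2$, apply Young's inequality to produce the $\tfrac{1}{8}$-term, and then split $\mathcal{J}_n\sigma(\cdot,\v_n)-\mathcal{J}_m\sigma(\cdot,\v_m)=(\mathcal{J}_n-\mathcal{J}_m)\sigma(\cdot,\v_n)+\mathcal{J}_m(\sigma(\cdot,\v_n)-\sigma(\cdot,\v_m))$, using \eqref{sr2} plus (A.2) on the first piece and \eqref{intro1} plus (A.3) on the second. Your concern about needing the $\mathrm{L}^2$-level Lipschitz bound is exactly right and is handled in the paper by the fact that Assumption~\ref{hypo} is stated for all $s\ge0$, so in particular for $s=0$.
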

\begin{proof}
Let us take the term on the left hand side of the estimate \eqref{est.m1} and apply Burkholder-Davis-Gundy
inequality and Young's inequality to get
\begin{align}\label{mhde44}
&\mathbb{E}\left[\sup_{0\leq t\leq \Lm}\left|\int_0^{t}\eta(s)
  \Big(\Big(\mathcal{J}_{n}\sigma(s,\v_n)-\mathcal{J}_{m}\sigma(s,\v_m)\Big)
  \d W_1(s),\v_n(s)-\v_m(s)\Big)_{\mathrm{L}^2}\right|\right]\notag\\
  &\leq
2\sqrt{2}\mathbb{E}\left[\int_{0}^{\Lm}(\eta(t))^2\|\mathcal{J}_{n}\sigma(t,\v_n)
-\mathcal{J}_{m}\sigma(t,\v_{m})\|_{\mathcal{L}_Q(\mathrm{L}^2,\mathrm{L}^2)}^2\|\v_n(t)-\v_m(t)\|_{\mathrm{L}^2}^2\d t\right]^{1/2}\nonumber\\
&\leq 2\sqrt{2}\mathbb{E}\left[\left(\sup_{0\leq t\leq
\Lm}\eta(t)\|\v_n(t)-\v_m(t)\|^2_{\mathrm{L}^2}\right)^{1/2}\times\right.\nonumber\\
&\qquad\left.
\left(\int_{0}^{\Lm}\eta(t)\|\mathcal{J}_{n}\sigma(t,\v_n)
-\mathcal{J}_{m}\sigma(t,\v_{m})\|_{\mathcal{L}_Q(\mathrm{L}^2,\mathrm{L}^2)}^2\d t\right)^{1/2}\right]\nonumber\\
&\leq \frac{1}{8}\mathbb{E}\left(\sup_{0\leq t\leq
\Lm} \eta(t)\|\v_n(t)-\v_{m}(t)\|^2_{\mathrm{L}^2}\right)
\nonumber\\&\quad+4\mathbb{E}\left[\int_{0}^{\Lm}\eta(t)\|\mathcal{J}_{m}\sigma(t,\v_n)
-\mathcal{J}_{n}\sigma(t,\v_{m})\|_{\mathcal{L}_Q(\mathrm{L}^2,\mathrm{L}^2)}^2 dt\right].
\end{align} 
Now, exploiting Assumption \ref{hypo} and cut off property (for the noise term) \eqref{sr2} we have for 
$0<\epsilon<s-1,$
\begin{align} \label{s.tau.2}
&\mathbb{E}\left[\int_{0}^{\Lm} \eta(t)\|\mathcal{J}_{n}\sigma(t,\v_n)
-\mathcal{J}_{m}\sigma(t,\v_m)\|_{\mathcal{L}_Q(\mathrm{L}^2,\mathrm{L}^2)}^2\d
t\right] \notag \\
&\leq
\mathbb{E}\left[\int_{0}^{\Lm} \eta(t)\|(\mathcal{J}_{n}-\mathcal{J}_{m})
\sigma(t,\v_n)\|_{\mathcal{L}_Q(\mathrm{L}^2,\mathrm{L}^2)}^2\d
t\right]\nonumber\\&\quad
+\mathbb{E}\left[\int_{0}^{\Lm}\eta(t)
\|\mathcal{J}_{m}(\sigma(t,\v_n)-\sigma(t,\v_{m}))\|_{\mathcal{L}_Q(\mathrm{L}^2,\mathrm{L}^2)}^2\d
t\right]\nonumber\\&\leq
\frac{C}{n^{\epsilon}}\mathbb{E}\left[\int_{0}^{\Lm}\eta(t)\|\sigma(t,\v_n)\|_{\mathcal{L}_Q(\mathrm{L}^2,H^{\epsilon})}^2\d
t\right]
+\mathbb{E}\left[\int_{0}^{\Lm}\eta(t)\|\sigma(t,\v_n)-\sigma(t,\v_m)\|_{\mathcal{L}_Q(\mathrm{L}^2,\mathrm{L}^2)}^2\d
t\right]\nonumber\\
&\leq
\frac{CK}{n^{\epsilon}}\mathbb{E}\left[\int_{0}^{\Lm}\eta(t)(1+\|\v_n\|_{H^s}^2)\d
t\right]+L\,\mathbb{E}\left[\int_{0}^{\Lm}\eta(t)\|\v_n-\v_{m}\|_{\mathrm{L}^2}^2\d
t\right].
\end{align}

\end{proof}


  \begin{lem}\label{M2}
 Let $\eta(t)$ be a stochastic process. Let $\rho_N^n$ be the stopping time given by \eqref{stop1}. Denote $ \Rn= \rho^n_N \wedge \rho^m_N.$ Then
 \begin{align}\label{est.m2}
  &\mathbb{E}\left[\sup_{0\leq t\leq \Lm}\left|\int_0^{t}\eta(s)
  \Big(\mathcal{J}_{n}\mathcal{S}(\mathbf{\tau}_n)-\mathcal{J}_{m}\mathcal{S}(\mathbf{\tau}_m),\mathbf{\tau}_n(s)-\mathbf{\tau}_m(s)\Big)_{\mathrm{L}^2}\d W_2(s)\right|\right] \notag\\
  & \leq \frac{1}{8}\mathbb{E}\left(\sup_{0\leq t\leq \Lm}\eta(t)\|\mathbf{\tau}_n(t)-\mathbf{\tau}_m(t)\|^2_{\mathrm{L}^2}\right)\notag\\
  &\quad+\frac{4}{n^\epsilon}\|h\|_{H^{s}}^2 \mathbb{E}\left(\int_0^{\Lm}\eta(t)\|\mathbf{\tau}_n\|_{H^{s}}^2 \d t \right)
+4\|h\|^2_{\mathrm{L}^\infty} \mathbb{E}\left(\int_0^{\Lm}\eta(t)\|\mathbf{\tau}_n-\mathbf{\tau}_m\|_{\mathrm{L}^2}^2\d t \right).
 \end{align}
\end{lem}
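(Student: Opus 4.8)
The plan is to imitate the proof of Lemma~\ref{M1} line by line, since $J_7$ (the term here) has exactly the same structure as $J_6$, but with the $W_2$ stochastic integral in place of the $W_1$ one and with $\mathcal{S}$ in place of $\sigma$. First I would apply the Burkholder--Davis--Gundy inequality (Lemma~\ref{burk}) to the real-valued martingale $t\mapsto\int_0^t\eta(s)\big(\mathcal{J}_n\mathcal{S}(\mathbf{\tau}_n)-\mathcal{J}_m\mathcal{S}(\mathbf{\tau}_m),\mathbf{\tau}_n(s)-\mathbf{\tau}_m(s)\big)_{\mathrm{L}^2}\,\d W_2(s)$, whose quadratic variation is $\int_0^{t}(\eta(s))^2\big(\mathcal{J}_n\mathcal{S}(\mathbf{\tau}_n)-\mathcal{J}_m\mathcal{S}(\mathbf{\tau}_m),\mathbf{\tau}_n-\mathbf{\tau}_m\big)_{\mathrm{L}^2}^2\,\d s$. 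Using Cauchy--Schwarz on the inner product and the crude bound $(\eta(s))^2\le\eta(s)$ (valid since $0<\eta\le 1$), this gives
\begin{align*}
\mathbb{E}\Big[\sup_{0\le t\le\Lm}\big|\cdots\big|\Big]
&\le 2\sqrt{2}\,\mathbb{E}\Big[\Big(\int_0^{\Lm}\eta(s)\|\mathcal{J}_n\mathcal{S}(\mathbf{\tau}_n)-\mathcal{J}_m\mathcal{S}(\mathbf{\tau}_m)\|_{\mathrm{L}^2}^2\|\mathbf{\tau}_n-\mathbf{\tau}_m\|_{\mathrm{L}^2}^2\,\d s\Big)^{1/2}\Big].
\end{align*}
Then I would pull $\big(\sup_{0\le t\le\Lm}\eta(t)\|\mathbf{\tau}_n(t)-\mathbf{\tau}_m(t)\|_{\mathrm{L}^2}^2\big)^{1/2}$ out of the time integral and apply Young's inequality $2\sqrt{2}\,ab\le\frac{1}{8}a^2+8b^2$, which yields
\[
\mathbb{E}\Big[\sup_{0\le t\le\Lm}\big|\cdots\big|\Big]\le \frac{1}{8}\mathbb{E}\Big(\sup_{0\le t\le\Lm}\eta(t)\|\mathbf{\tau}_n(t)-\mathbf{\tau}_m(t)\|_{\mathrm{L}^2}^2\Big)+4\,\mathbb{E}\Big[\int_0^{\Lm}\eta(t)\|\mathcal{J}_n\mathcal{S}(\mathbf{\tau}_n)-\mathcal{J}_m\mathcal{S}(\mathbf{\tau}_m)\|_{\mathrm{L}^2}^2\,\d t\Big].
\]

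The remaining work is to estimate the last expectation, which is precisely the content of the auxiliary estimate labelled \eqref{S.2} in the statement; I would split $\mathcal{J}_n\mathcal{S}(\mathbf{\tau}_n)-\mathcal{J}_m\mathcal{S}(\mathbf{\tau}_m)=(\mathcal{J}_n-\mathcal{J}_m)\mathcal{S}(\mathbf{\tau}_n)+\mathcal{J}_m\big(\mathcal{S}(\mathbf{\tau}_n)-\mathcal{S}(\mathbf{\tau}_m)\big)=(\mathcal{J}_n-\mathcal{J}_m)(h\otimes\mathbf{\tau}_n)+\mathcal{J}_m\big(h\otimes(\mathbf{\tau}_n-\mathbf{\tau}_m)\big)$. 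For the first piece I would use the Fourier truncation property \eqref{sr} (with $k=\epsilon$, $0<\epsilon<s-1$) together with Remark~\ref{esti.S} to get $\|(\mathcal{J}_n-\mathcal{J}_m)(h\otimes\mathbf{\tau}_n)\|_{\mathrm{L}^2}\le \frac{C}{n^\epsilon}\|h\otimes\mathbf{\tau}_n\|_{H^\epsilon}\le\frac{C}{n^\epsilon}\|h\|_{H^s}\|\mathbf{\tau}_n\|_{H^s}$; for the second piece the contraction property \eqref{intro1} of $\mathcal{J}_m$ and \eqref{propS.2} give $\|\mathcal{J}_m(h\otimes(\mathbf{\tau}_n-\mathbf{\tau}_m))\|_{\mathrm{L}^2}\le\|h\|_{\mathrm{L}^\infty}\|\mathbf{\tau}_n-\mathbf{\tau}_m\|_{\mathrm{L}^2}$. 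Squaring, using $(a+b)^2\le 2a^2+2b^2$, integrating against $\eta(t)\,\d t$ and taking expectation then produces
\[
\mathbb{E}\Big[\int_0^{\Lm}\eta(t)\|\mathcal{J}_n\mathcal{S}(\mathbf{\tau}_n)-\mathcal{J}_m\mathcal{S}(\mathbf{\tau}_m)\|_{\mathrm{L}^2}^2\,\d t\Big]\le \frac{C}{n^\epsilon}\|h\|_{H^s}^2\,\mathbb{E}\Big[\int_0^{\Lm}\eta(t)\|\mathbf{\tau}_n\|_{H^s}^2\,\d t\Big]+\|h\|_{\mathrm{L}^\infty}^2\,\mathbb{E}\Big[\int_0^{\Lm}\eta(t)\|\mathbf{\tau}_n-\mathbf{\tau}_m\|_{\mathrm{L}^2}^2\,\d t\Big],
\]
possibly after absorbing the constant $2$ and renaming $C$; multiplying by the factor $4$ from the previous display gives exactly the two terms $\frac{4}{n^\epsilon}\|h\|_{H^s}^2\mathbb{E}(\cdots)$ and $4\|h\|_{\mathrm{L}^\infty}^2\mathbb{E}(\cdots)$ appearing in \eqref{est.m2}.

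I do not anticipate any serious obstacle here: the argument is entirely parallel to Lemma~\ref{M1}, and the only subtlety worth flagging is the bookkeeping of constants — one should be mildly careful that the constant $C$ absorbed into the $n^{-\epsilon}$ term does not secretly depend on $n$ or $m$ (it does not, by the uniformity in \eqref{sr} and \eqref{intro1}), and that the factor $2$ from the elementary inequality $(a+b)^2\le 2a^2+2b^2$ is folded in so that the final constants match the statement. The boundedness $0<\eta(t)\le 1$ is used silently to pass from $(\eta(s))^2$ to $\eta(s)$ inside the quadratic variation; since $\eta(t)=\exp\big(-2\int_0^t\|\nabla\v_n\|_{H^s}^2\,\d s\big)$ this is immediate. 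Everything else is a routine application of BDG, Cauchy--Schwarz, Young, and the cut-off properties already recorded in Subsection~\ref{FTO}.
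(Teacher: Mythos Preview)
Your proposal is correct and follows essentially the same route as the paper: the paper first records the pointwise bound \eqref{S.2} (obtained exactly via your splitting $(\mathcal{J}_n-\mathcal{J}_m)\mathcal{S}(\mathbf{\tau}_n)+\mathcal{J}_m\mathcal{S}(\mathbf{\tau}_n-\mathbf{\tau}_m)$ together with \eqref{intro1}, \eqref{sr}, \eqref{propS.2}) and then invokes Burkholder--Davis--Gundy and Young's inequality, which is precisely your argument in reverse order. One tiny bookkeeping slip: the Young split $2\sqrt{2}\,ab\le\frac{1}{8}a^2+8b^2$ should read $\frac{1}{8}a^2+16b^2$, but the paper's own constants in this lemma are handled loosely (compare the statement with its use in \eqref{M2.cau}), so this is harmless.
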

\begin{proof}
 
Using \eqref{intro1}, \eqref{sr}, \eqref{propS.2}, we have the following inequality 
 \begin{align}\label{S.2}
  \|\mathcal{J}_{n}\mathcal{S}(\mathbf{\tau}_n)-\mathcal{J}_{m} \mathcal{S}(\mathbf{\tau}_m)\|_{\mathrm{L}^2}^2
  \leq \frac{2}{n^{\epsilon}}\|h\|_{H^{s}}^2\|\mathbf{\tau}_n\|_{H^{s}}^2
+2\|h\|^2_{\mathrm{L}^\infty}\|\mathbf{\tau}_n-\mathbf{\tau}_m\|_{\mathrm{L}^2}^2.
 \end{align}
Again application of Burkholder-Davis-Gundy inequality, Young's inequality and \eqref{S.2} produces the required estimate.
\end{proof}

\begin{lem}\label{M3}
Let $\eta(t)$ be a stochastic process. Let $\rho_N^n$ be the stopping time given by \eqref{stop1}. Denote $ \Rn= \rho^n_N \wedge \rho^m_N.$ Then
 \begin{align}\label{est.m3}
  &\mathbb{E}\left[\sup_{0\leq t\leq \Lm}\left|\int_0^{t}\int_{Z}\eta(s)
  \Big(\mathcal{J}_{n}G(\v_n(s-),z)-\mathcal{J}_{m}G(\v_m(s-),z),\v_n(s-)-\v_m(s-)\Big)_{\mathrm{L}^2}
  \tilde{N}(\d s,\d z)\right|\right] \notag\\
  & \leq \frac{1}{8}\mathbb{E}\left(\sup_{0\leq t\leq \Lm}\eta(t)\|\v_n(t)-\v_m(t)\|^2_{\mathrm{L}^2}\right)+\frac{4CK}{n^{\epsilon}}\mathbb{E}\left[\int_{0}^{\Lm}\eta(t)(1+\|\v_n\|_{H^s}^2)d
t\right]\notag\\& \quad +4L\,\mathbb{E}\left[\int_{0}^{\Lm}\eta(t)\|\v_n-\v_{m}\|_{\mathrm{L}^2}^2dt\right].
 \end{align}
\end{lem}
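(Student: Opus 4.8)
The plan is to repeat, almost verbatim, the argument used for Lemma~\ref{M1}, with the $W_1$-stochastic integral replaced by the compensated Poisson integral and the It\^o isometry/BDG for continuous martingales replaced by its jump counterpart. Set
$M_t:=\int_0^{t}\int_{Z}\eta(s)\big(\mathcal{J}_{n}G(\v_n(s-),z)-\mathcal{J}_{m}G(\v_m(s-),z),\v_n(s-)-\v_m(s-)\big)_{\mathrm{L}^2}\,\tilde{N}_1(\d s,\d z)$,
which is a real-valued c\`adl\`ag martingale whose quadratic variation is
$[M]_t=\int_0^{t}\int_{Z}(\eta(s))^2\,\big|(\mathcal{J}_{n}G(\v_n(s-),z)-\mathcal{J}_{m}G(\v_m(s-),z),\v_n(s-)-\v_m(s-))_{\mathrm{L}^2}\big|^2\,N_1(\d s,\d z)$.
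First I would apply the Burkholder--Davis--Gundy inequality (Lemma~\ref{burk}) at the stopping time $\Lm$, then Cauchy--Schwarz on the $\mathrm{L}^2$ inner product inside the integrand, and finally factor out $\sup_{0\le t\le\Lm}\eta(t)\|\v_n(t)-\v_m(t)\|_{\mathrm{L}^2}^2$ (using that $\v_n,\v_m$ are c\`adl\`ag, so the supremum of the left limits does not exceed the supremum of the paths, and that $\eta$ is continuous and bounded by $1$), to reach
\[
\mathbb{E}\Big[\sup_{0\le t\le\Lm}|M_t|\Big]
\le c_1\,\mathbb{E}\Big[\Big(\sup_{0\le t\le\Lm}\eta(t)\|\v_n(t)-\v_m(t)\|_{\mathrm{L}^2}^2\Big)^{1/2}
\Big(\int_0^{\Lm}\!\!\int_Z \eta(s)\,\|\mathcal{J}_{n}G(\v_n(s-),z)-\mathcal{J}_{m}G(\v_m(s-),z)\|_{\mathrm{L}^2}^2\,N_1(\d s,\d z)\Big)^{1/2}\Big].
\]

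Next I would use Young's inequality to split this product, absorbing the first factor into $\tfrac18\mathbb{E}\big(\sup_{0\le t\le\Lm}\eta(t)\|\v_n(t)-\v_m(t)\|_{\mathrm{L}^2}^2\big)$ (with the weight chosen, as in \eqref{mhde44}, so that the second factor carries the absolute constant). Since the remaining integrand $\eta(s)\|\mathcal{J}_{n}G(\v_n(s-),z)-\mathcal{J}_{m}G(\v_m(s-),z)\|_{\mathrm{L}^2}^2$ is nonnegative and $\mathscr{F}_t$-adapted, the identity \eqref{mt1} of Remark~\ref{nlam} allows me to replace $N_1(\d s,\d z)$ by the compensator $\lambda(\d z)\,\d s$ inside the expectation. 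I would then split
$\|\mathcal{J}_{n}G(\v_n,z)-\mathcal{J}_{m}G(\v_m,z)\|_{\mathrm{L}^2}^2\le 2\|(\mathcal{J}_{n}-\mathcal{J}_{m})G(\v_n,z)\|_{\mathrm{L}^2}^2+2\|\mathcal{J}_{m}(G(\v_n,z)-G(\v_m,z))\|_{\mathrm{L}^2}^2$,
bound the first term by $\tfrac{C}{n^{2\epsilon}}\|G(\v_n,z)\|_{H^{\epsilon}}^2$ via the cut-off property \eqref{sr} (for $0<\epsilon<s-1$) and the second by $\|G(\v_n,z)-G(\v_m,z)\|_{\mathrm{L}^2}^2$ via \eqref{intro1}. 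Integrating over $Z$ against $\lambda(\d z)$, the growth condition in Assumption~\ref{hypo} (A.2) applied with $s=\epsilon$ (so that $\int_Z\|G(\v_n,z)\|_{H^\epsilon}^2\lambda(\d z)\le K(1+\|\v_n\|_{H^\epsilon}^2)\le K(1+\|\v_n\|_{H^s}^2)$) and the Lipschitz condition (A.3) applied with $s=0$ (so that $\int_Z\|G(\v_n,z)-G(\v_m,z)\|_{\mathrm{L}^2}^2\lambda(\d z)\le L\|\v_n-\v_m\|_{\mathrm{L}^2}^2$), together with $n^{-2\epsilon}\le n^{-\epsilon}$, give exactly the asserted estimate \eqref{est.m3} after relabelling the absolute constant $C$ (which now absorbs the square of the cut-off constant and the BDG constant).

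The argument is essentially bookkeeping once the jump version of BDG is available, so the only point deserving genuine care --- and the closest thing to an obstacle --- is the passage from the integral against $N_1$ to the integral against the compensator $\lambda(\d z)\,\d s$. This step is legitimate only \emph{after} the supremum factor has been pulled out, so that what remains under the double integral is the nonnegative predictable integrand $\eta(s)\|\mathcal{J}_{n}G(\v_n(s-),z)-\mathcal{J}_{m}G(\v_m(s-),z)\|_{\mathrm{L}^2}^2$; nonnegativity and the membership $G(\cdot,z)\in\mathbb{H}^2_{\lambda}$ (Assumption~\ref{hypo} (A.1)) are what make \eqref{mt1} applicable here. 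One must also carry the left limits $\v_n(s-),\v_m(s-)$ through the computation, but they agree with the right-continuous versions for Lebesgue-a.e.\ $s$ and therefore do not affect any $\d s$- or $\lambda(\d z)$-integral, nor the supremum bound (which dominates the left limits as well).
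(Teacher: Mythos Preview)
Your proposal is correct and follows essentially the same route as the paper's own proof: Burkholder--Davis--Gundy for the jump martingale, Cauchy--Schwarz on the inner product, factoring out the supremum, Young's inequality with weight $\tfrac{1}{8}$, passage from $N_1$ to the compensator $\lambda(\d z)\,\d s$ via Remark~\ref{nlam}, and then the splitting \eqref{G.lam.2} with \eqref{sr}, (A.2) and (A.3). The paper compresses all of this into two lines, so your write-up is simply a more detailed version of the same argument; in particular your care about when the $N_1\to\lambda(\d z)\,\d s$ replacement is legitimate (only after the supremum has been pulled out, leaving a nonnegative predictable integrand) is exactly the point the paper leaves implicit.
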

\begin{proof}
An application of Burkholder-Davis-Gundy inequality, Young's inequality produces the required estimate
\begin{align}
 &\mathbb{E}\left[\sup_{0\leq t\leq \Lm}\left|\int_0^{t}\int_{Z}\eta(s)
  \Big(\mathcal{J}_{n}G(\v_n(s-),z)-\mathcal{J}_{m}G(\v_m(s-),z),\v_n(s-)-\v_m(s-)\Big)_{\mathrm{L}^2}
  \tilde{N}(\d s,\d z)\right|\right] \notag\\
  &\leq \frac{1}{8}\mathbb{E}\left(\sup_{0\leq t\leq \Lm}\eta(t)\|\v_n(t)-\v_m(t)\|^2_{\mathrm{L}^2}\right)\notag\\
  &\quad+4\mathbb{E}\left(\int_0^{\Lm}\int_Z\eta(t)
  \|\mathcal{J}_{n}G(\v_n(s-),z)-\mathcal{J}_{m}G(\v_m(s-),z)\|^2_{\mathrm{L}^2}\lambda(\d z)\d t\right).
\end{align}
Further exploiting Assumption \ref{hypo} and \eqref{sr}, for $0<\epsilon< s-1$ we have 
\begin{align} \label{G.lam.2}
&\mathbb{E}\left[\int_{0}^{\Lm}\int_Z \eta(t)\|\mathcal{J}_{n} G (\v_n,z)
-\mathcal{J}_{m} G(\v_{m},z)\|^2_{\mathrm{L}^2}\lambda(\d
z) \d t\right] \notag \\
& \leq 
\mathbb{E}\left[\int_{0}^{\Lm} \int_Z \eta(t)\|(\mathcal{J}_{n}-\mathcal{J}_{m})G(\v_n,z)\|^2_{\mathrm{L}^2}\lambda(\d
z) \d t\right]\nonumber\\&\quad+
\mathbb{E}\left[\int_{0}^{\Lm}\int_Z \eta(t)\|
\mathcal{J}_{m}(G(\v_n,z)-G(\v_m,z))\|^2_{\mathrm{L}^2}\lambda(\d
z) \d t\right]\nonumber\\&\leq
\frac{C}{n^{\epsilon}}\mathbb{E}\left[\int_{0}^{\Lm}\int_Z \eta(t)\|G(\v_n,z)\|_{H^{\epsilon}}^2\lambda(\d
z)\d t\right]+
\mathbb{E}\left[\int_{0}^{\Lm}\int_Z\eta(t)\|
G(\v_n,z)-G(\v_{m},z)\|^2_{\mathrm{L}^2}\lambda(\d
z) \d t\right]\nonumber\\&\leq
\frac{CK}{n^{\epsilon}}\mathbb{E}\left[\int_{0}^{\Lm}\eta(t)(1+\|\v_n\|_{H^s}^2)\d
t\right]+L\,\mathbb{E}\left[\int_{0}^{\Lm}\eta(t)\|\v_n-\v_{m}\|_{\mathrm{L}^2}^2\d t\right].
\end{align}
\end{proof}

\begin{lem} \label{lambda}
Let $\eta(t)$ be a stochastic process. Let $\rho_N^n$ be the stopping time given by \eqref{stop1}. Denote $ \Rn= \rho^n_N \wedge \rho^m_N.$ Then
\begin{align}
&\mathbb{E}\left[\int_{0}^{\Lm}\int_Z \eta(t)\|\mathcal{J}_{n} G (\v_n,z)
-\mathcal{J}_{m} G(\v_{m},z)\|^2_{\mathrm{L}^2}\lambda(\d
z) \d t\right]\notag\\ &\leq
\frac{CK}{n^{\epsilon}}\mathbb{E}\left[\int_{0}^{\Lm}\eta(t)(1+\|\v_n\|_{H^s}^2)\d
t\right]+L\,\mathbb{E}\left[\int_{0}^{\Lm}\eta(t)\|\v_n-\v_{m}\|_{\mathrm{L}^2}^2\d t\right].
\end{align}
\end{lem}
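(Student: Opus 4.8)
The plan is to recognize that this estimate is exactly the bound \eqref{G.lam.2} that was already obtained in passing during the proof of Lemma \ref{M3}; I would simply reproduce that argument here in a self-contained form. First I would decompose the integrand via the elementary identity
$$\mathcal{J}_n G(\v_n,z) - \mathcal{J}_m G(\v_m,z) = (\mathcal{J}_n - \mathcal{J}_m)G(\v_n,z) + \mathcal{J}_m\bigl(G(\v_n,z) - G(\v_m,z)\bigr),$$
and apply $\|a+b\|_{\mathrm{L}^2}^2 \le 2\|a\|_{\mathrm{L}^2}^2 + 2\|b\|_{\mathrm{L}^2}^2$ (the constant $2$ being absorbed into $C$) to split the $z$-integral, and hence the whole expectation, into two pieces. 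Since $\eta(t)\ge 0$ is merely a nonnegative multiplier it is carried along untouched, and by Tonelli's theorem the $z$-integration, the time integration up to the stopping time $\Lm$, and the expectation may be interchanged freely because every integrand is nonnegative.

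For the first piece I would invoke the Fourier-truncation property \eqref{sr} with $k=\epsilon$: since $m>n$ the maximum is attained at $n$, so $\|(\mathcal{J}_n - \mathcal{J}_m)G(\v_n,z)\|_{\mathrm{L}^2} \le C\,n^{-\epsilon}\|G(\v_n,z)\|_{H^{\epsilon}}$. Using the continuous embedding $H^s \hookrightarrow H^{\epsilon}$ for $0<\epsilon<s$, integrating in $z$, and applying the growth condition (A.2) of Assumption \ref{hypo}, namely $\int_Z \|G(\v_n,z)\|_{H^s}^2\,\lambda(\d z) \le K\bigl(1+\|\v_n\|_{H^s}^2\bigr)$, produces the term $\frac{CK}{n^{\epsilon}}\,\mathbb{E}\bigl[\int_0^{\Lm}\eta(t)(1+\|\v_n\|_{H^s}^2)\,\d t\bigr]$.

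For the second piece I would use the contraction property \eqref{intro1} of $\mathcal{J}_m$ in $\mathrm{L}^2$ to drop the truncation operator, and then apply the Lipschitz condition (A.3) of Assumption \ref{hypo} at the level $s=0$ (which is admissible because (A.3) is assumed for all $s\ge 0$): $\int_Z \|G(\v_n,z)-G(\v_m,z)\|_{\mathrm{L}^2}^2\,\lambda(\d z) \le L\|\v_n-\v_m\|_{\mathrm{L}^2}^2$. This yields the term $L\,\mathbb{E}\bigl[\int_0^{\Lm}\eta(t)\|\v_n-\v_m\|_{\mathrm{L}^2}^2\,\d t\bigr]$, and adding the two contributions gives the assertion.

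There is no real obstacle here: the lemma is a routine consequence of the noise hypotheses together with the mapping properties of the Fourier truncation, and is essentially a restatement of \eqref{G.lam.2}. The only points requiring care are choosing the correct Sobolev exponent when invoking (A.2) versus (A.3) (growth at level $H^s$ after the embedding $H^s\hookrightarrow H^\epsilon$, Lipschitz at level $\mathrm{L}^2$), and keeping track of the $n^{-\epsilon}$ gain from \eqref{sr} which is what forces the first term to vanish as $n\to\infty$ in the applications of this lemma.
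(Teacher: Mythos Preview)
Your proposal is correct and follows essentially the same route as the paper: the same add-and-subtract decomposition, the same use of the truncation bound \eqref{sr} together with the growth condition (A.2) for the first piece, and the contraction property \eqref{intro1} together with the Lipschitz condition (A.3) at the $\mathrm{L}^2$ level for the second. You are also right that this lemma is nothing more than a self-contained restatement of the estimate \eqref{G.lam.2} already derived inside the proof of Lemma~\ref{M3}.
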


\begin{proof}
Exploiting Assumption \ref{hypo} and \eqref{sr}, for $0<\epsilon< s-1,$ we have 
\begin{align}
&\mathbb{E}\left[\int_{0}^{\Lm}\int_Z \eta(t)\|\mathcal{J}_{n} G (\v_n,z)
-\mathcal{J}_{m} G(\v_{m},z)\|^2_{\mathrm{L}^2}\lambda(d
z) \d t\right] \notag \\
& \leq 
\mathbb{E}\left[\int_{0}^{\Lm} \int_Z \eta(t)\|(\mathcal{J}_{n}-\mathcal{J}_{m})G(\v_n,z)\|^2_{\mathrm{L}^2}\lambda(\d
z) \d t\right]\nonumber\\&\quad+
\mathbb{E}\left[\int_{0}^{\Lm}\int_Z \eta(t)\|
\mathcal{J}_{m}(G(\v_n,z)-G(\v_m,z))\|^2_{\mathrm{L}^2}\lambda(\d
z) \d t\right]\nonumber\\&\leq
\frac{C}{n^{\epsilon}}\mathbb{E}\left[\int_{0}^{\Lm}\int_Z \eta(t)\|G(\v_n,z)\|_{H^{\epsilon}}^2\lambda(\d
z)\d t\right]+
\mathbb{E}\left[\int_{0}^{\Lm}\int_Z\eta(t)\|
G(\v_n,z)-G(\v_{m},z)\|^2_{\mathrm{L}^2}\lambda(\d
z) \d t\right]\nonumber\\&\leq
\frac{CK}{n^{\epsilon}}\mathbb{E}\left[\int_{0}^{\Lm}\eta(t)(1+\|\v_n\|_{H^s}^2)d
t\right]+L\,\mathbb{E}\left[\int_{0}^{\Lm}\eta(t)\|\v_n-\v_{m}\|_{\mathrm{L}^2}^2dt\right].
\end{align}
\end{proof}

\end{appendix}

\section*{Acknowledgement}
The authors would like to thank Professor Zdzislaw Brze\'{z}niak of University of York, UK for his valuable comments and pointing our attention to certain references.

\end{document}